\definecolor{wwhhii}{rgb}{1.,1.,1.}
\definecolor{rreedd}{rgb}{1.,0.,0.}
\definecolor{uuuuuu}{rgb}{0.26666666666666666,0.26666666666666666,0.26666666666666666}
\definecolor{darkgreen}{HTML}{0d8513}
\newtheorem{theorem}{Theorem}[section]
\newtheorem{lemma}[theorem]{Lemma}
\newtheorem{prop}[theorem]{Proposition}
\newtheorem{cor}[theorem]{Corollary}
\newtheorem{question}[theorem]{Question}
\theoremstyle{definition}
\newtheorem{ex}[theorem]{Example}
\DeclareMathOperator{\argmax}{argmax}
\DeclareMathOperator{\Exp}{Exp}
\DeclareMathOperator{\LAL}{Left}
\DeclareMathOperator{\EG}{EG}
\DeclareMathOperator{\SN}{SN}
\DeclareMathOperator{\co}{cor}
\newcommand{\ocor}{\overline{\co}}
\DeclareMathOperator{\la}{last}
\newcommand{\ola}{\overline{\la}}
\DeclareMathOperator{\de}{deg}
\DeclareMathOperator{\SYT}{SYT}
\newcommand{\PP}{\mathbb P}
\newcommand{\Z}{\mathbb Z}
\newcommand{\R}{\mathbb R}
\newcommand{\C}{\mathbb C}
\newcommand{\N}{\mathbb N}
\newcommand{\don}{\mathds{1}}
\newcommand{\hg}{\hat{g}}
\newcommand{\cD}{\mathcal D}
\newcommand{\cP}{\mathcal P}
\newcommand{\cE}{\mathcal E}
\newcommand{\cA}{\mathcal A}
\newcommand{\cL}{\Lambda}
\newcommand{\cH}{\mathcal H}
\newcommand{\cS}{\mathcal S}
\newcommand{\cR}{\mathcal R}
\newcommand{\bU}{\mathbf U}
\newcommand{\bT}{\mathbf T}
\newcommand{\bt}{\mathbf t}
\newcommand{\bR}{\mathbf R}
\newcommand{\bL}{\mathbf L}
\newcommand{\bell}{\boldsymbol{\ell}}
\newcommand{\br}{\mathbf r}
\newcommand{\bx}{\mathbf x}
\newcommand{\bX}{\mathbf X}
\newcommand{\oi}{\overline{i}}
\newcommand{\oj}{\overline{j}}
\newcommand{\ol}{\overline{\ell}}
\newcommand{\oV}{\overline{V}}
\newcommand{\oor}{\overline{r}}
\newcommand{\oeta}{\overline{\eta}}
\newcommand{\obT}{\overline{\bT}}
\newcommand{\cbT}{\hat{\bT}}
\newcommand{\obt}{\overline{\bt}}
\newcommand{\obell}{\overline{\bell}}
\newcommand{\obr}{\overline{\br}}
\newcommand{\opi}{\mu}
\newcommand{\hmu}{\hat{\mu}}
\newcommand{\hkappa}{\overline{\kappa}}
\renewcommand\tableofcontents{%
  \null\hfill\textbf{\Large\contentsname}\hfill\null\par
  \@mkboth{\MakeUppercase\contentsname}{\MakeUppercase\contentsname}%
  \@starttoc{toc}%
}
\g@addto@macro\normalsize{%
  \setlength\abovedisplayskip{5pt}
  \setlength\belowdisplayskip{5pt}
  \setlength\abovedisplayshortskip{3pt}
  \setlength\belowdisplayshortskip{3pt}
}
\numberwithin{equation}{section}
\newenvironment{customthm}[1]
  {\innercustomthm}
  {\endinnercustomthm}
\begin{document}
\title{Shift-Invariance of the Colored TASEP and Finishing Times of the Oriented Swap Process}

\author{Lingfu Zhang
\thanks{Department of Mathematics, Princeton University, e-mail: lingfuz@math.princeton.edu}
\thanks{Department of Statistics, UC Berkeley, e-mail: lfzhang@berkeley.edu}
}
\date{}

\maketitle

\begin{abstract}
We prove a new shift-invariance property of the colored TASEP. 
From the shift-invariance of the colored stochastic six-vertex model (proved in Borodin-Gorin-Wheeler or Galashin), one can get a shift-invariance property of the colored TASEP at one time, and our result generalizes this to multiple times.
Our proof takes the single-time shift-invariance as an input, and uses analyticity of the probability functions and induction arguments.
We apply our shift-invariance to prove a distributional identity between the finishing times of the oriented swap process and the point-to-line passage times in exponential last-passage percolation, which is conjectured by Bisi-Cunden-Gibbons-Romik and Bufetov-Gorin-Romik, and is also equivalent to a purely combinatorial identity related to the Edelman-Greene correspondence. With known results from last-passage percolation, we also get new asymptotic results on the colored TASEP and the finishing times of the oriented swap process.
\end{abstract}

\section{Introduction} \label{sec:intro}
The Totally Asymmetric Simple Exclusion Process (TASEP) is a classical interacting particle system, where one considers a collection of particles in $\Z$, such that each site contains at most one particle. There is an independent Poisson clock on each edge $(x, x+1)$, such that when it rings, if there is a particle at site $x$, and the site $x+1$ is empty (in other words, there is a hole at site $x+1$), then the particle jumps to site $x+1$.
See e.g.\ the book of Liggett \cite{liggett2012interacting} and references therein.

The colored TASEP is a variant of this model: there is a particle at each site in $\Z$, and each particle has an additional property called \emph{color}, which is usually integer-valued. A particle with a smaller color is considered `stronger' than a particle with a larger color: when the Poisson clock on edge $(x, x+1)$ rings, if the particles on sites $x$ and $x+1$ have colors $i$ and $j$ respectively, then they will swap if and only if $i<j$.
The uncolored TASEP can be viewed as a colored TASEP with two colors: particles with the larger color correspond to holes, and particles with the smaller color correspond to particles. 
Another degeneration of the colored TASEP is TASEP with second-class particles.
In these models, there are normal particles, second-class particles, and holes on $\Z$.
The rule is that a normal or second-class particle can jump to a hole right next to it, and a normal particle can swap with a second-class particle right next to it.
These models involving second-class particles have been proven powerful in understanding the evolution of the TASEP and related exactly solvable models
\cite{ferrari1991microscopic, ferrari1992shock,
derrida1993exact, speer1994two, balazs2006cube, balazs2010order, MSZ}.

In this paper, we consider the following colored TASEP, studied in e.g.\ \cite{angel2009oriented, amir2011tasep, bufetov2020interacting, bufetov2020shock, borodin2021color}. The particles have mutually different colors, and initially, the particle at each site $x$ is colored $x$.
The configuration of this process at any time $t\ge 0$ can be viewed as a bijection $\zeta_t:\Z\to\Z$, where $\zeta_t(x)$ is the color of the particle at site $x$ at time $t$. Then $\zeta_0$ is the identity map, and $t\mapsto \zeta_t(x)$ is a cadlag function for each $x\in\Z$.
There are several distributional identities for this process, due to natural symmetries.
For example, take any $y\in\Z$, the function $x\mapsto \zeta_t(x-y)+y$ has the same distribution as $\zeta_t$.
There is also a reflection symmetry, which says that $x\mapsto -\zeta_t(-x)$ has the same distribution as $\zeta_t$.
There are also some less obvious equalities in distribution or symmetries that have been proved previously. For example, in \cite{amir2011tasep}, it is proved that $\zeta_t$ and its inverse (as a permutation of $\Z$) have the same distribution.
A version of this symmetry in a finite interval is given in \cite{angel2009oriented}.
A more general color-to-position symmetry is also proved in \cite{borodin2021color} (and also \cite{bufetov2020interacting}).

We present a new shift-invariance property, concerning the multi-time distribution of this colored TASEP.
It is stated for the distribution of `passage times', a notion coming from the connection between TASEP and the directed last-passage percolation (LPP) with i.i.d. exponential weights, and the distribution of related objects such as the geodesics.
A main question on the colored TASEP is about the scaling limit of the passage times.
Our shift-invariance implies convergence of the colored TASEP passage times on some `ordered sets' to the Airy sheet, a universal 2D random process constructed in \cite{dauvergne2018directed} and proven to be the scaling limit of LPP and its variants in \cite{dauvergne2021scaling}.

Another major application of our result is to prove a conjectured identity between LPP and the oriented swap process (OSP), a type of random sorting network that can be viewed as the colored TASEP in a finite interval.
Using this identity, we can use results from LPP to deduce asymptotic results of OSP.
For example, we show that the vector of the finishing times of OSP convergences to the Airy$_2$ process, under the KPZ scaling; and the site where the last swap happens has fluctuation in the order of $N^{2/3}$ (for OSP with $N$ numbers). On a smaller scale, the finishing times converge to random walks in total variation distance.

Our shift-invariance is related to various other hidden invariance in exactly solvable models, from \cite{borodin2019shift, dauvergne2020hidden, galashin2020symmetries}, as will be discussed in Section \ref{ssec:dis}; however, the approach we take is quite different from these previous works. In particular, we avoid working with formulas from algebraic combinatorics or representation theory.
Our starting point is a crucial input of a shift-invariance property of the colored stochastic six-vertex model, proved in \cite{galashin2020symmetries}.
Our central idea is to use the analyticity of the distribution functions, combined with the independence properties of the colored TASEP in different space-time areas.
See Section \ref{sec:stra} for a more detailed explanation of our strategy, with a simple example.

\subsection{Passage times and last-passage percolation}

To state our results we start with the following setup.
For the colored TASEP starting from the identity map, it is also equivalent to the coupling of a family of (uncolored) TASEPs, starting with step initial conditions.
More precisely, for each $A\in \Z$, we consider a TASEP such that initially, there is a particle on each site $\le A$, and each site $>A$ is empty.
For time $t\ge 0$, we denote the configuration as $\opi^A_t:\Z \to \{0, \infty\}$, where $0$ denotes a particle and $\infty$ denotes a hole.
We couple $\opi^A=(\opi^A_t)_{t\ge 0}$ for all $A\in \Z$, such that the jump of particles follow the same Poisson clock.
Such $\opi^A$ can be obtained from $\zeta=(\zeta_t)_{t\ge 0}$: we just let $\opi^A_t(x)=0$ if $\zeta_t(x)\le A$, and $\opi^A_t(x)=\infty$ if $\zeta_t(x)> A$.
We can also recover $\zeta$ from such $\opi^A$ for all $A\in \Z$,
by letting $\zeta_t(x)=\min\{A\in\Z: \opi^A_t(x)=0\}$.

The evolution of the colored TASEP $\zeta$ can also be described by the following family of random variables, which we call \emph{passage times}.
For each $A\in\Z$ and $B,C\in \N$, denote \begin{align*}
T_{B,C}^A&=\inf\{t\ge 0: |\{x\ge A+B+1-C: \zeta_t(x)\le A\}|\ge C\} \\ &= \inf\{t\ge 0: |\{x\ge A+B+1-C: \opi^A_t(x)=0\}|\ge C\},
\end{align*}
i.e., $T_{B,C}^A$ is the first time, when in $\opi^A$  there are at least $C$ particles on or to the right of site $A+B+1-C$. 
In other words, $T_{B,C}^A$ is the time when the $B$-th leftmost hole swaps with the $C$-th rightmost particle in $\opi^A$.
From $\{T_{B,C}^A\}_{B,C\in\N}$ we can recover the evolution of $\opi^A$.
We take such random variables and notations partially due to the connection between (uncolored) TASEP and LPP, dating back to Rost \cite{rost1981non}. 
We now state this connection, and we start by formally defining the model of 2D LPP (with i.i.d.\ exponential weights).

To each vertex $v \in \Z^2$ we associate an independent weight $\omega(v)$ with $\Exp(1)$ distribution.
For two vertices $u, v \in \Z^2$, we say $u\leq v$ if $u$ is coordinate-wise less than or equal to $v$.
For such $u, v$ and any up-right path $\gamma$ from $u$ to $v$, we define the \emph{passage time of the path} to be
\[
L(\gamma) := \sum_{w \in \gamma} \omega(w) .
\]
Then almost surely there is a unique up-right path from $u$ to $v$ that has the largest passage time.
We will always assume such uniqueness in this paper, and we call this path the \emph{geodesic} $\Gamma_{u,v}$, and call $L_{u,v}:=L(\Gamma_{u,v})$ the \emph{passage time from $u$ to $v$}.

For any $A\in\Z$, we can couple $\opi^A$ with a family of i.i.d. $\Exp(1)$ random variables $\{\omega^A(v)\}_{v\in \N^2}$, where for any $B, C\in \N$, $\omega^A((B,C))$ is the waiting time for the $B$-th leftmost hole to swap with the $C$-th rightmost particle.
We note that at any time $t$, the $B$-th leftmost hole is to the right of the $C$-th rightmost particle, if and only if there are at least $C$ particles on or to the right of site $A+B+1-C$.
Thus for the swap (between the $B$-th leftmost hole and the $C$-th rightmost particle), one starts the `waiting' when there are $C-1$ particles on or to the right of site $A+B+2-C$, and another particle arrives at site $A+B-C$.
Thus we can formally write \[\omega^A((B,C)) = T^A_{B,C}-T^A_{B-1,C}\vee T^A_{B,C-1},\] where we assume that $T^A_{B-1,C}=0$ if $B=1$, and $T^A_{B,C-1}=0$ if $C=1$.
Such $\omega^A((B,C))$ are i.i.d. $\Exp(1)$ for $B, C \in \N$, since in $\opi^A$, the jumps happen with rate $1$ independently. 

Let $L^A_{u,v}$ denote the passage time from $u$ to $v$ under the random field $\omega^A$.
Then we have $L^A_{(1,1),(B,C)} = T^A_{B,C}$ for any $B,C\in\N$, which explains the name of `passage times' for $T^A_{B,C}$.

From this connection between LPP and TASEP, we get a coupling of random fields $\omega^A$ for different $A$.
We now give a more direct but slightly more involved description of this coupling.
From $\omega^A$ one can get $\omega^{A+1}$ in the following way.
Take a family of i.i.d.\ $\Exp(1)$ random variables $\{E_{i,j}\}_{i,j\in \N}$, independent of $\omega^A$.
We then recursively define a function $\pi:\Z_{\ge 0}\to\N$ and a sequence of times $\{J_i\}_{i=0}^\infty$, as follows.
Let $\pi(0)=1$ and $J_0 = 0$.
Given any $\pi(i)$ and $J_i$, we let \[\pi(i+1)=\inf\{j\ge \pi(i):  L^A_{(1,1),(i+1,j)}-L^A_{(1,1),(i+2,j-1)}\vee J_i \ge E_{i+1,j} \},\]
and $J_{i+1} = L^A_{(1,1),(i+2,\pi(i+1)-1)}\vee J_i + E_{i+1,\pi(i+1)}$.
Then for $B,C\in\N$, we let
\[
L^{A+1}_{(1,1),(B,C)} =
\begin{cases}
L^{A}_{(1,1),(B,C-1)},\quad & C>\pi(B), \\
L^{A}_{(1,1),(B+1,C)},\quad & C<\pi(B), \\
J_B,\quad & C=\pi(B).
\end{cases}
\]
From this we can get $\omega^{A+1}$ via $\omega^{A+1}_{B,C}=L^{A+1}_{(1,1),(B,C)} - L^{A+1}_{(1,1),(B-1,C)} \vee L^{A+1}_{(1,1),(B,C-1)}$. 
We note that the above random variables are defined from the colored TASEP: each $E_{i,j}$ corresponds to the waiting time for the $i$-th right jump of the particle colored $A+1$, when there are $j-1$ particles with smaller colors to its right; each $J_i$ corresponds to the time when the particle colored $A+1$ makes the $i$-th jump to the right, and $\pi(i)-1$ corresponds to the number of particles with smaller colors to its right at time $J_i$.

In other words, the function $\pi$ splits $\omega^A$ into two parts; and by shifting the upper left part by $(0,1)$ and shifting the lower right part by $(-1,0)$, we get $\omega^{A+1}$, except for some vertices around the boundary.
By repeating this procedure, we can get $\omega^{A'}$ for any $A' >A$; and by the reflection symmetry, we can do a similar procedure to get $\omega^{A'}$ for any $A' <A$.
More precisely, for any $k\in \N$, we can find non-decreasing function $\pi_1,\ldots, \pi_k:\N\to\N$, with $\pi_1<\cdots <\pi_k$, such that they split $\omega^A$ into $k+1$ parts.
For the $i$-th part from the top, we shift it by $(1-i, k+1-i)$, and we get $\omega^{A+k}$, except for vertices around the boundaries.

\subsection{Colored TASEP identities}  \label{ssec:colortasep-iden}

We now state our shift-invariance of $\zeta$, using a graphical representation.
For each $A\in\Z$ and $B,C\in \N$, let $\cR^A_{B,C}$ be the rectangle of lattice points $[1+A,B+A]\times [1-A,C-A] \cap \Z^2$. From the connection with LPP, $T^A_{B,C}$ can be thought of as the passage time from the bottom-left corner to the up-right corner of $\cR^A_{B,C}$, where the random field is $\omega^A$ shifted by $(A,-A)$.\\

\noindent\textbf{Ordering of rectangles.} 
We make the collection of all such rectangles a partially ordered set, as follows.
For two rectangles $\cR, \cR' \subset \Z^2$, we say $\cR\le \cR'$ if the projection of $\cR$ onto the first coordinate contains the projection of $\cR'$ onto the first coordinate, and the projection of $\cR$ onto the second coordinate is contained in the projection of $\cR'$ onto the second coordinate.
In other words, suppose $\cR=\cR^A_{B,C}$ and $\cR'=\cR^{A'}_{B',C'}$, then $\cR\le \cR'$ if and only if $A\le A'$ and $A+B\ge A'+B'$, $A-C\ge A'-C'$.
We say that $\cR$ and $\cR'$ are \emph{ordered} if $\cR\le \cR'$ or $\cR'\le \cR$.
This ordering might look artificial, but it is crucial for our shift-invariance. See the discussions in Section \ref{ssec:shift-cons}.
\\

Our main result states that, if certain ordering relations are preserved, one can shift these rectangles while the joint distribution of the passage times is invariant.
\begin{theorem}  \label{thm:main-de}
Let $g\in\N$, $k_1,\ldots, k_g\in\N$, and take $A_{i,j}\in \Z$, $B_{i,j}, C_{i,j} \in\N$, for all $1\le i \le g$ and $1\le j \le k_i$.
Let $1\le \iota < g$, and for any $1\le i\le g$ and $1\le j \le k_i$ we let $A_{i,j}^+ = A_{i,j}+\don[i> \iota]$. Suppose that for any $1\le i<i' \le g$, and $1\le j \le k_i, 1\le j' \le k_{i'}$ we have $\cR^{A_{i,j}}_{B_{i,j},C_{i,j}} \le \cR^{A_{i',j'}}_{B_{i',j'},C_{i',j'}}$and $\cR^{A_{i,j}^+}_{B_{i,j},C_{i,j}} \le \cR^{A_{i',j'}^+}_{B_{i',j'},C_{i',j'}}$.
Then the vectors
$\{\max_{1\le j \le k_i} T_{B_{i,j},C_{i,j}}^{A_{i,j}} \}_{i=1}^g$ and $\{\max_{1\le j \le k_i} T_{B_{i,j},C_{i,j}}^{A_{i,j}^+} \}_{i=1}^g$ are equal in distribution.
\end{theorem}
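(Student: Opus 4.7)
My plan is to deduce this shift-invariance from the corresponding result for the colored (stochastic) six-vertex model proved by Galashin, via a degeneration limit combined with analytic continuation and an induction on the configuration complexity.

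First I would reformulate the statement in a way closer to the six-vertex setting. For $A, x \in \Z$ and $t \ge 0$, define the height function $h^A_t(x) := |\{y > x : \zeta_t(y) \le A\}|$; then the event $T^A_{B,C} \le t$ is equivalent to $h^A_t(A+B-C) \ge C$, so the joint CDF of the maxima $\{\max_{1 \le j \le k_i} T^{A_{i,j}}_{B_{i,j},C_{i,j}}\}_i$ is determined by the joint distribution of the finite collection $\{h^{A_{i,j}}_{t_i}(A_{i,j}+B_{i,j}-C_{i,j})\}_{i,j}$. This reduces the theorem to an identity between joint height-function distributions of $\zeta$ at finitely many space-time points. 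Since the colored stochastic six-vertex model converges to the colored TASEP in an appropriate continuous-time limit, Galashin's shift-invariance yields, after passing to the limit, such an identity---but only in specialized configurations, because the spectral parameters in the six-vertex setting must be set in a rigid way for the degeneration to produce exactly a colored TASEP.

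To extend from this specialized regime to the general configurations in the theorem, I would induct on the number of distinct times $|\{t_i : 1 \le i \le g\}|$ and, within each time class, on a complexity measure of the configuration (for example, the total count $\sum_i k_i$). The inductive step uses two ingredients: (i) the joint CDF of the passage times is real-analytic in the time parameters $t_i$, since these are LPP times with i.i.d.\ $\Exp(1)$ weights whose joint law has an analytic density on the positive orthant; and (ii) the Markov property of $\zeta$, together with the independence of Poisson clocks on disjoint time intervals, which lets multi-time events be decomposed as sums over intermediate configurations of products of single-time transition probabilities. Combining these, the six-vertex input provides the identity on a subset of an analytic family of parameters; analyticity then extends it to the whole family; and the induction reduces complex configurations to simpler ones.

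The main obstacle I anticipate is maintaining the ordering condition $\cR^{A_{i,j}}_{B_{i,j},C_{i,j}} \le \cR^{A_{i',j'}}_{B_{i',j'},C_{i',j'}}$ (and its shifted counterpart) throughout the analytic continuation and the inductive reductions. One must choose deformation paths of the parameters so that the ordering is preserved for both the unshifted and the shifted configuration simultaneously, and arrange the induction so that degenerate boundary cases (where the inequalities become tight, or where two rectangles coincide) reduce cleanly either to the six-vertex input or to configurations with strictly fewer rectangles. Carefully organizing this combinatorial bookkeeping, and verifying that the required independence of Poisson clocks across space-time can be invoked at each step, is likely to be the most delicate part of the argument.
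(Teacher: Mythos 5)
Your proposal identifies the correct inputs (Galashin's shift-invariance of the colored six-vertex model, degeneration to the colored TASEP, induction on the number of distinct times, analyticity, Markov/Poisson-clock independence), and this is genuinely the same family of ideas as the paper's. However, the crux of your argument is flawed, and the paper's actual mechanism is quite different from what you describe.

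The gap is in the sentence ``the six-vertex input provides the identity on a subset of an analytic family of parameters; analyticity then extends it to the whole family.'' The limit of Theorem~\ref{thm:gal-6v} gives an equality of the height function laws \emph{at a single time}, i.e.\ for parameter tuples with $t_1 = \cdots = t_g$. That diagonal is a codimension-$(g-1)$ subset of $(0,\infty)^g$, so equality of two real-analytic functions of $(t_1,\dots,t_g)$ on that diagonal does \emph{not} imply they agree everywhere --- for instance $f(t_1,t_2)=t_1-t_2$ is real-analytic, vanishes on the diagonal, and is nonzero. The identity theorem needs an open set, or a set of full measure; analytic continuation in the $t_i$ alone cannot convert the single-time identity into a multi-time one.

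What the paper does instead is fix the $t_i$ once and for all and perform the continuation in different variables. It introduces a projection $\hmu^{A_*,C_*}$ to finitely many particles, defines \emph{cutting times} $R_1<\cdots<R_{C_*}$ (stopping times at a fixed spatial line), and proves (Lemma~\ref{lem:equal-condi-cut}) a conditional independence: given the cutting-time information, the events at time $t_\tau$ and the events at later times live in disjoint space-time regions. It then shows (Lemma~\ref{lem:analytic}) that the joint density of the cutting-time vector and colors, conditional on $I$ or $I^+$, is real-analytic in the cutting-time values $(r_1,\dots,r_k)$ and extends analytically to $\C^k$, because it decomposes into finitely many sums/integrals of exponentials (Section~\ref{ssec:density}). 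Finally, applying the inductive hypothesis with extra observation points far to the right (the parameters $m_i=2^{\lambda^{k+1-i}}$) yields an integral identity of the form $\int (G - G^+) F_\lambda \, dr = o(1)$, and the kernels $F_\lambda$ concentrate near $r=0$ (after the normalization $\prod m_i^{\alpha_i} m_i!/t_*^{m_i}$) in a controlled way (Lemmas~\ref{lem:npr}, \ref{lem:pr}, \ref{lem:asy-m-t}). Since $G-G^+$ is analytic, if it were nonzero one could read off a nonzero leading Taylor coefficient from these integrals and reach a contradiction. This is a Laplace-type deconvolution in the cutting-time variables, with the probe parameters being spatial locations sent to infinity, and is what actually supplies the determinacy that your appeal to analytic continuation in $t_i$ cannot. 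Your proposal also has no analogue of the cutting-time construction or the conditional-independence lemma, which is what makes the decomposition you invoke in (ii) actually available.
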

In words, we can take a collection of passage times and divide them into groups, and shift the color parameters (i.e., $A_{i,j}$) for some groups.
As long as some ordering across groups always holds, the joint distribution of the maximums in different groups remains unchanged.

We next demonstrate what can be obtained from this theorem, via the following example.
\begin{ex}  \label{ex:parshift}
Consider the passage times $T^{-2}_{14,7}$, $T^{-1}_{15,6}$, $T^{0}_{10,11}$, $T^{3}_{5,14}$, $T^{3}_{4,15}$.
Each one is respectively the smallest time, such that
\begin{enumerate}
    \item $7$ particles with colors $\le -2$ on or to the right of site $6$, 
    \item $6$ particles with colors $\le -1$ on or to the right of site $9$, 
    \item $11$ particles with colors $\le 0$ on or to the right of site $0$, 
    \item $14$ particles with colors $\le 3$ on or to the right of site $-5$, 
    \item $15$ particles with colors $\le 3$ on or to the right of site $-7$.
\end{enumerate}
By Theorem \ref{thm:main-de}, $\max\{T^{-2}_{14,7}, T^{-1}_{15,6}\}$, $T^{0}_{10,11}$, $\max\{T^{3}_{5,14}, T^{3}_{4,15}\}$ have the same joint distribution as \[\max\{T^{-2}_{14,7}, T^{-1}_{15,6}\}, T^{1}_{10,11}, \max\{T^{4}_{5,14}, T^{4}_{4,15}\}.\] By applying Theorem \ref{thm:main-de} again, they also have the same joint distribution as
\[\max\{T^{-2}_{14,7}, T^{-1}_{15,6}\}, T^{1}_{10,11}, \max\{T^{2}_{5,14}, T^{2}_{4,15}\}.\]
See Figure \ref{fig:thm1par} for illustrations of these passage times, and Figure \ref{fig:thm1} for visualizations of the same using LPP.
\end{ex}

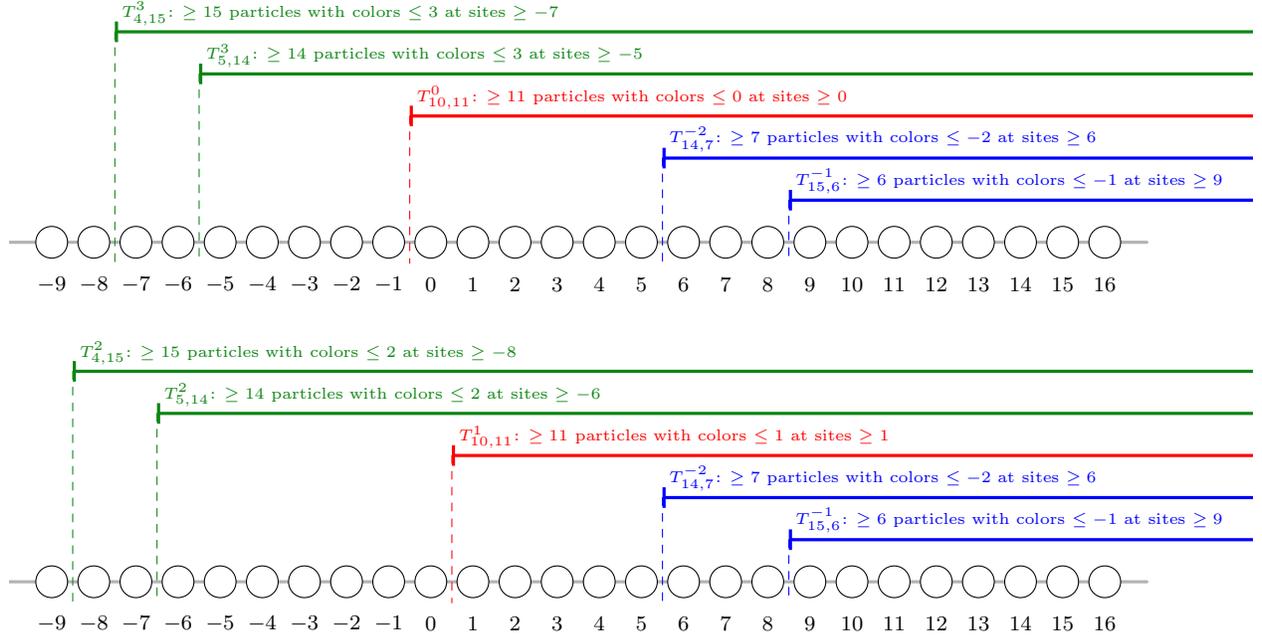
\begin{figure}[hbt!]
    \centering
\begin{subfigure}[b]{\textwidth}
         \centering
    \begin{tikzpicture}[line cap=round,line join=round,>=triangle 45,x=5.6cm,y=5.6cm]
\clip(-3,0.) rectangle (-0.05,0.8);

\draw [line width=1.2pt, opacity=0.3] (-3,0.2) -- (-0.3,0.2);
\draw [fill=white] (-2.9,0.2) circle (6.0pt);
\draw [fill=white] (-2.8,0.2) circle (6.0pt);
\draw [fill=white] (-2.7,0.2) circle (6.0pt);
\draw [fill=white] (-2.6,0.2) circle (6.0pt);
\draw [fill=white] (-2.5,0.2) circle (6.0pt);
\draw [fill=white] (-2.4,0.2) circle (6.0pt);
\draw [fill=white] (-2.3,0.2) circle (6.0pt);
\draw [fill=white] (-2.2,0.2) circle (6.0pt);
\draw [fill=white] (-2.1,0.2) circle (6.0pt);
\draw [fill=white] (-2.,0.2) circle (6.0pt);
\draw [fill=white] (-1.9,0.2) circle (6.0pt);
\draw [fill=white] (-1.8,0.2) circle (6.0pt);
\draw [fill=white] (-1.7,0.2) circle (6.0pt);
\draw [fill=white] (-1.6,0.2) circle (6.0pt);
\draw [fill=white] (-1.5,0.2) circle (6.0pt);
\draw [fill=white] (-1.4,0.2) circle (6.0pt);
\draw [fill=white] (-1.3,0.2) circle (6.0pt);
\draw [fill=white] (-1.2,0.2) circle (6.0pt);
\draw [fill=white] (-1.1,0.2) circle (6.0pt);
\draw [fill=white] (-1.,0.2) circle (6.0pt);
\draw [fill=white] (-0.9,0.2) circle (6.0pt);
\draw [fill=white] (-0.8,0.2) circle (6.0pt);
\draw [fill=white] (-0.7,0.2) circle (6.0pt);
\draw [fill=white] (-0.6,0.2) circle (6.0pt);
\draw [fill=white] (-0.5,0.2) circle (6.0pt);
\draw [fill=white] (-0.4,0.2) circle (6.0pt);

\draw [red] [very thick] [|-](-2.05,0.5) -- (10,0.5);

\draw [blue] [very thick] [|-](-1.45,0.4) -- (10,0.4);
\draw [blue] [very thick] [|-](-1.15,0.3) -- (10,0.3);

\draw [darkgreen] [very thick] [|-](-2.75,0.7) -- (10,0.7);
\draw [darkgreen] [very thick] [|-](-2.55,0.6) -- (10,0.6);

\draw [red] [dashed](-2.05,0.5) -- (-2.05,0.15);

\draw [blue] [dashed](-1.45,0.4) -- (-1.45,0.15);
\draw [blue] [dashed](-1.15,0.3) -- (-1.15,0.15);

\draw [darkgreen] [dashed](-2.75,0.7) -- (-2.75,0.15);
\draw [darkgreen] [dashed](-2.55,0.6) -- (-2.55,0.15);

\begin{scriptsize}
\draw (-2.9,0.1) node[anchor=center]{$-9$};
\draw (-2.8,0.1) node[anchor=center]{$-8$};
\draw (-2.7,0.1) node[anchor=center]{$-7$};
\draw (-2.6,0.1) node[anchor=center]{$-6$};
\draw (-2.5,0.1) node[anchor=center]{$-5$};
\draw (-2.4,0.1) node[anchor=center]{$-4$};
\draw (-2.3,0.1) node[anchor=center]{$-3$};
\draw (-2.2,0.1) node[anchor=center]{$-2$};
\draw (-2.1,0.1) node[anchor=center]{$-1$};
\draw (-2.0,0.1) node[anchor=center]{$0$};
\draw (-1.9,0.1) node[anchor=center]{$1$};
\draw (-1.8,0.1) node[anchor=center]{$2$};
\draw (-1.7,0.1) node[anchor=center]{$3$};
\draw (-1.6,0.1) node[anchor=center]{$4$};
\draw (-1.5,0.1) node[anchor=center]{$5$};
\draw (-1.4,0.1) node[anchor=center]{$6$};
\draw (-1.3,0.1) node[anchor=center]{$7$};
\draw (-1.2,0.1) node[anchor=center]{$8$};
\draw (-1.1,0.1) node[anchor=center]{$9$};
\draw (-1.0,0.1) node[anchor=center]{$10$};
\draw (-0.9,0.1) node[anchor=center]{$11$};
\draw (-0.8,0.1) node[anchor=center]{$12$};
\draw (-0.7,0.1) node[anchor=center]{$13$};
\draw (-0.6,0.1) node[anchor=center]{$14$};
\draw (-0.5,0.1) node[anchor=center]{$15$};
\draw (-0.4,0.1) node[anchor=center]{$16$};
\end{scriptsize}
\begin{tiny}
\draw [red] (-2.05,0.5) node[anchor=south west]{$T^0_{10,11}$: $\ge 11$ particles with colors $\le 0$ at sites $\ge 0$};
\draw [blue] (-1.45,0.4) node[anchor=south west]{$T^{-2}_{14,7}$: $\ge 7$ particles with colors $\le -2$ at sites $\ge 6$};
\draw [blue] (-1.15,0.3) node[anchor=south west]{$T^{-1}_{15,6}$: $\ge 6$ particles with colors $\le -1$ at sites $\ge 9$};

\draw [darkgreen] (-2.75,0.7) node[anchor=south west]{$T^{3}_{4,15}$: $\ge 15$ particles with colors $\le 3$ at sites $\ge -7$};
\draw [darkgreen] (-2.55,0.6) node[anchor=south west]{$T^{3}_{5,14}$: $\ge 14$ particles with colors $\le 3$ at sites $\ge -5$};
\end{tiny}

\draw (-4,0.6) node[anchor=west]{$\zeta_t$};
\draw (-4,0.4) node[anchor=west]{$\hmu_t^{1,8}\Longleftrightarrow \opi_t^{-6,1}, \opi_t^{-5,2}, \ldots, \opi_t^{1,8}$}; 
\draw (-4,0.2) node[anchor=west]{$\hmu_t^{0,5}\Longleftrightarrow \opi_t^{-4,1}, \opi_t^{-3,2}, \ldots, \opi_t^{0,5}$};

\end{tikzpicture}
\end{subfigure}
\begin{subfigure}[b]{\textwidth}
         \centering
    \begin{tikzpicture}[line cap=round,line join=round,>=triangle 45,x=5.6cm,y=5.6cm]
\clip(-3,0.) rectangle (-0.05,0.8);

\draw [line width=1.2pt, opacity=0.3] (-3,0.2) -- (-0.3,0.2);
\draw [fill=white] (-2.9,0.2) circle (6.0pt);
\draw [fill=white] (-2.8,0.2) circle (6.0pt);
\draw [fill=white] (-2.7,0.2) circle (6.0pt);
\draw [fill=white] (-2.6,0.2) circle (6.0pt);
\draw [fill=white] (-2.5,0.2) circle (6.0pt);
\draw [fill=white] (-2.4,0.2) circle (6.0pt);
\draw [fill=white] (-2.3,0.2) circle (6.0pt);
\draw [fill=white] (-2.2,0.2) circle (6.0pt);
\draw [fill=white] (-2.1,0.2) circle (6.0pt);
\draw [fill=white] (-2.,0.2) circle (6.0pt);
\draw [fill=white] (-1.9,0.2) circle (6.0pt);
\draw [fill=white] (-1.8,0.2) circle (6.0pt);
\draw [fill=white] (-1.7,0.2) circle (6.0pt);
\draw [fill=white] (-1.6,0.2) circle (6.0pt);
\draw [fill=white] (-1.5,0.2) circle (6.0pt);
\draw [fill=white] (-1.4,0.2) circle (6.0pt);
\draw [fill=white] (-1.3,0.2) circle (6.0pt);
\draw [fill=white] (-1.2,0.2) circle (6.0pt);
\draw [fill=white] (-1.1,0.2) circle (6.0pt);
\draw [fill=white] (-1.,0.2) circle (6.0pt);
\draw [fill=white] (-0.9,0.2) circle (6.0pt);
\draw [fill=white] (-0.8,0.2) circle (6.0pt);
\draw [fill=white] (-0.7,0.2) circle (6.0pt);
\draw [fill=white] (-0.6,0.2) circle (6.0pt);
\draw [fill=white] (-0.5,0.2) circle (6.0pt);
\draw [fill=white] (-0.4,0.2) circle (6.0pt);

\draw [red] [very thick] [|-](-1.95,0.5) -- (10,0.5);

\draw [blue] [very thick] [|-](-1.45,0.4) -- (10,0.4);
\draw [blue] [very thick] [|-](-1.15,0.3) -- (10,0.3);

\draw [darkgreen] [very thick] [|-](-2.85,0.7) -- (10,0.7);
\draw [darkgreen] [very thick] [|-](-2.65,0.6) -- (10,0.6);

\draw [red] [dashed](-1.95,0.5) -- (-1.95,0.15);

\draw [blue] [dashed](-1.45,0.4) -- (-1.45,0.15);
\draw [blue] [dashed](-1.15,0.3) -- (-1.15,0.15);

\draw [darkgreen] [dashed](-2.85,0.7) -- (-2.85,0.15);
\draw [darkgreen] [dashed](-2.65,0.6) -- (-2.65,0.15);

\begin{scriptsize}
\draw (-2.9,0.1) node[anchor=center]{$-9$};
\draw (-2.8,0.1) node[anchor=center]{$-8$};
\draw (-2.7,0.1) node[anchor=center]{$-7$};
\draw (-2.6,0.1) node[anchor=center]{$-6$};
\draw (-2.5,0.1) node[anchor=center]{$-5$};
\draw (-2.4,0.1) node[anchor=center]{$-4$};
\draw (-2.3,0.1) node[anchor=center]{$-3$};
\draw (-2.2,0.1) node[anchor=center]{$-2$};
\draw (-2.1,0.1) node[anchor=center]{$-1$};
\draw (-2.0,0.1) node[anchor=center]{$0$};
\draw (-1.9,0.1) node[anchor=center]{$1$};
\draw (-1.8,0.1) node[anchor=center]{$2$};
\draw (-1.7,0.1) node[anchor=center]{$3$};
\draw (-1.6,0.1) node[anchor=center]{$4$};
\draw (-1.5,0.1) node[anchor=center]{$5$};
\draw (-1.4,0.1) node[anchor=center]{$6$};
\draw (-1.3,0.1) node[anchor=center]{$7$};
\draw (-1.2,0.1) node[anchor=center]{$8$};
\draw (-1.1,0.1) node[anchor=center]{$9$};
\draw (-1.0,0.1) node[anchor=center]{$10$};
\draw (-0.9,0.1) node[anchor=center]{$11$};
\draw (-0.8,0.1) node[anchor=center]{$12$};
\draw (-0.7,0.1) node[anchor=center]{$13$};
\draw (-0.6,0.1) node[anchor=center]{$14$};
\draw (-0.5,0.1) node[anchor=center]{$15$};
\draw (-0.4,0.1) node[anchor=center]{$16$};
\end{scriptsize}
\begin{tiny}
\draw [red] (-1.95,0.5) node[anchor=south west]{$T^1_{10,11}$: $\ge 11$ particles with colors $\le 1$ at sites $\ge 1$};
\draw [blue] (-1.45,0.4) node[anchor=south west]{$T^{-2}_{14,7}$: $\ge 7$ particles with colors $\le -2$ at sites $\ge 6$};
\draw [blue] (-1.15,0.3) node[anchor=south west]{$T^{-1}_{15,6}$: $\ge 6$ particles with colors $\le -1$ at sites $\ge 9$};

\draw [darkgreen] (-2.85,0.7) node[anchor=south west]{$T^{2}_{4,15}$: $\ge 15$ particles with colors $\le 2$ at sites $\ge -8$};
\draw [darkgreen] (-2.65,0.6) node[anchor=south west]{$T^{2}_{5,14}$: $\ge 14$ particles with colors $\le 2$ at sites $\ge -6$};
\end{tiny}

\draw (-4,0.6) node[anchor=west]{$\zeta_t$};
\draw (-4,0.4) node[anchor=west]{$\hmu_t^{1,8}\Longleftrightarrow \opi_t^{-6,1}, \opi_t^{-5,2}, \ldots, \opi_t^{1,8}$}; 
\draw (-4,0.2) node[anchor=west]{$\hmu_t^{0,5}\Longleftrightarrow \opi_t^{-4,1}, \opi_t^{-3,2}, \ldots, \opi_t^{0,5}$};

\end{tikzpicture}
\end{subfigure}

\caption{
Consider (a) the first time when both the green events happen, (b) the first time when the red event happens, and (c) the first time when both the blue events happen. Their joint distributions are the same in the top and bottom panels.
This can be proved by applying Theorem \ref{thm:main-de} twice.
}  
\label{fig:thm1par}
\end{figure}

\begin{figure}[hbt!]
    \centering

\begin{subfigure}[b]{0.48\textwidth}
         \centering

\begin{tikzpicture}[line cap=round,line join=round,>=triangle 45,x=4.cm,y=4.cm]
\clip(-.5,-0.2) rectangle (1.19,1.59);

\fill[line width=0.pt,color=green,fill=green,fill opacity=0.15]
(0.05,1.25) -- (0.05,-0.15) -- (0.55,-0.15) -- (0.55,1.25) -- cycle;
\fill[line width=0.pt,color=green,fill=green,fill opacity=0.15]
(0.05,1.35) -- (0.05,-0.15) -- (0.45,-0.15) -- (0.45,1.35) -- cycle;

\fill[line width=0.pt,color=blue,fill=blue,fill opacity=0.15]
(-0.45,0.35) -- (-0.45,1.05) -- (0.95,1.05) -- (0.95,0.35) -- cycle;
\fill[line width=0.pt,color=blue,fill=blue,fill opacity=0.15]
(-0.35,0.25) -- (-0.35,0.85) -- (1.15,0.85) -- (1.15,0.25) -- cycle;

\fill[line width=0.pt,color=red,fill=red,fill opacity=0.15]
(-0.25,0.15) -- (-0.25,1.25) -- (0.75,1.25) -- (0.75,0.15) -- cycle;

\draw [line width=.1pt, opacity=0.3] (-0.5,-0.1) -- (2.6,-0.1);
\draw [line width=.1pt, opacity=0.3] (-0.5,0.) -- (2.6,0.);
\draw [line width=.1pt, opacity=0.3] (-0.5,0.1) -- (2.6,0.1);
\draw [line width=.1pt, opacity=0.3] (-0.5,0.2) -- (2.6,0.2);
\draw [line width=.1pt, opacity=0.3] (-0.5,0.3) -- (2.6,0.3);
\draw [line width=.1pt, opacity=0.3] (-0.5,0.4) -- (2.6,0.4);
\draw [line width=.1pt, opacity=0.3] (-0.5,0.5) -- (2.6,0.5);
\draw [line width=.1pt, opacity=0.3] (-0.5,0.6) -- (2.6,0.6);
\draw [line width=.1pt, opacity=0.3] (-0.5,0.7) -- (2.6,0.7);
\draw [line width=.1pt, opacity=0.3] (-0.5,0.8) -- (2.6,0.8);
\draw [line width=.1pt, opacity=0.3] (-0.5,0.9) -- (2.6,0.9);
\draw [line width=.1pt, opacity=0.3] (-0.5,1.) -- (2.6,1.);
\draw [line width=.1pt, opacity=0.3] (-0.5,1.1) -- (2.6,1.1);
\draw [line width=.1pt, opacity=0.3] (-0.5,1.2) -- (2.6,1.2);
\draw [line width=.1pt, opacity=0.3] (-0.5,1.3) -- (2.6,1.3);
\draw [line width=.1pt, opacity=0.3] (-0.5,1.4) -- (2.6,1.4);
\draw [line width=.1pt, opacity=0.3] (-0.5,1.5) -- (2.6,1.5);
\draw [line width=.1pt, opacity=0.3] (-0.5,1.6) -- (2.6,1.6);
\draw [line width=.1pt, opacity=0.3] (-0.5,1.7) -- (2.6,1.7);
\draw [line width=.1pt, opacity=0.3] (-0.5,1.8) -- (2.6,1.8);
\draw [line width=.1pt, opacity=0.3] (-0.5,1.9) -- (2.6,1.9);
\draw [line width=.1pt, opacity=0.3] (-0.5,2.) -- (2.6,2.);
\draw [line width=.1pt, opacity=0.3] (-0.5,2.1) -- (2.6,2.1);
\draw [line width=.1pt, opacity=0.3] (-0.4,-0.2) -- (-0.4,2.2);
\draw [line width=.1pt, opacity=0.3] (-0.3,-0.2) -- (-0.3,2.2);
\draw [line width=.1pt, opacity=0.3] (-0.2,-0.2) -- (-0.2,2.2);
\draw [line width=.1pt, opacity=0.3] (-0.1,-0.2) -- (-0.1,2.2);
\draw [line width=.1pt, opacity=0.3] (0.,-0.2) -- (0.,2.2);
\draw [line width=.1pt, opacity=0.3] (0.1,-0.2) -- (0.1,2.2);
\draw [line width=.1pt, opacity=0.3] (0.2,-0.2) -- (0.2,2.2);
\draw [line width=.1pt, opacity=0.3] (0.3,-0.2) -- (0.3,2.2);
\draw [line width=.1pt, opacity=0.3] (0.4,-0.2) -- (0.4,2.2);
\draw [line width=.1pt, opacity=0.3] (0.5,-0.2) -- (0.5,2.2);
\draw [line width=.1pt, opacity=0.3] (0.6,-0.2) -- (0.6,2.2);
\draw [line width=.1pt, opacity=0.3] (0.7,-0.2) -- (0.7,2.2);
\draw [line width=.1pt, opacity=0.3] (0.8,-0.2) -- (0.8,2.2);
\draw [line width=.1pt, opacity=0.3] (0.9,-0.2) -- (0.9,2.2);
\draw [line width=.1pt, opacity=0.3] (1.,-0.2) -- (1.,2.2);
\draw [line width=.1pt, opacity=0.3] (1.1,-0.2) -- (1.1,2.2);
\draw [line width=.1pt, opacity=0.3] (1.2,-0.2) -- (1.2,2.2);
\draw [line width=.1pt, opacity=0.3] (1.3,-0.2) -- (1.3,2.2);
\draw [line width=.1pt, opacity=0.3] (1.4,-0.2) -- (1.4,2.2);
\draw [line width=.1pt, opacity=0.3] (1.5,-0.2) -- (1.5,2.2);
\draw [line width=.1pt, opacity=0.3] (1.6,-0.2) -- (1.6,2.2);
\draw [line width=.1pt, opacity=0.3] (1.7,-0.2) -- (1.7,2.2);
\draw [line width=.1pt, opacity=0.3] (1.8,-0.2) -- (1.8,2.2);
\draw [line width=.1pt, opacity=0.3] (1.9,-0.2) -- (1.9,2.2);
\draw [line width=.1pt, opacity=0.3] (2.,-0.2) -- (2.,2.2);
\draw [line width=.1pt, opacity=0.3] (2.1,-0.2) -- (2.1,2.2);
\draw [line width=.1pt, opacity=0.3] (2.2,-0.2) -- (2.2,2.2);
\draw [line width=.1pt, opacity=0.3] (2.3,-0.2) -- (2.3,2.2);
\draw [line width=.1pt, opacity=0.3] (2.4,-0.2) -- (2.4,2.2);

\draw [uuuuuu] plot coordinates {(1.,-1.) (-1.,1.) };

\draw [darkgreen] plot coordinates {(0.1,-0.1) (0.4,1.3) };
\draw [darkgreen] plot coordinates {(0.1,-0.1) (0.5,1.2) };
\draw [fill=green] (0.1,-0.1) circle (1.5pt);
\draw [fill=green] (0.4,1.3) circle (1.5pt);
\draw [fill=green] (0.5,1.2) circle (1.5pt);

\draw [red] plot coordinates {(-0.2,0.2) (0.7,1.2) };
\draw [fill=red] (-0.2,0.2) circle (1.5pt);
\draw [fill=red] (0.7,1.2) circle (1.5pt);

\draw [blue] plot coordinates {(-0.3,0.3) (1.1,0.8) };
\draw [blue] plot coordinates {(-0.4,0.4) (0.9,1.0) };
\draw [fill=blue] (-0.3,0.3) circle (1.5pt);
\draw [fill=blue] (1.1,0.8) circle (1.5pt);
\draw [fill=blue] (-0.4,0.4) circle (1.5pt);
\draw [fill=blue] (0.9,1.0) circle (1.5pt);

\begin{scriptsize}
\draw (0.,0.) node[anchor=east, color=uuuuuu]{$x+y=2$};
\foreach \i in {-2,-1,...,14}
{
\draw node[anchor=west] at  (-0.5,\i/10+0.1) {\i};
}
\foreach \i in {-1,0,...,14}
{
\draw node[anchor=north] at  (\i/10-0.3,-0.1) {\i};
}
\end{scriptsize}

\end{tikzpicture}
\end{subfigure}
\hfill
\begin{subfigure}[b]{0.5\textwidth}
         \centering

\begin{tikzpicture}[line cap=round,line join=round,>=triangle 45,x=4.cm,y=4.cm]
\clip(-.5,-0.2) rectangle (1.19,1.59);

\fill[line width=0.pt,color=green,fill=green,fill opacity=0.15]
(-0.05,1.35) -- (-0.05,-0.05) -- (0.45,-0.05) -- (0.45,1.35) -- cycle;
\fill[line width=0.pt,color=green,fill=green,fill opacity=0.15]
(-0.05,1.45) -- (-0.05,-0.05) -- (0.35,-0.05) -- (0.35,1.45) -- cycle;

\fill[line width=0.pt,color=blue,fill=blue,fill opacity=0.15]
(-0.45,0.35) -- (-0.45,1.05) -- (0.95,1.05) -- (0.95,0.35) -- cycle;
\fill[line width=0.pt,color=blue,fill=blue,fill opacity=0.15]
(-0.35,0.25) -- (-0.35,0.85) -- (1.15,0.85) -- (1.15,0.25) -- cycle;

\fill[line width=0.pt,color=red,fill=red,fill opacity=0.15]
(-0.15,0.05) -- (-0.15,1.15) -- (0.85,1.15) -- (0.85,0.05) -- cycle;

\draw [line width=.1pt, opacity=0.3] (-0.5,-0.1) -- (2.6,-0.1);
\draw [line width=.1pt, opacity=0.3] (-0.5,0.) -- (2.6,0.);
\draw [line width=.1pt, opacity=0.3] (-0.5,0.1) -- (2.6,0.1);
\draw [line width=.1pt, opacity=0.3] (-0.5,0.2) -- (2.6,0.2);
\draw [line width=.1pt, opacity=0.3] (-0.5,0.3) -- (2.6,0.3);
\draw [line width=.1pt, opacity=0.3] (-0.5,0.4) -- (2.6,0.4);
\draw [line width=.1pt, opacity=0.3] (-0.5,0.5) -- (2.6,0.5);
\draw [line width=.1pt, opacity=0.3] (-0.5,0.6) -- (2.6,0.6);
\draw [line width=.1pt, opacity=0.3] (-0.5,0.7) -- (2.6,0.7);
\draw [line width=.1pt, opacity=0.3] (-0.5,0.8) -- (2.6,0.8);
\draw [line width=.1pt, opacity=0.3] (-0.5,0.9) -- (2.6,0.9);
\draw [line width=.1pt, opacity=0.3] (-0.5,1.) -- (2.6,1.);
\draw [line width=.1pt, opacity=0.3] (-0.5,1.1) -- (2.6,1.1);
\draw [line width=.1pt, opacity=0.3] (-0.5,1.2) -- (2.6,1.2);
\draw [line width=.1pt, opacity=0.3] (-0.5,1.3) -- (2.6,1.3);
\draw [line width=.1pt, opacity=0.3] (-0.5,1.4) -- (2.6,1.4);
\draw [line width=.1pt, opacity=0.3] (-0.5,1.5) -- (2.6,1.5);
\draw [line width=.1pt, opacity=0.3] (-0.5,1.6) -- (2.6,1.6);
\draw [line width=.1pt, opacity=0.3] (-0.5,1.7) -- (2.6,1.7);
\draw [line width=.1pt, opacity=0.3] (-0.5,1.8) -- (2.6,1.8);
\draw [line width=.1pt, opacity=0.3] (-0.5,1.9) -- (2.6,1.9);
\draw [line width=.1pt, opacity=0.3] (-0.5,2.) -- (2.6,2.);
\draw [line width=.1pt, opacity=0.3] (-0.5,2.1) -- (2.6,2.1);
\draw [line width=.1pt, opacity=0.3] (-0.4,-0.2) -- (-0.4,2.2);
\draw [line width=.1pt, opacity=0.3] (-0.3,-0.2) -- (-0.3,2.2);
\draw [line width=.1pt, opacity=0.3] (-0.2,-0.2) -- (-0.2,2.2);
\draw [line width=.1pt, opacity=0.3] (-0.1,-0.2) -- (-0.1,2.2);
\draw [line width=.1pt, opacity=0.3] (0.,-0.2) -- (0.,2.2);
\draw [line width=.1pt, opacity=0.3] (0.1,-0.2) -- (0.1,2.2);
\draw [line width=.1pt, opacity=0.3] (0.2,-0.2) -- (0.2,2.2);
\draw [line width=.1pt, opacity=0.3] (0.3,-0.2) -- (0.3,2.2);
\draw [line width=.1pt, opacity=0.3] (0.4,-0.2) -- (0.4,2.2);
\draw [line width=.1pt, opacity=0.3] (0.5,-0.2) -- (0.5,2.2);
\draw [line width=.1pt, opacity=0.3] (0.6,-0.2) -- (0.6,2.2);
\draw [line width=.1pt, opacity=0.3] (0.7,-0.2) -- (0.7,2.2);
\draw [line width=.1pt, opacity=0.3] (0.8,-0.2) -- (0.8,2.2);
\draw [line width=.1pt, opacity=0.3] (0.9,-0.2) -- (0.9,2.2);
\draw [line width=.1pt, opacity=0.3] (1.,-0.2) -- (1.,2.2);
\draw [line width=.1pt, opacity=0.3] (1.1,-0.2) -- (1.1,2.2);
\draw [line width=.1pt, opacity=0.3] (1.2,-0.2) -- (1.2,2.2);
\draw [line width=.1pt, opacity=0.3] (1.3,-0.2) -- (1.3,2.2);
\draw [line width=.1pt, opacity=0.3] (1.4,-0.2) -- (1.4,2.2);
\draw [line width=.1pt, opacity=0.3] (1.5,-0.2) -- (1.5,2.2);
\draw [line width=.1pt, opacity=0.3] (1.6,-0.2) -- (1.6,2.2);
\draw [line width=.1pt, opacity=0.3] (1.7,-0.2) -- (1.7,2.2);
\draw [line width=.1pt, opacity=0.3] (1.8,-0.2) -- (1.8,2.2);
\draw [line width=.1pt, opacity=0.3] (1.9,-0.2) -- (1.9,2.2);
\draw [line width=.1pt, opacity=0.3] (2.,-0.2) -- (2.,2.2);
\draw [line width=.1pt, opacity=0.3] (2.1,-0.2) -- (2.1,2.2);
\draw [line width=.1pt, opacity=0.3] (2.2,-0.2) -- (2.2,2.2);
\draw [line width=.1pt, opacity=0.3] (2.3,-0.2) -- (2.3,2.2);
\draw [line width=.1pt, opacity=0.3] (2.4,-0.2) -- (2.4,2.2);

\draw [uuuuuu] plot coordinates {(1.,-1.) (-1.,1.) };

\draw [darkgreen] plot coordinates {(0.,0.) (0.3,1.4) };
\draw [darkgreen] plot coordinates {(0.,0.) (0.4,1.3) };
\draw [fill=green] (0.,0.) circle (1.5pt);
\draw [fill=green] (0.3,1.4) circle (1.5pt);
\draw [fill=green] (0.4,1.3) circle (1.5pt);

\draw [red] plot coordinates {(-0.1,0.1) (0.8,1.1) };
\draw [fill=red] (-0.1,0.1) circle (1.5pt);
\draw [fill=red] (0.8,1.1) circle (1.5pt);

\draw [blue] plot coordinates {(-0.3,0.3) (1.1,0.8) };
\draw [blue] plot coordinates {(-0.4,0.4) (0.9,1.0) };
\draw [fill=blue] (-0.3,0.3) circle (1.5pt);
\draw [fill=blue] (1.1,0.8) circle (1.5pt);
\draw [fill=blue] (-0.4,0.4) circle (1.5pt);
\draw [fill=blue] (0.9,1.0) circle (1.5pt);

\begin{scriptsize}
\draw (0.,0.) node[anchor=east, color=uuuuuu]{$x+y=2$};
\foreach \i in {-2,-1,...,14}
{
\draw node[anchor=west] at  (-0.5,\i/10+0.1) {\i};
}
\foreach \i in {-1,0,...,14}
{
\draw node[anchor=north] at  (\i/10-0.3,-0.1) {\i};
}
\end{scriptsize}

\end{tikzpicture}

\end{subfigure}

\caption{
The passage times in Figure \ref{fig:thm1par} are visualized using LPP. Note that each $T^A_{B,C}$ is both (1) the first time for $C$ particles with colors $\le A$ to be on or to the right of site $A+B+1-C$, and
(2) the passage time from $(1+A, 1-A)$ to $(B+A, C-A)$ (which are corners of the rectangle $\cR^A_{B,C}$).
The left panel corresponds to the top panel in Figure \ref{fig:thm1par}, and the right panel corresponds to the bottom panel in Figure \ref{fig:thm1par}.
By Theorem \ref{thm:main-de}, the maximum of the blue passage times, the green passage times, and the red passage time have the same joint distributions, in both panels.
Note that this is not the shift-invariance of LPP, since for each starting point $(1+A,1-A)$, the last-passage time is for a different set of the random field $\omega^A$, associated with the TASEP $\opi^A$.
}  
\label{fig:thm1}
\end{figure}
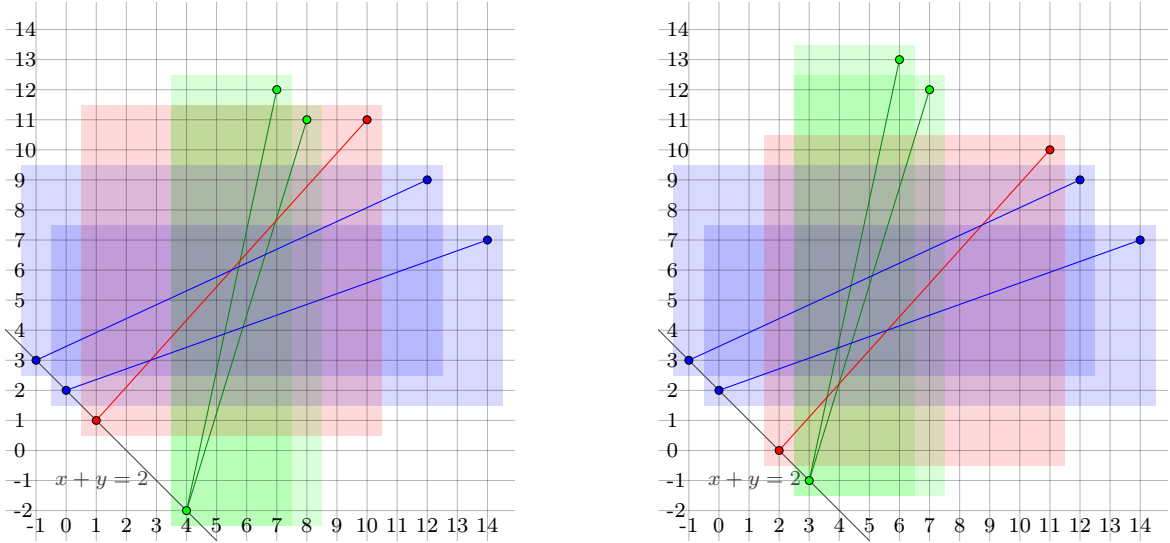

We also state the shift-invariance in another setting,
with some slightly different constraints on the parameters.
\begin{theorem}  \label{thm:main-sa}
Let $g\in\N$, and $A_i,A_i'\in\Z$, $B_i, C_i\in\N$ for each $1\le i \le g$.
Let $V_i$ be the rectangle consisting of all $(B,C)\in [1,B_i]\times[1,C_i]\cap\Z^2$, such that $\cR^{A_j}_{B_j,C_j}$ and $\cR^{A_i}_{B,C}$ are ordered, $\cR^{A_j'}_{B_j,C_j}$ and $\cR^{A_i'}_{B,C}$ are ordered, for each $1\le j \le g$.
Then $\{ \{T^{A_i}_{B,C}\}_{(B,C)\in V_i} \}_{i=1}^g$ has the same distribution as $\{ \{T^{A_i'}_{B,C}\}_{(B,C)\in V_i} \}_{i=1}^g$.
\end{theorem}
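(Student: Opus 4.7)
My plan is to derive Theorem \ref{thm:main-sa} from Theorem \ref{thm:main-de} by induction on the total shift $N=\sum_i(A_i'-A_i)$, decomposing the joint shift $(A_i)\mapsto (A_i')$ into a sequence of single-coordinate unit shifts that each fit the suffix-shift form of Theorem \ref{thm:main-de}. By the reflection symmetry of $\zeta$ recalled in the introduction, I would first reduce to the one-sided case $A_i'\ge A_i$ for all $i$, factoring a general shift through a configuration $(A_i^*)$ dominated coordinatewise by both $(A_i)$ and $(A_i')$ and applying the one-sided result twice. The base case $N=0$ is trivial.

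For the inductive step, pick an index $i_0$ with $A_{i_0}'>A_{i_0}$ and set $\tilde A_i=A_i+\don[i=i_0]$. I would verify that the hypotheses of Theorem \ref{thm:main-sa} (possibly after refining $V_i$ to the intersection of the rectangle-ordering constraints for $(A_i)$, $(\tilde A_i)$, and $(A_i')$) hold for both transitions $(A_i)\to(\tilde A_i)$ and $(\tilde A_i)\to (A_i')$. The induction hypothesis then handles the second leg, so it suffices to prove the single-coordinate unit shift $(A_i)\to (\tilde A_i)$ preserves the joint distribution of $\{T^{A_i}_{B,C}\}_{(B,C)\in V_i,\,1\le i\le g}$. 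For this core step I apply Theorem \ref{thm:main-de} in the singleton case $k_i=1$, in which the max within each group collapses to a single passage time and the conclusion is exactly equality in distribution of a joint vector of individual passage times. I would enumerate the tuples $\{(i,B,C):(B,C)\in V_i\}$ via a linear extension of the partial order $\le$ on the rectangles $\cR^{A_i}_{B,C}$ in which the tuples with $i=i_0$ form a consecutive block, and then apply Theorem \ref{thm:main-de} twice (the second time in the reverse direction, again via reflection symmetry) to produce a net shift only of $A_{i_0}$ by $+1$.

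The main obstacle is verifying the rectangle-ordering hypotheses of Theorem \ref{thm:main-de} throughout this procedure. The hypotheses of Theorem \ref{thm:main-sa} directly supply orderings only between the outer frame $\cR^{A_j}_{B_j,C_j}$ and each inner rectangle $\cR^{A_i}_{B,C}$; they do not provide pairwise orderings of inner rectangles across different values of $i$. I would bridge this gap using that each $V_i$ is a rectangle (so its inner rectangles are partially ordered by the coordinatewise order on $(B,C)$), combined with the inner-vs-outer ordering to propagate comparisons. Where a global linear extension does not exist, I would split the argument into several applications of Theorem \ref{thm:main-de} along chains of the partial order, and the most delicate bookkeeping lies in ensuring that the intermediate configurations $(\tilde A_i)$ continue to satisfy all ordering hypotheses simultaneously, both before and after each suffix shift.
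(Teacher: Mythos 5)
Your plan breaks down at the step you yourself flag as the main obstacle, and the proposed fix does not work. Within a single $V_i$ the rectangles $\cR^{A_i}_{B,C}$ are \emph{not} pairwise comparable under $\le$: by definition $\cR^{A_i}_{B,C}\le\cR^{A_i}_{B',C'}$ requires $B\ge B'$ and $C\le C'$, so any two points $(B,C),(B',C')\in V_i$ with $(B-B')(C-C')>0$ give incomparable rectangles (for example, $(1,1)$ and $(2,2)$ once both lie in $V_i$). This is exactly the phenomenon the paper highlights when it says Theorem \ref{thm:main-sa} ``allows rectangles with the same $A_i$ to be not ordered'' -- it is what makes \ref{thm:main-sa} a genuine strengthening of \ref{thm:main-de}, not a formal corollary. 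Consequently there is no way to list all tuples $\{(i,B,C):(B,C)\in V_i,\,1\le i\le g\}$ as the groups of Theorem \ref{thm:main-de} with $k_i=1$: the hypothesis there demands that every pair of rectangles (across different group indices) be ordered, which is impossible whenever some $V_i$ has more than one row and one column. Splitting into chains of the partial order does not rescue this, because separate applications of Theorem \ref{thm:main-de} to overlapping chains give equality in distribution chain by chain, not joint equality of the whole family.

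The missing idea is to apply Theorem \ref{thm:main-de} only to the lower-left boundary
$\oV_i=\{(B_i^-,C):C_i^-\le C\le C_i\}\cup\{(B,C_i^-):B_i^-\le B\le B_i\}$
of each $V_i$ (where every pair of rectangles \emph{is} comparable, since at least one coordinate sits at its minimum), and then to argue that, conditional on the boundary passage times, the interior passage times $\{T^{A_i}_{B,C}\}_{(B,C)\in V_i\setminus\oV_i}$ for distinct $i$ are determined by the Poisson field $\Pi$ restricted to pairwise disjoint regions $U_i$, hence conditionally independent, with transition laws that are translation-invariant in $A_i$. That independence step (not a further application of Theorem \ref{thm:main-de}) is what lets you pass from the boundary identity to the identity on all of $V_i$. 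Your reduction to unit shifts and singleton groups is a reasonable instinct, but without the boundary-plus-independence device the argument cannot close.
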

From these identities, we can get invariance for some related objects. For example, in LPP one can locally construct geodesics using passage times, thus we immediately get the following equality in distribution for geodesics.

For each $A\in\Z$ and $B,C\in\N$, we let $\Gamma^A_{B,C}$ be the LPP geodesic from $(1,1)$ to $(B,C)$, with the random field $\omega^A$ given by $\opi^A$.
\begin{cor}  \label{cor:main-geo}
In the setting of Theorem \ref{thm:main-sa}, let $W_i\subset V_i$ consist of all $(B,C) \in \Z^2$, such that $\{(B-1,C), (B,C-1)\}\cap \N^2 \subset V_i$.
Then $\{\Gamma^{A_i}_{B_i,C_i}\cap W_i\}_{i=1}^g$ has the same distribution as $\{\Gamma^{A_i'}_{B_i,C_i}\cap W_i\}_{i=1}^g$.
\end{cor}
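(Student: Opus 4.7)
The plan is to reduce the corollary to Theorem~\ref{thm:main-sa}. The central observation is that, for each $i$, the intersection $\Gamma^{A_i}_{B_i,C_i}\cap W_i$ is a deterministic function $\phi_i$ of the passage-time collection $\{T^{A_i}_{B,C}\}_{(B,C)\in V_i}$. Since the rectangle $V_i$ is defined symmetrically in the primed and unprimed data of Theorem~\ref{thm:main-sa}, the same $\phi_i$ applied to $\{T^{A_i'}_{B,C}\}_{(B,C)\in V_i}$ computes $\Gamma^{A_i'}_{B_i,C_i}\cap W_i$; applying $(\phi_1,\ldots,\phi_g)$ to both sides of the distributional identity in Theorem~\ref{thm:main-sa} yields the statement.

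The map $\phi_i$ uses the standard local reconstruction of LPP geodesics from passage times. Because the geodesic is a.s.\ unique, the Bellman-type recursion $T^{A_i}_{B,C}=\omega^{A_i}(B,C)+\max(T^{A_i}_{B-1,C},T^{A_i}_{B,C-1})$ forces the immediate predecessor of any vertex $(B,C)$ on $\Gamma^{A_i}_{B_i,C_i}$ with $B,C\ge 2$ to equal $(B-1,C)$ if $T^{A_i}_{B-1,C}>T^{A_i}_{B,C-1}$ and $(B,C-1)$ otherwise; boundary vertices have a unique lattice predecessor. The definition of $W_i$ is tailored precisely so that whenever the running vertex is in $W_i$, both passage times needed for this comparison belong to $\{T^{A_i}_{B,C}\}_{(B,C)\in V_i}$. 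I would implement $\phi_i$ by a backward trace-back starting at $(B_i,C_i)$ (which I take to lie in $W_i$, as the top-right corner of the rectangle $V_i$), iteratively jumping to the predecessor dictated by the local rule as long as the running vertex lies in $W_i$, and recording every visited vertex of $W_i$.

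What remains is to verify that this algorithm outputs exactly $\Gamma^{A_i}_{B_i,C_i}\cap W_i$. Recorded vertices lie on the geodesic by construction, so what must be shown is that, once the backward trace first exits $W_i$, it cannot re-enter; equivalently, the complement of $W_i$ in $\Z^2$ is closed under the lattice predecessor operations $(B,C)\mapsto(B-1,C)$ and $(B,C)\mapsto(B,C-1)$. This follows from $V_i$ being a rectangle and $W_i$ being obtained from $V_i$ by (possibly) deleting its leftmost column and bottom row: a point outside $W_i$ has first or second coordinate strictly below the corresponding threshold, and both lattice predecessors inherit this. The main technical care lies in this geometric bookkeeping, in particular the boundary cases where $V_i$ touches the axes $B=1$ or $C=1$ or where $(B_i,C_i)$ fails to lie in $W_i$ (in which case one should instead start the trace at the unique exit point of the forward geodesic from $W_i$, guaranteed to exist by the same non-return property); I expect these cases to form the only real bookkeeping obstacle, but none of them is a serious one.
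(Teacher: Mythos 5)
Your proof is correct and takes essentially the same route as the paper, which likewise reconstructs the geodesic locally from the sign of $T_{B-1,C}^{A_i}-T_{B,C-1}^{A_i}$ at each $(B,C)\in W_i$ (with the convention $T^A_{B,C}=0$ when $BC=0$) and applies Theorem \ref{thm:main-sa} to this collection of comparison indicators, leaving the backward trace-back and the no-re-entry observation implicit where you spell them out. The only small simplification available is that $(B_i,C_i)$ automatically lies in $W_i$ whenever $W_i\neq\emptyset$, since $W_i$ is obtained from the rectangle $V_i$ by deleting at most its leftmost column and bottom row and never its top-right corner, so the fall-back you propose for that case is never actually invoked.
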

See Figure \ref{fig:thm2} for an illustration of Theorem \ref{thm:main-sa} and Corollary \ref{cor:main-geo}.

\begin{figure}[hbt!]
    \centering
\begin{subfigure}[b]{0.48\textwidth}
         \centering

\begin{tikzpicture}[line cap=round,line join=round,>=triangle 45,x=4.cm,y=4.cm]
\clip(-.5,-0.2) rectangle (1.19,1.49);

\fill[line width=0.pt,color=green,fill=green,fill opacity=0.15]
(0.05,1.35) -- (0.05,-0.15) -- (0.45,-0.15) -- (0.45,1.35) -- cycle;

\fill[line width=0.pt,color=blue,fill=blue,fill opacity=0.15]
(-0.45,0.35) -- (-0.45,0.95) -- (1.15,0.95) -- (1.15,0.35) -- cycle;

\fill[line width=0.pt,color=red,fill=red,fill opacity=0.15]
(-0.25,0.15) -- (-0.25,1.15) -- (0.75,1.15) -- (0.75,0.15) -- cycle;

\draw [line width=.1pt, opacity=0.3] (-0.5,-0.1) -- (2.6,-0.1);
\draw [line width=.1pt, opacity=0.3] (-0.5,0.) -- (2.6,0.);
\draw [line width=.1pt, opacity=0.3] (-0.5,0.1) -- (2.6,0.1);
\draw [line width=.1pt, opacity=0.3] (-0.5,0.2) -- (2.6,0.2);
\draw [line width=.1pt, opacity=0.3] (-0.5,0.3) -- (2.6,0.3);
\draw [line width=.1pt, opacity=0.3] (-0.5,0.4) -- (2.6,0.4);
\draw [line width=.1pt, opacity=0.3] (-0.5,0.5) -- (2.6,0.5);
\draw [line width=.1pt, opacity=0.3] (-0.5,0.6) -- (2.6,0.6);
\draw [line width=.1pt, opacity=0.3] (-0.5,0.7) -- (2.6,0.7);
\draw [line width=.1pt, opacity=0.3] (-0.5,0.8) -- (2.6,0.8);
\draw [line width=.1pt, opacity=0.3] (-0.5,0.9) -- (2.6,0.9);
\draw [line width=.1pt, opacity=0.3] (-0.5,1.) -- (2.6,1.);
\draw [line width=.1pt, opacity=0.3] (-0.5,1.1) -- (2.6,1.1);
\draw [line width=.1pt, opacity=0.3] (-0.5,1.2) -- (2.6,1.2);
\draw [line width=.1pt, opacity=0.3] (-0.5,1.3) -- (2.6,1.3);
\draw [line width=.1pt, opacity=0.3] (-0.5,1.4) -- (2.6,1.4);
\draw [line width=.1pt, opacity=0.3] (-0.5,1.5) -- (2.6,1.5);
\draw [line width=.1pt, opacity=0.3] (-0.5,1.6) -- (2.6,1.6);
\draw [line width=.1pt, opacity=0.3] (-0.5,1.7) -- (2.6,1.7);
\draw [line width=.1pt, opacity=0.3] (-0.5,1.8) -- (2.6,1.8);
\draw [line width=.1pt, opacity=0.3] (-0.5,1.9) -- (2.6,1.9);
\draw [line width=.1pt, opacity=0.3] (-0.5,2.) -- (2.6,2.);
\draw [line width=.1pt, opacity=0.3] (-0.5,2.1) -- (2.6,2.1);
\draw [line width=.1pt, opacity=0.3] (-0.4,-0.2) -- (-0.4,2.2);
\draw [line width=.1pt, opacity=0.3] (-0.3,-0.2) -- (-0.3,2.2);
\draw [line width=.1pt, opacity=0.3] (-0.2,-0.2) -- (-0.2,2.2);
\draw [line width=.1pt, opacity=0.3] (-0.1,-0.2) -- (-0.1,2.2);
\draw [line width=.1pt, opacity=0.3] (0.,-0.2) -- (0.,2.2);
\draw [line width=.1pt, opacity=0.3] (0.1,-0.2) -- (0.1,2.2);
\draw [line width=.1pt, opacity=0.3] (0.2,-0.2) -- (0.2,2.2);
\draw [line width=.1pt, opacity=0.3] (0.3,-0.2) -- (0.3,2.2);
\draw [line width=.1pt, opacity=0.3] (0.4,-0.2) -- (0.4,2.2);
\draw [line width=.1pt, opacity=0.3] (0.5,-0.2) -- (0.5,2.2);
\draw [line width=.1pt, opacity=0.3] (0.6,-0.2) -- (0.6,2.2);
\draw [line width=.1pt, opacity=0.3] (0.7,-0.2) -- (0.7,2.2);
\draw [line width=.1pt, opacity=0.3] (0.8,-0.2) -- (0.8,2.2);
\draw [line width=.1pt, opacity=0.3] (0.9,-0.2) -- (0.9,2.2);
\draw [line width=.1pt, opacity=0.3] (1.,-0.2) -- (1.,2.2);
\draw [line width=.1pt, opacity=0.3] (1.1,-0.2) -- (1.1,2.2);
\draw [line width=.1pt, opacity=0.3] (1.2,-0.2) -- (1.2,2.2);
\draw [line width=.1pt, opacity=0.3] (1.3,-0.2) -- (1.3,2.2);
\draw [line width=.1pt, opacity=0.3] (1.4,-0.2) -- (1.4,2.2);
\draw [line width=.1pt, opacity=0.3] (1.5,-0.2) -- (1.5,2.2);
\draw [line width=.1pt, opacity=0.3] (1.6,-0.2) -- (1.6,2.2);
\draw [line width=.1pt, opacity=0.3] (1.7,-0.2) -- (1.7,2.2);
\draw [line width=.1pt, opacity=0.3] (1.8,-0.2) -- (1.8,2.2);
\draw [line width=.1pt, opacity=0.3] (1.9,-0.2) -- (1.9,2.2);
\draw [line width=.1pt, opacity=0.3] (2.,-0.2) -- (2.,2.2);
\draw [line width=.1pt, opacity=0.3] (2.1,-0.2) -- (2.1,2.2);
\draw [line width=.1pt, opacity=0.3] (2.2,-0.2) -- (2.2,2.2);
\draw [line width=.1pt, opacity=0.3] (2.3,-0.2) -- (2.3,2.2);
\draw [line width=.1pt, opacity=0.3] (2.4,-0.2) -- (2.4,2.2);

\draw [uuuuuu] plot coordinates {(1.,-1.) (-1.,1.) };

\draw [darkgreen] plot coordinates {(0.1,-0.1) (0.4,1.3) };
\draw [darkgreen] plot coordinates {(0.1,-0.1) (0.4,1.2) };
\draw [darkgreen] plot coordinates {(0.1,-0.1) (0.4,1.1) };
\draw [darkgreen] plot coordinates {(0.1,-0.1) (0.3,1.3) };
\draw [darkgreen] plot coordinates {(0.1,-0.1) (0.3,1.2) };
\draw [darkgreen] plot coordinates {(0.1,-0.1) (0.3,1.1) };
\draw [darkgreen] plot coordinates {(0.1,-0.1) (0.2,1.3) };
\draw [darkgreen] plot coordinates {(0.1,-0.1) (0.2,1.2) };
\draw [darkgreen] plot coordinates {(0.1,-0.1) (0.2,1.1) };
\draw [darkgreen] plot coordinates {(0.1,-0.1) (0.1,1.3) };
\draw [darkgreen] plot coordinates {(0.1,-0.1) (0.1,1.2) };
\draw [darkgreen] plot coordinates {(0.1,-0.1) (0.1,1.1) };
\draw [fill=green] (0.1,-0.1) circle (1.5pt);
\draw [fill=green] (0.4,1.3) circle (1.5pt);
\draw [fill=green] (0.4,1.2) circle (1.5pt);
\draw [fill=green] (0.4,1.1) circle (1.5pt);
\draw [fill=green] (0.3,1.3) circle (1.5pt);
\draw [fill=green] (0.3,1.2) circle (1.5pt);
\draw [fill=green] (0.3,1.1) circle (1.5pt);
\draw [fill=green] (0.2,1.3) circle (1.5pt);
\draw [fill=green] (0.2,1.2) circle (1.5pt);
\draw [fill=green] (0.2,1.1) circle (1.5pt);
\draw [fill=green] (0.1,1.3) circle (1.5pt);
\draw [fill=green] (0.1,1.2) circle (1.5pt);
\draw [fill=green] (0.1,1.1) circle (1.5pt);

\draw [red] plot coordinates {(-0.2,0.2) (0.7,1.1) };
\draw [red] plot coordinates {(-0.2,0.2) (0.6,1.1) };
\draw [red] plot coordinates {(-0.2,0.2) (0.5,1.1) };
\draw [red] plot coordinates {(-0.2,0.2) (0.4,1.1) };
\draw [red] plot coordinates {(-0.2,0.2) (0.7,1.) };
\draw [red] plot coordinates {(-0.2,0.2) (0.6,1.) };
\draw [red] plot coordinates {(-0.2,0.2) (0.5,1.) };
\draw [red] plot coordinates {(-0.2,0.2) (0.4,1.) };

\draw [fill=red] (-0.2,0.2) circle (1.5pt);
\draw [fill=red] (0.7,1.1) circle (1.5pt);
\draw [fill=red] (0.7,1.0) circle (1.5pt);
\draw [fill=red] (0.6,1.1) circle (1.5pt);
\draw [fill=red] (0.6,1.0) circle (1.5pt);
\draw [fill=red] (0.5,1.1) circle (1.5pt);
\draw [fill=red] (0.5,1.0) circle (1.5pt);
\draw [fill=uuuuuu] (0.4,1.1) circle (1.5pt);
\draw [fill=red] (0.4,1.0) circle (1.5pt);

\draw [blue] plot coordinates {(-0.4,0.4) (1.1,0.9) };
\draw [blue] plot coordinates {(-0.4,0.4) (1.0,0.9) };
\draw [blue] plot coordinates {(-0.4,0.4) (0.9,0.9) };
\draw [blue] plot coordinates {(-0.4,0.4) (0.8,0.9) };
\draw [blue] plot coordinates {(-0.4,0.4) (1.1,0.8) };
\draw [blue] plot coordinates {(-0.4,0.4) (1.0,0.8) };
\draw [blue] plot coordinates {(-0.4,0.4) (0.9,0.8) };
\draw [blue] plot coordinates {(-0.4,0.4) (0.8,0.8) };
\draw [blue] plot coordinates {(-0.4,0.4) (1.1,0.7) };
\draw [blue] plot coordinates {(-0.4,0.4) (1.0,0.7) };
\draw [blue] plot coordinates {(-0.4,0.4) (0.9,0.7) };
\draw [blue] plot coordinates {(-0.4,0.4) (0.8,0.7) };
\draw [blue] plot coordinates {(-0.4,0.4) (1.1,0.6) };
\draw [blue] plot coordinates {(-0.4,0.4) (1.0,0.6) };
\draw [blue] plot coordinates {(-0.4,0.4) (0.9,0.6) };
\draw [blue] plot coordinates {(-0.4,0.4) (0.8,0.6) };
\draw [blue] plot coordinates {(-0.4,0.4) (1.1,0.5) };
\draw [blue] plot coordinates {(-0.4,0.4) (1.0,0.5) };
\draw [blue] plot coordinates {(-0.4,0.4) (0.9,0.5) };
\draw [blue] plot coordinates {(-0.4,0.4) (0.8,0.5) };
\draw [blue] plot coordinates {(-0.4,0.4) (1.1,0.4) };
\draw [blue] plot coordinates {(-0.4,0.4) (1.0,0.4) };
\draw [blue] plot coordinates {(-0.4,0.4) (0.9,0.4) };
\draw [blue] plot coordinates {(-0.4,0.4) (0.8,0.4) };

\draw [fill=blue] (-0.4,0.4) circle (1.5pt);
\draw [fill=blue] (1.1,0.9) circle (1.5pt);
\draw [fill=blue] (1.0,0.9) circle (1.5pt);
\draw [fill=blue] (0.9,0.9) circle (1.5pt);
\draw [fill=blue] (0.8,0.9) circle (1.5pt);
\draw [fill=blue] (1.1,0.8) circle (1.5pt);
\draw [fill=blue] (1.0,0.8) circle (1.5pt);
\draw [fill=blue] (0.9,0.8) circle (1.5pt);
\draw [fill=blue] (0.8,0.8) circle (1.5pt);
\draw [fill=blue] (1.1,0.7) circle (1.5pt);
\draw [fill=blue] (1.0,0.7) circle (1.5pt);
\draw [fill=blue] (0.9,0.7) circle (1.5pt);
\draw [fill=blue] (0.8,0.7) circle (1.5pt);
\draw [fill=blue] (1.1,0.6) circle (1.5pt);
\draw [fill=blue] (1.0,0.6) circle (1.5pt);
\draw [fill=blue] (0.9,0.6) circle (1.5pt);
\draw [fill=blue] (0.8,0.6) circle (1.5pt);
\draw [fill=blue] (1.1,0.5) circle (1.5pt);
\draw [fill=blue] (1.0,0.5) circle (1.5pt);
\draw [fill=blue] (0.9,0.5) circle (1.5pt);
\draw [fill=blue] (0.8,0.5) circle (1.5pt);
\draw [fill=blue] (1.1,0.4) circle (1.5pt);
\draw [fill=blue] (1.0,0.4) circle (1.5pt);
\draw [fill=blue] (0.9,0.4) circle (1.5pt);
\draw [fill=blue] (0.8,0.4) circle (1.5pt);

\draw (0.,0.) node[anchor=east, color=uuuuuu]{$x+y=2$};

\draw [line width=2pt, red] plot coordinates {(0.7,1.1) (0.5,1.1) (0.5,1.0)};

\draw [line width=2pt, blue] plot coordinates {(1.1,0.9) (1.1,0.8) (0.9,0.8) (0.9,0.7) (0.8,0.7)};

\draw [line width=2pt, green] plot coordinates {(0.4,1.3) (0.4,1.2) (0.2,1.2) (0.2,1.1)};

\end{tikzpicture}
\end{subfigure}
\hfill
\begin{subfigure}[b]{0.5\textwidth}
         \centering

\begin{tikzpicture}[line cap=round,line join=round,>=triangle 45,x=4.cm,y=4.cm]
\clip(-.5,-0.2) rectangle (1.19,1.49);

\fill[line width=0.pt,color=green,fill=green,fill opacity=0.15]
(-0.05,1.45) -- (-0.05,-0.05) -- (0.35,-0.05) -- (0.35,1.45) -- cycle;

\fill[line width=0.pt,color=blue,fill=blue,fill opacity=0.15]
(-0.45,0.35) -- (-0.45,0.95) -- (1.15,0.95) -- (1.15,0.35) -- cycle;

\fill[line width=0.pt,color=red,fill=red,fill opacity=0.15]
(-0.15,0.05) -- (-0.15,1.05) -- (0.85,1.05) -- (0.85,0.05) -- cycle;

\draw [line width=.1pt, opacity=0.3] (-0.5,-0.1) -- (2.6,-0.1);
\draw [line width=.1pt, opacity=0.3] (-0.5,0.) -- (2.6,0.);
\draw [line width=.1pt, opacity=0.3] (-0.5,0.1) -- (2.6,0.1);
\draw [line width=.1pt, opacity=0.3] (-0.5,0.2) -- (2.6,0.2);
\draw [line width=.1pt, opacity=0.3] (-0.5,0.3) -- (2.6,0.3);
\draw [line width=.1pt, opacity=0.3] (-0.5,0.4) -- (2.6,0.4);
\draw [line width=.1pt, opacity=0.3] (-0.5,0.5) -- (2.6,0.5);
\draw [line width=.1pt, opacity=0.3] (-0.5,0.6) -- (2.6,0.6);
\draw [line width=.1pt, opacity=0.3] (-0.5,0.7) -- (2.6,0.7);
\draw [line width=.1pt, opacity=0.3] (-0.5,0.8) -- (2.6,0.8);
\draw [line width=.1pt, opacity=0.3] (-0.5,0.9) -- (2.6,0.9);
\draw [line width=.1pt, opacity=0.3] (-0.5,1.) -- (2.6,1.);
\draw [line width=.1pt, opacity=0.3] (-0.5,1.1) -- (2.6,1.1);
\draw [line width=.1pt, opacity=0.3] (-0.5,1.2) -- (2.6,1.2);
\draw [line width=.1pt, opacity=0.3] (-0.5,1.3) -- (2.6,1.3);
\draw [line width=.1pt, opacity=0.3] (-0.5,1.4) -- (2.6,1.4);
\draw [line width=.1pt, opacity=0.3] (-0.5,1.5) -- (2.6,1.5);
\draw [line width=.1pt, opacity=0.3] (-0.5,1.6) -- (2.6,1.6);
\draw [line width=.1pt, opacity=0.3] (-0.5,1.7) -- (2.6,1.7);
\draw [line width=.1pt, opacity=0.3] (-0.5,1.8) -- (2.6,1.8);
\draw [line width=.1pt, opacity=0.3] (-0.5,1.9) -- (2.6,1.9);
\draw [line width=.1pt, opacity=0.3] (-0.5,2.) -- (2.6,2.);
\draw [line width=.1pt, opacity=0.3] (-0.5,2.1) -- (2.6,2.1);
\draw [line width=.1pt, opacity=0.3] (-0.4,-0.2) -- (-0.4,2.2);
\draw [line width=.1pt, opacity=0.3] (-0.3,-0.2) -- (-0.3,2.2);
\draw [line width=.1pt, opacity=0.3] (-0.2,-0.2) -- (-0.2,2.2);
\draw [line width=.1pt, opacity=0.3] (-0.1,-0.2) -- (-0.1,2.2);
\draw [line width=.1pt, opacity=0.3] (0.,-0.2) -- (0.,2.2);
\draw [line width=.1pt, opacity=0.3] (0.1,-0.2) -- (0.1,2.2);
\draw [line width=.1pt, opacity=0.3] (0.2,-0.2) -- (0.2,2.2);
\draw [line width=.1pt, opacity=0.3] (0.3,-0.2) -- (0.3,2.2);
\draw [line width=.1pt, opacity=0.3] (0.4,-0.2) -- (0.4,2.2);
\draw [line width=.1pt, opacity=0.3] (0.5,-0.2) -- (0.5,2.2);
\draw [line width=.1pt, opacity=0.3] (0.6,-0.2) -- (0.6,2.2);
\draw [line width=.1pt, opacity=0.3] (0.7,-0.2) -- (0.7,2.2);
\draw [line width=.1pt, opacity=0.3] (0.8,-0.2) -- (0.8,2.2);
\draw [line width=.1pt, opacity=0.3] (0.9,-0.2) -- (0.9,2.2);
\draw [line width=.1pt, opacity=0.3] (1.,-0.2) -- (1.,2.2);
\draw [line width=.1pt, opacity=0.3] (1.1,-0.2) -- (1.1,2.2);
\draw [line width=.1pt, opacity=0.3] (1.2,-0.2) -- (1.2,2.2);
\draw [line width=.1pt, opacity=0.3] (1.3,-0.2) -- (1.3,2.2);
\draw [line width=.1pt, opacity=0.3] (1.4,-0.2) -- (1.4,2.2);
\draw [line width=.1pt, opacity=0.3] (1.5,-0.2) -- (1.5,2.2);
\draw [line width=.1pt, opacity=0.3] (1.6,-0.2) -- (1.6,2.2);
\draw [line width=.1pt, opacity=0.3] (1.7,-0.2) -- (1.7,2.2);
\draw [line width=.1pt, opacity=0.3] (1.8,-0.2) -- (1.8,2.2);
\draw [line width=.1pt, opacity=0.3] (1.9,-0.2) -- (1.9,2.2);
\draw [line width=.1pt, opacity=0.3] (2.,-0.2) -- (2.,2.2);
\draw [line width=.1pt, opacity=0.3] (2.1,-0.2) -- (2.1,2.2);
\draw [line width=.1pt, opacity=0.3] (2.2,-0.2) -- (2.2,2.2);
\draw [line width=.1pt, opacity=0.3] (2.3,-0.2) -- (2.3,2.2);
\draw [line width=.1pt, opacity=0.3] (2.4,-0.2) -- (2.4,2.2);

\draw [uuuuuu] plot coordinates {(1.,-1.) (-1.,1.) };

\draw [darkgreen] plot coordinates {(0,0) (0.,1.3) };
\draw [darkgreen] plot coordinates {(0,0) (0.,1.2) };
\draw [darkgreen] plot coordinates {(0,0) (0.,1.4) };
\draw [darkgreen] plot coordinates {(0,0) (0.3,1.3) };
\draw [darkgreen] plot coordinates {(0,0) (0.3,1.2) };
\draw [darkgreen] plot coordinates {(0,0) (0.3,1.4) };
\draw [darkgreen] plot coordinates {(0,0) (0.2,1.3) };
\draw [darkgreen] plot coordinates {(0,0) (0.2,1.2) };
\draw [darkgreen] plot coordinates {(0,0) (0.2,1.4) };
\draw [darkgreen] plot coordinates {(0,0) (0.1,1.3) };
\draw [darkgreen] plot coordinates {(0,0) (0.1,1.2) };
\draw [darkgreen] plot coordinates {(0,0) (0.1,1.4) };
\draw [fill=green] (0,0) circle (1.5pt);
\draw [fill=green] (0.,1.3) circle (1.5pt);
\draw [fill=green] (0.,1.2) circle (1.5pt);
\draw [fill=green] (0.,1.4) circle (1.5pt);
\draw [fill=green] (0.3,1.3) circle (1.5pt);
\draw [fill=green] (0.3,1.2) circle (1.5pt);
\draw [fill=green] (0.3,1.4) circle (1.5pt);
\draw [fill=green] (0.2,1.3) circle (1.5pt);
\draw [fill=green] (0.2,1.2) circle (1.5pt);
\draw [fill=green] (0.2,1.4) circle (1.5pt);
\draw [fill=green] (0.1,1.3) circle (1.5pt);
\draw [fill=green] (0.1,1.2) circle (1.5pt);
\draw [fill=green] (0.1,1.4) circle (1.5pt);

\draw [red] plot coordinates {(-0.1,0.1) (0.7,.9) };
\draw [red] plot coordinates {(-0.1,0.1) (0.6,.9) };
\draw [red] plot coordinates {(-0.1,0.1) (0.5,.9) };
\draw [red] plot coordinates {(-0.1,0.1) (0.8,.9) };
\draw [red] plot coordinates {(-0.1,0.1) (0.7,1.) };
\draw [red] plot coordinates {(-0.1,0.1) (0.6,1.) };
\draw [red] plot coordinates {(-0.1,0.1) (0.5,1.) };
\draw [red] plot coordinates {(-0.1,0.1) (0.8,1.) };

\draw [fill=red] (-0.1,0.1) circle (1.5pt);
\draw [fill=red] (0.7,.9) circle (1.5pt);
\draw [fill=red] (0.7,1.0) circle (1.5pt);
\draw [fill=red] (0.6,.9) circle (1.5pt);
\draw [fill=red] (0.6,1.0) circle (1.5pt);
\draw [fill=red] (0.5,.9) circle (1.5pt);
\draw [fill=red] (0.5,1.0) circle (1.5pt);
\draw [fill=red] (0.8,.9) circle (1.5pt);
\draw [fill=red] (0.8,1.0) circle (1.5pt);

\draw [blue] plot coordinates {(-0.4,0.4) (1.1,0.9) };
\draw [blue] plot coordinates {(-0.4,0.4) (1.0,0.9) };
\draw [blue] plot coordinates {(-0.4,0.4) (0.9,0.9) };
\draw [blue] plot coordinates {(-0.4,0.4) (0.8,0.9) };
\draw [blue] plot coordinates {(-0.4,0.4) (1.1,0.8) };
\draw [blue] plot coordinates {(-0.4,0.4) (1.0,0.8) };
\draw [blue] plot coordinates {(-0.4,0.4) (0.9,0.8) };
\draw [blue] plot coordinates {(-0.4,0.4) (0.8,0.8) };
\draw [blue] plot coordinates {(-0.4,0.4) (1.1,0.7) };
\draw [blue] plot coordinates {(-0.4,0.4) (1.0,0.7) };
\draw [blue] plot coordinates {(-0.4,0.4) (0.9,0.7) };
\draw [blue] plot coordinates {(-0.4,0.4) (0.8,0.7) };
\draw [blue] plot coordinates {(-0.4,0.4) (1.1,0.6) };
\draw [blue] plot coordinates {(-0.4,0.4) (1.0,0.6) };
\draw [blue] plot coordinates {(-0.4,0.4) (0.9,0.6) };
\draw [blue] plot coordinates {(-0.4,0.4) (0.8,0.6) };
\draw [blue] plot coordinates {(-0.4,0.4) (1.1,0.5) };
\draw [blue] plot coordinates {(-0.4,0.4) (1.0,0.5) };
\draw [blue] plot coordinates {(-0.4,0.4) (0.9,0.5) };
\draw [blue] plot coordinates {(-0.4,0.4) (0.8,0.5) };
\draw [blue] plot coordinates {(-0.4,0.4) (1.1,0.4) };
\draw [blue] plot coordinates {(-0.4,0.4) (1.0,0.4) };
\draw [blue] plot coordinates {(-0.4,0.4) (0.9,0.4) };
\draw [blue] plot coordinates {(-0.4,0.4) (0.8,0.4) };

\draw [fill=blue] (-0.4,0.4) circle (1.5pt);
\draw [fill=blue] (1.1,0.9) circle (1.5pt);
\draw [fill=blue] (1.0,0.9) circle (1.5pt);
\draw [fill=blue] (0.9,0.9) circle (1.5pt);
\draw [fill=uuuuuu] (0.8,0.9) circle (1.5pt);
\draw [fill=blue] (1.1,0.8) circle (1.5pt);
\draw [fill=blue] (1.0,0.8) circle (1.5pt);
\draw [fill=blue] (0.9,0.8) circle (1.5pt);
\draw [fill=blue] (0.8,0.8) circle (1.5pt);
\draw [fill=blue] (1.1,0.7) circle (1.5pt);
\draw [fill=blue] (1.0,0.7) circle (1.5pt);
\draw [fill=blue] (0.9,0.7) circle (1.5pt);
\draw [fill=blue] (0.8,0.7) circle (1.5pt);
\draw [fill=blue] (1.1,0.6) circle (1.5pt);
\draw [fill=blue] (1.0,0.6) circle (1.5pt);
\draw [fill=blue] (0.9,0.6) circle (1.5pt);
\draw [fill=blue] (0.8,0.6) circle (1.5pt);
\draw [fill=blue] (1.1,0.5) circle (1.5pt);
\draw [fill=blue] (1.0,0.5) circle (1.5pt);
\draw [fill=blue] (0.9,0.5) circle (1.5pt);
\draw [fill=blue] (0.8,0.5) circle (1.5pt);
\draw [fill=blue] (1.1,0.4) circle (1.5pt);
\draw [fill=blue] (1.0,0.4) circle (1.5pt);
\draw [fill=blue] (0.9,0.4) circle (1.5pt);
\draw [fill=blue] (0.8,0.4) circle (1.5pt);

\draw (0.,0.) node[anchor=east, color=uuuuuu]{$x+y=2$};

\draw [line width=2pt, red] plot coordinates {(0.8,1.0) (0.6,1.0) (0.6,0.9)};

\draw [line width=2pt, blue] plot coordinates {(1.1,0.9) (1.1,0.8) (0.9,0.8) (0.9,0.7) (0.8,0.7)};

\draw [line width=2pt, green] plot coordinates {(0.3,1.4) (0.3,1.3) (0.1,1.3) (0.1,1.2)};

\end{tikzpicture}

\end{subfigure}

\caption{
An illustration of Theorem \ref{thm:main-sa} and Corollary \ref{cor:main-geo}: the green, red, and blue passage times, and the shape of part of the geodesics, are all equal in distribution, in the left and the right panels.
}  
\label{fig:thm2}
\end{figure}
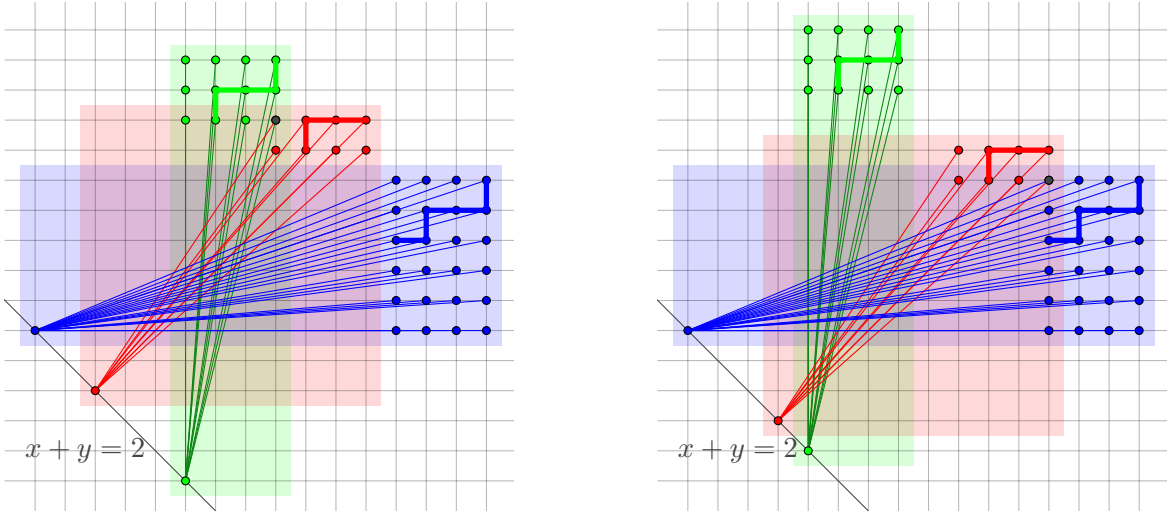

\subsubsection{Convergence to the Airy sheet}

A natural question to ask is about the scaling limit of the colored TASEP.
Let's first consider the following scaling limit of LPP, the Airy sheet $\cS:\R^2\to\R$, constructed in \cite{dauvergne2018directed}.
For each $x\in \R$, $y\mapsto \cS(x,x+y)+y^2$ is the so-called Airy$_2$ process on $\R$ from \cite{prahofer2002scale}, which is stationary and ergodic, and has the GUE Tracy-Widom one-point distributions.
The convergence from LPP to $\cS$ can be stated as follows. Take $\cS_n: \R^2\to \R$ where
\[
\cS_n(x,y) = 2^{-4/3}n^{-1/3} (L_{(1+\lfloor 2^{2/3}n^{2/3}x \rfloor, 1-\lfloor 2^{2/3}n^{2/3}x \rfloor), (n+\lfloor 2^{2/3}n^{2/3}y \rfloor, n-\lfloor 2^{2/3}n^{2/3}y \rfloor)}-4n).
\]
In words, $\cS_n$ is a rescaled version of all the LPP passage times, for two endpoints varying in lines $\{(1+i,1-i):i \in \Z\}$ and $\{(n+i,n-i):i \in \Z\}$.
In \cite{dauvergne2021scaling} it is proved that $\cS_n \to \cS$ weakly, in the topology of uniform convergence on compact sets.

We note that for each fixed $i\in \Z$, $\{L_{(1+i,1-i),v}\}_{(1+i,1-i)\le v}$ is equivalent to the evolution of an (uncolored) TASEP with step initial condition, via the coupling between LPP and TASEP.
Thus LPP with starting point varying in $\{(1+i,1-i):i \in \Z\}$ gives a coupling of a family of (uncolored) step initial TASEPs, indexed by $\Z$. Its law is different from that of $\{\mu^A\}_{A\in\Z}$, given by the colored TASEP.
For the scaling limit of the colored TASEP, it is also believed to be the Airy sheet, i.e., these two families of (uncolored) step initial TASEP have the same scaling limit.
We state it using the passage times of the colored TASEP.
\begin{question}
For each $n\in\N$, let $\cS_n^*:\R^2\to\R$ be the random function, such that
\[
\cS_n^*(x,y) = 2^{-4/3}n^{-1/3} (T_{n+\lfloor 2^{2/3}n^{2/3}y \rfloor-\lfloor 2^{2/3}n^{2/3}x \rfloor, n-\lfloor 2^{2/3}n^{2/3}y \rfloor+\lfloor 2^{2/3}n^{2/3}x \rfloor}^{\lfloor 2^{2/3}n^{2/3}x \rfloor}-4n),\quad \forall x, y\in \R.
\]
Find the limit of $\cS_n^*$, as $n\to\infty$. In particular, does the limit have the same distribution as $\cS$?
\end{question}
Combined with the LPP shift-invariance proved in \cite{dauvergne2020hidden}, and the convergence of $\cS_n$, our Theorem \ref{thm:main-de} or \ref{thm:main-sa} implies the following result.
\begin{theorem}  \label{thm:conv-colored}
Take any $\Theta\subset \R^2$, such that for any $u=(u_1, u_2), v=(v_1, v_2) \in \Theta$, there is either $u_1\le v_1$ and $u_2\ge v_2$, or $u_1\ge v_1$ and $u_2\le v_2$.
Then $\cS_n^* \to \cS$ in $\Theta$, weakly in the topology of uniform convergence on compact sets.
\end{theorem}

\subsubsection{Discussions on related hidden invariances}  \label{ssec:dis}

We now explain the relation between our shift-invariance, and various other hidden invariance in exactly solvable models, from \cite{borodin2019shift, dauvergne2020hidden, galashin2020symmetries}.

With the connection to LPP, our results take a similar form as those in \cite{dauvergne2020hidden}. In particular, for the ordered relation between two lattice rectangles $\cR, \cR'$, it is equivalent to the following notion widely used in \cite{dauvergne2020hidden}: for any up-right paths from the bottom-left corner to the up-right corner in $\cR$ and $\cR'$ respectively, they must intersect. 
However, as noted above, for $\opi^A$ with different $A\in\Z$, the corresponding LPP random fields are different and coupled in a non-trivial way, thus the above identities do not follow from the results in \cite{dauvergne2020hidden}. We also do not see how our results can be proved using the framework or similar arguments as those in \cite{dauvergne2020hidden}, via certain conditional independence obtained from the RSK correspondence.

In \cite{borodin2019shift, galashin2020symmetries}, certain shift- and flip-invariance statements are proved for the colored stochastic six-vertex model.
Our colored TASEP can be obtained from the colored stochastic six-vertex model, by passing the parameters there to a certain limit.
As pointed out in \cite[Remark 2.3]{bufetov2020absorbing}, when passing the shift- or flip-invariance statements from the colored stochastic six-vertex model to the colored TASEP, one gets equalities in distribution at a single time (see e.g.\ \cite[Theorem 2.2]{bufetov2020absorbing}).
More precisely, we define the height function as
\[
\cH^A(x,t) = \text{number of particles on or to the right of $x$ with color $\le A$ at time $t$,}
\]
for any $A, x\in \Z$ and $t\ge 0$. This is degenerated from the height function of the colored stochastic six-vertex model, as appeared in \cite{borodin2019shift, galashin2020symmetries} and also below. 
The single-time equality in distribution is the following result.
\begin{theorem}  \label{thm:eqgal}
For any $t\ge 0$, $\{A_i\}_{i=1}^\ell \in \Z^\ell$, $\{A_i'\}_{i=1}^n \in \Z^n$, 
$\{x_i\}_{i=1}^\ell, \{C_i\}_{i=1}^\ell \in \N^\ell$, $\{x_i'\}_{i=1}^n, \{C_i'\}_{i=1}^n \in \N^n$, such that
\[
A_1,\ldots, A_\ell < A_1',\ldots, A_n',\quad x_1,\ldots, x_\ell \ge x_1',\ldots, x_n',
\]
we have
\begin{align*}
& \PP\left[ \cH^{A_i}(x_i,t) \ge C_i, \cH^{A_j'}(x_j',t) \ge C_j', \forall 1\le i \le n, 1\le j \le \ell  \right] \\
=& \PP\left[ \cH^{A_i+1}(x_i+1,t) \ge C_i, \cH^{A_j'}(x_j',t) \ge C_j', \forall 1\le i \le n, 1\le j \le \ell  \right]    
\end{align*}
\end{theorem}
In \cite{bufetov2020absorbing}, a slightly weaker result is stated (\cite[Theorem 2.2]{bufetov2020absorbing}).
Our Theorem \ref{thm:main-de} generalizes Theorem \ref{thm:eqgal} to the multiple time setting. For comparison, we restate Theorem \ref{thm:main-de} using the height function, as follows.
\begin{customthm}{1.1 (Height Function Version)}
Let $g\in\N$, $k_1,\ldots, k_g\in\N$, and take $A_{i,j}\in \Z$, $x_{i,j}, C_{i,j} \in\N$, $t_i\ge 0$, for all $1\le i \le g$ and $1\le j \le k_i$.
Let $1\le \iota < g$, and for any $1\le i\le g$ and $1\le j \le k_i$ we let $A_{i,j}^+ = A_{i,j}+\don[i> \iota]$ and $x_{i,j}^+ = x_{i,j}+\don[i> \iota]$. Suppose that for any $1\le i<i' \le g$, and $1\le j \le k_i, 1\le j' \le k_{i'}$ we have $A_{i,j}^+ \le A_{i',j'}$, $A_{i,j}-C_{i,j} \ge A_{i',j'}^+ - C_{i,j}$, and $x_{i,j} + C_{i,j} \ge x_{i',j'}^+ + C_{i',j'}$.
Then we have
\begin{align*}
& \PP\left[ \cH^{A_{i,j}}(x_{i,j},t_i) \ge C_{i,j}, \forall 1\le i \le g, 1\le j \le k_i  \right] \\
=& \PP\left[ \cH^{A_{i,j}^+}(x_{i,j}^+,t_i) \ge C_{i,j}, \forall 1\le i \le g, 1\le j \le k_i  \right].
\end{align*}
\end{customthm}
We note that to allow for multiple times, we have to impose more restrictions on the spatial parameters.
It seems that there are no corresponding arguments (that can prove this multiple-time version) as those in the proofs in \cite{borodin2019shift, galashin2020symmetries} in the degenerated setting of the colored TASEP model.
This is because the full power of the colored stochastic six-vertex model is necessary for their proofs.

We also note that our Theorem \ref{thm:main-de} is in a spirit close to the main results in \cite{borodin2019shift}, for we require that all pairs of rectangles (of different $i$'s) are ordered.
It is asked as \cite[Conjecture 1.5]{borodin2019shift} whether such shift-invariance holds in more generality, with ordering imposed only for endpoints where relative shift happens.
This has been answered in \cite{galashin2020symmetries} and \cite{dauvergne2020hidden} in different settings.
It is natural to ask if a similar extension holds for our results, i.e., are some of the ordering inequalities in Theorem \ref{thm:main-de} not necessary for the shift-invariance to hold?
Our Theorem \ref{thm:main-sa} can be viewed as one step toward this, for there we allow rectangles with the same $A_i$ to be not ordered.
See Section \ref{ssec:shift-cons} for some further discussions on this.

\subsection{The oriented swap process and the conjectured identity}

Consider the Cayley graph of the symmetric group $S_N$, generated by swaps of numbers at adjacent sites $k$ and $k+1$, for each $1\le k \le N-1$.
A \emph{sorting network}, first studied by Stanley \cite{stanley1984number}, is one of the shortest paths between the identity permutation $(1, \ldots, N)$ and the reverse permutation $(N, \ldots, 1)$. Equivalently, it is a sequence of $N(N-1)/2$ adjacent swaps, from the identity permutation to the reverse permutation.
We note that in such a sequence, each swap must increase the total number of inversions (i.e., switch adjacent $i, j$ to $j, i$ for some $i<j$); otherwise more than $N(N-1)/2$ swaps are needed to obtain the reverse permutation from the identity permutation.

One natural way of defining a {random sorting network} is to assign equal probability to each sorting network.
This model has been extensively studied in \cite{angel2007random, angel2012pattern, rozinov2016statistics, gorin2019random,  dauvergne2020circular, dauvergne2018archimedean}.
Another natural way of defining a random sorting network is the so-called oriented swap process (OSP), a Markovian construction from \cite{angel2009oriented}.
One lets the permutation evolve in a continuous way, such that at any time, for every two adjacent sites, if the number at the left site is smaller than the number at the right site, with rate $1$ these two numbers swap.
More formally, one can think of OSP as a finite interval version of the colored TASEP.
Consider $N$ particles on the finite lattice $\{1,\ldots,N\}$, such that initially (at time $0$) the particle at site each $k$ is colored $k$.
For each edge $(k, k+1)$ (for $1\le k \le N-1$), there is an independent Poisson clock that rings at rate $1$; and whenever it rings, the particles occupying the sites $k$ and $k+1$ will attempt to swap.
Suppose that these two particles have colors $i$ and $j$ respectively, then the swap succeeds only if $i < j$, i.e., only if the swap increases the number of inversions in the sequence of particle colors.

We consider the vector of \emph{finishing times} $\bU_N=(U_N(1),\ldots, U_N(N-1))$, where $U_N(k)$ is the last time such that a swap happens between the sites $k$ and $k+1$.
As has been observed in \cite{angel2009oriented}, for each individual $k$, the time $U_N(k)$ has the same distribution as $L_{(1,1),(k,N-k)}$. Using this, it is proved that if we let $N\to \infty$ and assume $k/N\to y$ for some $0<y<1$, then $U_N(k)$ grows linearly in $N$ with a fluctuation in the order of $N^{1/3}$, and (after appropriate rescaling) the fluctuation converges to GUE Tracy-Widom distribution
(\cite[Theorem 1.6]{angel2009oriented}).

It is asked in \cite{angel2009oriented} about the asymptotic behavior of the \emph{absorbing time} of OSP, defined as $\max_{1\le k \le N-1}U_N(k)$.
It is conjectured by Bisi, Cunden, Bibbson, and Romik that the vector $\bU_N$ and $\{L_{(1,1),(k,N-k)}\}_{k=1}^{N-1}$ are equal in distribution (\cite[Conjecture 1.2]{bisi2020oriented} and \cite[Conjecture 1.2]{bisi2020sorting}); and assuming such equality in distribution they show that the absorbing time of OSP (after appropriate rescaling) converges to the GOE Tracy-Widom distribution.

Starting from this, in \cite{bufetov2020absorbing} it is proved that $\max_{1\le k \le N-1}U_n(k)$ has the same distribution as $\max_{1\le k \le N-1}L_{(1,1),(k,N-k)}$, using distributional identities from the colored stochastic six-vertex model, obtained in \cite{borodin2019shift}.
The latter corresponds to the passage time of TASEP with the flat initial condition, and it is known to converge to the Tracy-Widom GOE distribution (see e.g.\ \cite{BFPS07, Sas05}). 
Thus the OSP absorbing time convergence problem is settled in \cite{bufetov2020absorbing}.
However, the joint equality in distribution between $\bU_N$ and $\{L_{(1,1),(k,N-k)}\}_{k=1}^{N-1}$ remains open.
As pointed out in \cite{bufetov2020absorbing}, such joint equality in distribution does not follow directly from any known bijections and also escapes the generality of the method taken there, therefore some new ideas are needed.

In \cite{bisi2020oriented}, several other special cases of joint equality in distribution have been verified.
For example, it is shown that the equality in distribution holds for $2\le N \le 6$, by a computer-assisted proof using the equivalent combinatorial formulation (see Section \ref{ssec:combf}); and it is also shown that $(U_N(1), U_N(N-1))$ has the same distribution as $(L_{(1,1),(1,N-1)}, L_{(1,1),(N-1,1)})$.

Using Theorem \ref{thm:main-de}, we prove the desired joint equality in distribution, thus resolve \cite[Conjecture 1.2]{bisi2020oriented, bisi2020sorting} (also stated as \cite[Conjecture 1.3]{bufetov2020absorbing}).
\begin{theorem}  \label{thm:udv}
The vectors $\bU_N$ and $\{L_{(1,1),(k,N-k)}\}_{k=1}^{N-1}$ are equal in distribution.
\end{theorem}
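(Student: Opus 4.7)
I will deduce Theorem \ref{thm:udv} from the shift-invariance established earlier in the paper, by identifying each OSP finishing time $U_N(k)$ with a passage time of the infinite colored TASEP and then shifting all the $A$-indices to a common value.

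\textbf{Step 1 (Identify $U_N(k) = T^{N-k}_{k, N-k}$).} The last swap at edge $(k, k+1)$ in OSP must interchange colors $N-k$ and $N+1-k$, since these are the colors at sites $k+1$ and $k$ in the terminal reverse permutation; this swap is visible in the uncolored projection (marking colors $\le A$ as particles) exactly when $A = N-k$. So $U_N(k)$ equals the last swap time at edge $(k, k+1)$ in the finite uncolored TASEP on $[1, N]$ with $N-k$ particles initially at $\{1, \ldots, N-k\}$. In the TASEP--LPP bijection for this finite process, swaps at edge $(k, k+1)$ correspond to LPP cells $(B, C)$ with $B - C = 2k - N$, and the coordinatewise-extremal such cell is $(B, C) = (k, N-k)$, whose passage time is therefore the last swap time. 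Finally, this passage time equals the infinite-system quantity $T^{N-k}_{k, N-k}$ in the canonical coupling: both are determined by the motions of the particle initially at site $1$ and the hole initially at site $N$, and neither motion is affected by particles outside $[1, N]$ before their swap time.

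\textbf{Step 2 (Shift-invariance).} Given Step 1, it remains to prove
\[
\{T^{N-k}_{k, N-k}\}_{k=1}^{N-1} \stackrel{d}{=} \{T^{0}_{k, N-k}\}_{k=1}^{N-1},
\]
since the right-hand side is the vector of point-to-line LPP times in the i.i.d.\ $\Exp(1)$ field $\omega^0$. I apply Theorem \ref{thm:main-sa} with $g = N-1$, $A_i = N-i$, $A_i' = 0$, $B_i = i$, $C_i = N-i$, and verify that $(i, N-i) \in V_i$ for every $i$. The key geometric observation is that in each of the source and target configurations, the rectangles $\{\cR^{A_k}_{k, N-k}\}_k$ share a common corner and their extents along the two coordinate axes vary oppositely and monotonically with $k$, which yields pairwise ordering. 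Alternatively, Theorem \ref{thm:main-de} can be applied iteratively, shifting the $A$-values from $(N-1, \ldots, 1)$ down to $(0, \ldots, 0)$ one suffix at a time through a chain of valid intermediate configurations.

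\textbf{Main obstacle.} The technical heart of the proof lies in Step 1, in connecting a feature of the \emph{finite} OSP to a passage time of the \emph{infinite} colored TASEP. Two things require care: (a) pinpointing the extremal LPP cell on the diagonal $B - C = 2k - N$ as $(k, N-k)$, which translates the dynamical notion of ``last swap'' into a combinatorial one; and (b) justifying that the finite and infinite TASEPs give the same value of $T^{N-k}_{k, N-k}$, which follows because the relevant particle-hole pair meet strictly inside the window before any particle from outside $[1, N]$ can influence either of them.
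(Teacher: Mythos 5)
Your plan follows the same two-step structure as the paper's proof: first identify each $U_N(k)$ with a passage time $T^A_{B,C}$ of the infinite colored TASEP, then invoke the shift-invariance (either Theorem \ref{thm:main-sa} in one step, or Theorem \ref{thm:main-de} iteratively, which is what the paper does, arriving at $\{T^{1}_{N-A,A}\}_A$ and finishing via the coordinate symmetry of LPP). Step 2 is fine, and $(B_i,C_i)\in V_i$ does hold with your choice of parameters. The concern is with the reasoning in Step 1.

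Your claim that ``the last swap at edge $(k,k+1)$ must interchange colors $N-k$ and $N+1-k$'' is false. Take $N=3$, $k=1$, and the sorting network $(2,1,2)$: the trajectory is $(1,2,3)\to(1,3,2)\to(3,1,2)\to(3,2,1)$, so the unique (hence last) swap at edge $(1,2)$ interchanges colors $1$ and $3$, not $2$ and $3$. In general the last swap at $(k,k+1)$ interchanges some colors $i<j$ with $i\le N-k< j$, but $(i,j)$ need not be $(N-k,N+1-k)$, because further swaps at edges $(k-1,k)$ and $(k+1,k+2)$ can still rearrange the contents of sites $k$ and $k+1$ afterwards. What is true, and is the correct route, is that the last swap at $(k,k+1)$ coincides with the absorption time of the projection $\nu^{N,N-k}$: once $\nu^{N,N-k}$ absorbs (particles $=$ colors $\le N-k$ occupy $[k+1,N]$ and holes occupy $[1,k]$), site $k$ holds a color $>N-k$ and site $k+1$ a color $\le N-k$, which blocks any further swap there; and the absorbing jump is itself a swap at $(k,k+1)$. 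So your conclusion $U_N(k)=T^{N-k}_{k,N-k}$ is correct, but your stated justification is not. Likewise, the assertion that ``neither motion is affected by particles outside $[1,N]$ before their swap time'' is an informal gloss; the precise statement needed (and the one the paper invokes) is the cut-off/push-back identity $B_NR_A\opi^A=\nu^{N,A}$ from \cite[Lemma 3.3]{angel2009oriented}, which yields the identification of the finite absorption time with the infinite-system passage time $T^A_{N-A,A}$ without hand-waving about which particles can or cannot interact.
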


\subsubsection{Asymptotic results}  \label{ssec:asyr}
With the equality in distribution of Theorem \ref{thm:udv}, we can deduce results on the OSP finishing times from corresponding LPP results, strengthening results in \cite{angel2009oriented}.

Let $\cA_2$ denote the stationary Airy$_2$ process on $\R$, then $\cA_2(0)$ has the GUE Tracy-Widom distribution.
The fluctuation of the vector $\bU_N$ converges to $\cA_2$ minus a parabola.
\begin{theorem}  \label{cor:j-airy}
For any $y \in (0, 1)$, as $N\to\infty$, the function \[x\mapsto \frac{(y(1-y))^{1/6}}{(1+2\sqrt{y(1-y)})^{2/3}N^{1/3}}(U_N(\lfloor yN+ xN^{2/3} \rfloor) - (1+2\sqrt{y(1-y)})N - \frac{1-2y}{\sqrt{y(1-y)}}xN^{2/3})\] weakly converges to $\cA_2(x)-x^2$, in the topology of uniform convergence on compact sets.
\end{theorem}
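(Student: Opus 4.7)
The plan is to deduce Theorem \ref{cor:j-airy} from Theorem \ref{thm:udv} combined with the classical KPZ-scale Airy$_2$ convergence for exponential LPP. First, by Theorem \ref{thm:udv}, $\bU_N$ has the same joint distribution as $\{L_{(1,1),(k,N-k)}\}_{k=1}^{N-1}$; this replaces the OSP-specific question with a purely last-passage percolation one for the passage-time profile along the anti-diagonal, which is a well-studied KPZ limit.

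Next I would invoke the functional convergence of the rescaled profile $k \mapsto L_{(1,1),(k,N-k)}$ to $\cA_2$ minus a parabola, in the uniform-on-compacts topology. Finite-dimensional convergence follows from the extended Airy$_2$ kernel asymptotics for exponential LPP (via the Fredholm-determinantal analysis of Johansson and Borodin-Ferrari-Pr\"ahofer-Sasamoto, and many subsequent works). Upgrading to uniform-on-compacts tightness is by now standard: one can appeal to the local H\"older-$1/2^-$ regularity of LPP passage-time profiles established via transversal fluctuation estimates (Hammond; Basu-Ganguly-Hammond), or more directly to the convergence of rescaled exponential LPP to the directed landscape (Dauvergne-Vir\'ag and co-authors).

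To confirm the centering and scaling in the statement, I would Taylor-expand $f(m) := (\sqrt{m}+\sqrt{N-m})^2 = N + 2\sqrt{m(N-m)}$ at $m = yN$: this gives $f(yN) = (1+2\sqrt{y(1-y)})N$ (matching the constant subtraction), $f'(yN) = (1-2y)/\sqrt{y(1-y)}$ (matching the linear subtraction), and $f''(yN) = -\tfrac{1}{2}(y(1-y))^{-3/2}N^{-1}$, which produces the parabolic correction after division by the fluctuation scale $\sigma_N(y)N^{1/3}$ with $\sigma_N(y) = (1+2\sqrt{y(1-y)})^{2/3}(y(1-y))^{-1/6}$. This fluctuation coefficient equals the standard LPP prescription $(\sqrt{m}+\sqrt{n})^{4/3}(mn)^{-1/6}$ evaluated at $(m,n)=(yN,(1-y)N)$, exactly as expected from KPZ scaling, with the implicit spatial normalization of $\cA_2$ absorbing any remaining direction-dependent factors.

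The only modest obstacle is locating (or adapting) an LPP reference that yields Airy$_2$ convergence in the functional, uniform-on-compacts topology along a general direction $y \in (0,1)$, rather than only finite-dimensional convergence or only at $y = 1/2$; once that is available, Theorem \ref{cor:j-airy} is essentially a direct corollary of Theorem \ref{thm:udv}.
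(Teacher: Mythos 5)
Your proposal is correct and takes essentially the same route as the paper: by Theorem \ref{thm:udv} the question is transferred to the anti-diagonal passage-time profile in exponential LPP, after which one invokes finite-dimensional Airy$_2$ convergence plus a tightness estimate to get uniform-on-compacts functional convergence; the paper cites Borodin--Ferrari/Borodin--P\'ech\'e for the finite-dimensional part, a tightness result of Pimentel, and a functional statement in Basu--Ganguly--Zhang at $y=1/2$ which is noted to extend to general $y$, while you point at the same landscape of results through slightly different (equally standard) references. Your Taylor-expansion sanity check of the centering, linear drift, and fluctuation scale is a welcome addition that the paper omits; the only caveat is that your closing hedge about the spatial normalization "absorbing remaining direction-dependent factors" is doing real work -- a direction-dependent prefactor on the $xN^{2/3}$ spatial window is genuinely required for the parabola coefficient to come out to exactly $1$ away from $y=1/2$, so if you wanted a fully self-contained verification you would need to pin that constant down rather than wave at it.
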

We consider the \emph{last swap location} $k_*\in \{1,\cdots, N-1\}$, such that the last swap is between sites $k_*$ and $k_*+1$.
We then get the following convergence about $k_*$.
\begin{theorem}  \label{cor:max-loc}
As $N\to\infty$, $\frac{k_*-N/2}{N^{2/3}}$ converges in distribution, to $\argmax_x \cA_2(x)-x^2$.
\end{theorem}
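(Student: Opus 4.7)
The plan is to combine the equality in distribution from Theorem \ref{thm:udv} with the Airy$_2$ convergence of Theorem \ref{cor:j-airy} at $y=1/2$, and then invoke a standard argmax-continuity argument.

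First, by Theorem \ref{thm:udv} the vector $\bU_N$ has the same distribution as $\{L_{(1,1),(k,N-k)}\}_{k=1}^{N-1}$. In particular, $k_*$ has the same distribution as $K_N := \argmax_{1\le k \le N-1} L_{(1,1),(k,N-k)}$, so it suffices to prove $(K_N-N/2)/N^{2/3}$ converges in distribution to $\argmax_{x\in\R} (\cA_2(x)-x^2)$. Next, specialize Theorem \ref{cor:j-airy} to $y=1/2$: the linear term $\tfrac{1-2y}{\sqrt{y(1-y)}}$ vanishes, and writing $c = 2^{-4/3}$ (the value of $(y(1-y))^{1/6}(1+2\sqrt{y(1-y)})^{-2/3}$ at $y=1/2$), the random function
\[
F_N(x) := c\,N^{-1/3}\bigl(L_{(1,1),(\lfloor N/2+xN^{2/3}\rfloor,\, N-\lfloor N/2+xN^{2/3}\rfloor)}-2N\bigr)
\]
converges in distribution, in the topology of uniform convergence on compact sets, to $F(x):=\cA_2(x)-x^2$. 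The argmax of $F_N$ equals $(K_N-N/2)/N^{2/3}$ up to negligible discretization, and the argmax of $F$ is almost surely unique (a classical property of the Airy$_2$-minus-parabola process, see Corwin--Hammond or Pimentel).

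The second ingredient is tightness of $(K_N-N/2)/N^{2/3}$. This follows from quadratic-curvature and one-point upper/lower tail bounds for exponential LPP: for $|m|=|xN^{2/3}|$ large, the mean of $L_{(1,1),(N/2+m,N/2-m)}$ is $2N-\kappa m^2/N + O(N^{1/3})$ for an explicit $\kappa>0$ (by a Taylor expansion of the limit shape $(\sqrt{y_1}+\sqrt{y_2})^2$ around the diagonal), while the Tracy--Widom upper tail gives $\PP(L_{(1,1),(N/2,N/2)}\ge 2N+sN^{1/3})\le e^{-cs^{3/2}}$ and a lower tail $\PP(L_{(1,1),(k,N-k)}\le 2N-sN^{1/3})\le e^{-cs^3}$ uniform in $k$. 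Taking a union bound over $|m|\ge MN^{2/3}$ on a $N^{2/3}$-grid (and using a short-range monotonicity/interpolation argument between grid points) yields $\PP(|K_N-N/2|\ge MN^{2/3})\to 0$ as $M\to\infty$, uniformly in $N$.

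With both ingredients in hand, the conclusion follows from the standard functional continuity of the argmax: whenever $F_N\to F$ uniformly on compact sets, $F$ has an a.s.-unique maximizer, and $\argmax F_N$ is tight, then $\argmax F_N\Rightarrow \argmax F$ in distribution (a Skorokhod-representation argument, or the Portmanteau theorem applied to the closed set $\{f : \argmax f \notin U\}$ for open $U$). Applied to $F_N, F$ above, this yields the claimed weak convergence. The main obstacle is the tightness step: everything else is essentially a bookkeeping consequence of Theorems \ref{thm:udv} and \ref{cor:j-airy} together with soft functional analysis, but tightness requires quantitative LPP estimates that control the far-off-diagonal endpoints at precisely the $N^{2/3}$ scale and rule out spurious maximizers there.
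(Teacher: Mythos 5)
Your overall plan matches the paper's: reduce via Theorem~\ref{thm:udv} to the LPP argmax, invoke the Airy$_2$ convergence of Theorem~\ref{cor:j-airy} at $y=1/2$, cite the almost-sure uniqueness of the maximizer of $\cA_2(x)-x^2$, establish tightness of the argmax, and conclude by an argmax-continuity argument. The paper proceeds in exactly this order, and also identifies tightness as the one nontrivial step. So structurally the two proofs are the same.

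The one place where your sketch differs substantively --- and where it has a genuine soft spot --- is the tightness step. You propose to assemble it from the shape curvature and one-point Tracy--Widom tails via ``a union bound over $|m|\ge MN^{2/3}$ on a $N^{2/3}$-grid (and using a short-range monotonicity/interpolation argument between grid points).'' There is no simple monotonicity of $k\mapsto L_{(1,1),(k,N-k)}$ along the anti-diagonal: consecutive endpoints are not comparable in the coordinatewise order, so one cannot squeeze the passage time at a generic $k$ between passage times at nearby grid points the way one does for, say, rows. A naive union bound over all $k$ with $|k-N/2|>MN^{2/3}$ picks up a polynomial factor in $N$ which is not absorbed by the $e^{-cM^3}$ tail when $M$ is fixed and $N\to\infty$; one really needs a chaining/multi-scale argument (or comparison to a stationary model) to kill that factor. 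This is precisely the technical content that the paper outsources by citing a ready-made transversal estimate of the form $\PP[\max_{|k-N/2|>\phi N^{2/3}} L_{(1,1),(k,N-k)} > 2N - c_1\phi^2 N^{1/3}] < e^{-c_2\phi^3}$ from \cite{BGZ}, combined with the lower-tail bound on the central passage time from \cite{Jo99,LR10}. Your sketch reaches for the right ingredients, but the interpolation step as stated would not go through; either flesh it out into a chaining argument or, as the paper does, quote the transversal bound directly.
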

The limiting distribution of $\argmax_x \cA_2(x)-x^2$ has been studied in \cite{flores2013endpoint,schehr2012extremes}, with explicit formulas given.

In a scale smaller than $N^{2/3}$, the local fluctuations are simple random walks.
\begin{theorem}  \label{cor:loc-srw}
Take any $y\in (0,1)$, and positive integers $K_N$ such that $N^{-2/3}K_N\to 0$ as $N\to\infty$.
Consider two random functions $f_N, g_N: [-K_N, K_N]\cap \Z \to \R$, where $f_N(x)= U_N(\lfloor yN\rfloor+x)-U_N(\lfloor yN\rfloor)$, and $g_N$ is a two-sided random walk, such that $g_N(x+1)-g_N(x)$ are i.i.d. for each $-K_N\le x < K_N$, with 
\[
\PP[g_N(x+1)-g_N(x)=t] =
\begin{cases}
\frac{\sqrt{y(1-y)}}{1+2\sqrt{y(1-y)}}e^{-\frac{\sqrt{y}t}{\sqrt{y}+\sqrt{1-y}}}\;\; & t\ge 0, \\
\frac{\sqrt{y(1-y)}}{1+2\sqrt{y(1-y)}}e^{\frac{\sqrt{1-y}t}{\sqrt{y}+\sqrt{1-y}}}\;\; & t<0.
\end{cases}\]
Then the total variation distance between $f_N$ and $g_N$ decays to zero, as $N\to\infty$.
\end{theorem}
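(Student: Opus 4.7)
My plan is to pull the statement back to last-passage percolation (LPP) using Theorem \ref{thm:udv}, and then match the local increments of the point-to-line passage times with the exactly i.i.d.\ increments of a stationary exponential LPP at the characteristic direction for $(y,1-y)$.

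First, by Theorem \ref{thm:udv}, the vector $\bU_N$ has the same distribution as $(L_{(1,1),(k,N-k)})_{k=1}^{N-1}$, so $f_N$ is equal in distribution to $x\mapsto L_{(1,1),(\lfloor yN\rfloor+x,N-\lfloor yN\rfloor-x)}-L_{(1,1),(\lfloor yN\rfloor,N-\lfloor yN\rfloor)}$ (reading the statement as having a typo in the second term), and it suffices to prove the total-variation bound for these LPP increments. Setting $a=\tfrac{\sqrt y}{\sqrt y+\sqrt{1-y}}$ and $b=1-a$, the density in the theorem is $ab\,e^{-at}$ for $t\ge 0$ and $ab\,e^{bt}$ for $t<0$, which is exactly the law of $X-Y$ with $X\sim\Exp(a)$ and $Y\sim\Exp(b)$ independent. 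Thus I need to show that the LPP increments converge in total variation to an i.i.d.\ random walk with this asymmetric Laplace step.

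The key comparison object is stationary exponential LPP with horizontal boundary rate $a$, vertical boundary rate $b$, and bulk rate $1$. The pair $(a,b)$ is exactly the characteristic direction for slope $(y,1-y)$. By Burke's property for stationary exponential LPP (as developed by Cator--Groeneboom and Bal\'azs--Cator--Sepp\"al\"ainen), the horizontal right-increments of $L^{\mathrm{stat}}$ along any down-right staircase are i.i.d.\ $\Exp(a)$, the downward increments are i.i.d.\ $-\Exp(b)$, and the two families are jointly independent. Applying this to the staircase $(k,N-k)\to(k,N-k-1)\to(k+1,N-k-1)$ for $k$ in a window around $\lfloor yN\rfloor$, I obtain that the single-step increments $L^{\mathrm{stat}}_{(k+1,N-k-1)}-L^{\mathrm{stat}}_{(k,N-k)}$ are \emph{exactly} i.i.d.\ with the distribution of $X-Y$. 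Hence the target limit is realized on the nose by stationary LPP.

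Finally, I couple step LPP and stationary LPP by using a common $\Exp(1)$ interior field together with fresh $\Exp(a)$ and $\Exp(b)$ boundary weights to construct $L^{\mathrm{stat}}$. At the characteristic direction the exit point of the stationary geodesic from the boundary has fluctuation of order $N^{2/3}$, and the transversal fluctuations of the step geodesic near the bulk endpoint $(yN,(1-y)N)$ are likewise of order $N^{2/3}$; since $K_N=o(N^{2/3})$, with probability $1-o(1)$ the step and stationary geodesics to each $(\lfloor yN\rfloor+x,N-\lfloor yN\rfloor-x)$ for $|x|\le K_N$ coalesce well before entering the window. On this event the two passage-time fields differ throughout the window by a single random additive constant, so their increments agree exactly. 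Combined with the previous paragraph, this yields the total-variation convergence.

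The main obstacle is this final coupling step: turning the known geodesic-coalescence and exit-point estimates into a genuine total-variation statement that is uniform across the whole window of size $K_N$. The ingredients are the cube-root bound on the stationary exit point (which forces it to lie outside the relevant bulk window with probability $1-o(1)$ precisely when $K_N=o(N^{2/3})$) together with two-point coalescence estimates for exponential LPP at the characteristic direction; the work is to package these so that the single bad event ``the coalescence fails somewhere in the window'' has vanishing probability, after which the TV claim is immediate from Step 3.
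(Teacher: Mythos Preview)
Your reduction via Theorem \ref{thm:udv} to the LPP increments is exactly what the paper does. The only difference is that the paper then simply cites \cite[Theorem 2.1]{balazs2021local} for the total-variation convergence of the LPP increment profile to the random walk $g_N$ (noting that the result there is phrased in terms of Busemann functions, which have the required random-walk law), whereas you sketch the proof of that LPP result from scratch.

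Your sketch is correct in outline and is in fact essentially the mechanism behind the cited result: stationary exponential LPP with boundary rates $(a,b)$ at the characteristic direction has exactly i.i.d.\ asymmetric-Laplace increments along the anti-diagonal by Burke's property, and one couples step LPP to stationary LPP via exit-point and coalescence estimates to conclude that the increments agree on an event of probability $1-o(1)$ when $K_N=o(N^{2/3})$. You correctly identify the one nontrivial ingredient left to justify, namely the uniform-in-window coalescence bound; this is precisely the content supplied by \cite{balazs2021local}. So your proposal is not a different proof so much as an unpacking of the black box the paper invokes.
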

Using the shift-invariance, we can also get the distribution of the OSP local dynamics, near the finishing times. As an example, the following result can be directly deduced from Theorem \ref{cor:loc-srw}.

For any $1\le k \le N$, we consider the last jump of the number $k$ in OSP (of size $N$), and we denote $\LAL(N,k)$ as the event where the last jump of $k$ is to the left.
Noting that this event $\LAL(N,k)$ is equivalent to that $U_N(N+1-k)>U_N(N-k)$ (assuming that $U_N(0)=U_N(N)=0$), we immediately get the following result.
\begin{cor}  \label{cor:left}
Take any $y \in (0,1)$, and a sequence of integers $k_N$ such that $N^{-1}k_N\to y$ as $N\to\infty$.
Then we have that $\PP[\LAL(N,k_N)] \to \frac{\sqrt{y}}{\sqrt{y}+\sqrt{1-y}}$.
\end{cor}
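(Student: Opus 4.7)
The plan is to deduce this directly from Theorem \ref{cor:loc-srw}, via the identification $\LAL(N, k_N) = \{U_N(N+1-k_N) > U_N(N-k_N)\}$ already noted in the excerpt. The task thus reduces to computing the limiting probability that a specific one-step increment of $U_N$ is positive. Since $k_N/N \to y$, both indices $N-k_N$ and $N+1-k_N$ concentrate around $(1-y)N$, so the natural way to invoke Theorem \ref{cor:loc-srw} is with the parameter there taken to be $1-y$.

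Concretely, I would apply Theorem \ref{cor:loc-srw} with that parameter and a sequence $K_N \to \infty$ satisfying $N^{-2/3} K_N \to 0$, chosen large enough that the window $[\lfloor (1-y)N \rfloor - K_N, \lfloor (1-y)N \rfloor + K_N]$ contains both of our indices. When $|k_N - yN| = o(N^{2/3})$ this is automatic, and the theorem identifies the law of the increments of $U_N$ over this window, up to vanishing total variation error, with those of a two-sided random walk whose step density is obtained by substituting $1-y$ for $y$ in the formula stated there. In particular, the single step $U_N(N+1-k_N) - U_N(N-k_N)$ converges in distribution to a law with that density.

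The remaining work is an elementary integration, namely
\[
\int_0^\infty \frac{\sqrt{y(1-y)}}{1+2\sqrt{y(1-y)}}\, e^{-\sqrt{1-y}\,t/(\sqrt{y}+\sqrt{1-y})}\,dt \;=\; \frac{y+\sqrt{y(1-y)}}{1+2\sqrt{y(1-y)}},
\]
followed by recognizing the right-hand side as $\sqrt{y}/(\sqrt{y}+\sqrt{1-y})$ by writing numerator and denominator as $\sqrt{y}(\sqrt{y}+\sqrt{1-y})$ and $(\sqrt{y}+\sqrt{1-y})^2$, respectively. The only mild obstacle is the case where $|k_N - yN|$ is not $o(N^{2/3})$, which blocks the direct application of the theorem at a fixed parameter; here I would use a subsequence/continuity argument, exploiting the fact that the limiting step density is continuous in the parameter, so any subsequential limit of $\PP[\LAL(N,k_N)]$ must equal the above value.
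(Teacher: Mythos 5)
Your approach is the intended one: the paper offers no explicit proof beyond the remark that $\LAL(N,k)=\{U_N(N+1-k)>U_N(N-k)\}$, and the reduction to computing $\PP[\text{step}>0]$ for the random walk of Theorem~\ref{cor:loc-srw}, evaluated at parameter $1-y$, together with the algebra $1+2\sqrt{y(1-y)}=(\sqrt{y}+\sqrt{1-y})^2$, is exactly what is implied. Your integral computation is correct.

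The one place where I would push back is your handling of the case $|k_N-yN|\neq o(N^{2/3})$. A subsequence argument does not actually repair this: if, say, $k_N-yN=N^{0.9}$ for all $N$, then no subsequence brings the index $N-k_N$ into a window of size $o(N^{2/3})$ around $\lfloor(1-y)N\rfloor$, so Theorem~\ref{cor:loc-srw} as literally stated never applies along any subsequence, and ``subsequential limits must agree'' is vacuous here. The correct remedy is the continuity idea you mention, but it has to be invoked at the level of a slightly strengthened version of Theorem~\ref{cor:loc-srw} in which the window is centered at an arbitrary sequence $m_N$ with $m_N/N\to y$ rather than at $\lfloor yN\rfloor$; this stronger statement holds because the underlying LPP input (the Busemann-function comparison of \cite[Theorem 2.1]{balazs2021local}) is already of this form, and the step distribution depends continuously on the direction. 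Once you center at $m_N=N-k_N$ directly, the gap disappears and the rest of your argument goes through verbatim.
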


\subsubsection{An equivalent formulation: Young tableaux and sorting networks}  \label{ssec:combf}

In \cite{bisi2020oriented}, it is proved that there is an equivalent formulation of Theorem \ref{thm:udv} in combinatorics, which gives an identity between rational functions raised from Young tableaux and sorting networks. It can be seen as an extension of the remarkable Edelman-Greene correspondence between Young tableaux and sorting networks \cite{EG}.
We record this result here.

We will mostly follow the notations in \cite{bisi2020oriented}.
Denote $\delta_N$ as the Young diagram of $N-1$ rows, where the $k$-th row has $N-k$ boxes, and let $\SYT(\delta_N)$ denote all Young tableaux with shape $\delta_N$, i.e., all $\lambda=\{\lambda_{i,j}\}_{i,j\ge 1, i+j\le N}$, such that these numbers are precisely $1, \ldots, N(N-1)/2$, and $\lambda_{i,j}<\lambda_{i+1,j}\wedge \lambda_{i,j+1}$, for any $i,j\ge 1$, $i+j\le N-1$.
For each $\lambda \in \SYT(\delta_N)$, we denote $\co_\lambda = (\lambda_{N-1,1},\ldots, \lambda_{1,N-1})$ as the vector of the last entries of each row.
Let $\sigma_\lambda \in S_{N-1}$ be a permutation, such that for any $1\le i < j \le N-1$, we have $\sigma_\lambda(i)<\sigma_\lambda(j)$ if and only if $\co_\lambda(i)<\co_\lambda(j)$.
We let $\ocor_\lambda$ be the increasing rearrangement of $\co_\lambda$, i.e., $\ocor_\lambda = \co_\lambda \circ \sigma_\lambda^{-1}$.
We also define a $N(N-1)/2$ dimensional vector $\de_\lambda$ as follows: we consider the sequence of Young diagrams $\emptyset=\delta_\lambda^{(0)}, \delta_\lambda^{(1)}, \ldots, \delta_\lambda^{(N(N-1)/2)}=\delta_N$, such that each $\delta_\lambda^{(k)}$ contains all boxes $(i,j)$ with $\lambda_{i,j}\le k$.
Then for each $0\le k\le N(N-1)/2-1$, we let $\de_\lambda(k)$ be the number of boxes $(i,j)\subset \delta_N \setminus \delta_\lambda^{(k)}$, such that $\delta_\lambda^{(k)}\cup \{(i,j)\}$ is a Young subdiagram of $\delta_N$.
For each permutation $\sigma\in S_{N-1}$, we take the rational function $F_\sigma \in \C(x_1,\ldots,x_{N-1})$ as
\[
F_\sigma(x_1,\ldots,x_{N-1}) = \sum_{\lambda \in \SYT(\delta_N), \sigma_\lambda = \sigma} \prod_{k=1}^{N-1} \prod_{i=\ocor_\lambda(k-1)+1}^{\ocor_\lambda(k)} \frac{1}{x_k+\de_\lambda(i)}.
\]

There is a similar set of quantities in the sorting network.
Let $\SN_N$ be the collection of all sorting networks from $(1,\ldots,N)$ to $(N,\ldots,1)$, where each $s=(s_1, \ldots, s_{N(N-1)/2})\in \SN_N$ is a sequence of numbers, each in $\{1,\ldots, N-1\}$, such that the numbers at sites $s_k$ and $s_k+1$ are swapped at step $k$.
We let $\la_s= (\la_s(1),\ldots, \la_s(N-1))$ such that $\la_s(k)=\max\{1\le i \le N(N-1)/2: s_i=k\}$ for each $1\le k \le N-1$.
Let $\sigma_s \in S_{N-1}$ be a permutation, such that for any $1\le i < j \le N-1$, we have $\sigma_s(i)<\sigma_s(j)$ if and only if $\la_s(i)<\la_s(j)$, and let $\ola_s=\la_s\circ \sigma_s$.
For each $0\le k \le N(N-1)/2-1$, we let $\de_s(k)=|\{1\le i \le N-1: v^{(k)}(i) < v^{(k)}(i+1)\}|$, where $v^{(k)}\in S_{N-1}$ is the configuration of the sorting network $s$ after the $k$-th step.
For each permutation $\sigma\in S_{N-1}$, we take the rational function $G_\sigma \in \C(x_1,\ldots,x_{N-1})$ as
\[
G_\sigma(x_1,\ldots,x_{N-1}) = \sum_{s \in \SN_N, \sigma_s = \sigma} \prod_{k=1}^{N-1} \prod_{i=\ola_s(k-1)+1}^{\ola_s(k)} \frac{1}{x_k+\de_s(i)}.
\]
As proved in \cite{bisi2020oriented}, from the density functions and taking certain Fourier transforms, Theorem \ref{thm:udv} is equivalent to the following.
\begin{theorem}  \label{thm:EG}
For $N\ge 2$ and any $\sigma\in S_{N-1}$, we have that 
\[
F_\sigma(x_1,\ldots,x_{N-1}) = G_\sigma(x_1,\ldots,x_{N-1}).
\]
\end{theorem}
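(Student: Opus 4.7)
The plan is to deduce the identity directly from Theorem \ref{thm:udv} via the density-and-Laplace-transform analysis already carried out in \cite{bisi2020oriented}. Recall from that paper that the joint density of $\bU_N$ can be written as an explicit sum over sorting networks $s \in \SN_N$, with each waiting time between consecutive relevant clock rings contributing an exponential factor with rate $\de_s(i)$; analogously, the joint density of $\{L_{(1,1),(k,N-k)}\}_{k=1}^{N-1}$ is a sum over $\lambda \in \SYT(\delta_N)$ governed by $\de_\lambda$, via a standard RSK-type bijection for exponential LPP with step initial data.

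I would proceed as follows. First, for each $\sigma \in S_{N-1}$ I restrict to the event that the relative order of the components of the vector in question is $\sigma$ (i.e., $\sigma_s = \sigma$ on the OSP side, $\sigma_\lambda = \sigma$ on the LPP side), since the density formulas split cleanly over this event into products of exponential factors indexed by $\ocor_\lambda$ or $\ola_s$. Second, I apply the Laplace transform in variables $x_1, \ldots, x_{N-1}$ dual to the ordered components; the product of exponential densities integrates term-by-term into exactly the products $\prod 1/(x_k + \de_\cdot(i))$ appearing in the definitions of $F_\sigma$ and $G_\sigma$. This reduction is the content of the equivalence stated before the theorem and carried out in detail in \cite{bisi2020oriented}.

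Granting this setup, the identity $F_\sigma = G_\sigma$ is immediate from Theorem \ref{thm:udv}: the restricted Laplace transform is a functional of the joint law, so the distributional identity $\bU_N \stackrel{d}{=} \{L_{(1,1),(k,N-k)}\}_{k=1}^{N-1}$ forces the two sides to agree as analytic functions on an open subset of $\R^{N-1}_{>0}$, hence as elements of $\C(x_1, \ldots, x_{N-1})$. The real content of the argument—and the main obstacle throughout this project—lies in establishing Theorem \ref{thm:udv} itself, which we obtain by applying the shift-invariance of Theorem \ref{thm:main-de} to reshape the relevant colored-TASEP passage times into the line-to-point LPP ensemble. Once Theorem \ref{thm:udv} is in hand, the translation to the combinatorial identity is a purely formal Laplace-inversion step, so no further difficulty arises here.
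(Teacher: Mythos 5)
Your proposal is correct and follows the same route as the paper: the paper states that \cite{bisi2020oriented} establishes the equivalence of this combinatorial identity with Theorem \ref{thm:udv} via the density functions and a transform argument, so the theorem is an immediate corollary of Theorem \ref{thm:udv}. The only cosmetic difference is that you call the transform a Laplace transform while the paper refers to it as a Fourier transform, but the substance is identical.
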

In \cite{stanley1984number} Stanley computed $|\SN_N|$ and noticed that it equals $|\SYT(\delta_N)|$, later Edelman and Greene \cite{EG} gave a bijection $\EG_N: \SYT_N\to \SN_N$, which played crucial roles in the study of uniform random sorting networks in e.g.\ \cite{angel2007random, angel2012pattern}.
The bijection $\EG_N$ also gives that $\la_{\EG_N(\lambda)} = \co_\lambda$ and 
$\sigma_{\EG_N(\lambda)} = \sigma_\lambda$ for any $\lambda \in \SYT_N$; and also
$|\{\lambda \in \SYT(\delta_N), \sigma_\lambda = \sigma\}| = |\{s \in \SN_N, \sigma_s = \sigma\}|$.
However, it is difficult to use the Edelman-Greene correspondence to study the OSP, or to deduce  Theorem \ref{thm:EG}, because (as pointed out in \cite{bisi2020oriented}) the identity \[\prod_{k=1}^{N-1} \prod_{i=\ocor_\lambda(k-1)+1}^{\ocor_\lambda(k)} \frac{1}{x_k+\de_\lambda(i)} = \prod_{k=1}^{N-1} \prod_{i=\ola_{\EG(\lambda)}(k-1)+1}^{\ola_{\EG(\lambda)}(k)} \frac{1}{x_k+\de_{\EG(\lambda)}(i)}\] is not generally true for any $\lambda \in \SYT_N$.
Thus one cannot find a direct proof of Theorem \ref{thm:EG} via a bijection (that may be different from $\EG_N$) between $\SYT_N$ and $\SN_N$.

\section{Proof strategy}  \label{sec:stra}

In this section, we explain the general approach to prove Theorem \ref{thm:main-de} and Theorem \ref{thm:main-sa}.

As discussed in Section \ref{ssec:dis}, using the colored stochastic six-vertex model distributional identities from \cite{borodin2019shift, galashin2020symmetries}, we can prove a single-time equality in distribution for the colored TASEP model (Theorem \ref{thm:eqgal}).
Our main work is to upgrade Theorem \ref{thm:eqgal} to Theorem \ref{thm:main-de}, which is the multi-time distributional identity.

For better expository, we shall avoid the notion of the height function, and write the arguments using the passage times.
Recall that for each $A\in \Z$ and $B, C\in \N$, the passage time $T_{B,C}^A$ is the first time when there are at least $C$ particles and on or to the right of site $A+B+1-C$ with color $\le A$.
For any $t\ge 0$, we denote $I^t[A,B,C]$ as the event where $T_{B,C}^A\le t$.
Then $I^t[A,B,C]$ is also equivalent to the event $\cH^A(A+B+1-C,t)\ge C$.
We can then write Theorem \ref{thm:eqgal} and Theorem \ref{thm:main-de} as follows.
\begin{customthm}{1.7*}  \label{thm:eqgals}
For any $t>0$, and \[\{A_i\}_{i=1}^\ell \in \Z^\ell,\;\{A_i'\}_{i=1}^n \in \Z^n,\;\{B_i\}_{i=1}^\ell,\; \{C_i\}_{i=1}^\ell \in \N^\ell,\;\{B_i'\}_{i=1}^n, \{C_i'\}_{i=1}^n \in \N^n,\] such that
\[
A_1,\ldots, A_\ell < A_1',\ldots, A_n',
\]
and
\[
A_1+B_1-C_1,\ldots, A_\ell+B_\ell-C_\ell \ge A_1'+B_1'-C_1',\ldots, A_n'+B_n'-C_n',
\]
we have
\[
\PP\left[\left(\bigcap_{i=1}^\ell I^t[A_i,B_i,C_i]\right)\cap \left(\bigcap_{i=1}^n I^t[A_i',B_i',C_i']\right)\right]
=
\PP\left[\left(\bigcap_{i=1}^\ell I^t[A_i+1,B_i,C_i]\right)\cap \left(\bigcap_{i=1}^n I^t[A_i',B_i',C_i']\right)\right].
\]
\end{customthm}

\begin{customthm}{1.1*}  \label{thm:main-des}
Let $g\in\N$, $k_1,\ldots, k_g\in\N$, and take $A_{i,j}\in \Z$, $B_{i,j}, C_{i,j} \in\N$, for all $1\le i \le g$ and $1\le j \le k_i$.
Let $1\le \iota < g$, and for any $1\le i\le g$ and $1\le j \le k_i$ we let $A_{i,j}^+ = A_{i,j}+\don[i> \iota]$. Suppose that for any $1\le i<i' \le g$, and $1\le j \le k_i, 1\le j' \le k_{i'}$ we have 
\begin{equation}  \label{eq:assumabc}
A_{i,j} \le A_{i',j'},\quad A_{i,j}^++B_{i,j} \ge A_{i',j'}^++B_{i',j'}, \quad A_{i,j}^+-C_{i,j} \ge A_{i',j'}^+-C_{i',j'}.
\end{equation}
Then for any $t_1,\ldots, t_g >0$, we have
\[
\PP\left[\bigcap_{i=1}^g\bigcap_{j=1}^{k_i} I^{t_i}[A_{i,j},B_{i,j},C_{i,j}]\right]
=
\PP\left[\bigcap_{i=1}^g\bigcap_{j=1}^{k_i} I^{t_i}[A_{i,j}^+,B_{i,j},C_{i,j}]\right].
\]
\end{customthm}
This theorem is equivalent to Theorem \ref{thm:main-de} because for any $A_1, A_2 \in \Z$ and $B_1,B_2, C_1,C_2 \in \N$, the condition $\cR_{B_1,C_1}^{A_1}\le \cR_{B_2,C_2}^{A_2}$ is equivalent to that $A_1\le A_2$, and $A_1+B_1\ge A_2+B_2$, $A_1-C_1\ge A_2-C_2$.

The general strategy to prove Theorem \ref{thm:main-des} is to do a careful induction on the number of different times, i.e., the parameter $g$.
At each inductive step, we exploit the fact that the passage times have analytic distribution functions, and use some conditional independence of the passage times.
To illustrate the arguments for each inductive step, we consider the following example involving two passage times, which can be viewed as a special case of Theorem \ref{thm:main-des}.

\begin{ex}  \label{ex:simple}
Take some $B,C\in\N$, $B,C\ge 2$.
For any $t_1, t_2 > 0$, we have that
\[
\PP[T^0_{B,1}\le t_1, T^0_{1,C}\le t_2] = \PP[T^0_{B,1}\le t_1, T^1_{1,C}\le t_2].
\]
\end{ex}
As we only concern about two certain passage times, we can simplify the model to contain only two particles: the first one $P_1$ is the rightmost particle in $\opi^0$, which starts at site $0$ and jumps to the right;
the second one $P_2$ is the leftmost hole in $\opi^0$ or $\opi^1$, which starts at site $1$ or $2$, and jumps to the left.
The waiting times for each jump is $\Exp(1)$ independently, except for when $P_1$ is to left next to $P_2$: then they swap with $\Exp(1)$ waiting time.

\begin{figure}[hbt!]
    \centering
    
\begin{subfigure}[t]{0.48\textwidth}
         \centering
\begin{tikzpicture}[line cap=round,line join=round,>=triangle 45,x=4cm,y=4cm]
\clip(-2.65,-0.07) rectangle (-.15,0.235);

\draw [line width=1.2pt, opacity=0.3] (-2.7,0.1) -- (-0.75,0.1);
\draw [fill=white] (-2.6,0.1) circle (3.0pt);;
\draw [fill=white] (-2.5,0.1) circle (3.0pt);;
\draw [fill=white] (-2.4,0.1) circle (3.0pt);;
\draw [fill=white] (-2.3,0.1) circle (3.0pt);;
\draw [fill=white] (-2.2,0.1) circle (3.0pt);;
\draw [fill=white] (-2.1,0.1) circle (3.0pt);;
\draw [fill=white] (-2.,0.1) circle (3.0pt);;
\draw [fill=white] (-1.9,0.1) circle (3.0pt);;
\draw [fill=white] (-1.8,0.1) circle (3.0pt);;
\draw [fill=blue] (-1.7,0.1) circle (3.0pt);;
\draw [fill=red] (-1.6,0.1) circle (3.0pt);;
\draw [fill=white] (-1.5,0.1) circle (3.0pt);;
\draw [fill=white] (-1.4,0.1) circle (3.0pt);;
\draw [fill=white] (-1.3,0.1) circle (3.0pt);;
\draw [fill=white] (-1.2,0.1) circle (3.0pt);;
\draw [fill=white] (-1.1,0.1) circle (3.0pt);;
\draw [fill=white] (-1.,0.1) circle (3.0pt);;
\draw [fill=white] (-0.9,0.1) circle (3.0pt);;
\draw [fill=white] (-0.8,0.1) circle (3.0pt);;

\begin{scriptsize}
\foreach \i in {-10,-9,...,9}
{
\draw node[anchor=south] at  (\i/10-1.7,0.12) {\i};
}

\draw node[anchor=west] at  (-0.72, 0.1) {time $0$};
\end{scriptsize}

\draw [-stealth] (-1.6,0.06) to [bend left] (-2.4,0.06);

\draw [-stealth] (-1.7,0.06) to [bend right] (-1.1,0.06);

\end{tikzpicture}
\end{subfigure}
\hfill     
\begin{subfigure}[t]{0.48\textwidth}
         \centering
\begin{tikzpicture}[line cap=round,line join=round,>=triangle 45,x=4cm,y=4cm]
\clip(-2.65,-0.07) rectangle (-.75,0.235);

\draw [line width=1.2pt, opacity=0.3] (-2.7,0.1) -- (-0.1,0.1);
\draw [fill=white] (-2.6,0.1) circle (3.0pt);;
\draw [fill=white] (-2.5,0.1) circle (3.0pt);;
\draw [fill=white] (-2.4,0.1) circle (3.0pt);;
\draw [fill=white] (-2.3,0.1) circle (3.0pt);;
\draw [fill=white] (-2.2,0.1) circle (3.0pt);;
\draw [fill=white] (-2.1,0.1) circle (3.0pt);;
\draw [fill=white] (-2.,0.1) circle (3.0pt);;
\draw [fill=white] (-1.9,0.1) circle (3.0pt);;
\draw [fill=white] (-1.8,0.1) circle (3.0pt);;
\draw [fill=blue] (-1.7,0.1) circle (3.0pt);;
\draw [fill=white] (-1.6,0.1) circle (3.0pt);;
\draw [fill=red] (-1.5,0.1) circle (3.0pt);;
\draw [fill=white] (-1.4,0.1) circle (3.0pt);;
\draw [fill=white] (-1.3,0.1) circle (3.0pt);;
\draw [fill=white] (-1.2,0.1) circle (3.0pt);;
\draw [fill=white] (-1.1,0.1) circle (3.0pt);;
\draw [fill=white] (-1.,0.1) circle (3.0pt);;
\draw [fill=white] (-0.9,0.1) circle (3.0pt);;
\draw [fill=white] (-0.8,0.1) circle (3.0pt);;

\begin{scriptsize}
\foreach \i in {-10,-9,...,10}
{
\draw node[anchor=south] at  (\i/10-1.7,0.12) {\i};
}
\end{scriptsize}

\draw [-stealth] (-1.5,0.06) to [bend left] (-2.3,0.06);

\draw [-stealth] (-1.7,0.06) to [bend right] (-1.1,0.06);

\end{tikzpicture}
\end{subfigure}

\begin{subfigure}[t]{0.48\textwidth}
         \centering
\begin{tikzpicture}[line cap=round,line join=round,>=triangle 45,x=4cm,y=4cm]
\clip(-2.65,-0.07) rectangle (-.15,0.235);

\draw [line width=1.2pt, opacity=0.3] (-2.7,0.1) -- (-0.75,0.1);
\draw [fill=white] (-2.6,0.1) circle (3.0pt);;
\draw [fill=white] (-2.5,0.1) circle (3.0pt);;
\draw [fill=white] (-2.4,0.1) circle (3.0pt);;
\draw [fill=white] (-2.3,0.1) circle (3.0pt);;
\draw [fill=white] (-2.2,0.1) circle (3.0pt);;
\draw [fill=white] (-2.1,0.1) circle (3.0pt);;
\draw [fill=white] (-2.,0.1) circle (3.0pt);;
\draw [fill=white] (-1.9,0.1) circle (3.0pt);;
\draw [fill=red] (-1.8,0.1) circle (3.0pt);;
\draw [fill=white] (-1.7,0.1) circle (3.0pt);;
\draw [fill=white] (-1.6,0.1) circle (3.0pt);;
\draw [fill=blue] (-1.5,0.1) circle (3.0pt);;
\draw [fill=white] (-1.4,0.1) circle (3.0pt);;
\draw [fill=white] (-1.3,0.1) circle (3.0pt);;
\draw [fill=white] (-1.2,0.1) circle (3.0pt);;
\draw [fill=white] (-1.1,0.1) circle (3.0pt);;
\draw [fill=white] (-1.,0.1) circle (3.0pt);;
\draw [fill=white] (-0.9,0.1) circle (3.0pt);;
\draw [fill=white] (-0.8,0.1) circle (3.0pt);;

\begin{scriptsize}
\foreach \i in {-10,-9,...,9}
{
\draw node[anchor=south] at  (\i/10-1.7,0.12) {\i};
}
\draw node[anchor=west] at  (-0.72, 0.1) {time $T_{2,1}^0$};
\end{scriptsize}

\draw [-stealth] (-1.8,0.06) to [bend left] (-2.4,0.06);

\draw [-stealth] (-1.5,0.06) to [bend right] (-1.1,0.06);

\end{tikzpicture}
\end{subfigure}
\hfill     
\begin{subfigure}[t]{0.48\textwidth}
         \centering
\begin{tikzpicture}[line cap=round,line join=round,>=triangle 45,x=4cm,y=4cm]
\clip(-2.65,-0.07) rectangle (-.75,0.235);

\draw [line width=1.2pt, opacity=0.3] (-2.7,0.1) -- (-0.1,0.1);
\draw [fill=white] (-2.6,0.1) circle (3.0pt);;
\draw [fill=white] (-2.5,0.1) circle (3.0pt);;
\draw [fill=white] (-2.4,0.1) circle (3.0pt);;
\draw [fill=white] (-2.3,0.1) circle (3.0pt);;
\draw [fill=white] (-2.2,0.1) circle (3.0pt);;
\draw [fill=white] (-2.1,0.1) circle (3.0pt);;
\draw [fill=white] (-2.,0.1) circle (3.0pt);;
\draw [fill=white] (-1.9,0.1) circle (3.0pt);;
\draw [fill=white] (-1.8,0.1) circle (3.0pt);;
\draw [fill=red] (-1.7,0.1) circle (3.0pt);;
\draw [fill=white] (-1.6,0.1) circle (3.0pt);;
\draw [fill=blue] (-1.5,0.1) circle (3.0pt);;
\draw [fill=white] (-1.4,0.1) circle (3.0pt);;
\draw [fill=white] (-1.3,0.1) circle (3.0pt);;
\draw [fill=white] (-1.2,0.1) circle (3.0pt);;
\draw [fill=white] (-1.1,0.1) circle (3.0pt);;
\draw [fill=white] (-1.,0.1) circle (3.0pt);;
\draw [fill=white] (-0.9,0.1) circle (3.0pt);;
\draw [fill=white] (-0.8,0.1) circle (3.0pt);;

\begin{scriptsize}
\foreach \i in {-10,-9,...,10}
{
\draw node[anchor=south] at  (\i/10-1.7,0.12) {\i};
}
\end{scriptsize}

\draw [-stealth] (-1.7,0.06) to [bend left] (-2.3,0.06);

\draw [-stealth] (-1.5,0.06) to [bend right] (-1.1,0.06);

\end{tikzpicture}
\end{subfigure}

\caption{
An illustration of $T_{B,1}^0, T_{1,C}^0$ (left) and $T_{B,1}^0, T_{1,C}^1$ (right) in Example \ref{ex:simple} (with $B=6$ and $C=8$).
Here $P_1$ is denoted by the blue particle, and $P_2$ is denoted by the red particle.
Starting from $T_{2,1}^0$ the evolutions of both particles will be independent; therefore it suffices to show that (up to a shift by $1$) the locations of the red particles at time $T_{2,1}^0$ have the same distribution, in the left and right settings.
To achieve this, we use that $T_{B',1}^0\vee T_{1,C}^0$ and $T_{B',1}^0\vee T_{1,C}^1$ have the same distribution (by Theorem \ref{thm:eqgal}), and consider all large enough $B'$.
}  
\label{fig:rbp}
\end{figure}
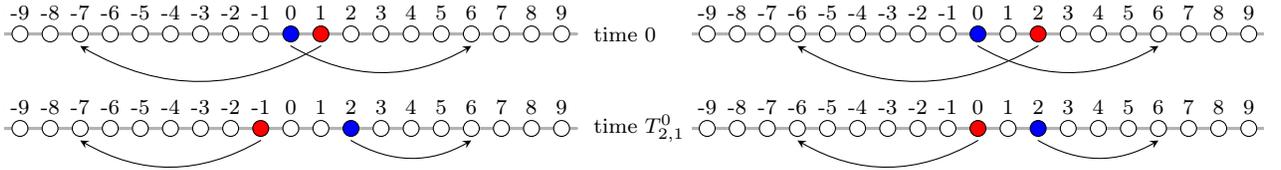

We just consider the case where $t_1< t_2$, and the case where $t_1>t_2$ follows a similar argument.
A key observation is that, since the time $T^0_{2,1}$, i.e., the time when $P_1$ arrives at site $2$, the future evolutions of $P_1$ and $P_2$ will be independent.
This is because, no matter whether $P_2$ starts from site $1$ or $2$, at time $T^0_{2,1}$ it must be to the left of $P_1$, thus these two particles have swapped already, and all future waiting times will be independent.
In particular, $T^0_{B,1}-T^0_{2,1}$ will just be the sum of $B-2$ i.i.d. $\Exp(1)$ random variables.

We consider functions $f: t\mapsto \PP[T^0_{2,1}\le t, T^0_{1,C}\le t_2]$ and $f^+: t\mapsto \PP[T^0_{2,1}\le t, T^1_{1,C}\le t_2]$.
Thus it suffices to show that $f(t)=f^+(t)$ for any $t\in [0, t_2]$.
We use Theorem \ref{thm:eqgal} to deduce this.
For any large $B'\in \N$, $B'\ge 2$, we have $\PP[T^0_{B',1}, T^0_{1,C}\le t_2] = \PP[T^0_{B',1}, T^1_{1,C}\le t_2]$, by Theorem \ref{thm:eqgal}.
We can also write
\[
\PP[T^0_{B',1}, T^0_{1,C}\le t_2] = \int_0^{t_2} f(t)
\frac{(t_2-t)^{B'-3}e^{-(t_2-t)}}{(B'-3)!}  dt,
\]
and
\[
\PP[T^0_{B',1}, T^1_{1,C}\le t_2] = \int_0^{t_2} f^+(t)
\frac{(t_2-t)^{B'-3}e^{-(t_2-t)}}{(B'-3)!}  dt.
\]
Thus we have
\begin{equation}  \label{eq:stra-demo}
\int_0^{t_2} (f(t)-f^+(t))
(t_2-t)^{B'-3}e^{-(t_2-t)} dt=0, \quad \forall B'\in\N,\; B'\ge 3.    
\end{equation}
Suppose that the function $f-f^+$ is analytic, we can deduce that $f-f^+ = 0$.
Otherwise, one can find some $\beta \in \Z_{\ge 0}$ and $D\neq 0$, such that $\lim_{t\to 0}t^{-\beta}(f(t)-f^+(t)) = D$.
By taking $B'$ large, we can make $(t_2-t)^{B'-3}e^{-(t_2-t)}$ decay fast as $t$ grows from $0$, and get a contradiction with \eqref{eq:stra-demo}.

Our proof for Theorem \ref{thm:main-des} follows the same strategy.
\begin{enumerate}
    \item We will find certain stopping times, which we call `cutting times', like the time $T^0_{2,1}$ in the above example. We prove that for certain two sets of particles, they evolve independently after these times.
    \item The next step will be to show that the cutting time distribution is invariant under shift. We will get formulas similar to \eqref{eq:stra-demo}.
    In some settings, at this step, we use the induction hypothesis (instead of Theorem \ref{thm:eqgal}).
\item We then prove that the probability density function of cutting times is analytic.
\item We analyze the transition probability from cutting times (i.e., the distribution of $T^0_{B',1}-T^0_{2,1}$ in the example): we will show that by taking the corresponding $B'$s large enough, the transition distribution concentrates on small $t$. 
Thus we conclude that the cutting time distributions are the same.
\end{enumerate}

We remark that in the setting of multiple times and multiple particles, the independence property in Step 1 is delicate.
Thus one main difficulty in our proof is designing the appropriate setup, 
mainly including the induction setup and the choice of the cutting times, to get the independence property.

For Theorem \ref{thm:main-sa}, it is deduced from Theorem \ref{thm:main-de}, plus some arguments using the independence of the Poisson field in different areas.

The oriented swap process identity (Theorem \ref{thm:udv}) can be obtained by repeatedly applying Theorem \ref{thm:main-de}. One issue is that it is on a finite interval, rather than $\Z$; and for this, we use input from \cite{angel2009oriented}, which translates the colored TASEP into a finite interval version via a pair of truncation operators.
Such truncation procedure also appeared in \cite{bufetov2020absorbing} and was used to study the OSP absorbing time $\max_{1\le k \le N-1}U_n(k)$, using the single-time equality in distribution of the colored TASEP.
The asymptotic results in Section \ref{ssec:asyr} are proved using Theorem \ref{thm:udv} and various existing results on LPP.

\subsection*{Organization of the remaining text}
The remaining text mainly focuses on the proofs.
In Section \ref{sec:prelim} we make some preparations, including proving Theorem \ref{thm:eqgal} using input from the colored stochastic six-vertex model (Section \ref{ssec:6v}), setting up notations and proving basic results on projections of the colored TASEP and the Poisson field (Section \ref{ssec:proj} and \ref{ssec:poif}), and proving results on the analyticity of certain probability density functions (Section \ref{ssec:density}).
Theorem \ref{thm:main-des} is proved in the next two sections: Section \ref{sec:ind-setup} is for the induction setup and Step 1 above, and Section \ref{sec:equal-cut-info} is for more detailed arguments on the remaining steps.
In Section \ref{sec:exten} we prove Theorem \ref{thm:main-sa} from Theorem \ref{thm:main-de} and deduce Theorem \ref{thm:conv-colored}, and we also discuss some possible further extensions of the results in Section \ref{ssec:shift-cons}.
In Section \ref{sec:osp} we prove Theorem \ref{thm:udv} and explain how it implies the asymptotic results.

\section{Preliminaries}   \label{sec:prelim}

\subsection{Input from the colored stochastic six-vertex model}  \label{ssec:6v}

We first prove Theorem \ref{thm:eqgal}, using the main result of \cite{galashin2020symmetries}.

Our notations for the colored stochastic six-vertex model follow those in \cite{borodin2019shift}.
We consider the model as random colored up-right paths in the positive quadrant $\N^2$.
All the paths enter the quadrant from the left boundary, such that for each $i\in \N$, there is a path of color $i$ entering from the left in row $i$.
Given the entering paths, they progress in the up-right direction
within the quadrant.
For each vertex of the lattice, given the colors of the entering paths along the bottom and left adjacent edges, we choose the colors of the exiting paths along the top and right edges
according to the following probabilities (see Figure \ref{fig:6v}).
Let $0\le b_2 < b_1 < 1$ be two parameters. 
Suppose the path entering from the bottom is in color $i$, and the path entering from the left is in color $j$.
If $i\le j$, then with probability $b_1$, the path exiting from the top is in color $i$, and the path exiting from the right is in color $j$;
and with probability $1-b_1$, the path exiting from the top is in color $j$, and the path exiting from the right is in color $i$.
If $i>j$, it has the same transition probability, with $b_1$ replaced by $b_2$.
We also use color $0$ to encode the absence of a path.

\begin{figure}[hbt!]
    \centering
\begin{tikzpicture}[line cap=round,line join=round,>=triangle 45,x=1.2cm,y=1.2cm]
\clip(-1.2,4.7) rectangle (3.1,7.6);

\draw [-stealth,color=blue](0,7) -- (1,7);
\draw [-stealth,color=red](0.5,6.5) -- (0.5,7.5);

\draw [-stealth,color=blue](2,7) -- (2.45,7) -- (2.5,7.05) -- (2.5,7.5);
\draw [-stealth,color=red](2.5,6.5) -- (2.5,6.95) -- (2.55,7) -- (3,7);

\draw (0,7) node[anchor=east, color=blue]{$j$};
\draw (0.5,6.5) node[anchor=north, color=red]{$i$};

\draw (2,7) node[anchor=east, color=blue]{$j$};
\draw (2.5,6.5) node[anchor=north, color=red]{$i$};

\draw (-0.8,5.8) node[anchor=north]{$i\le j$};
\draw (-0.8,5.2) node[anchor=north]{$i> j$};

\draw (0.5,5.8) node[anchor=north]{$b_1$};
\draw (2.5,5.8) node[anchor=north]{$1-b_1$};

\draw (0.5,5.2) node[anchor=north]{$b_2$};
\draw (2.5,5.2) node[anchor=north]{$1-b_2$};

\end{tikzpicture}
\caption{
Probabilities at each vertex of the colored stochastic six-vertex model. 
}  
\label{fig:6v}
\end{figure}
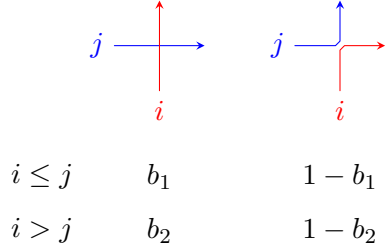

We next define the height function of the colored stochastic six-vertex model.
For each $m\in \N$ and $x, y \in (\N-\frac{1}{2})^2$, we denote
$\cH^m_{6v}(x,y)$ as the number of paths that are in color $\ge m$, and pass below $(x,y)$. 

A particular case of the main result in \cite[Theorem 1.6]{galashin2020symmetries} is as follows, in the above notations.
\begin{theorem}
\label{thm:gal-6v}
Let $\ell, n \in \N$, $\{m_i\}_{i=1}^\ell \in \N^\ell$, $\{m_i'\}_{i=1}^n\in\N^n$,
and $\{(x_i, y_i)\}_{i=1}^\ell \in (\N-\frac{1}{2})^{2\ell}$, $\{(x_i', y_i')\}_{i=1}^n \in (\N-\frac{1}{2})^{2n}$, such that
\begin{enumerate}
    \item $\max_{1\le i \le \ell}m_i < \min_{1\le i \le n}m_i'$, 
    \item $x_1 \le \cdots \le x_\ell \le x_1' \le \cdots \le x_n'$, and $y_1 \ge \cdots \ge y_\ell \ge y_1' \ge \cdots \ge y_n'$.
\end{enumerate}
Then these $\ell+n$ dimensional vectors  \[(\{\cH^{m_i}_{6v}(x_i,y_i)\}_{i=1}^\ell, \{\cH^{m_i'}_{6v}(x_i',y_i')\}_{i=1}^n)
\quad\text{and}\quad
(\{\cH^{m_i+1}_{6v}(x_i,y_i+1)\}_{i=1}^\ell, \{\cH^{m_i'}_{6v}(x_i',y_i')\}_{i=1}^n)\]
have the same distribution.
\end{theorem}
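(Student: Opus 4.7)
My plan is to derive this theorem as a direct specialization of \cite[Theorem 1.6]{galashin2020symmetries}, as the statement itself announces. First I would carefully recall Galashin's general shift-invariance: it concerns a collection of height-function observables of the colored stochastic six-vertex model indexed by pairs of colors and lattice points, and asserts that, under compatible orderings of the colors and coordinates, one may apply an elementary simultaneous shift in color and in the vertical coordinate to a designated subset of the observables while preserving their joint distribution. This is the engine driving all of the present theorem's content.

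Second, I would match our setup to Galashin's. The $l+n$ observables split naturally into the ``low-color'' block $\{\cH^{m_i}_{6v}(x_i,y_i)\}_{i=1}^l$ and the ``high-color'' block $\{\cH^{m_i'}_{6v}(x_i',y_i')\}_{i=1}^n$. Hypothesis (1), $\max_i m_i < \min_i m_i'$, supplies the strict color separation of the two blocks. Hypothesis (2), namely the non-decreasing ordering of the $x$-coordinates and the non-increasing ordering of the $y$-coordinates across the concatenated list $(x_1,y_1),\ldots,(x_l,y_l),(x_1',y_1'),\ldots,(x_n',y_n')$, is exactly the spatial compatibility Galashin's theorem demands. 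Under these hypotheses, his theorem permits the elementary shift $(m_i,y_i)\mapsto(m_i+1,y_i+1)$ on the first block alone, which is precisely the identity in distribution we want.

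The remaining step, and the only real obstacle, is a notational check: I would confirm that our convention for $\cH^m_{6v}$ (paths of color at least $m$ passing below $(x,y)$) matches Galashin's height function up to a trivial reindexing, and that the specific shift asked for here corresponds to a single elementary move in his framework rather than an iterated one. No genuinely new probabilistic or combinatorial argument is required at this stage; all the substantive content is hidden in Galashin's Yang-Baxter/flip-invariance analysis of the colored six-vertex model, and our role here is simply to extract the specialization we shall later pass to a limit in order to feed into the TASEP arguments of subsequent sections.
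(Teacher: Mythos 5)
Your proposal matches the paper exactly: the paper presents Theorem~\ref{thm:gal-6v} with no proof, introducing it as ``a particular case of the main result in \cite[Theorem 1.6]{galashin2020symmetries}, in the above notations,'' which is precisely the citation-and-specialization you outline, including the notational match of the height function and the single elementary shift.
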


We next state the limit transition to TASEP.
It is first observed in \cite{borodin2016stochastic}, and then proved in \cite{aggarwal2017convergence}, for the stochastic six-vertex model without colors; the colored version follows the same proof.
Recall the TASEP height function: for any $A, x\in\Z$, $\cH^A(x,t)$ is the number of particles with color $\le A$ at site $\ge x$ at time $t$; or equivalently, it is the number of particles at site $\ge x$ in $\opi^A_t$.
\begin{theorem}  \label{thm:6v-ctasep}
Take $b_1=\varepsilon$ and $b_2=0$, then there is
\[
\cH_{6v}^{A+1}(\lfloor \varepsilon^{-1}t\rfloor + 1/2, \lfloor \varepsilon^{-1}t\rfloor + x-1/2) + A-x+1 \to \cH^A(x,t),
\]
in the sense of joint convergence in distribution for any finitely many $(t, A, x) \in [0,\infty) \times \Z_{\ge 0} \times \Z$.
\end{theorem}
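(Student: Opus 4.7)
The proof plan is to verify the scaling limit via the standard argument of \cite{aggarwal2017convergence} for the uncolored case, relying on the fact that each vertex rule of the colored six-vertex model depends on the two incoming colors only through their order. First, I would identify a deterministic moving frame. Under $b_1 = \varepsilon$ and $b_2 = 0$, a vertex with left color $\ge$ bottom color ``swaps'' (the left color exits upward and the bottom color exits rightward) with probability $1-\varepsilon$ and goes ``straight'' (both paths continue in their original direction) with probability $\varepsilon$, while a vertex with left color strictly less than bottom color always swaps. Starting from row $c$ carrying color $c$ at the left boundary and taking the deterministic/typical swap at every vertex, a direct column-by-column computation shows that rows $1, \ldots, x$ at column $x$ are empty and row $x+k$ carries color $k$. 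It is therefore natural to pass to the moving frame $(x,y) \mapsto (x, y-x)$, in which the bulk is the identity permutation on $\N$ and all stochastic content is carried by the rare ``straight'' events.

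Next, I would match each straight event to one step of the colored TASEP. A straight event at vertex $(x+1, y)$, followed by the (high-probability) typical swap at the vertex $(x+1, y+1)$ immediately above it, changes the outputs of column $x+1$ only at rows $y$ and $y+1$: in moving-frame coordinates the two sites $y-x-1$ and $y-x$ have their colors exchanged while every other site is untouched. Since before the event these two sites carry colors $y-x-1$ and $y-x$ in increasing order, the exchange reverses them, reproducing exactly one swap at TASEP edge $(y-x-1, y-x)$ starting from the identity; moreover, a straight event is only possible when the two adjacent moving-frame sites are in non-inverted order, matching the TASEP rule that a swap fires only when the left color is smaller than the right. For a fixed moving-frame edge $(s, s+1)$ the relevant vertices are $(x, s+x)$ for $x = 1, \ldots, \lfloor \varepsilon^{-1} t \rfloor$, each straight with probability $\varepsilon$ independently across $x$ and across distinct moving-frame edges. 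Classical Poisson convergence then gives joint convergence of the rescaled event times $\varepsilon x$ to independent rate-$1$ Poisson processes on $[0,t]$, which couple to the TASEP clocks and deliver the limit $\zeta_t$.

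Finally, I would match $\cH^{A+1}_{6v}$ to $h_{A,B}$. At column $k = \lfloor \varepsilon^{-1} t \rfloor$, the point $(k + 1/2, k + B - 1/2)$ corresponds in the moving frame to site $B - 1/2$, so $\cH^{A+1}_{6v}$ counts paths of color $\ge A+1$ at moving-frame sites $\le B-1$, while $h_{A,B}(t)$ counts colors $\le A$ at sites $\ge B$; a direct check at $t = 0$ in the two sub-cases $B \le A$ and $B > A$ pins down the deterministic constant $A - B + 1$. The main obstacle I anticipate is step two --- showing cleanly that the propagated effect of a single straight event is the exchange of exactly two adjacent moving-frame colors, and that multiple straight events combine as independent TASEP swaps. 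This is the content of \cite{aggarwal2017convergence} in the uncolored setting, and the colored version follows either by repeating the argument or, more conveniently given the coupling from the introduction, by applying the uncolored result to each $\opi^A$ separately and reassembling $\zeta_t$ from the family $\{\opi^A\}_{A \in \Z}$.
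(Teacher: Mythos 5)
The paper does not spell out a proof of this theorem: it simply cites \cite{borodin2016stochastic, aggarwal2017convergence} and asserts that the colored version follows the same argument. Your reconstruction — the deterministic moving frame $(x,y)\mapsto(x,y-x)$, the identification of rare ``straight'' events with TASEP swaps together with a Poisson limit, and the boundary check at $t=0$ fixing the constant $A-B+1$ — is exactly that standard argument, and your closing remark that one may alternatively project color-by-color to the coupled family of uncolored six-vertex models converging to $\{\opi^A\}$ and reassemble $\zeta$ is a correct (and arguably cleaner) way to import the cited uncolored result.
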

By combining this with Theorem \ref{thm:gal-6v}, we can prove Theorem \ref{thm:eqgal}
\begin{proof}[Proof of Theorem \ref{thm:eqgal}]
It suffices to consider the case where each $A_i\ge 0$ and each $A_i'\ge 0$.
Denote $\cE$ as the event where
\[
\cH_{6v}^{A_i+1}(\lfloor \varepsilon^{-1}t\rfloor + 1/2, \lfloor \varepsilon^{-1}t\rfloor + A_i+B_i-C_i+1/2) \ge B_i 
\]
for each $1\le i \le \ell$, and $\cE_+$ as the event where
\[
\cH_{6v}^{A_i+2}(\lfloor \varepsilon^{-1}t\rfloor + 1/2, \lfloor \varepsilon^{-1}t\rfloor + A_i+B_i-C_i+3/2) \ge B_i 
\]
for each $1\le i \le \ell$.
Also denote $\cE'$ as the event where
\[
\cH_{6v}^{A_i'+1}(\lfloor \varepsilon^{-1}t\rfloor + 1/2, \lfloor \varepsilon^{-1}t\rfloor + A_i'+B_i'-C_i'+1/2) \ge B_i' 
\]
for each $1\le i \le n$.
By Theorem \ref{thm:gal-6v} we have $\PP[\cE\cap \cE'] = \PP[\cE_+\cap \cE']$.
Then by Theorem \ref{thm:6v-ctasep}, the conclusion follows by sending $\varepsilon \to 0$.
\end{proof}

\subsection{Projections to finitely many particles}  \label{ssec:proj}
For simplicity of notation and arguments, we introduce the following `projections' of the colored TASEP $\zeta$, which are (colored or uncolored) TASEPs with finitely many particles.
We note that we take a slight misuse of notions (here and for the rest of this paper): in a colored TASEP, unless otherwise noted, a hole means a particle colored by $\infty$, and a particle means one colored by a finite number.\\

\noindent\textbf{Finite-particles and single-color.} For any $A\in \Z$ and $C\in\N$, we denote $\opi^{A,C}=(\opi_t^{A,C})_{t\ge 0}$ as the process, where for each $x \in \Z$ and $t\ge 0$, we have
$\opi_t^{A,C}(x)=0$ if $\opi_t^A(x)=0$ and $|\{y\ge x: \opi_t^A(y)=0\}| \le C$; otherwise we have $\opi_t^{A,C}(x)=\infty$.
In words, $\opi_t^{A,C}$ keeps the $C$ rightmost particles in $\opi_t^A$ and changes other particles to holes. This is also equivalent to applying to $\opi_t^A$ the `$C$ cut-off operator' from \cite{angel2009oriented} (see also Section \ref{sec:osp}).

Then $\opi^{A,C}$ encodes a TASEP with $C$ particles, starting from sites $A-C+1,\cdots, A$, where $0$ denotes particles and $\infty$ denotes holes, and it evolves using the same Poisson clock as $\zeta$.\\

\noindent\textbf{Finite-particles and multi-color.} 
For any $A\in \Z$ and $C\in\N$, let $\hmu^{A,C}=(\hmu_t^{A,C})_{t\ge 0}$ be the process defined as follows.
For each $x\in \Z$, $t\ge 0$, we let $\hmu^{A,C}_t(x)=\min\{1\le i \le C: \opi_t^{A-C+i,i}(x)=0\}\cup\{\infty\}$.\\

One can think of $\hmu_t^{A,C}$ as a colored version of $\opi_t^{A,C}$.
In fact, the locations of particles $\{x\in \Z: \hmu_t^{A,C}(x)<\infty\}$ and $\{x\in \Z: \opi_t^{A,C}(x)=0\}$ are the same.
The colors in $\hmu_t^{A,C}$ are obtained in the following way.
Consider the sequence $\opi_t^{A-C+1,1}, \opi_t^{A-C+2,2}, \ldots, \opi_t^{A,C}$. These configurations contain $1, 2, \ldots, C$ particles, respectively;
and they can be viewed as a procedure of adding one particle at a step to reach $\opi_t^{A,C}$.
The particle added at the $i$-th step is colored by $i$ in $\hmu_t^{A,C}$.
Thus one can also think of $\hmu_t^{A,C}$ as a way of encoding the sequence $\opi_t^{A-C+1,1}, \opi_t^{A-C+2,2}, \ldots, \opi_t^{A,C}$, as they precisely contain the same information (i.e., they can determine each other).
See Figure \ref{fig:projscol} for an illustration of this equivalence.
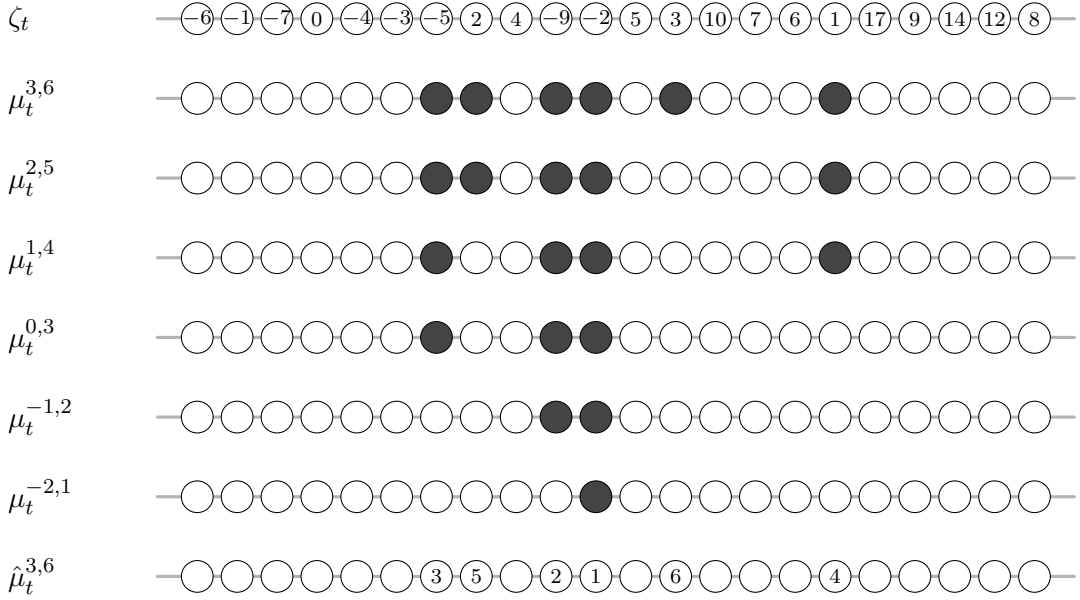
\begin{figure}[hbt!]
    \centering
\begin{tikzpicture}[line cap=round,line join=round,>=triangle 45,x=5.3cm,y=5.3cm]
\clip(-3.1,-0.7) rectangle (-0.35,0.9);

\draw [line width=1.2pt, opacity=0.3] (-2.7,-0.6) -- (-0.4,-0.6);
\draw [fill=white] (-2.6,-0.6) circle (6.0pt);
\draw [fill=white] (-2.5,-0.6) circle (6.0pt);
\draw [fill=white] (-2.4,-0.6) circle (6.0pt);
\draw [fill=white] (-2.3,-0.6) circle (6.0pt);
\draw [fill=white] (-2.2,-0.6) circle (6.0pt);
\draw [fill=white] (-2.1,-0.6) circle (6.0pt);
\draw [fill=white] (-2.,-0.6) circle (6.0pt);
\draw [fill=white] (-1.9,-0.6) circle (6.0pt);
\draw [fill=white] (-1.8,-0.6) circle (6.0pt);
\draw [fill=white] (-1.7,-0.6) circle (6.0pt);
\draw [fill=white] (-1.6,-0.6) circle (6.0pt);
\draw [fill=white] (-1.5,-0.6) circle (6.0pt);
\draw [fill=white] (-1.4,-0.6) circle (6.0pt);
\draw [fill=white] (-1.3,-0.6) circle (6.0pt);
\draw [fill=white] (-1.2,-0.6) circle (6.0pt);
\draw [fill=white] (-1.1,-0.6) circle (6.0pt);
\draw [fill=white] (-1.,-0.6) circle (6.0pt);
\draw [fill=white] (-0.9,-0.6) circle (6.0pt);
\draw [fill=white] (-0.8,-0.6) circle (6.0pt);
\draw [fill=white] (-0.7,-0.6) circle (6.0pt);
\draw [fill=white] (-0.6,-0.6) circle (6.0pt);
\draw [fill=white] (-0.5,-0.6) circle (6.0pt);

\draw [line width=1.2pt, opacity=0.3] (-2.7,-0.4) -- (-0.4,-0.4);
\draw [fill=white] (-2.6,-0.4) circle (6.0pt);
\draw [fill=white] (-2.5,-0.4) circle (6.0pt);
\draw [fill=white] (-2.4,-0.4) circle (6.0pt);
\draw [fill=white] (-2.3,-0.4) circle (6.0pt);
\draw [fill=white] (-2.2,-0.4) circle (6.0pt);
\draw [fill=white] (-2.1,-0.4) circle (6.0pt);
\draw [fill=white] (-2.,-0.4) circle (6.0pt);
\draw [fill=white] (-1.9,-0.4) circle (6.0pt);
\draw [fill=white] (-1.8,-0.4) circle (6.0pt);
\draw [fill=white] (-1.7,-0.4) circle (6.0pt);
\draw [fill=uuuuuu] (-1.6,-0.4) circle (6.0pt);
\draw [fill=white] (-1.5,-0.4) circle (6.0pt);
\draw [fill=white] (-1.4,-0.4) circle (6.0pt);
\draw [fill=white] (-1.3,-0.4) circle (6.0pt);
\draw [fill=white] (-1.2,-0.4) circle (6.0pt);
\draw [fill=white] (-1.1,-0.4) circle (6.0pt);
\draw [fill=white] (-1.,-0.4) circle (6.0pt);
\draw [fill=white] (-0.9,-0.4) circle (6.0pt);
\draw [fill=white] (-0.8,-0.4) circle (6.0pt);
\draw [fill=white] (-0.7,-0.4) circle (6.0pt);
\draw [fill=white] (-0.6,-0.4) circle (6.0pt);
\draw [fill=white] (-0.5,-0.4) circle (6.0pt);

\draw [line width=1.2pt, opacity=0.3] (-2.7,-0.2) -- (-0.4,-0.2);
\draw [fill=white] (-2.6,-0.2) circle (6.0pt);
\draw [fill=white] (-2.5,-0.2) circle (6.0pt);
\draw [fill=white] (-2.4,-0.2) circle (6.0pt);
\draw [fill=white] (-2.3,-0.2) circle (6.0pt);
\draw [fill=white] (-2.2,-0.2) circle (6.0pt);
\draw [fill=white] (-2.1,-0.2) circle (6.0pt);
\draw [fill=white] (-2.,-0.2) circle (6.0pt);
\draw [fill=white] (-1.9,-0.2) circle (6.0pt);
\draw [fill=white] (-1.8,-0.2) circle (6.0pt);
\draw [fill=uuuuuu] (-1.7,-0.2) circle (6.0pt);
\draw [fill=uuuuuu] (-1.6,-0.2) circle (6.0pt);
\draw [fill=white] (-1.5,-0.2) circle (6.0pt);
\draw [fill=white] (-1.4,-0.2) circle (6.0pt);
\draw [fill=white] (-1.3,-0.2) circle (6.0pt);
\draw [fill=white] (-1.2,-0.2) circle (6.0pt);
\draw [fill=white] (-1.1,-0.2) circle (6.0pt);
\draw [fill=white] (-1.,-0.2) circle (6.0pt);
\draw [fill=white] (-0.9,-0.2) circle (6.0pt);
\draw [fill=white] (-0.8,-0.2) circle (6.0pt);
\draw [fill=white] (-0.7,-0.2) circle (6.0pt);
\draw [fill=white] (-0.6,-0.2) circle (6.0pt);
\draw [fill=white] (-0.5,-0.2) circle (6.0pt);

\draw [line width=1.2pt, opacity=0.3] (-2.7,0) -- (-0.4,0);
\draw [fill=white] (-2.6,0) circle (6.0pt);
\draw [fill=white] (-2.5,0) circle (6.0pt);
\draw [fill=white] (-2.4,0) circle (6.0pt);
\draw [fill=white] (-2.3,0) circle (6.0pt);
\draw [fill=white] (-2.2,0) circle (6.0pt);
\draw [fill=white] (-2.1,0) circle (6.0pt);
\draw [fill=uuuuuu] (-2.,0) circle (6.0pt);
\draw [fill=white] (-1.9,0) circle (6.0pt);
\draw [fill=white] (-1.8,0) circle (6.0pt);
\draw [fill=uuuuuu] (-1.7,0) circle (6.0pt);
\draw [fill=uuuuuu] (-1.6,0) circle (6.0pt);
\draw [fill=white] (-1.5,0) circle (6.0pt);
\draw [fill=white] (-1.4,0) circle (6.0pt);
\draw [fill=white] (-1.3,0) circle (6.0pt);
\draw [fill=white] (-1.2,0) circle (6.0pt);
\draw [fill=white] (-1.1,0) circle (6.0pt);
\draw [fill=white] (-1.,0) circle (6.0pt);
\draw [fill=white] (-0.9,0) circle (6.0pt);
\draw [fill=white] (-0.8,0) circle (6.0pt);
\draw [fill=white] (-0.7,0) circle (6.0pt);
\draw [fill=white] (-0.6,0) circle (6.0pt);
\draw [fill=white] (-0.5,0) circle (6.0pt);

\draw [line width=1.2pt, opacity=0.3] (-2.7,0.2) -- (-0.4,0.2);
\draw [fill=white] (-2.6,0.2) circle (6.0pt);
\draw [fill=white] (-2.5,0.2) circle (6.0pt);
\draw [fill=white] (-2.4,0.2) circle (6.0pt);
\draw [fill=white] (-2.3,0.2) circle (6.0pt);
\draw [fill=white] (-2.2,0.2) circle (6.0pt);
\draw [fill=white] (-2.1,0.2) circle (6.0pt);
\draw [fill=uuuuuu] (-2.,0.2) circle (6.0pt);
\draw [fill=white] (-1.9,0.2) circle (6.0pt);
\draw [fill=white] (-1.8,0.2) circle (6.0pt);
\draw [fill=uuuuuu] (-1.7,0.2) circle (6.0pt);
\draw [fill=uuuuuu] (-1.6,0.2) circle (6.0pt);
\draw [fill=white] (-1.5,0.2) circle (6.0pt);
\draw [fill=white] (-1.4,0.2) circle (6.0pt);
\draw [fill=white] (-1.3,0.2) circle (6.0pt);
\draw [fill=white] (-1.2,0.2) circle (6.0pt);
\draw [fill=white] (-1.1,0.2) circle (6.0pt);
\draw [fill=uuuuuu] (-1.,0.2) circle (6.0pt);
\draw [fill=white] (-0.9,0.2) circle (6.0pt);
\draw [fill=white] (-0.8,0.2) circle (6.0pt);
\draw [fill=white] (-0.7,0.2) circle (6.0pt);
\draw [fill=white] (-0.6,0.2) circle (6.0pt);
\draw [fill=white] (-0.5,0.2) circle (6.0pt);

\draw [line width=1.2pt, opacity=0.3] (-2.7,0.4) -- (-0.4,0.4);
\draw [fill=white] (-2.6,0.4) circle (6.0pt);
\draw [fill=white] (-2.5,0.4) circle (6.0pt);
\draw [fill=white] (-2.4,0.4) circle (6.0pt);
\draw [fill=white] (-2.3,0.4) circle (6.0pt);
\draw [fill=white] (-2.2,0.4) circle (6.0pt);
\draw [fill=white] (-2.1,0.4) circle (6.0pt);
\draw [fill=uuuuuu] (-2.,0.4) circle (6.0pt);
\draw [fill=uuuuuu] (-1.9,0.4) circle (6.0pt);
\draw [fill=white] (-1.8,0.4) circle (6.0pt);
\draw [fill=uuuuuu] (-1.7,0.4) circle (6.0pt);
\draw [fill=uuuuuu] (-1.6,0.4) circle (6.0pt);
\draw [fill=white] (-1.5,0.4) circle (6.0pt);
\draw [fill=white] (-1.4,0.4) circle (6.0pt);
\draw [fill=white] (-1.3,0.4) circle (6.0pt);
\draw [fill=white] (-1.2,0.4) circle (6.0pt);
\draw [fill=white] (-1.1,0.4) circle (6.0pt);
\draw [fill=uuuuuu] (-1.,0.4) circle (6.0pt);
\draw [fill=white] (-.9,0.4) circle (6.0pt);
\draw [fill=white] (-0.8,0.4) circle (6.0pt);
\draw [fill=white] (-0.7,0.4) circle (6.0pt);
\draw [fill=white] (-0.6,0.4) circle (6.0pt);
\draw [fill=white] (-0.5,0.4) circle (6.0pt);

\draw [line width=1.2pt, opacity=0.3] (-2.7,0.6) -- (-0.4,0.6);
\draw [fill=white] (-2.6,0.6) circle (6.0pt);
\draw [fill=white] (-2.5,0.6) circle (6.0pt);
\draw [fill=white] (-2.4,0.6) circle (6.0pt);
\draw [fill=white] (-2.3,0.6) circle (6.0pt);
\draw [fill=white] (-2.2,0.6) circle (6.0pt);
\draw [fill=white] (-2.1,0.6) circle (6.0pt);
\draw [fill=uuuuuu] (-2.,0.6) circle (6.0pt);
\draw [fill=uuuuuu] (-1.9,0.6) circle (6.0pt);
\draw [fill=white] (-1.8,0.6) circle (6.0pt);
\draw [fill=uuuuuu] (-1.7,0.6) circle (6.0pt);
\draw [fill=uuuuuu] (-1.6,0.6) circle (6.0pt);
\draw [fill=white] (-1.5,0.6) circle (6.0pt);
\draw [fill=uuuuuu] (-1.4,0.6) circle (6.0pt);
\draw [fill=white] (-1.3,0.6) circle (6.0pt);
\draw [fill=white] (-1.2,0.6) circle (6.0pt);
\draw [fill=white] (-1.1,0.6) circle (6.0pt);
\draw [fill=uuuuuu] (-1.,0.6) circle (6.0pt);
\draw [fill=white] (-0.9,0.6) circle (6.0pt);
\draw [fill=white] (-0.8,0.6) circle (6.0pt);
\draw [fill=white] (-0.7,0.6) circle (6.0pt);
\draw [fill=white] (-0.6,0.6) circle (6.0pt);
\draw [fill=white] (-0.5,0.6) circle (6.0pt);

\draw [line width=1.2pt, opacity=0.3] (-2.7,0.8) -- (-0.4,0.8);
\draw [fill=white] (-2.6,0.8) circle (6.0pt);
\draw [fill=white] (-2.5,0.8) circle (6.0pt);
\draw [fill=white] (-2.4,0.8) circle (6.0pt);
\draw [fill=white] (-2.3,0.8) circle (6.0pt);
\draw [fill=white] (-2.2,0.8) circle (6.0pt);
\draw [fill=white] (-2.1,0.8) circle (6.0pt);
\draw [fill=white] (-2.,0.8) circle (6.0pt);
\draw [fill=white] (-1.9,0.8) circle (6.0pt);
\draw [fill=white] (-1.8,0.8) circle (6.0pt);
\draw [fill=white] (-1.7,0.8) circle (6.0pt);
\draw [fill=white] (-1.6,0.8) circle (6.0pt);
\draw [fill=white] (-1.5,0.8) circle (6.0pt);
\draw [fill=white] (-1.4,0.8) circle (6.0pt);
\draw [fill=white] (-1.3,0.8) circle (6.0pt);
\draw [fill=white] (-1.2,0.8) circle (6.0pt);
\draw [fill=white] (-1.1,0.8) circle (6.0pt);
\draw [fill=white] (-1.,0.8) circle (6.0pt);
\draw [fill=white] (-.9,0.8) circle (6.0pt);
\draw [fill=white] (-0.8,0.8) circle (6.0pt);
\draw [fill=white] (-0.7,0.8) circle (6.0pt);
\draw [fill=white] (-0.6,0.8) circle (6.0pt);
\draw [fill=white] (-0.5,0.8) circle (6.0pt);

\begin{scriptsize}
\draw (-2.6,0.8) node[anchor=center]{$-6$};
\draw (-2.5,0.8) node[anchor=center]{$-1$};
\draw (-2.4,0.8) node[anchor=center]{$-7$};
\draw (-2.3,0.8) node[anchor=center]{$0$};
\draw (-2.2,0.8) node[anchor=center]{$-4$};
\draw (-2.1,0.8) node[anchor=center]{$-3$};
\draw (-2.0,0.8) node[anchor=center]{$-5$};
\draw (-1.9,0.8) node[anchor=center]{$2$};
\draw (-1.8,0.8) node[anchor=center]{$4$};
\draw (-1.7,0.8) node[anchor=center]{$-9$};
\draw (-1.6,0.8) node[anchor=center]{$-2$};
\draw (-1.5,0.8) node[anchor=center]{$5$};
\draw (-1.4,0.8) node[anchor=center]{$3$};
\draw (-1.3,0.8) node[anchor=center]{$10$};
\draw (-1.2,0.8) node[anchor=center]{$7$};
\draw (-1.1,0.8) node[anchor=center]{$6$};
\draw (-1.0,0.8) node[anchor=center]{$1$};
\draw (-0.9,0.8) node[anchor=center]{$17$};
\draw (-0.8,0.8) node[anchor=center]{$9$};
\draw (-0.7,0.8) node[anchor=center]{$14$};
\draw (-0.6,0.8) node[anchor=center]{$12$};
\draw (-0.5,0.8) node[anchor=center]{$8$};

\draw (-2.,-0.6) node[anchor=center]{$3$};
\draw (-1.9,-0.6) node[anchor=center]{$5$};
\draw (-1.7,-0.6) node[anchor=center]{$2$};
\draw (-1.6,-0.6) node[anchor=center]{$1$};
\draw (-1.4,-0.6) node[anchor=center]{$6$};
\draw (-1.,-0.6) node[anchor=center]{$4$};
\end{scriptsize}

\draw (-3.1,0.8) node[anchor=west]{$\zeta_t$};
\draw (-3.1,0.6) node[anchor=west]{$\opi_t^{3,6}$};
\draw (-3.1,0.4) node[anchor=west]{$\opi_t^{2,5}$}; 
\draw (-3.1,0.2) node[anchor=west]{$\opi_t^{1,4}$};
\draw (-3.1,0.) node[anchor=west]{$\opi_t^{0,3}$};
\draw (-3.1,-0.2) node[anchor=west]{$\opi_t^{-1,2}$};
\draw (-3.1,-0.4) node[anchor=west]{$\opi_t^{-2,1}$};
\draw (-3.1,-0.6) node[anchor=west]{$\hmu_t^{3,6}$};
\end{tikzpicture}
\caption{An illustration of the equivalence between $\hmu_t^{3,6}$ and the sequence $\opi_t^{-2,1}, \opi_t^{-1,2}, \ldots, \opi_t^{3,6}$.}  
\label{fig:projscol}
\end{figure}

We then verify the following statement.

\begin{lemma}  \label{lem:law-hmu}
The process $\hmu^{A,C}$ is a colored TASEP on $\Z$, where there are only finitely many particles, at sites $A-C+1, \cdots, A$ initially, and are colored by $1, \cdots, C$, respectively.
\end{lemma}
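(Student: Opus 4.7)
The plan is to reduce everything to a single, clean identity:
\[
\hmu^{A,C}_t(x) =
\begin{cases} \zeta_t(x) - A + C, & \zeta_t(x) \in \{A-C+1,\ldots,A\}, \\ \infty, & \text{otherwise.} \end{cases}
\]
Once this is established, $\hmu^{A,C}$ is visibly nothing but the colored TASEP $\zeta$ restricted to the finitely many particles whose colors lie in $\{A-C+1,\ldots,A\}$, with those colors relabeled $1,\ldots,C$. Since $\zeta$ is already a colored TASEP driven by the Poisson clocks on edges, and the dynamics of the restricted process is inherited from $\zeta$ on the same clocks, $\hmu^{A,C}$ is itself a colored TASEP with the claimed initial configuration (particle of color $\zeta_0(x)-A+C = x-A+C$ at site $x$ for $x=A-C+1,\ldots,A$).

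The main step is to check, for each $1\le i \le C$, the identity
\[
\opi_t^{A-C+i,\,i}(x) = 0 \ \Longleftrightarrow\ \zeta_t(x)\in\{A-C+1,\ldots,A-C+i\}.
\]
By definition $\opi_t^{A-C+i,i}(x)=0$ means $\opi_t^{A-C+i}(x)=0$ and $x$ is among the $i$ rightmost such sites. The first condition is $\zeta_t(x)\le A-C+i$. For the second I would invoke the order-preservation property of the colored TASEP: within the subset of particles of color $\le A-C+i$, relative positions are preserved in time (a swap inside $\zeta$ either involves two of them, in which case it swaps adjacent elements of the subsequence according to the colored rule — but then relative order in the \emph{subset} is preserved because the two always move together through the same edge — or it involves at most one of them, in which case the subsequence's internal order is trivially preserved). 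At time $0$ the $i$ rightmost particles of color $\le A-C+i$ are exactly those at sites $A-C+1,\ldots,A-C+i$, with colors $A-C+1,\ldots,A-C+i$. Hence at any later time these same particles, identified by color, continue to be the $i$ rightmost particles of $\opi^{A-C+i}_t$. This gives the claimed equivalence.

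Given the equivalence, the smallest $i\in\{1,\ldots,C\}$ with $\zeta_t(x)\in\{A-C+1,\ldots,A-C+i\}$ is $i=\zeta_t(x)-(A-C)$ whenever $A-C+1\le \zeta_t(x)\le A$, and no such $i$ exists otherwise; this is precisely the boxed formula for $\hmu^{A,C}_t(x)$. One then concludes by observing that (i) at $t=0$ the formula gives particle $x-A+C$ at site $x$ for $A-C+1\le x\le A$ and $\infty$ elsewhere, matching the desired initial condition; and (ii) the dynamics of $x\mapsto\hmu^{A,C}_t(x)$ are inherited from $\zeta$ via the same Poisson clocks on edges $(x,x+1)$, with the swap rule on $\hmu^{A,C}$ matching the colored TASEP rule (a swap on $\zeta$ between colors in $\{A-C+1,\ldots,A\}$ produces the corresponding swap on $\hmu^{A,C}$; a swap involving a color outside this window either swaps a finite color with $\infty$ — which is correct since the outside color plays the role of a hole in $\hmu^{A,C}$ — or is invisible to $\hmu^{A,C}$, consistent with the rule that two holes do not swap).

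The hard part here is really just the order-preservation step, which is a standard but fundamental property of the colored TASEP; once it is stated cleanly, the rest is bookkeeping. No estimates, no analytic input, and no appeal to the six-vertex picture are needed.
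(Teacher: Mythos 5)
The proposal has a genuine gap: the boxed identity
\[
\hmu^{A,C}_t(x) = \zeta_t(x) - A + C \quad\text{when } \zeta_t(x)\in\{A-C+1,\ldots,A\},\qquad \infty \text{ otherwise,}
\]
and the order-preservation claim that feeds into it, are both false. The issue is that in the colored TASEP, particles of smaller color overtake particles of larger color, so the set of colors occupying the $i$ rightmost sites of $\opi^{A-C+i}$ is \emph{not} time-invariant. Concretely, take $A=0$, $C=2$, and apply the clocks on edges $(0,1)$, $(-1,0)$, $(-2,-1)$, $(-1,0)$ in that order starting from $\zeta_0=\mathrm{id}$. After these four swaps one has $\zeta(-1)=-1$, $\zeta(0)=-2$, $\zeta(1)=0$, while the two rightmost particles of $\opi^0$ sit at sites $0$ and $1$; hence $\hmu^{0,2}(0)=1$, $\hmu^{0,2}(-1)=\infty$, but your formula predicts $\hmu^{0,2}(0)=\infty$ and $\hmu^{0,2}(-1)=1$. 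The deeper problem is visible in your dynamics check: if a particle of color $<A-C+1$ is just to the left of one with color in the window and has smaller color, they swap in $\zeta$; your identity would then read this as a \emph{hole} moving right past a \emph{particle}, which is not TASEP dynamics. What is actually preserved is the \emph{set of positions} occupied by colors $\le A-C+i$, not the identity of which colors sit in the rightmost $i$ of those positions.

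The paper's proof avoids this pitfall by working entirely with the uncolored projections: it shows the nesting $\{x:\opi_t^{A-C+i,i}(x)=0\}\subset\{x:\opi_t^{A-C+i+1,i+1}(x)=0\}$ via a one-line counting argument (if $x$ is among the $i$ rightmost with $\zeta_t(x)\le A-C+i$, then $x$ is among the $i+1$ rightmost with $\zeta_t(x)\le A-C+i+1$), deduces that each color $1,\ldots,C$ labels exactly one site, and then cites the standard fact that the coupled family $(\opi^{A-C+i,i})_{i=1}^C$ of $i$-particle TASEPs driven by common clocks is exactly a colored TASEP. That argument never needs to identify which colored particle of $\zeta$ lands at a given rank, which is precisely the step that fails in your approach.
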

\begin{proof}
For each $1\le i < C$, $t\ge 0$, and $x\in \Z$, if $\opi_t^{A-C+i,i}(x)=0$, we have that $\opi_t^{A-C+i}(x)=0$, and $|\{y\ge x: \opi_t^{A-C+i}(y)=0\}| \le i$.
Thus we have $\opi_t^{A-C+i+1}(x)=0$, and $|\{y\ge x: \opi_t^{A-C+i+1}(y)=0\}| \le i+1$, so $\opi_t^{A-C+i+1,i+1}(x)=0$.
This means that there is exactly one $x\in \Z$ with $\opi_t^{A-C+i,i}(x)=\infty$ and $\opi_t^{A-C+i+1,i+1}(x)=0$, which is the only $x\in \Z$ with $\hmu_t^{A,C}(x)=i$.
We then get the conclusion from the fact that for each $1\le i\le C$, $\opi^{A-C+i,i}$ is a TASEP with $i$ particles starting from $A-C+1,\cdots, A-C+i$, and they evolve with the same Poisson clocks.
\end{proof}

While this projection only involves finitely many particles, it still contains much information on passage times in the original process.

\begin{lemma}  \label{lem:proj-info}
For any $A'\in\Z$, $B',C'\in\N$, such that $A'\le A$, $A'-C'\ge A-C$, the number $T^{A'}_{B',C'}$ is determined by $\hmu_t^{A,C}$, as
\[
T^{A'}_{B',C'}=\inf\{t>0: |\{x\in\Z: x\ge A'+B'+1-C', \hmu_t^{A,C}(x)\le A'-A+C \}|\ge C'\}.
\]
In particular, the configuration $\hmu_t^{A',C'}$ is determined by $\hmu_t^{A,C}$.
\end{lemma}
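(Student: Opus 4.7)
The plan is to unpack the definition of $\hmu_t^{A,C}$ and exploit the nested structure of the projections $\opi^{\cdot,\cdot}$. As a first step, I would verify the monotonicity
\[
\opi_t^{A-C+j, j}(x) = 0 \implies \opi_t^{A-C+j', j'}(x) = 0 \qquad (1 \le j \le j' \le C),
\]
which is essentially contained in the proof of the preceding lemma (each simultaneous unit increase of both indices adds exactly one new particle while leaving the existing ones intact). Iterating this gives the equivalent characterization
\[
\hmu_t^{A,C}(x) \le i \;\Longleftrightarrow\; \opi_t^{A-C+i,\, i}(x) = 0 \qquad \text{for every } 1 \le i \le C.
\]

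Next, I would specialize to $i = A' - A + C$, which lies in $[C', C]$ since $A' \le A$ and $A' - C' \ge A - C$. The equivalence becomes
\[
\{x : \hmu_t^{A,C}(x) \le A'-A+C\} \;=\; \{x : \opi_t^{A',\, A'-A+C}(x) = 0\}.
\]
To pass from $\opi^{A',\, A'-A+C}$ to a statement about $\opi^{A'}$ itself, I would observe that the truncation only removes all but the $A' - A + C$ rightmost particles of $\opi^{A'}_t$, and since $C' \le A'-A+C$, the $C'$-th rightmost particle is preserved. Hence for any $x_0$, the count of particles of $\opi^{A'}_t$ at or to the right of $x_0$ is $\ge C'$ if and only if the same holds for $\opi^{A',\, A'-A+C}_t$. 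Taking $x_0 = A' + B' + 1 - C'$ and comparing with the definition of $T^{A'}_{B',C'}$ yields the stated formula.

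For the ``in particular'' assertion, I would apply the same reduction level by level. Writing $\hmu^{A',C'}_t(x) \le i$ as $\opi^{A'-C'+i,\, i}_t(x) = 0$, the main claim of the lemma (applied with $A'' = A'-C'+i \le A$) determines $\opi^{A'-C'+i,\, A'-C'+i - A + C}_t$ from $\hmu^{A,C}_t$, and since $i \le A' - C' + i - A + C$ under our hypotheses, one then recovers $\opi^{A'-C'+i,\, i}_t$ by retaining only the $i$ rightmost occupied sites. This determines the level sets of $\hmu^{A',C'}_t$, so the whole configuration is determined.

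I do not anticipate any real obstacle here: the statement is essentially a careful unwinding of definitions. The only point of care is in tracking the two cutoff parameters (the color threshold $A''$ and the particle-count cutoff $C''$) in tandem, and verifying that the hypotheses $A' \le A$ and $A' - C' \ge A - C$ are precisely what is required to align the levels without discarding any of the particles relevant to the event being counted.
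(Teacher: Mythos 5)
Your proof is correct and follows essentially the same route as the paper's. You identify the level set $\{x : \hmu_t^{A,C}(x)\le A'-A+C\}$ with $\{x : \opi_t^{A',\,A'-A+C}(x)=0\}$ via the monotonicity of the nested projections, then pass from the $(A'-A+C)$-truncated process to $\opi^{A'}$ itself using $C'\le A'-A+C$; the paper does the same thing, phrased through the intermediate object $\hmu^{A',A'-A+C}$ rather than $\opi^{A',A'-A+C}$ (which agree by the same monotonicity). You also spell out the ``in particular'' assertion, which the paper leaves implicit, and the reduction you give is the natural one.
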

The fact that $\hmu_t^{A,C}$ contains sufficient information to determine $\hmu_t^{A',C'}$ can be understood as follows.
As we have discussed above, $\hmu_t^{A,C}$ is equivalent to $\opi_t^{A-C+1,1}, \opi_t^{A-C+2,2}, \ldots, \opi_t^{A,C}$.
For each $\opi_t^{A-C+i,i}$ in this sequence, by taking the $j$ rightmost particles (for $1\le j \le i$), we get $\opi_t^{A-C+i,j}$.
This way we can obtain each one of $\opi_t^{A'-C'+1,1}, \opi_t^{A'-C'+2,2}, \ldots, \opi_t^{A',C'}$, therefore get $\hmu_t^{A',C'}$.
See Figure \ref{fig:projct} for illustrations of $\hmu_t^{A,C}$ and  $\hmu_t^{A',C'}$ (with $A=1$, $C=8$, $A'=0$, $C'=5$).

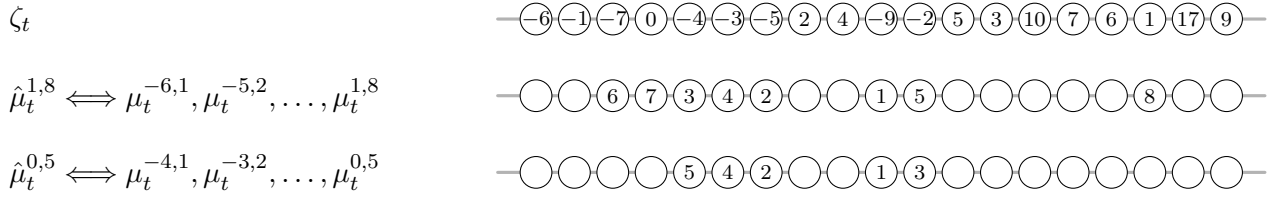
\begin{figure}[hbt!]
    \centering
    \begin{tikzpicture}[line cap=round,line join=round,>=triangle 45,x=5.1cm,y=5.1cm]
\clip(-4,0.1) rectangle (-0.35,.7);

\draw [line width=1.2pt, opacity=0.3] (-2.7,0.2) -- (-0.7,0.2);
\draw [fill=white] (-2.6,0.2) circle (6.0pt);
\draw [fill=white] (-2.5,0.2) circle (6.0pt);
\draw [fill=white] (-2.4,0.2) circle (6.0pt);
\draw [fill=white] (-2.3,0.2) circle (6.0pt);
\draw [fill=white] (-2.2,0.2) circle (6.0pt);
\draw [fill=white] (-2.1,0.2) circle (6.0pt);
\draw [fill=white] (-2.,0.2) circle (6.0pt);
\draw [fill=white] (-1.9,0.2) circle (6.0pt);
\draw [fill=white] (-1.8,0.2) circle (6.0pt);
\draw [fill=white] (-1.7,0.2) circle (6.0pt);
\draw [fill=white] (-1.6,0.2) circle (6.0pt);
\draw [fill=white] (-1.5,0.2) circle (6.0pt);
\draw [fill=white] (-1.4,0.2) circle (6.0pt);
\draw [fill=white] (-1.3,0.2) circle (6.0pt);
\draw [fill=white] (-1.2,0.2) circle (6.0pt);
\draw [fill=white] (-1.1,0.2) circle (6.0pt);
\draw [fill=white] (-1.,0.2) circle (6.0pt);
\draw [fill=white] (-0.9,0.2) circle (6.0pt);
\draw [fill=white] (-0.8,0.2) circle (6.0pt);

\draw [line width=1.2pt, opacity=0.3] (-2.7,0.4) -- (-0.7,0.4);
\draw [fill=white] (-2.6,0.4) circle (6.0pt);
\draw [fill=white] (-2.5,0.4) circle (6.0pt);
\draw [fill=white] (-2.4,0.4) circle (6.0pt);
\draw [fill=white] (-2.3,0.4) circle (6.0pt);
\draw [fill=white] (-2.2,0.4) circle (6.0pt);
\draw [fill=white] (-2.1,0.4) circle (6.0pt);
\draw [fill=white] (-2.,0.4) circle (6.0pt);
\draw [fill=white] (-1.9,0.4) circle (6.0pt);
\draw [fill=white] (-1.8,0.4) circle (6.0pt);
\draw [fill=white] (-1.7,0.4) circle (6.0pt);
\draw [fill=white] (-1.6,0.4) circle (6.0pt);
\draw [fill=white] (-1.5,0.4) circle (6.0pt);
\draw [fill=white] (-1.4,0.4) circle (6.0pt);
\draw [fill=white] (-1.3,0.4) circle (6.0pt);
\draw [fill=white] (-1.2,0.4) circle (6.0pt);
\draw [fill=white] (-1.1,0.4) circle (6.0pt);
\draw [fill=white] (-1.,0.4) circle (6.0pt);
\draw [fill=white] (-.9,0.4) circle (6.0pt);
\draw [fill=white] (-0.8,0.4) circle (6.0pt);

\draw [line width=1.2pt, opacity=0.3] (-2.7,0.6) -- (-0.7,0.6);
\draw [fill=white] (-2.6,0.6) circle (6.0pt);
\draw [fill=white] (-2.5,0.6) circle (6.0pt);
\draw [fill=white] (-2.4,0.6) circle (6.0pt);
\draw [fill=white] (-2.3,0.6) circle (6.0pt);
\draw [fill=white] (-2.2,0.6) circle (6.0pt);
\draw [fill=white] (-2.1,0.6) circle (6.0pt);
\draw [fill=white] (-2.,0.6) circle (6.0pt);
\draw [fill=white] (-1.9,0.6) circle (6.0pt);
\draw [fill=white] (-1.8,0.6) circle (6.0pt);
\draw [fill=white] (-1.7,0.6) circle (6.0pt);
\draw [fill=white] (-1.6,0.6) circle (6.0pt);
\draw [fill=white] (-1.5,0.6) circle (6.0pt);
\draw [fill=white] (-1.4,0.6) circle (6.0pt);
\draw [fill=white] (-1.3,0.6) circle (6.0pt);
\draw [fill=white] (-1.2,0.6) circle (6.0pt);
\draw [fill=white] (-1.1,0.6) circle (6.0pt);
\draw [fill=white] (-1.,0.6) circle (6.0pt);
\draw [fill=white] (-.9,0.6) circle (6.0pt);
\draw [fill=white] (-0.8,0.6) circle (6.0pt);

\begin{scriptsize}
\draw (-2.6,0.6) node[anchor=center]{$-6$};
\draw (-2.5,0.6) node[anchor=center]{$-1$};
\draw (-2.4,0.6) node[anchor=center]{$-7$};
\draw (-2.3,0.6) node[anchor=center]{$0$};
\draw (-2.2,0.6) node[anchor=center]{$-4$};
\draw (-2.1,0.6) node[anchor=center]{$-3$};
\draw (-2.0,0.6) node[anchor=center]{$-5$};
\draw (-1.9,0.6) node[anchor=center]{$2$};
\draw (-1.8,0.6) node[anchor=center]{$4$};
\draw (-1.7,0.6) node[anchor=center]{$-9$};
\draw (-1.6,0.6) node[anchor=center]{$-2$};
\draw (-1.5,0.6) node[anchor=center]{$5$};
\draw (-1.4,0.6) node[anchor=center]{$3$};
\draw (-1.3,0.6) node[anchor=center]{$10$};
\draw (-1.2,0.6) node[anchor=center]{$7$};
\draw (-1.1,0.6) node[anchor=center]{$6$};
\draw (-1.0,0.6) node[anchor=center]{$1$};
\draw (-0.9,0.6) node[anchor=center]{$17$};
\draw (-0.8,0.6) node[anchor=center]{$9$};

\draw (-2.4,0.4) node[anchor=center]{$6$};
\draw (-2.3,0.4) node[anchor=center]{$7$};
\draw (-2.2,0.4) node[anchor=center]{$3$};
\draw (-2.1,0.4) node[anchor=center]{$4$};
\draw (-2.0,0.4) node[anchor=center]{$2$};
\draw (-1.7,0.4) node[anchor=center]{$1$};
\draw (-1.6,0.4) node[anchor=center]{$5$};
\draw (-1.0,0.4) node[anchor=center]{$8$};

\draw (-2.2,0.2) node[anchor=center]{$5$};
\draw (-2.1,0.2) node[anchor=center]{$4$};
\draw (-2.0,0.2) node[anchor=center]{$2$};
\draw (-1.7,0.2) node[anchor=center]{$1$};
\draw (-1.6,0.2) node[anchor=center]{$3$};
\end{scriptsize}

\draw (-4,0.6) node[anchor=west]{$\zeta_t$};
\draw (-4,0.4) node[anchor=west]{$\hmu_t^{1,8}\Longleftrightarrow \opi_t^{-6,1}, \opi_t^{-5,2}, \ldots, \opi_t^{1,8}$}; 
\draw (-4,0.2) node[anchor=west]{$\hmu_t^{0,5}\Longleftrightarrow \opi_t^{-4,1}, \opi_t^{-3,2}, \ldots, \opi_t^{0,5}$};
\end{tikzpicture}
    \caption{Illustrations of $\zeta_t$, $\hmu_t^{1,8}$, and $\hmu_t^{0,5}$.}
    \label{fig:projct}
\end{figure}

\begin{proof}[Proof of Lemma \ref{lem:proj-info}]
From the construction, for any $x\in\Z$, we have that $\hmu_t^{A,C}(x)\le A'-A+C$ if and only if there is some $1\le i \le A'-A+C$, such that $\opi_t^{A-C+i,i}(x)=0$, thus if and only if $\hmu_t^{A',A'-A+C}(x) < \infty$.
Thus the right-hand side is the first time $t$ when $|\{x\ge A'+B'+1-C': \hmu_t^{A',A'-A+C}(x) < \infty\}| \ge C'$.
Since $A'-A+C\ge C'$, this is also the first time $t$ when $|\{x\ge A'+B'+1-C': \hmu_t^{A'}(x) < \infty\}| \ge C'$, which is precisely $T^{A'}_{B',C'}$.
\end{proof}

\subsection{Poisson field}  \label{ssec:poif}

We use $\Pi$ to denote the Poisson clock of the original colored TASEP $\zeta$; i.e., $\Pi$ is a Poisson field on $\Z\times [0,\infty)$, where for each $x\in\Z$ and any $0\le a < b$, $\Pi(\{x\}\times [a,b])$ is the number of times that the clock on the edge $(x-1, x)$ rings in the time interval $[a,b]$.
Throughout this paper, we shall also assume (the probability $1$ event) that for any two different points $(x_1,t_1)$ and $(x_2,t_2)$ in the Poisson field $\Pi$, we have $t_1\neq t_2$ and $t_1, t_2 >0$.

From now on, for any $A\in\Z$ and $B, C\in\N$, we denote $\cP[A, B, C]=(A+B+1-C, T^A_{B,C})$, which is the point in $\Pi$ corresponding to the passage time $T^A_{B,C}$.

We will use the following two lemmas, which say that for certain events on passage times, they can be determined by $\Pi$ on some subsets of $\Z\times [0,\infty)$.
The first one can be understood as an inductive construction of passage times.
\begin{lemma}  \label{lem:I-deter-gen}
Take any $f:\Z\to [0,\infty)\cup \{\infty\}$, and let $U = \{(x,t): x\in \Z, t\le f(x)\}$ be the hypograph of $f$.
Take any $A\in\Z$ and $B,C \in \N$, and $0 \le s \le f(A+B-C+1)$.
Then the event $I^s[A,B,C] \cap \left(\bigcap_{C'=1}^C I^{f(A+B-C'+1)}[A,B,C'] \right)\cap \left(\bigcap_{B'=1}^B I^{f(A+B'-C+1)}[A,B',C] \right)$ is determined by $\Pi$ on $U$.
\end{lemma}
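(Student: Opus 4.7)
The plan is to identify the event in the lemma as a system of upper-bound constraints on the LPP passage times and to show that this system can be checked from the portion of the Poisson field inside $U$. Writing $x_{B',C'} := A + B' - C' + 1$, the identification $T^A_{B',C'} = L^A_{(1,1),(B',C')}$ gives the recursion
\[
T^A_{B',C'} = \max\!\left(T^A_{B'-1,C'},\, T^A_{B',C'-1}\right) + \omega^A((B',C')),
\]
with $T^A_{0,\cdot} = T^A_{\cdot,0} = 0$, where $\omega^A((B',C'))$ is exactly the waiting time from the setup instant $\max(T^A_{B'-1,C'}, T^A_{B',C'-1})$ until the next ring of $\Pi$ at site $x_{B',C'}$ (recall that this site label encodes the edge where the swap between the $B'$-th leftmost hole and the $C'$-th rightmost particle takes place). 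Consequently the whole family $\{T^A_{B',C'}\}_{1\le B'\le B,\,1\le C'\le C}$ is a deterministic function of the rings of $\Pi$ on the strip of sites $[A-C+2,\,A+B]$, and these sites are exactly the ones whose cutoff $t_x$ appears in the definition of the event.

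Next I would run the same recursion using only the visible rings: define $\tilde T_{B',C'}$ inductively as the first ring of $\Pi$ at site $x_{B',C'}$ in the window $(\max(\tilde T_{B'-1,C'},\, \tilde T_{B',C'-1}),\, t_{x_{B',C'}}]$, setting $\tilde T_{B',C'} = \infty$ if no such ring exists. Then $\tilde T_{B',C'}$ is manifestly $\sigma(\Pi|_U)$-measurable. Let $\tilde E$ be the event obtained by replacing every $T^A$ in the statement by the corresponding $\tilde T$; then $\don_{\tilde E}$ is $\sigma(\Pi|_U)$-measurable, so it suffices to show $E = \tilde E$. The combinatorial content underlying this equality is the observation that every anti-diagonal $\{(B'',C''): B'' - C'' = B' - C'\}$ meets the L-shaped boundary $\{(B,\cdot)\} \cup \{(\cdot,C)\}$ at a point $(B'',C'') \geq (B',C')$ componentwise, and on such an anti-diagonal $x_{B'',C''} = x_{B',C'}$, so the cutoff imposed by $E$ at site $x_{B',C'}$ at least controls the passage time at $(B'',C'')$, which in turn dominates the passage time at $(B',C')$.

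Equipped with this, I would prove by induction on $B' + C'$ that on $E$ one has $\tilde T_{B',C'} = T^A_{B',C'}$: the inductive hypothesis reconstructs the setup time correctly, and monotonicity of the true passage times together with the anti-diagonal observation yields $T^A_{B',C'} \leq T^A_{B'',C''} \leq t_{x_{B',C'}}$, so the next ring at site $x_{B',C'}$ after the setup does lie in the visible window, whence the reconstruction picks it up. A parallel induction, using that $\tilde T_{B'-1,C'},\ \tilde T_{B',C'-1} \leq \tilde T_{B',C'}$ (so finiteness propagates downward through the recursion), shows that on $\tilde E$ one again has $\tilde T = T^A$, giving $E = \tilde E$. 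The main thing to be careful about will be the combinatorial check that the L-shaped boundary dominates every interior anti-diagonal in the above sense, so that each interior passage time inherits a cutoff at the correct site; once this is in hand, the inductive reconstruction and the base case $(B',C')=(1,1)$ are essentially formal.
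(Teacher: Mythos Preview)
Your proposal is correct and follows essentially the same route as the paper: both proofs introduce an inductive reconstruction of the passage times $T^A_{B',C'}$ from the portion of $\Pi$ lying in $U$ (what you call $\tilde T$, and what the paper encodes as the pair $\don[T^A_{B',C'}\le t_{x_{B',C'}}]$ together with $T^A_{B',C'}\don[\ldots]$), and then argue that this reconstruction suffices to decide the event. Your write-up is more explicit than the paper's in two respects: you isolate the anti-diagonal observation (every anti-diagonal through the rectangle hits the L-shaped boundary at a point dominating the interior point, so on $E$ every interior $T^A_{B',C'}$ inherits the bound $\le t_{x_{B',C'}}$), and you separate the two inclusions $E\subseteq\tilde E$ and $\tilde E\subseteq E$ cleanly; the paper leaves both implicit.
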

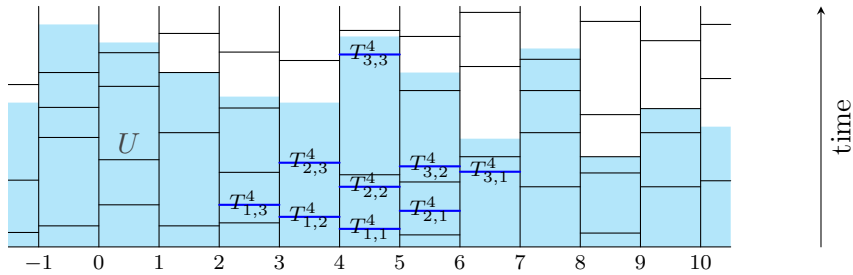
\begin{figure}[hbt!]
    \centering
\begin{tikzpicture}[line cap=round,line join=round,>=triangle 45,x=.8cm,y=.8cm]
\clip(5.5,-0.5) rectangle (19.8,4.7);

\fill[line width=0.pt,color=cyan,fill=cyan,fill opacity=0.3]
(2.5,0) -- (17.5,0) -- (17.5,2) -- (17,2) -- (17,2.3) -- (16,2.3) -- (16,1.5) -- (15,1.5) -- (15,3.3) -- (14,3.3) -- (14,1.8) -- (13,1.8) -- (13,2.9) -- (12,2.9) -- (12,3.5) -- (11,3.5) -- (11,2.4) -- (10,2.4) -- (10,2.5) -- (9,2.5) -- (9,2.9) -- (8,2.9) -- (8,3.4) -- (7,3.4) -- (7,3.7) -- (6,3.7) -- (6,2.4) -- (5,2.4) -- (5,3) -- (4,3) -- (4,2.7) -- (3,2.7) -- (3,1.8) -- (2.5,1.8) -- cycle;

\draw (2.5,0) -- (17.5,0);

\draw (3,0) -- (3,4);
\draw (4,0) -- (4,4);
\draw (5,0) -- (5,4);
\draw (6,0) -- (6,4);
\draw (7,0) -- (7,4);
\draw (8,0) -- (8,4);
\draw (9,0) -- (9,4);
\draw (10,0) -- (10,4);
\draw (11,0) -- (11,4);
\draw (12,0) -- (12,4);
\draw (13,0) -- (13,4);
\draw (14,0) -- (14,4);
\draw (15,0) -- (15,4);
\draw (16,0) -- (16,4);
\draw (17,0) -- (17,4);

\draw [-stealth] (19,0) -- (19,4);

\draw (3,3.2) -- (4,3.2);
\draw (3,2.3) -- (4,2.3);
\draw (3,1.1) -- (4,1.1);
\draw (4,3.8) -- (5,3.8);
\draw (4,2.0) -- (5,2.0);
\draw (4,1.2) -- (5,1.2);
\draw (4,0.1) -- (5,0.1);
\draw (5,2.7) -- (6,2.7);
\draw (5,1.11) -- (6,1.11);
\draw (5,0.24) -- (6,0.24);

\draw (6,2.9) -- (7,2.9);
\draw (6,2.32) -- (7,2.32);
\draw (6,1.82) -- (7,1.82);
\draw (6,0.35) -- (7,0.35);

\draw (7,3.23) -- (8,3.23);
\draw (7,2.67) -- (8,2.67);
\draw (7,1.45) -- (8,1.45);
\draw (7,0.7) -- (8,0.7);

\draw (8,3.55) -- (9,3.55);
\draw (8,2.9) -- (9,2.9);
\draw (8,1.9) -- (9,1.9);
\draw (8,0.35) -- (9,0.35);

\draw (9,3.24) -- (10,3.24);
\draw (9,2.31) -- (10,2.31);
\draw (9,1.24) -- (10,1.24);
\draw [blue] [thick] (9,0.7) -- (10,0.7);
\draw (9,0.4) -- (10,0.4);

\draw (10,3.1) -- (11,3.1);
\draw [blue] [thick] (10,1.4) -- (11,1.4);
\draw [blue] [thick] (10,0.5) -- (11,0.5);

\draw [blue] [thick] (11,3.2) -- (12,3.2);
\draw (11,3.6) -- (12,3.6);
\draw (11,1.2) -- (12,1.2);
\draw [blue] [thick] (11,1.0) -- (12,1.0);
\draw [blue] [thick] (11,0.3) -- (12,0.3);

\draw (12,3.5) -- (13,3.5);
\draw (12,2.6) -- (13,2.6);
\draw [blue] [thick] (12,1.34) -- (13,1.34);
\draw (12,1.08) -- (13,1.08);
\draw [blue] [thick] (12,0.6) -- (13,0.6);
\draw (12,0.2) -- (13,0.2);

\draw (13,3.9) -- (14,3.9);
\draw (13,3.) -- (14,3.);
\draw (13,1.5) -- (14,1.5);
\draw [blue] [thick] (13,1.25) -- (14,1.25);

\draw (14,3.12) -- (15,3.12);
\draw (14,2.6) -- (15,2.6);
\draw (14,1.9) -- (15,1.9);
\draw (14,1.0) -- (15,1.0);

\draw (15,3.75) -- (16,3.75);
\draw (15,2.2) -- (16,2.2);
\draw (15,1.5) -- (16,1.5);
\draw (15,1.23) -- (16,1.23);
\draw (15,0.23) -- (16,0.23);

\draw (16,3.43) -- (17,3.43);
\draw (16,2.3) -- (17,2.3);
\draw (16,1.9) -- (17,1.9);
\draw (16,1.0) -- (17,1.0);

\draw (17,3.7) -- (17.5,3.7);
\draw (17,2.8) -- (17.5,2.8);
\draw (17,1.1) -- (17.5,1.1);

\draw (19,2) node[anchor=north,rotate=90]{time};

\begin{scriptsize}
\draw (3,0) node[anchor=north]{$-4$};
\draw (4,0) node[anchor=north]{$-3$};
\draw (5,0) node[anchor=north]{$-2$};
\draw (6,0) node[anchor=north]{$-1$};
\draw (7,0) node[anchor=north]{$0$};
\draw (8,0) node[anchor=north]{$1$};
\draw (9,0) node[anchor=north]{$2$};
\draw (10,0) node[anchor=north]{$3$};
\draw (11,0) node[anchor=north]{$4$};
\draw (12,0) node[anchor=north]{$5$};
\draw (13,0) node[anchor=north]{$6$};
\draw (14,0) node[anchor=north]{$7$};
\draw (15,0) node[anchor=north]{$8$};
\draw (16,0) node[anchor=north]{$9$};
\draw (17,0) node[anchor=north]{$10$};

\draw (11.5,0.3) node[anchor=center]{$T^4_{1,1}$};
\draw (11.5,1) node[anchor=center]{$T^4_{2,2}$};
\draw (11.5,3.2) node[anchor=center]{$T^4_{3,3}$};

\draw (10.5,0.5) node[anchor=center]{$T^4_{1,2}$};
\draw (10.5,1.4) node[anchor=center]{$T^4_{2,3}$};

\draw (9.5,0.7) node[anchor=center]{$T^4_{1,3}$};

\draw (12.5,0.6) node[anchor=center]{$T^4_{2,1}$};
\draw (12.5,1.34) node[anchor=center]{$T^4_{3,2}$};

\draw (13.5,1.25) node[anchor=center]{$T^4_{3,1}$};
\end{scriptsize}

\draw (7.5,1.4) node[anchor=south, color=uuuuuu]{$U$};

\end{tikzpicture}
\caption{
An illustration of Lemma \ref{lem:I-deter-gen}: the segments between the lines $x-1$ and $x$ represent points of $\Pi$ on $\{x\}\times [0,\infty)$, and the blue ones are those involved in the recursive determination of $T^4_{3,3}$.
}  
\label{fig:detU}
\end{figure}

Essentially, this lemma says that to determine whether all of $\cP[A,B,C]$, $\{\cP[A,B,C']\}_{C'=1}^C$, and $\{\cP[A,B',C]\}_{B'=1}^B$ are contained in $U$, it suffices to reveal $\Pi$ on $U$.
We note that for this statement to be true, one needs to take such a collection of points: whether $\cP[A,B,C]\in \{A+B+1-C\}\times [0,t]$ (i.e., the event $I^t[A,B,C]$) is not determined by $\Pi$ on $\{A+B+1-C\}\times [0,t]$, unless $B=C=1$.

The idea behind this lemma is that $T^A_{B,C}$ can be inductively determined (see Figure \ref{fig:detU}).
Namely, it is the first time that the clock on the edge $(A+B-C, A+B+1-C)$ rings, after $T^A_{B-1,C}\vee T^A_{B,C-1}$.
Thus if both $T^A_{B-1,C}$ and $T^A_{B-1,C}$ are known after revealing $\Pi$ on $U$, one can also determine if $\cP[A,B,C]$ is in $U$.
\begin{proof}[Proof of Lemma \ref{lem:I-deter-gen}]
Denote the event in this lemma as $\cE$. 
For each $B', C' \in \N$, $B'\le B$ and $C'\le C$, we can determine $T^A_{B',C'}$ as the smallest $t > T^A_{B'-1,C'}\vee T^A_{B',C'-1}$ such that there is a point at $(A+B'+1-C', t)$ in $\Pi$; here for any $B', C'$ we take $T^A_{B',0}=T^A_{0,C'}=0$.
We can then inductively determine if the event $\cE$ holds, as follows. Suppose that we've determined $T^A_{B'-1,C'}\don[T^A_{B'-1,C'} \le f(A+B'-C')]$ and $T^A_{B',C'-1}\don[T^A_{B',C'-1} \le f(A+B'+2-C')]$.
Then by $\Pi$ on $U$ we can then determine if $I^{f(A+B'+1-C')}[A,B',C']$ holds; and if it holds, we can further determine the value of $T^A_{B',C'}$.
Thus the conclusion follows.
\end{proof}
Our second lemma states that the randomness can be leveraged: given some `boundary conditions', further passage times are independent of the randomness `below the boundary'.
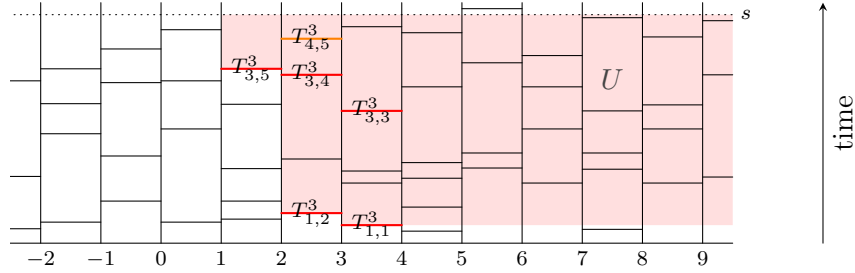
\begin{figure}[hbt!]
    \centering
\begin{tikzpicture}[line cap=round,line join=round,>=triangle 45,x=.8cm,y=.8cm]
\clip(5.5,-0.5) rectangle (19.8,4.7);

\fill[line width=0.pt,color=pink,fill=pink,fill opacity=0.5]
(9,3.8) -- (17.5,3.8) -- (17.5,0.3) -- (11,0.3) -- (11,0.5) -- (10,0.5) -- (10,2.9) -- (9,2.9) -- cycle;

\draw [dotted] (2.5,3.8) -- (17.5,3.8);

\draw (2.5,0) -- (17.5,0);

\draw (3,0) -- (3,4);
\draw (4,0) -- (4,4);
\draw (5,0) -- (5,4);
\draw (6,0) -- (6,4);
\draw (7,0) -- (7,4);
\draw (8,0) -- (8,4);
\draw (9,0) -- (9,4);
\draw (10,0) -- (10,4);
\draw (11,0) -- (11,4);
\draw (12,0) -- (12,4);
\draw (13,0) -- (13,4);
\draw (14,0) -- (14,4);
\draw (15,0) -- (15,4);
\draw (16,0) -- (16,4);
\draw (17,0) -- (17,4);

\draw [-stealth] (19,0) -- (19,4);

\draw (3,3.2) -- (4,3.2);
\draw (3,2.3) -- (4,2.3);
\draw (3,1.1) -- (4,1.1);
\draw (4,3.8) -- (5,3.8);
\draw (4,2.0) -- (5,2.0);
\draw (4,1.2) -- (5,1.2);
\draw (4,0.1) -- (5,0.1);
\draw (5,2.7) -- (6,2.7);
\draw (5,1.11) -- (6,1.11);
\draw (5,0.24) -- (6,0.24);

\draw (6,2.9) -- (7,2.9);
\draw (6,2.32) -- (7,2.32);
\draw (6,1.82) -- (7,1.82);
\draw (6,0.35) -- (7,0.35);

\draw (7,3.23) -- (8,3.23);
\draw (7,2.67) -- (8,2.67);
\draw (7,1.45) -- (8,1.45);
\draw (7,0.7) -- (8,0.7);

\draw (8,3.55) -- (9,3.55);
\draw (8,2.7) -- (9,2.7);
\draw (8,1.9) -- (9,1.9);
\draw (8,0.35) -- (9,0.35);

\draw [red] [thick] (9,2.9) -- (10,2.9);
\draw (9,2.31) -- (10,2.31);
\draw (9,1.24) -- (10,1.24);
\draw (9,0.7) -- (10,0.7);
\draw (9,0.4) -- (10,0.4);

\draw [orange] [thick] (10,3.4) -- (11,3.4);
\draw [red] [thick] (10,2.8) -- (11,2.8);
\draw (10,1.4) -- (11,1.4);
\draw [red] [thick] (10,0.5) -- (11,0.5);

\draw [red] [thick] (11,2.2) -- (12,2.2);
\draw (11,3.6) -- (12,3.6);
\draw (11,1.2) -- (12,1.2);
\draw (11,1.0) -- (12,1.0);
\draw [red] [thick] (11,0.3) -- (12,0.3);

\draw (12,3.5) -- (13,3.5);
\draw (12,2.6) -- (13,2.6);
\draw (12,1.34) -- (13,1.34);
\draw (12,1.08) -- (13,1.08);
\draw (12,0.6) -- (13,0.6);
\draw (12,0.2) -- (13,0.2);

\draw (13,3.9) -- (14,3.9);
\draw (13,3.) -- (14,3.);
\draw (13,1.5) -- (14,1.5);
\draw (13,1.25) -- (14,1.25);

\draw (14,3.12) -- (15,3.12);
\draw (14,2.6) -- (15,2.6);
\draw (14,1.9) -- (15,1.9);
\draw (14,1.0) -- (15,1.0);

\draw (15,3.75) -- (16,3.75);
\draw (15,2.2) -- (16,2.2);
\draw (15,1.5) -- (16,1.5);
\draw (15,1.23) -- (16,1.23);
\draw (15,0.23) -- (16,0.23);

\draw (16,3.43) -- (17,3.43);
\draw (16,2.3) -- (17,2.3);
\draw (16,1.9) -- (17,1.9);
\draw (16,1.0) -- (17,1.0);

\draw (17,3.7) -- (17.5,3.7);
\draw (17,2.8) -- (17.5,2.8);
\draw (17,1.1) -- (17.5,1.1);

\draw (19,2) node[anchor=north,rotate=90]{time};

\begin{scriptsize}
\draw (6,0) node[anchor=north]{$-2$};
\draw (7,0) node[anchor=north]{$-1$};
\draw (8,0) node[anchor=north]{$0$};
\draw (9,0) node[anchor=north]{$1$};
\draw (10,0) node[anchor=north]{$2$};
\draw (11,0) node[anchor=north]{$3$};
\draw (12,0) node[anchor=north]{$4$};
\draw (13,0) node[anchor=north]{$5$};
\draw (14,0) node[anchor=north]{$6$};
\draw (15,0) node[anchor=north]{$7$};
\draw (16,0) node[anchor=north]{$8$};
\draw (17,0) node[anchor=north]{$9$};

\draw (11.5,0.3) node[anchor=center]{$T^3_{1,1}$};

\draw (10.5,0.5) node[anchor=center]{$T^3_{1,2}$};

\draw (11.5,2.2) node[anchor=center]{$T^3_{3,3}$};

\draw (10.5,2.8) node[anchor=center]{$T^3_{3,4}$};

\draw (9.5,2.9) node[anchor=center]{$T^3_{3,5}$};

\draw (10.5,3.4) node[anchor=center]{$T^3_{4,5}$};

\draw (17.5,3.8) node[anchor=west]{$s$};
\end{scriptsize}

\begin{tiny}

\end{tiny}
\draw (15.5,2.4) node[anchor=south, color=uuuuuu]{$U$};

\end{tikzpicture}
\caption{
An illustration of Lemma \ref{lem:I-deter-bdy}: given $T^3_{1,1}$, $T^3_{1,2}$, $T^3_{3,3}$, $T^3_{3,4}$, $T^3_{3,5}$, (i.e., $A=3$, $C=5$, $(B_1, B_2, B_3, B_4, B_5)=(1,1,3,3,5)$), the event $T^3_{4,5} \le s$ (i.e., $I^s[3,4,5]$) is determined by $\Pi$ on $U$.
}  
\label{fig:detB}
\end{figure}
\begin{lemma}  \label{lem:I-deter-bdy}
Take any $A\in\Z$ and $B,C\in \N$, $s>0$, and a sequence of integers $0=B_0\le B_1\le \cdots \le B_C \le B$.
Suppose $i_* = \max\{0\le i \le C: B_i=0\}$,
we take $\{t_i\}_{i=1}^C$ such that $t_i= 0 $ for any $i\le i_*$, and $t_i<t_{i+1}<s$ for any $i_*\le i <C$.
Let $U\subset \Z\times [0,\infty)$, where \[U=\bigcup_{i=1}^C\{(x,t): x\ge A+B_i+1-i, t_i<t \le s\}.\]
Conditional on $T^A_{B_i,i}=t_i$ for each $1\le i \le C$, $B_i >0$, we have that the event $I^s[A,B,C]$ is determined by $\Pi$ on $U$.
\end{lemma}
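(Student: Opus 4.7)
The strategy is to reconstruct, from $\Pi|_U$ together with the conditioning values, the trajectories of the $C$ rightmost particles of $\opi^A$ starting from the times $t_j$ fixed by the conditioning, and then to read off $I^s_{A,B,C}$ from this reconstruction. Write $X_j(t)$ for the position of the $j$-th rightmost particle in $\opi^A_t$, so $X_j(0)=A+1-j$. The conditioning $T^A_{B_i,i}=t_i$ for those indices with $B_i>0$ (equivalently $i>i_*$) is exactly the statement that $X_i(t_i)=A+B_i+1-i$ while $X_i(t)<A+B_i+1-i$ for $t<t_i$. For $i\le i_*$, the convention $t_i=0$ together with the step initial condition gives $X_i(0)=A+1-i$, which is consistent with $B_i=0$.

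I will prove, by induction on $j=1,2,\dots,C$, that $\{X_j(t)\}_{t\in[t_j,s]}$ is a deterministic function of $\Pi|_U$ and the conditioning values. Two facts from the graphical construction of TASEP drive the induction: (i) $X_j$ advances from $x$ to $x+1$ at the first ring after the current time of the Poisson clock on the edge $(x,x+1)$, provided $X_{j-1}>x+1$ at that moment; and (ii) $X_j$'s motion depends only on clocks strictly to its right and on the position of $X_{j-1}$, never on $X_{j+1},X_{j+2},\dots$. Since $X_j(t)\ge A+B_j-j+1$ for all $t\ge t_j$, every clock needed to run $X_j$ forward from $t_j$ sits at some position $\ge A+B_j-j+2$ at some time in $(t_j,s]$, and every such clock lies in the index-$j$ slab $\{(x,t):x\ge A+B_j-j+1,\ t_j<t\le s\}$ of $U$. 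By the inductive hypothesis, $X_{j-1}$ is already determined on $[t_{j-1},s]\supseteq[t_j,s]$, since the $t_i$ are non-decreasing. Hence the dynamics determining $X_j$ on $[t_j,s]$ is completely specified.

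To conclude, observe that $I^s_{A,B,C}=\{X_C(t)\ge A+B+1-C\text{ for some }t\le s\}$. If $B\le B_C$, then $X_C(t_C)=A+B_C+1-C\ge A+B+1-C$ and $t_C\le s$, so the event holds automatically from the conditioning. If $B>B_C$, then since $T^A_{B,C}$ is nondecreasing in $B$ and $X_C$ is nondecreasing in $t$, the event is equivalent to $X_C(t)\ge A+B+1-C$ for some $t\in[t_C,s]$, which is a functional of $X_C|_{[t_C,s]}$ and therefore of $\Pi|_U$.

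The main obstacle is not conceptual but a careful bookkeeping check that, slab by slab, $U$ really does contain every clock needed to run every particle $X_j$ forward after $t_j$. The key inequalities are $B_k\le B_j$ and $t_k\le t_j$ for $k\le j$, which ensure that the slab of $U$ at index $k$ is wide enough and opens early enough to supply $X_k$'s dynamics on $[t_k,s]$—in particular during the subinterval $[t_j,s]$, when $X_k$ plays the role of collision blocker in the chain leading to $X_j$. A minor additional point to dispatch is that each slab at index $j$ contains the extra column $x=A+B_j-j+1$ which $X_j$ itself never uses after $t_j$ (it could only be used by $X_{j+1}$, which we never need to track), so this redundancy is harmless.
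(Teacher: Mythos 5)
Your proof is correct and is essentially the paper's argument viewed in particle-trajectory language rather than passage-time language: the paper's induction on $B'+C'$ over the set $S=\{(B',C'):B_{C'}<B'\le B\}$ is precisely the LPP-grid encoding of your induction on the particle index $j$, with the recursion $T^A_{B',C'}=$ (first ring at $A+B'-C'+1$ after $T^A_{B'-1,C'}\vee T^A_{B',C'-1}$) corresponding to your statement that $X_j$ advances only via clocks to its right and is blocked only by $X_{j-1}$. Both hinge on the same inclusion check that the needed clocks lie in the relevant slab of $U$, so there is no substantive difference in the route.
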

In words, here the `boundary' is $\{(A+B_i+1-i, T^A_{B_i,i})\}_{i=1}^C$, given which the passage time $T^A_{B,C}$ is determined by $\Pi$ above and to the right of it.
See Figure \ref{fig:detB}.
\begin{proof}[Proof of Lemma \ref{lem:I-deter-bdy}]
For simplicity of notation, we denote $T^A_{B',0}=T^A_{0,C'}=0$ for any $B',C'\in\N$.

Let $S=\{(B',C')\in\N^2: B_{C'}<B'\le B\}$. 
For $(B',C')\in S$, we can determine $T_{A,B',C'}\don[T_{A,B',C'} \le s]$ using $\Pi$ on $U$, by induction in $B'+C'$ as follows.
Take any $(B',C')\in S$. If $(B'-1,C')\in S$, we have that 
$T^A_{B'-1,C'}\don[T^A_{B'-1,C'} \le s]$ has been determined, by induction hypothesis; otherwise we must have that $B'-1=B_{C'}$, thus $T^A_{B'-1,C'}=t_{C'}\le s$.
If $(B',C'-1)\in S$, we have that $T^A_{B',C'-1}\don[T^A_{B',C'-1} \le s]$ has been determined, by induction hypothesis; otherwise we must have that $C'-1=0$, thus $T^A_{B',C'-1}=0$.
In any case, both $T^A_{B'-1,C'}\don[T^A_{B'-1,C'} \le s]$ and $T^A_{B',C'-1}\don[T^A_{B',C'-1} \le s]$ haven been by $\Pi$ on $U$.

If $I^s[A,B'-1,C']$ or $I^s[A,B',C'-1]$ does not hold, we have that $I^s[A,B',C']$ does not hold.
If $I^s[A,B'-1,C']\cap I^s[A,B',C'-1]$ holds, we can determine $T_{A,B',C'}\don[T_{A,B',C'} \le s]$, by considering the smallest $t>T_{A,B'-1,C'}\vee T_{A,B',C'-1}$, such that $t\le s$ and there is a point at $(A+B'+1-C',t)$ in $\Pi$.
By $\Pi$ on $U$ we can determine whether such $t$ exists, and its value (if it exists).
Thus by the principle of induction, the conclusion follows.
\end{proof}

\subsection{Sum of exponential random variables} \label{ssec:density}
We need the following basic results, on the analyticity of density functions of sums of independent exponential random variables.
\begin{lemma}  \label{lem:analy-single}
Let $m\in\N$, $E_1, \cdots, E_m$ be i.i.d. $\Exp(1)$ random variables, and $a_1, \cdots, a_m>0$. The function
\[
r\mapsto \PP\left[\sum_{i=1}^m a_iE_i =r\right],
\]
is analytic in $[0, \infty)$, and can be extended to an analytic function on $\C$. 
\end{lemma}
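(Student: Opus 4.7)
The plan is to obtain an explicit closed-form expression for the density of $S = \sum_{i=1}^m a_i E_i$ and observe that every term in this expression is the restriction of an entire function on $\C$. The starting point will be the Laplace transform, which factors as
\[
\E[e^{-sS}] = \prod_{i=1}^m \frac{1}{1+a_i s}, \qquad \Re(s) > -\min_i a_i^{-1}.
\]
Grouping equal values, let $\lambda_1 < \cdots < \lambda_\ell$ be the distinct values among $\{a_i^{-1}\}_{i=1}^m$, with multiplicities $m_1, \ldots, m_\ell$, so that $\E[e^{-sS}] = \prod_{i=1}^\ell \lambda_i^{m_i}(\lambda_i+s)^{-m_i}$.

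Next I will apply a partial fraction decomposition
\[
\prod_{i=1}^\ell \frac{\lambda_i^{m_i}}{(\lambda_i+s)^{m_i}} = \sum_{i=1}^\ell \sum_{k=1}^{m_i} \frac{c_{i,k}}{(\lambda_i+s)^k},
\]
for constants $c_{i,k} \in \R$ determined by $\{\lambda_i, m_i\}$. Inverting the Laplace transform term by term, using that the inverse Laplace transform of $(\lambda_i+s)^{-k}$ is $\frac{r^{k-1}}{(k-1)!}e^{-\lambda_i r}\don[r\ge 0]$, yields
\[
\PP\left[\sum_{i=1}^m a_i E_i = r\right] = \sum_{i=1}^\ell \sum_{k=1}^{m_i} \frac{c_{i,k}}{(k-1)!}\, r^{k-1} e^{-\lambda_i r} \qquad \text{for } r \ge 0.
\]
Each summand on the right is entire in $r$, so the right-hand side defines an entire function on $\C$ whose restriction to $[0,\infty)$ coincides with the density, giving the claimed analytic extension.

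I do not anticipate a serious obstacle here: everything reduces to the standard computation of a hypoexponential (or generalized Erlang) density. The only mild care needed is the partial fraction step when some $a_i$ coincide, producing the $r^{k-1}$ factors; this is handled by the general partial fraction theorem for rational functions with repeated poles and does not affect entire-ness of the summands. Alternatively, if one prefers to avoid Laplace transforms, one can argue by induction on $m$: the $m=1$ case gives the density $a_1^{-1}e^{-r/a_1}$, which is entire, and the inductive step convolves with one more exponential, $f_{m}(r) = \int_0^r f_{m-1}(r-u) a_m^{-1}e^{-u/a_m}\,du$, which preserves the class of finite sums $\sum_{i,k} c_{i,k} r^{k-1} e^{-\lambda_i r}$ by elementary integration.
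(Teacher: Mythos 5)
Your main argument is correct but takes a genuinely different route from the paper. You compute the hypoexponential density in closed form via the Laplace transform $\prod_i(1+a_is)^{-1}$ and its partial fraction decomposition, recognizing each summand $c_{i,k}\,r^{k-1}e^{-\lambda_i r}/(k-1)!$ as entire. The paper instead argues by induction on $m$, but with no explicit formula: it introduces the class $\Gamma_1$ of finite linear combinations of $r\mapsto r^ae^{br}$ ($a\in\Z_{\ge 0}$, $b\in\R$), proves a closure lemma stating that the convolution of two $\Gamma_1$ functions over $[0,r]$ is again in $\Gamma_1$ (Lemma \ref{lem:gam-prop-1}), and then observes that each factor density $a_i^{-1}e^{-r/a_i}$ lies in $\Gamma_1$, so the full density does too. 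The two approaches are close in spirit — your closing remark about the ``alternative by induction on $m$'' convolving one more exponential is essentially the paper's proof, since your class of finite sums $\sum c_{i,k}r^{k-1}e^{-\lambda_i r}$ is exactly $\Gamma_1$. What your Laplace approach buys is an explicit formula; what the paper's buys is that the abstract closure properties of $\Gamma_n$ (Lemmas \ref{lem:gam-prop-1}--\ref{lem:gam-prop-3}) are set up once and then reused directly to prove the companion multi-time statement Lemma \ref{lem:analy-multi}, which would be more awkward to obtain by Laplace inversion. Both proofs are valid; yours just requires the (standard, but worth flagging) justification that term-by-term inversion of the partial fraction expansion is legitimate, which follows from injectivity of the Laplace transform on continuous integrable functions on $(0,\infty)$.
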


\begin{lemma}  \label{lem:analy-multi}
Let $m\in\N$, $E_1, \cdots, E_m$ be i.i.d. $\Exp(1)$ random variables, and $a_1, \cdots, a_m>0$. 
Take integers $1\le m_1< \cdots < m_k \le m$, and $0<t_1<\cdots < t_k$.
The function
\[
r\mapsto \PP\left[\sum_{i=1}^{m_j} a_iE_i <t_j-r,\; \forall 1\le j \le k \right],
\]
is analytic in $[0, t_1]$, and can be extended to an analytic function on $\C$. 
\end{lemma}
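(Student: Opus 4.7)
The plan is to induct on $k$, conditioning on successive partial sums. Set $m_0 := 0$ and $Y_j := \sum_{i=m_{j-1}+1}^{m_j} a_i E_i$ for $1 \le j \le k$, so that $Y_1, \ldots, Y_k$ are independent and $S_j := \sum_{i=1}^{m_j} a_i E_i = Y_1 + \cdots + Y_j$. By Lemma \ref{lem:analy-single}, each $Y_j$ has a density $g_j$ on $[0,\infty)$ that extends to an entire function on $\C$. Write $F(r) := \PP[S_j < t_j - r,\ \forall 1 \le j \le k]$; the goal is to exhibit an entire extension of $F$, since then analyticity on $[0, t_1]$ is automatic.

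For $k = 1$ the claim is immediate: $F(r) = \int_0^{t_1 - r} g_1(y)\,dy = G_1(t_1 - r)$, where the antiderivative $G_1(z) := \int_0^z g_1(y)\,dy$ is entire since $g_1$ is. For the inductive step, condition on $Y_1 = y_1$: the surviving event requires $y_1 < t_1 - r$ together with $Y_2 + \cdots + Y_j < t_j - r - y_1$ for $2 \le j \le k$, so
\[
F(r) = \int_0^{t_1 - r} g_1(y_1)\,\tilde F(r + y_1)\,dy_1,
\]
where $\tilde F(\rho) := \PP[Y_2 + \cdots + Y_j < t_j - \rho,\ \forall 2 \le j \le k]$. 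The induction hypothesis, applied to $k-1$ and thresholds $t_2, \ldots, t_k$, produces an entire extension of $\tilde F$. Substituting $y_1 = (t_1 - r)s$ with $s \in [0, 1]$ gives
\[
F(r) = (t_1 - r) \int_0^1 g_1\bigl((t_1 - r)s\bigr)\,\tilde F\bigl(t_1 s + r(1 - s)\bigr)\,ds.
\]
The integrand is jointly continuous in $(r, s)$ on $\C \times [0, 1]$ and entire in $r$ for each fixed $s$, so Morera's theorem combined with Fubini shows that the integral over the fixed compact set $[0,1]$ is entire in $r$, completing the induction.

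The main obstacle is precisely the $r$-dependent upper limit $t_1 - r$ in the conditioning formula, which blocks a naive differentiation-under-the-integral argument and also makes the domain ambiguous for complex $r$. The substitution $y_1 = (t_1 - r)s$ is the key trick: it pushes all $r$-dependence into the integrand on a fixed interval, at which point analyticity follows from standard complex analysis. For $r \in [0, t_1]$ the substitution is a genuine real, orientation-preserving change of variable, so the formula agrees with $F$ there; for complex $r$ the right-hand side defines the entire extension, with the only subtlety being that the natural complex contour from $0$ to $t_1 - r$ is parametrized by a straight line, which is legitimate because $g_1$ and $\tilde F$ are entire.
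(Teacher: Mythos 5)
Your proof is correct, and it takes a genuinely different route from the paper. The paper introduces an explicit function space $\Gamma_n$ (finite linear combinations of $\prod_i r_i^{a_i}e^{b_i r_i}$), proves in Lemmas \ref{lem:gam-prop-1}--\ref{lem:gam-prop-3} that this class is closed under convolution and the specific iterated integration appearing here, and then deduces Lemma \ref{lem:analy-multi} by noting that the $k$-fold integral over the simplex $\{\sum_{i\le j}s_i < t_j - r\}$ of a product of $\Gamma_1$ densities lands back in $\Gamma_1$. Your argument instead conditions on $Y_1$, inducts on $k$, and handles the $r$-dependent upper limit via the substitution $y_1 = (t_1 - r)s$, after which Morera's theorem plus Fubini on the fixed compact interval $[0,1]$ establishes analyticity. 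The paper's approach is constructive and gives you the \emph{explicit form} of the extension as a polynomial-exponential combination, which is convenient if one later wants quantitative control; your approach is shorter, more conceptual, and avoids tracking algebraic closure properties, at the cost of giving less structural information about the extension. One small point worth making more carefully: in the inductive step you invoke the hypothesis for $\tilde F$, which corresponds to the lemma with $m \mapsto m - m_1$, reindexed coefficients $a_{m_1+1},\dots,a_m$, and thresholds $t_2 < \cdots < t_k$; this is a legitimate instance, but you should state explicitly that you are re-indexing, since the lemma's statement fixes the sum to start at $i=1$.
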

To prove these two lemmas, we consider the following space $\Gamma_n$ of analytic functions on $\C^n$, which consists of all (finite) linear combinations of
\[
(r_1,\cdots, r_n) \mapsto \prod_{i=1}^n r_i^{a_i} e^{b_ir_i}
\]
where $a_1, \cdots, a_n \in \Z_{\ge 0}$, and $b_1, \cdots, b_n \in \R$.
We have the following properties.
\begin{lemma} \label{lem:gam-prop-1}
For any $f_1, f_2 \in \Gamma_1$, there is $f_3 \in \Gamma_1$, such that for any $r>0$, we have
\[
f_3(r) = \int_0^r f_1(s)f_2(r-s) ds.
\]
\end{lemma}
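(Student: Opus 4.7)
The plan is to exploit bilinearity and reduce to monomial-exponential basis elements. Since $\Gamma_1$ is a vector space and the convolution $(f_1,f_2)\mapsto \int_0^r f_1(s)f_2(r-s)\,ds$ is bilinear in $(f_1,f_2)$, it suffices to prove the claim for $f_1(r)=r^a e^{br}$ and $f_2(r)=r^{a'}e^{b'r}$ with $a,a'\in\Z_{\ge 0}$ and $b,b'\in\R$. The goal then becomes a direct integration exercise.

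First, I would pull the exponential out of $f_2$ by writing
\[
\int_0^r s^a e^{bs}(r-s)^{a'}e^{b'(r-s)}\,ds = e^{b'r}\int_0^r s^a(r-s)^{a'}e^{(b-b')s}\,ds,
\]
then expand $(r-s)^{a'}=\sum_{k=0}^{a'}\binom{a'}{k}(-1)^{a'-k}r^k s^{a'-k}$ to reduce the problem to evaluating integrals of the form $I_m(r)=\int_0^r s^m e^{cs}\,ds$, where $c=b-b'$ and $m=a+a'-k$. In the case $c\neq 0$, repeated integration by parts gives $I_m(r)$ as a polynomial in $r$ times $e^{cr}$ plus a constant, so $e^{b'r}I_m(r)$ is a linear combination of terms $r^j e^{br}$ and $r^j e^{b'r}$, which lies in $\Gamma_1$. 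After multiplying by $r^k$ and summing over $k$, the whole expression remains in $\Gamma_1$.

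In the degenerate case $c=0$ (i.e.\ $b=b'$), the integrand becomes $e^{br}s^a(r-s)^{a'}$, and $\int_0^r s^a(r-s)^{a'}\,ds = r^{a+a'+1}B(a+1,a'+1)$ by the Beta function identity, so the convolution equals $B(a+1,a'+1)\,r^{a+a'+1}e^{br}$, again in $\Gamma_1$. Finite linear combinations of such outputs remain in $\Gamma_1$ since $\Gamma_1$ is closed under finite sums by definition, so the lemma follows.

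There is no real obstacle here; the content is purely computational, and the only care required is to separate the generic case $b\neq b'$ (where the antiderivative of $s^m e^{cs}$ contributes both an $e^{br}$-part and a constant that combines with $e^{b'r}$) from the resonant case $b=b'$ (where one obtains a single exponential with a higher-degree polynomial prefactor).
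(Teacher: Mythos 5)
Your proof is correct and follows essentially the same route as the paper's: reduce to monomial-exponential generators $r^a e^{br}$, factor out the exponential to obtain $e^{b'r}\int_0^r s^a (r-s)^{a'}e^{(b-b')s}\,ds$, and treat the resonant case $b=b'$ (pure polynomial integrand) separately from the non-resonant case $b\neq b'$ (where the antiderivative produces both an $e^{br}$-part and an $e^{b'r}$-part). You spell out the binomial expansion and integration by parts that the paper leaves implicit, but the structure is identical.
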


\begin{proof}
Without loss of generality, we can assume that $f_1(r)=r^{a_1}e^{b_1r}$ and $f_2(r)=r^{a_2}e^{b_2r}$, for some $a_1, a_2 \in \Z_{\ge 0}$ and $b_1, b_2 \in \R$.
When $b_1=b_2$, we have
\[
\int_0^r f_1(s)f_2(r-s) ds = e^{b_1r}\int_0^r s^{a_1}(r-s)^{a_2} ds,
\]
and the integral is a polynomial of $r$.
When $b_1\neq b_2$, we have
\[
\int_0^r f_1(s)f_2(r-s) ds = e^{b_2r}\int_0^r s^{a_1}(r-s)^{a_2}e^{(b_1-b_2)s} ds,
\]
and the integral is $F_1(r)+F_2(r)e^{(b_1-b_2)r}$, for some polynomials $F_1, F_2$.
Thus the conclusion follows.
\end{proof}

\begin{lemma} \label{lem:gam-prop-2}
For any $f_1 \in \Gamma_n$, and $0<t_1<\cdots<t_n$, there is $f_2 \in \Gamma_1$, such that
\[
f_2(r) = \int_{\sum_{i'=1}^i s_{i'}<t_i-r, \forall 1\le i \le n}  f_1(s_1, \cdots, s_n) \prod_{i=1}^n ds_i,
\]
for any $r \in [0, t_1]$.
\end{lemma}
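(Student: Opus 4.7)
The plan is by induction on $n$, after first reducing to monomial integrands. By $\R$-linearity of $\Gamma_n$ and of the integral, it suffices to prove the claim for $f_1(s_1,\ldots,s_n) = \prod_{i=1}^n s_i^{a_i} e^{b_i s_i}$ with $a_i \in \Z_{\ge 0}$ and $b_i \in \R$. For the base case $n=1$, I would compute $\int_0^{t_1 - r} s^{a_1} e^{b_1 s}\, ds$ directly: when $b_1 \neq 0$, repeated integration by parts gives an expression of the form $P(t_1 - r)\, e^{b_1(t_1 - r)} + C$ for a polynomial $P$ and a constant $C$, and when $b_1 = 0$ the integral equals $(t_1 - r)^{a_1+1}/(a_1+1)$. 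Binomial expansion of $(t_1 - r)^k$ together with $e^{b_1(t_1 - r)} = e^{b_1 t_1}\, e^{-b_1 r}$ writes the answer as a finite $\R$-linear combination of $r^k$ and $r^k e^{-b_1 r}$, which lies in $\Gamma_1$.

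For the inductive step I would perform the $s_n$ integration first. On the region $R_{n-1} := \{(s_1,\ldots,s_{n-1}) : s_i \ge 0,\ \sum_{i'=1}^i s_{i'} < t_i - r,\ 1\le i \le n-1\}$ carved out by the first $n-1$ constraints, the remaining constraint becomes $0 \le s_n < M$ with $M := t_n - r - \sum_{i'=1}^{n-1} s_{i'}$. The inequality $\sum_{i'=1}^{n-1} s_{i'} < t_{n-1} - r$ together with $t_{n-1} < t_n$ gives $M > t_n - t_{n-1} > 0$, so the $s_n$ integral is always over a non-degenerate interval and no hidden restriction is imposed on $(s_1,\ldots,s_{n-1})$. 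The one-dimensional integral $\int_0^M s_n^{a_n} e^{b_n s_n}\, ds_n$ evaluates to $\widetilde P(M) + \widetilde Q(M)\, e^{b_n M}$ for polynomials $\widetilde P, \widetilde Q$ depending only on $a_n, b_n$.

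Next I would expand $M^k$ by the multinomial theorem and split $e^{b_n M} = e^{b_n(t_n - r)} \prod_{i=1}^{n-1} e^{-b_n s_i}$. This writes the integrand after the $s_n$-integration as a finite sum of products $\varphi_\alpha(r) \cdot \psi_\alpha(s_1,\ldots,s_{n-1})$, where $\varphi_\alpha$ is a polynomial in $r$ possibly multiplied by $e^{-b_n r}$ (constants arising from powers of $t_n$ and the factor $e^{b_n t_n}$ are absorbed), hence lies in $\Gamma_1$, and $\psi_\alpha$ is a monomial $\prod_{i=1}^{n-1} s_i^{a_i + k_i} e^{b_i' s_i}$ with $b_i' \in \{b_i,\, b_i - b_n\}$, hence lies in $\Gamma_{n-1}$. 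Applying the induction hypothesis to each $\psi_\alpha$ with the data $t_1 < \cdots < t_{n-1}$, the integral of $\psi_\alpha$ over $R_{n-1}$ is a $\Gamma_1$ function of $r$. Since $\Gamma_1$ is closed under products of its generators, as $r^a e^{br} \cdot r^{a'} e^{b'r} = r^{a+a'} e^{(b+b')r}$, multiplying by $\varphi_\alpha(r)$ preserves $\Gamma_1$, and summing the finitely many terms yields the desired $f_2 \in \Gamma_1$.

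The only delicate step I anticipate is the bookkeeping in the multinomial expansion, namely cleanly separating the $r$-dependence from the $(s_1,\ldots,s_{n-1})$-dependence so that the induction hypothesis applies to a genuine $\Gamma_{n-1}$ integrand. The identity $e^{b_n M} = e^{b_n(t_n - r)} \prod_{i=1}^{n-1} e^{-b_n s_i}$ together with the observation that $(t_n - r)^{k_0}$ is polynomial in $r$ alone handles this separation.
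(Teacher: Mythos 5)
Your proof is correct, and it is essentially the same computation as the paper's, organized a bit differently. Both arguments peel off the outermost variable $s_n$ first (using the constraint $\sum_{i'=1}^n s_{i'} < t_n - r$) and then recurse on the remaining $(s_1,\ldots,s_{n-1})$ with data $t_1 < \cdots < t_{n-1}$. The difference is purely in bookkeeping: the paper isolates the single-variable integration step as a separate helper lemma (Lemma~\ref{lem:gam-prop-3}), which says integrating the last variable of a $\Gamma_{n+1}$ function against the one constraint $\sum s_i < t$ lands in $\Gamma_n$, and then applies it repeatedly while treating $r$ as an extra coordinate so that the intermediate objects $g_m$ live in $\Gamma_{n-m+1}$. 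You instead apply the induction hypothesis at the $\Gamma_1$ level after explicitly splitting the $r$-dependence off via the multinomial expansion of powers of $M = t_n - r - \sum_{i=1}^{n-1} s_i$ and the factorization $e^{b_n M} = e^{b_n(t_n - r)}\prod_{i=1}^{n-1} e^{-b_n s_i}$, which also forces you to check that $\Gamma_1$ is closed under multiplication. The paper's abstraction avoids the multinomial bookkeeping and keeps the verification local to one variable at a time; your version is self-contained (no auxiliary lemma) and equally valid. Your observation that $M > t_n - t_{n-1} > 0$ on the domain is a nice touch that confirms the one-dimensional integral never degenerates, though it is not strictly needed for the computation to go through.
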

To prove this we need the following lemma.
\begin{lemma} \label{lem:gam-prop-3}
For any $f_1 \in \Gamma_{n+1}$, and $t>0$, there is $f_2 \in \Gamma_n$, such that
\[
f_2(s_1,\cdots, s_n) = \int_0^{t-\sum_{i=1}^n s_i}  f_1(s_1, \cdots, s_{n+1})  ds_{n+1},
\]
for any $s_1,\cdots, s_n\ge 0$ with $\sum_{i=1}^n s_i < t$.
\end{lemma}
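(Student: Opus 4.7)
The plan is to reduce to generating monomials of $\Gamma_{n+1}$ and then compute the inner integral in closed form.

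First, by linearity of both the integral and of membership in $\Gamma_n$, it suffices to treat a single generator $f_1(s_1,\ldots,s_{n+1}) = \prod_{i=1}^{n+1} s_i^{a_i} e^{b_i s_i}$ for fixed $a_i\in\Z_{\ge 0}$ and $b_i\in\R$. The first $n$ factors are independent of $s_{n+1}$ and pull out of the integral, reducing the problem to showing that $G(R):=\int_0^R s^{a_{n+1}} e^{b_{n+1}s}\,ds$, evaluated at $R = t-\sum_{i=1}^n s_i$, becomes an element of $\Gamma_n$ after multiplication by $\prod_{i=1}^n s_i^{a_i}e^{b_i s_i}$.

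Next, I would compute $G(R)$ explicitly. If $b_{n+1}=0$ then $G(R)=R^{a_{n+1}+1}/(a_{n+1}+1)$, a polynomial in $R$. If $b_{n+1}\ne 0$, repeated integration by parts produces $G(R)=Q(R)e^{b_{n+1}R}+C$ for some polynomial $Q$ and constant $C$ depending only on $a_{n+1}$ and $b_{n+1}$. Substituting $R = t-s_1-\cdots-s_n$ and expanding then yields an element of $\Gamma_n$. In the $b_{n+1}=0$ case, $(t-\sum s_i)^{a_{n+1}+1}$ expands as a polynomial in $s_1,\ldots,s_n$ with constant coefficients, and multiplying by $\prod s_i^{a_i}e^{b_i s_i}$ gives a finite linear combination of generators of $\Gamma_n$. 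In the $b_{n+1}\ne 0$ case, $e^{b_{n+1}R}=e^{b_{n+1}t}\prod_{i=1}^n e^{-b_{n+1}s_i}$, so combined with $\prod_{i=1}^n e^{b_i s_i}$ each exponential factor becomes $e^{(b_i-b_{n+1})s_i}$; the polynomial $Q(R)$ together with the powers $s_i^{a_i}$ contributes a polynomial prefactor; and the additive constant $C$ contributes a term of the original generator form. Both cases land in $\Gamma_n$.

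I expect no real obstacle here: the argument is a routine integration-by-parts calculation, made clean by the fact that $\Gamma_n$ is closed under multiplication by polynomials in $s_1,\ldots,s_n$ and by single-variable exponentials $e^{cs_i}$. The only bookkeeping needed is verifying that the substitution factorizes correctly across the $s_i$: the multinomial expansion of $(t-\sum s_i)^k$ stays polynomial in $s_1,\ldots,s_n$, and $e^{-b_{n+1}\sum s_i}$ splits as the product $\prod_i e^{-b_{n+1}s_i}$, both of which are immediate.
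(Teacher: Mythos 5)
Your proof is correct and follows essentially the same route as the paper: reduce to a single generating monomial, pull out the factors independent of $s_{n+1}$, evaluate the inner integral explicitly, then substitute $R=t-\sum s_i$ and expand. Your explicit split into the cases $b_{n+1}=0$ and $b_{n+1}\ne 0$ is slightly more careful than the paper's displayed formula, which as written presupposes $b_{n+1}\ne 0$.
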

\begin{proof}
Without loss of generality we assume that $f_1(s_1, \cdots, s_{n+1}) = \prod_{i=1}^{n+1}s_i^{a_i}e^{b_is_i}$,
where each $a_i\in\Z_{\ge 0}$ and $b_i \in \R$.
Then we have
\begin{align*}
\int_0^{t-\sum_{i=1}^n s_i}  \prod_{i=1}^{n+1}s_i^{a_i}e^{b_is_i}  ds_{n+1}
&=
\int_0^{t-\sum_{i=1}^n s_i} s_{n+1}^{a_{n+1}}e^{b_{n+1}s_{n+1}}  ds_{n+1} \prod_{i=1}^{n}s_i^{a_i}e^{b_is_i} \\
&=
\left(F\left(t-\sum_{i=1}^n s_i\right)e^{b_{n+1}(t-\sum_{i=1}^n s_i)} + \frac{a_{n+1}!}{(-b_{n+1})^{a_{n+1}+1}}\right)\prod_{i=1}^{n}s_i^{a_i}e^{b_is_i},    
\end{align*}
where $F$ is a polynomial.
Then the conclusion follows.
\end{proof}

\begin{proof}[Proof of Lemma \ref{lem:gam-prop-2}]
For each $0\le m \le n$, we can inductively prove that there is some $g_m \in \Gamma_{n-m+1}$, such that
\begin{equation} \label{eq:gm-condi}
g_m(s_1,\cdots, s_{n-m}, r) = \int_{\sum_{i'=1}^i s_{i'}<t_i-r, \forall n-m+1\le i \le n}  f_1(s_1, \cdots, s_n) \prod_{i=n-m+1}^n ds_i,
\end{equation}
for any $s_1,\cdots, s_{n-m}, r\ge 0$ with $r+\sum_{i=1}^{n-m} s_i < t_{n-m+1}$.
Indeed, for the base case we just take $g_0(s_1,\cdots, s_n, r)=f_1(s_1, \cdots, s_n)$.
Then given $g_m$ for some $0\le m <n$, by Lemma \ref{lem:gam-prop-3} we let $g_{m+1} \in \Gamma_{n-m}$ be the function with
\[
g_{m+1}(s_1,\cdots, s_{n-m-1}, r) = \int_0^{t_{n-m}-r-\sum_{i=1}^{n-m-1} s_i}  g_m(s_1,\cdots, s_{n-m}, r) ds_{n-m},
\]
for any $s_1,\cdots, s_{n-m-1}, r\ge 0$ with $r+\sum_{i=1}^{n-m-1} s_i < t_{n-m}$. Then this $g_{m+1}$ satisfies \eqref{eq:gm-condi}.
Finally, we just take $f_2=g_n$, and the conclusion follows.
\end{proof}
We can now prove the lemmas on the probabilities about sums of exponential random variables. 
\begin{proof}[Proof of Lemma \ref{lem:analy-single}]
By induction in $m$, the conclusion follows from Lemma \ref{lem:gam-prop-1}, using that for each $1\le i \le m$, the function
$r\mapsto \PP[a_iE_i=r] = e^{-r/a_i}$ can be analytically extended to a function in $\Gamma_1$.
\end{proof}
\begin{proof}[Proof of Lemma \ref{lem:analy-multi}]
Denote $m_0=0$.
By Lemma \ref{lem:analy-single}, for each $1\le j \le k$ we take $f_j \in \Gamma_1$ such that
\[
f_j(s) = \PP\left[\sum_{i=m_{j-1}+1}^{m_j} a_iE_i = s\right].
\]
Then by Lemma \ref{lem:gam-prop-2}, we can find $f\in \Gamma_1$, such that
\[
f(r) = \int_{\sum_{i=1}^j s_i<t_j-r, \forall 1\le j \le k} \prod_{j=1}^k f_j(s_j)  ds_j = \PP\left[\sum_{i=1}^{m_j} a_iE_i <t_j-r,\; \forall 1\le j \le k \right],
\]
for any $r\in [0,t_1]$.
Thus the conclusion follows.
\end{proof}

\section{Inductive setup}   \label{sec:ind-setup}

In this section, we set up the induction framework to prove Theorem \ref{thm:main-de}. As already indicated, we will actually prove the equivalent form of Theorem \ref{thm:main-des}.

Recall the setup of Theorem \ref{thm:main-des}.
Let $\hg=g$ if $t_{\iota}\neq t_{\iota+1}$, and $\hg=g-1$ if $t_{\iota}=t_{\iota+1}$, and we do induction on $\hg$.
The base case where $\hg=1$ follows directly from Theorem \ref{thm:eqgal} (actually the equivalent form of Theorem \ref{thm:eqgals}). Now we assume that $\hg\ge 2$.

Take $1\le \tau \le g$ such that $t_\tau \le t_i$ for any $1\le i \le g$.
Without loss of generality (and by reflection symmetry), we assume that $\tau \le \iota$.
In the case where $t_{\iota}=t_{\iota+1}$, we also assume that $\iota\ge 2$, since otherwise, we must have $\tau=\iota=1$, so we can always set $\tau=2$ instead and do a reflection of particles and colors (note that $\hg\ge 2$ and $g\ge 3$).

For the events involved in Theorem \ref{thm:main-des}, for simplicity of notation we denote
\[
I=\bigcap_{i=\tau+1}^g\bigcap_{j=1}^{k_i}I^{t_i}[A_{i,j},B_{i,j},C_{i,j}],\quad I^+=\bigcap_{i=\tau+1}^g\bigcap_{j=1}^{k_i}I^{t_i}[A_{i,j}^+,B_{i,j},C_{i,j}],
\]
and \[J=\bigcap_{i=1}^\tau\bigcap_{j=1}^{k_i}I^{t_i}[A_{i,j},B_{i,j},C_{i,j}].\]
Namely, we split the events into two groups: $J$ contains those with `group index' $i\le \tau$, and $I$ or $I^+$ contains those with `group index' $i>\tau$.
The reason behind this splitting will be clear later.
Then the goal is to prove that $\PP[J\cap I] = \PP[J \cap I^+]$.
Note that we already have
\begin{equation}  \label{eq:IIpeq}
\PP[I]=\PP[I^+].   
\end{equation}
When $\tau<\iota$ this is by the induction hypothesis of Theorem \ref{thm:main-des}. When $\tau=\iota$ this is by a simple symmetry (of the colored TASEP).\\

\noindent\textbf{A projection.} Our next step is to simplify the problem by considering a projection of the colored TASEP $\zeta$, which we describe now.
Let 
\begin{equation}  \label{eq:defA}
A_*=\min_{\tau < i\le g, 1\le j \le k_i} A_{i,j},
\end{equation}
and take $B_*, C_*$ such that
\begin{equation}  \label{eq:defB}
    A_*+B_*=\max_{\tau < i\le g, 1\le j \le k_i}A_{i,j}+B_{i,j},
\end{equation}
\begin{equation}  \label{eq:defC}
    A_*-C_*=\min_{1 \le i \le \tau, 1\le j \le k_i}A_{i,j}-C_{i,j}.
\end{equation}
Then by \eqref{eq:assumabc}, we have
\begin{equation}  \label{eq:Ap}
A_*\ge \max_{1\le i\le \tau, 1\le j \le k_i} A_{i,j},
\end{equation}
\begin{equation}  \label{eq:Bp}
A_*+B_*\le \min_{1 \le i\le \tau, 1\le j \le k_i}A_{i,j}+B_{i,j},
\end{equation}
\begin{equation}  \label{eq:Cp}
A_*-C_*\ge \max_{\tau < i \le g, 1\le j \le k_i}A_{i,j}-C_{i,j}.
\end{equation}
We also have that $B_*, C_* \in \N$ (i.e., $B_*, C_*>0$). For $B_*$, this is because $B_* \ge B_{i,j}$ for any $\tau<i\le g$ and $1\le j \le k_i$ (due to \eqref{eq:defA} and \eqref{eq:defB}).
For $C_*$, this is because $C_* \ge C_{i,j}$ for any $1\le i\le \tau$ and $1\le j \le k_i$ (due to \eqref{eq:Ap} and \eqref{eq:defC}).

For the event $J$, it suffices to consider the projection $\hmu^{A_*,C_*}$, which (for simplicity of notation) we also denote as $\eta$ from now on.
Indeed, from Lemma \ref{lem:proj-info}, we have that for any $1\le i\le \tau$ and $1\le j \le k_i$, $I^{t_i}[A_{i,j},B_{i,j},C_{i,j}]$ is measurable with respect to the sigma-algebra generated by $\eta$, since $A_{i,j}\le A_*$ and $A_{i,j}-C_{i,j}\ge A_*-C_*$.

By Lemma \ref{lem:law-hmu}, $\eta$ is a colored TASEP on $\Z$, where initially there are particles at sites $A_*-C_*+1, \cdots, A_*$, which are colored by $1, \cdots, C_*$.\\

Our strategy of proving $\PP[J\cap I] = \PP[J \cap I^+]$ is to leverage $I, I^+$ and $J$ using some \emph{cutting information} which we define now.\\

\noindent\textbf{Cutting information.}
We start by defining a series of stopping times in $\eta$, which we call the \emph{cutting times}:
let $R_0=0$; and for each $1\le i\le C_*$, we let $R_i$ be the first time after $R_{i-1}$ when the site $A_*+B_*+1-i$ is occupied by a particle in $\eta$; i.e.
\[
R_i = \inf\{t>R_{i-1}: \eta_t(A_*+B_*+1-i) < \infty\}.
\]
Then almost surely we have that $0<R_1 < \cdots < R_{C_*}$. We also denote $R_{C_*+1}=\infty$ for simplicity of notation in the later text.
For each $1\le i\le C_*$ we let $L_i=\eta_{R_i}(A_*+B_*+1-i)$, 
the color of the particle arriving at site $A_*+B_*+1-i$ at time $R_i$.
Our cutting information is the following collection of random variables:
\[
\{\don[R_i \le t_\tau]R_i\}_{i=1}^{C_*}, \quad \{\don[R_i \le t_\tau]L_i\}_{i=1}^{C_*}.
\]
By leveraging $I, I^+$, and $J$, our goal is to show that $J\cap I$ and $J\cap I^+$ have the same probability conditioned on this cutting information.
Then by taking the expectation over the cutting information, the conclusion follows.\\

Given \eqref{eq:IIpeq}, the above task can be accomplished in two steps, given by the next two propositions.
For simplicity of notation, from now on, for each $1\le k\le C_*$ we denote $\bR^k=\{R_i\}_{i=1}^k$ and $\bL^k=\{L_i\}_{i=1}^k$.
We also denote
\[
\R_<\{k,t\} = \{\{r_i\}_{i=1}^k: 0<r_1<\cdots<r_k<t\},
\]
for any $t>0$, and let $\cL\{k\}$ denote the collection of all $\{\ell_i\}_{i=1}^k$, with $\ell_1,\cdots, \ell_k$ being mutually different numbers in $\{1,\cdots, C_*\}$.
\begin{prop}  \label{prop:equal-condi-cut}
For any $1\le k \le C_*$, $\br\in \R_<\{k,t_\tau\}$, and $\bell\in\cL\{k\}$, we have
\begin{equation}  \label{eq:equal-cut}
\PP[J\mid \bR^k=\br, R_{k+1}>t_\tau, \bL^k=\bell, I] \\ = \PP[J \mid 
\bR^k=\br, R_{k+1}>t_\tau, \bL^k=\bell, I^+].
\end{equation}
\end{prop}
This proposition actually corresponds to Step 1 in Section \ref{sec:stra}.
Given it, the remaining task to show that $I$ and $I^+$ have the same probability conditioned the cutting information $\{\don[R_i \le t_\tau]R_i\}_{i=1}^{C_*}$ and $\{\don[R_i \le t_\tau]L_i\}_{i=1}^{C_*}$.
This is given by the following identity.
\begin{prop} \label{prop:equal-p}
For any $1\le k \le C_*$, $\br\in \R_<\{k,t_\tau\}$, and $\bell\in\cL\{k\}$, we have
\begin{equation}  \label{eq:equal-p}
\PP[\bR^k=\br, R_{k+1}>t_\tau, \bL^k=\bell \mid I] \\ = \PP[\bR^k=\br, R_{k+1}>t_\tau, \bL^k=\bell \mid I^+].   
\end{equation}
\end{prop}
We will prove Proposition \ref{prop:equal-condi-cut} later in this section. Proposition \ref{prop:equal-p} will be proved in the next section, assuming that Theorem \ref{thm:main-des} is true for any smaller $\hg$.
Assuming these two propositions, it is now immediate to deduce Theorem \ref{thm:main-des} from them.
\begin{proof}[Proof of Theorem \ref{thm:main-des}]
For any $1\le k \le C_*$, $\br\in \R_<\{k,t_\tau\}$, and $\bell\in\cL\{k\}$, by Proposition \ref{prop:equal-p} and \eqref{eq:IIpeq} we have
\[
\PP[I\mid \bR^k=\br, R_{k+1}>t_\tau, \bL^k=\bell] = \PP[I^+\mid \bR^k=\br, R_{k+1}>t_\tau, \bL^k=\bell].    
\]
Then by Proposition \ref{prop:equal-condi-cut} we have that
\[
\PP[J\cap I\mid \bR^k=\br, R_{k+1}>t_\tau, \bL^k=\bell] \\ = \PP[J\cap I^+\mid \bR^k=\br, R_{k+1}>t_\tau, \bL^k=\bell].    
\]
By multiplying both sides by $\PP[\bR^k=\br, R_{k+1}>t_\tau, \bL^k=\bell]$, integrating over $\br\in \R_<\{k,t_\tau\}$, and summing over $\bell\in\cL\{k\}$ and $1\le k\le C_*$, we have that $\PP[J\cap I]=\PP[J\cap I^+]$.
Thus the proof concludes by the principle of induction.
\end{proof}

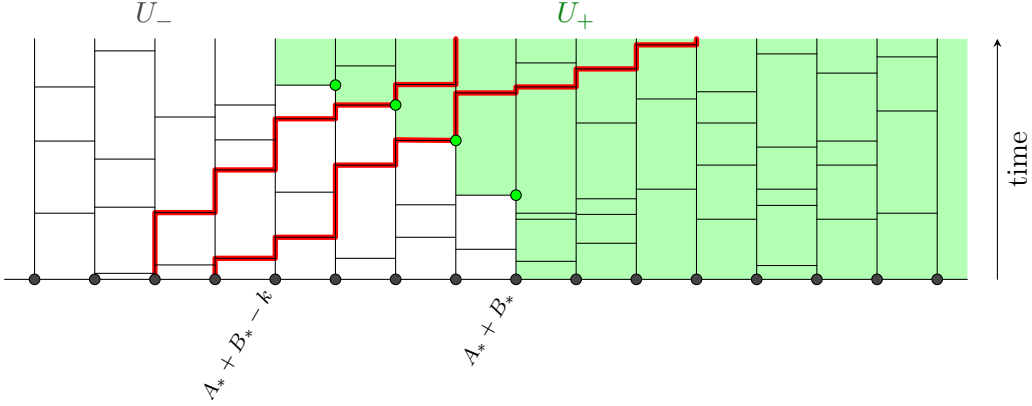
\begin{figure}[hbt!]
    \centering
\begin{tikzpicture}[line cap=round,line join=round,>=triangle 45,x=.8cm,y=.8cm]
\clip(2,-2) rectangle (19.8,4.7);

\fill[line width=0.pt,color=green,fill=green,fill opacity=0.3]
(7,4) -- (18.5,4) -- (18.5,0) -- (11,0) -- (11,1.4) -- (10,1.4) -- (10,2.31) -- (9,2.31) -- (9,2.9) -- (8,2.9) -- (8,3.23) -- (7,3.23) -- cycle;

\draw [line width=2pt, red] plot coordinates {(6,0) (6,0.35) (7,0.35)  (7,0.7) (8,0.7) (8,1.9) (9,1.9) (9,2.32) (10,2.31) (10,3.1) (11,3.1) (11,3.2) (12,3.2) (12,3.5) (13,3.5) (13,3.9) (14,3.9) (14,4)};

\draw [line width=2pt, red] plot coordinates {(5,0) (5,1.11) (6,1.11) (6,1.82) (7,1.82) (7,2.67) (8,2.67) (8,2.9) (9,2.9) (9,3.24) (10,3.24) (10,4)};

\draw (2.5,0) -- (18.5,0);

\draw (3,0) -- (3,4);
\draw (4,0) -- (4,4);
\draw (5,0) -- (5,4);
\draw (6,0) -- (6,4);
\draw (7,0) -- (7,4);
\draw (8,0) -- (8,4);
\draw (9,0) -- (9,4);
\draw (10,0) -- (10,4);
\draw (11,0) -- (11,4);
\draw (12,0) -- (12,4);
\draw (13,0) -- (13,4);
\draw (14,0) -- (14,4);
\draw (15,0) -- (15,4);
\draw (16,0) -- (16,4);
\draw (17,0) -- (17,4);
\draw (18,0) -- (18,4);

\draw [-stealth] (19,0) -- (19,4);

\draw (3,3.2) -- (4,3.2);
\draw (3,2.3) -- (4,2.3);
\draw (3,1.1) -- (4,1.1);
\draw (4,3.8) -- (5,3.8);
\draw (4,2.0) -- (5,2.0);
\draw (4,1.2) -- (5,1.2);
\draw (4,0.1) -- (5,0.1);
\draw (5,2.7) -- (6,2.7);
\draw (5,1.11) -- (6,1.11);
\draw (5,0.24) -- (6,0.24);

\draw (6,2.9) -- (7,2.9);
\draw (6,2.32) -- (7,2.32);
\draw (6,1.82) -- (7,1.82);
\draw (6,0.35) -- (7,0.35);

\draw (7,3.23) -- (8,3.23);
\draw (7,2.67) -- (8,2.67);
\draw (7,1.45) -- (8,1.45);
\draw (7,0.7) -- (8,0.7);

\draw (8,3.55) -- (9,3.55);
\draw (8,2.9) -- (9,2.9);
\draw (8,1.9) -- (9,1.9);
\draw (8,0.35) -- (9,0.35);

\draw (9,3.24) -- (10,3.24);
\draw (9,2.31) -- (10,2.31);
\draw (9,1.24) -- (10,1.24);
\draw (9,0.7) -- (10,0.7);

\draw (10,3.1) -- (11,3.1);
\draw (10,1.4) -- (11,1.4);
\draw (10,0.5) -- (11,0.5);

\draw (11,3.2) -- (12,3.2);
\draw (11,3.6) -- (12,3.6);
\draw (11,1.1) -- (12,1.1);
\draw (11,1.0) -- (12,1.0);
\draw (11,0.3) -- (12,0.3);

\draw (12,3.5) -- (13,3.5);
\draw (12,2.6) -- (13,2.6);
\draw (12,1.34) -- (13,1.34);
\draw (12,1.08) -- (13,1.08);
\draw (12,0.6) -- (13,0.6);

\draw (13,3.9) -- (14,3.9);
\draw (13,3.) -- (14,3.);
\draw (13,1.5) -- (14,1.5);

\draw (14,3.12) -- (15,3.12);
\draw (14,2.6) -- (15,2.6);
\draw (14,1.9) -- (15,1.9);
\draw (14,1.0) -- (15,1.0);

\draw (15,3.75) -- (16,3.75);
\draw (15,2.2) -- (16,2.2);
\draw (15,1.5) -- (16,1.5);
\draw (15,1.23) -- (16,1.23);
\draw (15,0.23) -- (16,0.23);

\draw (16,3.43) -- (17,3.43);
\draw (16,2.3) -- (17,2.3);
\draw (16,1.9) -- (17,1.9);
\draw (16,1.0) -- (17,1.0);

\draw (17,3.7) -- (18,3.7);
\draw (17,2.8) -- (18,2.8);
\draw (17,1.1) -- (18,1.1);

\draw [fill=uuuuuu] (3,0) circle (2.0pt);
\draw [fill=uuuuuu] (4,0) circle (2.0pt);
\draw [fill=uuuuuu] (5,0) circle (2.0pt);
\draw [fill=uuuuuu] (6,0) circle (2.0pt);
\draw [fill=uuuuuu] (7,0) circle (2.0pt);
\draw [fill=uuuuuu] (8,0) circle (2.0pt);
\draw [fill=uuuuuu] (9,0) circle (2.0pt);
\draw [fill=uuuuuu] (10,0) circle (2.0pt);
\draw [fill=uuuuuu] (11,0) circle (2.0pt);
\draw [fill=uuuuuu] (12,0) circle (2.0pt);
\draw [fill=uuuuuu] (13,0) circle (2.0pt);
\draw [fill=uuuuuu] (14,0) circle (2.0pt);
\draw [fill=uuuuuu] (15,0) circle (2.0pt);
\draw [fill=uuuuuu] (16,0) circle (2.0pt);
\draw [fill=uuuuuu] (17,0) circle (2.0pt);
\draw [fill=uuuuuu] (18,0) circle (2.0pt);

\draw [fill=green] (8,3.23) circle (2.0pt);
\draw [fill=green] (9,2.9) circle (2.0pt);
\draw [fill=green] (10,2.31) circle (2.0pt);
\draw [fill=green] (11,1.4) circle (2.0pt);

\draw (19,2) node[anchor=north,rotate=90]{time};

\begin{scriptsize}
\draw (7,0) node[anchor=east,rotate=60]{$A_*+B_*-k\;$};

\draw (11,0) node[anchor=east,rotate=60]{$A_*+B_*\;$};
\end{scriptsize}

\draw (5,4) node[anchor=south, color=uuuuuu]{$U_-$};
\draw (12,4) node[anchor=south, color=darkgreen]{$U_+$};

\end{tikzpicture}
\caption{
An illustration of the Poisson field $\Pi$ and the areas $U_-$ and $U_+$, in the proof of Proposition \ref{prop:equal-condi-cut}.
The green points indicate space-time locations where the colors are given by $\{L_i\}_{i=1}^k$.
The red paths are the trajectories of the first two particles in $\opi^{A_{i,j}}$, for some $1\le i\le \tau$ and $1\le j \le k_i$.
}  
\label{fig:inds}
\end{figure}
For the rest of this section, we prove Proposition \ref{prop:equal-condi-cut}. The idea is as follows.
First, the cutting information can already determine some events on the configuration at $t_\tau$.
In particular, for some $\bell\in\cL\{k\}$, 
the event $R_{k+1}>t_\tau, \bL^k=\bell$ may contradict $J$.
In this case, both sides of \eqref{eq:equal-cut} equal $0$.

In the remaining case, we will show that conditional on the cutting information, $J$ and $I$ are independent, and similarly, $J$ and $I^+$ are independent.
Therefore both sides of \eqref{eq:equal-cut} equal the probability of $J$ conditional on the cutting information.
The conditional independence between $J$ and $I$ is proved by considering the Poisson field $\Pi$.
We will split the space $\Z\times [0,\infty)$ into two parts (in a way determined by the cutting information), and show that $J$ and $I$ are determined by $\Pi$ on these two parts, respectively.
\begin{proof}[Proof of Proposition \ref{prop:equal-condi-cut}]
As indicated above, the main task is to prove the independence between $J$ and $I$, conditioned on the cutting information.

\noindent\textbf{Infeasible cutting information.} We first rule out cutting information that contradicts $J$.
Namely, consider $1\le k \le C_*$ and $\bell=\{\ell_i\}_{i=1}^k\in\cL\{k\}$ that satisfy
\begin{equation}  \label{eq:equal-cut-pf1}
|\{i: 1\le i \le k, \ell_i\le C_*+A_{\tau,j}-A_* \}| < C_{\tau,j},
\end{equation}
for some $1\le j \le k_\tau$.
If $\bL^k=\bell$ and $R_{k+1} > t_\tau$, in $\eta_{t_\tau}$ all the particles on or to the right of $A_*+B_*-k$ are those with colors $\ell_1, \ldots, \ell_k$. 
We next show that this contradicts $I^{t_\tau}[A_{\tau,j},B_{\tau,j},C_{\tau,j}]$.
For this, we now consider the particles in $\eta_{t_\tau}$ that are on or to the right of $A_*+B_*-k$, with color $\le C_*+A_{\tau,j}-A_*$.
The number of such particles is at most
\[
|\{i: 1\le i \le k, \ell_i\le C_*+A_{\tau,j}-A_* \}|
\]
Then (in $\eta_{t_\tau}$) the number of particles on or to the right of $A_*+B_*+1-C_{\tau,j}$ with color $\le C_*+A_{\tau,j}-A_*$ is at most
\[
|\{i: 1\le i \le k, \ell_i\le C_*+A_{\tau,j}-A_* \}| + (C_{\tau,j}-1-k)\vee 0,
\]
which is $< C_{\tau,j}$ by \eqref{eq:equal-cut-pf1}.
This can be equivalently written as
\[
|\{x\in\Z: x\ge A_*+B_*+1-C_{\tau,j}, \eta_{t_\tau}(x)\le C_*+A_{\tau,j}-A_* \}| < C_{\tau,j}.
\]
Since $A_*+B_*\le A_{\tau,j}+B_{\tau,j}$ (by \eqref{eq:Bp}), we now have
\[
|\{x\in\Z: x\ge A_{\tau,j}+B_{\tau,j}+1-C_{\tau,j}, \eta_{t_\tau}(x)\le C_*+A_{\tau,j}-A_* \}| < C_{\tau,j},
\]
which (by Lemma \ref{lem:proj-info}) precisely implies that $T^{A_{\tau,j}}_{B_{\tau,j},C_{\tau,j}}>t_\tau$, i.e., $I^{t_\tau}[A_{\tau,j},B_{\tau,j},C_{\tau,j}]$ does not hold.
Thus we have that both sides of \eqref{eq:equal-cut} equal $0$.

\noindent\textbf{Feasible cutting information.} We next consider cutting information that makes $J$ feasible. Namely, take $1\le k \le C_*$ and $\bell=\{\ell_i\}_{i=1}^k\in\cL\{k\}$, we assume that \eqref{eq:equal-cut-pf1} is not true for any $1\le j \le k_\tau$.
Then for any $1\le i \le \tau$ and $1\le j \le k_i$, we have
\begin{equation}  \label{eq:equal-cut-pf2}
|\{i': 1\le i' \le k, \ell_{i'}\le C_*+A_{i,j}-A_* \}| \ge C_{i,j}.      
\end{equation}
(For $i=\tau$ this is precisely the contrary of \eqref{eq:equal-cut-pf1}; for $i<\tau$ this is due to $A_{i,j}\le A_{\tau, 1}$ and $C_{i,j}\le C_{\tau, 1}$ from \eqref{eq:assumabc}.)
Take $\br=\{r_i\}_{i=1}^k \in \R_<\{k, t_\tau\}$.
For simplicity of notation, below we denote $\cE_-$ as the event $\bR^k=\br, R_{k+1}>t_\tau, \bL^k=\bell$.
We will show that conditional on $\cE_-$, the events $I$ and $J$ are independent (and similarly the events $I^+$ and $J$ are also independent).

We recall the Poisson field $\Pi$ on $\Z\times [0,\infty)$, where for each $x\in\Z$ and any $0\le a < b$, $\Pi(\{x\}\times [a,b])$ is the number of times that the clock on the edge $(x-1, x)$ rings, in the time interval $[a, b]$.
Also recall that for any $A\in\Z$ and $B, C\in\N$, we denote $\cP[A, B, C]=(A+B+1-C, T^A_{B,C})$.

As indicated above, we split $\Z\times [0,\infty)$ as follows.
Denote
\[
U^-:=\{(x,t):x\le A_*+B_*-k\} \cup \bigcup_{i'=1}^k\{ (x,t): 0\le t\le r_{i'}, x\le A_*+B_*-i'+1 \},
\]
and $U^+ = \Z\times [0,\infty) \setminus U^-$. See Figure \ref{fig:inds}.
By Lemma \ref{lem:I-deter-gen}, we have that the event $\cE_-$ is measurable with respect to $\Pi$ on $U^-$,
therefore it is independent of $\Pi$ on $U^+$.

The conditional independence between $I$ and $J$ is proved in the following two steps.\\
 
\noindent\textbf{Step 1: $I$ is measurable with respect to $\Pi$ on $U^-$, conditional on $\cE_-$.}
In this step we will apply Lemma \ref{lem:I-deter-gen}; for that, we need to consider a sequence of points, which we describe now.

Under $\cE_-$, for any $1\le i' \le k$, in $\eta_{r_{i'}}$
the number of particles on or to the right of 
$A_*+B_*+1-i'$ is at least $i'$.
Back to the colored TASEP $\zeta$, this means that 
\[
|\{x\in\Z: x\ge A_*+B_*+1-i', \zeta_{r_{i'}}(x) \le A_*\}| \ge i'.
\]
For any $\tau < i \le g$, $1\le j \le k_i$, by \eqref{eq:defA} and \eqref{eq:defB} we have
\[
|\{x\in\Z: x\ge A_{i,j}+B_{i,j}+1-i', \zeta_{r_{i'}}(x) \le A_{i,j}\}| \ge i'.
\]
This is equivalent to that $T^{A_{i,j}}_{B_{i,j},i'} \le r_{i'}$.
Also, by \eqref{eq:defB} we have $A_{i,j}+B_{i,j}+1-i' \le A_*+B_*+1-i'$, so
\[
\cP[A_{i,j},B_{i,j},i']\in \{ (x,t): 0\le t\le r_{i'}, x\le A_*+B_*+1-i' \}. 
\]

For any $\tau < i \le g$, $1\le j \le k_i$, we also consider $i'$ with $k<i'\le C_{i,j}$. In this case we have $A_{i,j}+B_{i,j}+1-i' \le A_*+B_*-k$ (using \eqref{eq:defB}), so
\[
\cP[A_{i,j},B_{i,j},i']\in \{ (x,t): x\le A_*+B_*-k \}. 
\]

In conclusion, for any $\tau < i \le g$, $1\le j \le k_i$, and $1\le i'\le C_{i,j}$, we have $\cP[A_{i,j},B_{i,j},i'] \in U^-$.
Thus conditional on $\cE_-$, the event $I^{t_i}[A_{i,j},B_{i,j},C_{i,j}]$ is equivalent to 
\[
I^{t_i}[A_{i,j},B_{i,j},C_{i,j}] \cap
\left(\bigcap_{i'=1}^{B_{i,j}}I^{t_i}[A_{i,j},i',C_{i,j}] \right)\cap \{ \cP[A_{i,j},B_{i,j},i'] \in U^-,\; \forall 1\le i'\le C_{i,j} \},\]
which is determined by $\Pi$ on $U^-$, according to Lemma \ref{lem:I-deter-gen}.
Therefore $I=\bigcap_{i=\tau+1}^g\bigcap_{j=1}^{k_i}I^{t_i}[A_{i,j},B_{i,j},C_{i,j}]$ is measurable with respect to $\Pi$ on $U^-$, conditional on $\cE_-$.
Similarly, the event $I^+$ is also measurable with respect to $\Pi$ on $U^-$, conditional on $\cE_-$.
\\

\noindent\textbf{Step 2: $J$ is measurable with respect to $\Pi$ on $U^+$, conditional on $\cE_-$.}
This step is done by using Lemma \ref{lem:I-deter-bdy}.

Take any $1\le i \le \tau$ and $1\le j \le k_i$.
Suppose that the $C_{i,j}$ smallest numbers in the set $\{i': 1\le i' \le k, \ell_{i'}\le C_*+A_{i,j}-A_* \}$ are
\[
\kappa[1] < \cdots < \kappa[C_{i,j}].
\]
Such numbers exist by \eqref{eq:equal-cut-pf2}.
For each $1\le i' \le C_{i,j}$, we take $B'_{i'} \in \N$ such that $A_{i,j}+B'_{i'}-i'+1=A_*+B_*-\kappa[i']+1$.
Then $B'_{i'}\le B'_{i'+1}$ for any $1\le i'<C_{i,j}$.
Also, $T^{A_{i,j}}_{B_{i'}',i'}$ is precisely 
\[
\inf\{t>0: |\{x\in\Z: x\ge A_*+B_*+1-\kappa[i'], \eta_t(x)\le  C_*+A_{i,j}-A_*\}| \ge i'\},
\]
according to Lemma \ref{lem:proj-info}.
Under the event $\cE_-$ this equals $r_{\kappa[i']}$, thus $T^{A_{i,j}}_{B_{i'}',i'} = r_{\kappa[i']}$. 
Therefore we can apply Lemma \ref{lem:I-deter-bdy} to conclude that, conditional on $\cE_-$, the event  $I^{t_i}[A_{i,j},B_{i,j},C_{i,j}]$ is determined by $\Pi$ on
\[
\bigcup_{i'=1}^{C_{i,j}}  \{(x,t):x\ge A_{i,j}+B'_{i'}-i'+1, r_{\kappa[i']}<t\le t_i\} \subset \bigcup_{i'=1}^k  \{(x,t):x\ge A_*+B_*-i'+1, r_{i'}<t\le t_i\},
\]
which is disjoint from $U^-$, thus contained in $U^+$.\\

With the above two steps, we conclude that the events $I$ and $J$ are independent conditional on $\cE_-$, and the events $I^+$ and $J$ are also independent conditional on $\cE_-$. So $\PP[J\mid \cE_- \cap I] = \PP[J\mid \cE_-] = \PP[J\mid \cE_-\cap I^+]$, and the conclusion follows.
\end{proof}

\section{Equal probabilities conditional on cutting information}   \label{sec:equal-cut-info}
We prove Proposition \ref{prop:equal-p} in this section.
The main inputs include (1) information proved by the induction hypothesis, i.e., Theorem \ref{thm:main-des} for any smaller $\hg$, and (2) certain probability density functions of the cutting information are analytic.
We start by proving the second input.

\subsection{Analyticity}
Recall the integers $A_*, B_*, C_*$ defined via
\[
A_*=\min_{\tau < i\le g, 1\le j \le k_i} A_{i,j}, \quad  A_*+B_*=\max_{\tau < i\le g, 1\le j \le k_i}A_{i,j}+B_{i,j} \quad A_*-C_*=\min_{1 \le i \le \tau, 1\le j \le k_i}A_{i,j}-C_{i,j},
\]
where $A_{i,j}\in \Z$, $B_{i,j}, C_{i,j} \in\N, t_i>0$ are from Theorem \ref{thm:main-des}, and $1\le \tau \le g$ such that $t_\tau =\min\{t_i: 1\le i \le g\}$.
As in the previous section, we concern about $\eta=\hmu^{A_*,C_*}$, which (according to Lemma \ref{lem:law-hmu}) is a colored TASEP on $\Z$, initially containing particles at sites $A_*-C_*+1, \cdots, A_*$, with colors $1, \cdots, C_*$ respectively. 
Then recall that the cutting information consists of $\{\don[R_i \le t_\tau]R_i\}_{i=1}^{C_*}$ and $\{\don[R_i \le t_\tau]L_i\}_{i=1}^{C_*}$, where $R_0=0$, and $R_i= \inf\{t>R_{i-1}: \eta_t(A_*+B_*+1-i) < \infty\}$ is the first time after $R_{i-1}$ when the site $A_*+B_*+1-i$ is occupied by a particle in $\eta$, and $L_i=\eta_{R_i}(A_*+B_*+1-i)$ is the color of that particle.
Also recall the notations $\bR^k=\{R_i\}_{i=1}^k$, $\bL^k=\{L_i\}_{i=1}^k$, and the sets $\R_<\{k,t\}$, $\cL\{k\}$, for each $1\le k\le C_*$ and $t>0$.

We shall consider probability density functions joint with the event $I=\bigcap_{i=\tau+1}^g\bigcap_{j=1}^{k_i}I^{t_i}[A_{i,j},B_{i,j},C_{i,j}]$ on passage times, or the (partially) shifted version $I^+=\bigcap_{i=\tau+1}^g\bigcap_{j=1}^{k_i}I^{t_i}[A_{i,j}^+,B_{i,j},C_{i,j}]$. (Recall from Theorem \ref{thm:main-des} that $A_{i,j}^+ = A_{i,j}+\don[i> \iota]$ for some $1\le \iota < g$; and note we have assumed that $\tau \le \iota$.)
\begin{lemma} \label{lem:analytic}
For any $1\le k \le C_*$, $\bell\in \cL\{k\}$, and $\hat{I}=I$ or $I^+$, the function
\[
\br \mapsto 
\PP[\bR^k=\br, \bL^k=\bell, \hat{I}]
\]
is analytic in $\R_<\{k,t_\tau\}$,
and it can be analytically extended to $\C^k$.
\end{lemma}
Our proof is by writing the cutting times as sums of independent exponential random variables and using results from Section \ref{ssec:density}.
\begin{proof}[Proof of Lemma \ref{lem:analytic}]
We prove for the case where $\hat{I}=I$, and the other case where $\hat{I}=I^+$ follows similarly.

We need to consider a projection of the colored TASEP $\zeta$ that is different from $\eta=\hmu^{A_*,C_*}$, because each $T_{B_{i,j},C_{i,j}}^{A_{i,j}}$ for $\tau < i \le g$ is not determined by $\eta$.

We take integers $A_0$, $B_0$, $C_0$, which are different from $A_*$, $B_*$, $C_*$, as follows.
We let $A_0 = \max_{\tau<i\le g, 1\le j \le k_i}A_{i,j}$, and take $B_0, C_0$ such that
\[
A_0+B_0=\max_{\tau<i\le g, 1\le j \le k_i}A_{i,j}+B_{i,j},\quad
A_0-C_0=\min_{\tau<i\le g, 1\le j \le k_i}A_{i,j}-C_{i,j}.
\]
We have $B_0 \in \N$ (i.e., $B_0>0$), since there exist some $i_*, j_*$ such that $A_0=A_{i_*,j_*}$, and then $B_0\ge B_{i_*,j_*}$; we also have $C_0 \in \N$ (i.e., $C_0>0$), since $C_0\ge C_{i,j}$ for any $\tau < i \le g$ and $1\le j \le k_i$.
For any $\tau < i \le g$ and $1\le j \le k_i$, we have $A_0\ge A_{i,j}\ge A_*$ and $A_0-C_0\le A_{i,j}-C_{i,j}\le A_*-C_*$, according to \eqref{eq:defA} and \eqref{eq:Cp};
so by Lemma \ref{lem:proj-info}, the process $\eta=\hmu^{A_*,C_*}$ and the time $T_{B_{i,j},C_{i,j}}^{A_{i,j}}$ are determined by $\hmu^{A_0,C_0}$.

We consider a process $\xi$, which can be thought of as $\hmu^{A_0,C_0}$, while any particle disappears once arrives at site $A_0+B_0+1$.
To be more precise, for any $x\in\Z$ and $t\ge 0$, we let $\xi_t(x)=\infty$ if $x> A_0+B_0$, and $\xi_t(x)=\hmu_t^{A_0,C_0}(x)$ otherwise.
As $A_0+B_0=A_*+B_*\ge A_{i,j} + B_{i,j}$ for any $\tau < i \le g$ and $1\le j \le k_i$, we have that $\eta$ on $\Z\cap (-\infty, A_*+B_*]$ is determined by $\xi$, and $T_{B_{i,j},C_{i,j}}^{A_{i,j}}$ is a certain swap time in $\xi$.

Suppose the total number of swaps (of two particles, or of a particle jumping to an empty site) in the process $\xi$ is $M$.
Then we always have that $M\le B_0C_0+C_0^2$.
This bound is from the following reasoning: for the particle colored $i$, it moves from $A_0-C_0+i$ to $A_0+B_0+1$, and it can be swapped with a smaller colored particle (i.e., make a left jump) $i-1$ times;
thus it can jump to the right at most $(A_0+B_0+1) - (A_0-C_0+i) + (i-1) = B_0+C_0$ times.

For $1\le i\le M$, we take $X_i\in \Z$ such that the $i$-th swap (in $\xi$) is between sites $X_i$ and $X_i+1$, and it happens at time $w_i$. Take $w_0=0$, and for each $1\le i\le M$ we let $E_i=w_i-w_{i-1}$.
Then conditional on $M$ and $\{X_i\}_{i=1}^M$, we have that $\{E_i\}_{i=1}^M$ are independent, and the distribution for $E_i$ is $\Exp(1/a_i)$, where $a_i$ is the number of possible swaps at time $w_{i-1}$; i.e.,
\[
a_i= |\{x\in\Z: \xi_{w_{i-1}}(x) < \xi_{w_{i-1}}(x+1) \}|.
\]
Note that for each $i$, such $a_i$ is determined by $\{X_j\}_{j=1}^M$, since $\xi_{w_{i-1}}$ is determined by $\{X_j\}_{j=1}^{i-1}$.

Below we write $\bX=\{X_i\}_{i=1}^M$. From the bound of $M$, we have that there are finitely many $\bx=\{x_i\}_{i=1}^m$, such that $\PP[\bX=\bx] > 0$.
It remains to show that for any such $\bx$, the function
\[
\br \mapsto 
\PP[\bR^k=\br, \bL^k=\bell, I \mid \bX=\bx]
\]
is analytic in $\R_<\{k,t_\tau\}$,
and it can be analytically extended to $\C^k$.

We note that $\bL^k$ is determined by $\bX$, so it remains to consider the function
\begin{equation}  \label{eq:r-func}
\br \mapsto 
\PP[\bR^k=\br, I \mid \bX=\bx].    
\end{equation}
For each $1\le i \le k$, we have $R_i \in \{w_{i'}\}_{i'=1}^M$; and we let $\kappa[i]$ be the number such that
$R_i=w_{\kappa[i]}$.
Then we have $\kappa[1]<\kappa[2]<\cdots<\kappa[k]$.
For each $\tau< i\le g$ and $1\le j\le k_i$, we have $T^{A_{i,j}}_{B_{i,j},C_{i,j}} \in \{w_{i'}\}_{i'=1}^M$; and we let $\hkappa[i]$ be the number such that
$\max_{1\le j\le k_i} T^{A_{i,j}}_{B_{i,j},C_{i,j}} =w_{\hkappa[i]}$.
We note that the indices $\{\kappa[i]\}_{i=1}^k$ and $\{\hkappa[i]\}_{i=\tau+1}^g$ are determined by $\bX$.

Since $\{E_i\}_{i=1}^M$ are independent exponential random variables,
we can then write the function \eqref{eq:r-func} as (supposing that $\br=\{r_i\}_{i=1}^k$, $r_0=0$, and $\kappa[0]=0$)
\[
\PP\left[\sum_{i'=\kappa[k]+1}^{\hkappa[i]} E_{i'} < t_i-r_k, \; \forall \tau< i\le g, \hkappa[i] > \kappa[k]\mid \bX=\bx\right]
\prod_{i=1}^k\PP\left[\sum_{i'=\kappa[i-1]+1}^{\kappa[i]} E_{i'} = r_i-r_{i-1}\mid \bX=\bx\right].
\]
By applying Lemma \ref{lem:analy-single} or Lemma \ref{lem:analy-multi} to each factor, we get that this is analytic in $\R_<\{k,t_\tau\}$, and can be analytically extended to $\C^k$.
Thus the conclusion follows.
\end{proof}

\subsection{Induction hypothesis with fewer colors}  \label{ssec:hywfc}
For the rest of this section, we assume that Theorem \ref{thm:main-des} is true for any smaller $\hg$.
We now state the input from this induction hypothesis, through the following event.\\

\noindent\textbf{High-speed event $\cE$.}
Take any $1\le k \le C_*$, and let $\lambda\in\N$.
For each $1\le i\le k$ we denote $m_i=2^{\lambda^{k+1-i}}$.
We assume that $\lambda$ is large enough so that $m_1>\cdots > m_k>A_*+B_*$. 
Denote $t_*=t_{\tau+1}\ge t_\tau$. 
Take $\bell=\{\ell_i\}_{i=1}^k\in\cL\{k\}$, and denote $\cE$ as the event where $\{\eta_{t_*}(m_i)\}_{i=1}^k=\bell$ and $\eta_{t_*}(x)=\infty, x\in [m_k,\infty)\setminus \{m_i\}_{i=1}^k$.
In words, $\cE$ is the event that in $\eta_{t_*}$, the particle colored $\ell_i$ is at site $m_i$. See Figure \ref{fig:EEpI} for an illustration.

We design $\cE$ this way because later we will send $\lambda\to \infty$, so that all these $m_i$ are well-separated.
Then under $\cE$, the particles with colors in $\bell$ have high and very different speeds, thus their evolutions are roughly independent of each other.
This enables us to extract the cutting information. The procedure of sending $\lambda\to\infty$ is similar to taking $B'$ large enough in Example \ref{ex:simple}.
\begin{lemma} \label{lem:equal-p1}
We have $\PP[\cE \mid I] = \PP[\cE \mid I^+]$.
\end{lemma}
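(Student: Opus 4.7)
My plan is to reduce $\PP[\cE \mid I] = \PP[\cE \mid I^+]$ to the induction hypothesis of Theorem \ref{thm:main} at a strictly smaller value of $\hg$, by expressing $\cE$ as a signed combination of events of the form $I^{t_*}_{A, B, C}$ and applying the induction term by term.

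First I would translate the constraints defining $\cE$ into events on the height functions $h_{A, x}(t_*)$ of the original process $\zeta$. The identity $\eta_{t_*}(m_j) = l_j$ is equivalent to $\zeta_{t_*}(m_j) = A_* - C_* + l_j$ combined with the rightmost-$l_j$ condition coming from the construction of $\hmu^{A_*, C_*}$ (Lemma \ref{lem:proj-info}); both parts can be phrased in terms of $h_{A, m_j}(t_*)$ for $A \in \{A_* - C_* + l_j - 1, A_* - C_* + l_j\}$. The vacancy condition $\eta_{t_*}(x) = \infty$ on $[m_k, \infty) \setminus \{m_i\}$ reduces, via the count $N_{C_*}(m_k, t_*) = k$, to a constraint on $h_{A_*, m_k}(t_*) - h_{A_* - C_*, m_k}(t_*)$. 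Each event $\{h_{A, x}(t_*) \ge n\}$ coincides with $I^{t_*}_{A, x - A + n - 1, n}$, so by inclusion-exclusion $\cE$ admits a signed representation
\[
\cE = \sum_\alpha c_\alpha \bigcap_\beta I^{t_*}_{(A, B, C)_{\alpha, \beta}}, \quad c_\alpha \in \{\pm 1\},
\]
with $A_{\alpha, \beta} \in \{A_* - C_*, \ldots, A_*\}$ and $A_{\alpha, \beta} + B_{\alpha, \beta}$ of order $m_k \gg A_* + B_*$.

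Substituting this decomposition into $\PP[\cE \cap \hat I]$ for $\hat I \in \{I, I^+\}$ produces the same signed sum of probabilities of the form $\PP\bigl[\bigcap_\beta I^{t_*}_{(A, B, C)_{\alpha, \beta}} \cap \hat I\bigr]$, so it suffices to prove term-wise equality. For each term I would invoke the induction hypothesis of Theorem \ref{thm:main}: the combined event uses only the $g - \tau$ distinct times $t_{\tau + 1}, \ldots, t_g$, so the corresponding $\hg$ drops by $\tau \ge 1$, placing the instance strictly below the current $\hg$, and the induction applies. The $\cE$-parameters go into the first time group at new index $1$, which remains unshifted when $\tau < \iota$. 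The orderings $A_\cE \le A_I$ and $A^+_\cE + B_\cE \ge A^+_I + B_I$ follow respectively from $A_\cE \le A_* \le A_I$ and from the exponentially large choice $m_k > A_* + B_*$.

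The main obstacle will be the third ordering $A^+_\cE - C_\cE \ge A^+_I - C_I$: because $h_{A_* - C_*, m_k}(t_*)$ is a priori unbounded, naive terms in the decomposition can have large $C_\cE$ and violate this inequality. I expect to resolve this by stratifying the inclusion-exclusion according to the value of $h_{A_* - C_*, m_k}(t_*)$, so that within each stratum the $C_\cE$ are bounded and the orderings hold, and then summing the countably many strata using the analyticity of the densities established in Section \ref{ssec:density}. The boundary case $\tau = \iota$, where the new shift index would otherwise be $0$, is handled separately by translation invariance of the colored TASEP: a simultaneous unit shift of all $A$-parameters preserves the joint distribution, and composed with the decomposition above this yields the desired equality.
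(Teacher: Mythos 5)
Your overall strategy coincides with the paper's: recast $\cE$ in terms of passage--time events $I^{t_*}_{A,B,C}$, decompose via inclusion-exclusion, and invoke the induction hypothesis of Theorem \ref{thm:main} term by term with a strictly smaller value of $\hg$. However, the ``main obstacle'' you identify is not present in the paper's formulation, and your proposed fix would not resolve it.

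The key point you are missing is that $\cE$ only involves the projected process $\eta=\hmu^{A_*,C_*}$, which contains exactly $C_*$ particles, and the relevant count is $\min(C_*, h_{A_*,m_k}(t_*))$, \emph{not} the difference $h_{A_*, m_k}(t_*) - h_{A_*-C_*, m_k}(t_*)$ of two unbounded quantities. Consequently, the paper expresses $\cE$ as the event $\cE'$ using only passage times $T^A_{B,C}$ with $A\le A_*$ and $A-C\ge A_*-C_*$, so that $C\le A-A_*+C_*\le C_*$ automatically. This bound is exactly the third ordering $A_{0,j}^{+}-C_{0,j}\ge A^{+}_{i,j}-C_{i,j}$ that Theorem \ref{thm:main} needs (one checks the required strictness follows from the hypotheses on the original parameters), and it also renders the inclusion-exclusion finite without any stratification. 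Stratifying by the value of $h_{A_*-C_*,m_k}(t_*)$ does not help: within a stratum $\{h_{A_*-C_*,m_k}(t_*)=n\}$ with $n$ large, expressing the stratum itself requires events $I^{t_*}_{A_*-C_*,\cdot,n}$ and $I^{t_*}_{A_*-C_*,\cdot,n+1}$ whose effective $A-C$ equals $A_*-C_*-n$, which is far below $A_*-C_*$ and violates the ordering you need for the induction---so the orderings do \emph{not} ``hold within each stratum.'' The invocation of analyticity to sum the strata is also unnecessary and a red herring; once the decomposition is phrased through the projection, the sum is finite.

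A second, smaller issue: the translation of $\eta_{t_*}(m_j)=l_j$ into $\zeta_{t_*}(m_j)=A_*-C_*+l_j$ is not correct. The colors of $\hmu^{A_*,C_*}$ are not the shifted $\zeta$-colors at the same sites; $\eta_{t_*}(m_j)=l_j$ only forces $\zeta_{t_*}(m_j)\le A_*-C_*+l_j$ together with count conditions on $h_{A_*-C_*+l_j,m_j}(t_*)$ and $h_{A_*-C_*+l_j-1,m_j}(t_*)$. This is repairable but worth flagging, because it is precisely this conflation of $\eta$-positions with $\zeta$-colors that led you to believe you must control $h_{A_*-C_*,m_k}(t_*)$ separately. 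Finally, your translation-invariance handling of the boundary case $\tau=\iota$ is a legitimate alternative to the paper's bookkeeping (which relies on $\iota\ge2$ in that case to get $\hg'<\hg$), but it should be spelled out more carefully: after shifting all $A$-parameters by one, one must still compare $\cE^{+}$ and $\cE$ with $I^{+}$ fixed, which requires one more application of the induction hypothesis in the other direction.
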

We now explain how this can be deduced from the induction hypothesis of Theorem \ref{thm:main-des}.
By \eqref{eq:IIpeq}, we need to show that $\PP[\cE\cap I] = \PP[\cE\cap I^+]$.
The event $\cE$ can be characterized by all the random variables $\don[T^A_{B,C}\le t_*]=\don[I^{t_*}[A,B,C]]$, with $A\le A_*$, $A-C\ge A_*-C_*$, $A+B-C+1\ge m_k$.
Then we use the induction hypothesis of Theorem \ref{thm:main-des}, and the principle of inclusion-exclusion,
to show that the joint law of these random variables with $\don[I]$ is the same as the joint law of these random variables with $\don[I^+]$.

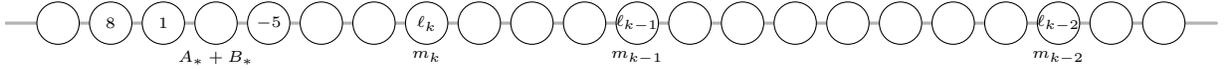
\begin{figure}[hbt!]
    \centering
\begin{tikzpicture}[line cap=round,line join=round,>=triangle 45,x=7cm,y=7cm]
\clip(-2.7,-0.7) rectangle (-0.35,-0.5);

\draw [line width=1.2pt, opacity=0.3] (-2.7,-0.6) -- (-0.4,-0.6);
\draw [fill=white] (-2.6,-0.6) circle (8.0pt);
\draw [fill=white] (-2.5,-0.6) circle (8.0pt);
\draw [fill=white] (-2.4,-0.6) circle (8.0pt);
\draw [fill=white] (-2.3,-0.6) circle (8.0pt);
\draw [fill=white] (-2.2,-0.6) circle (8.0pt);
\draw [fill=white] (-2.1,-0.6) circle (8.0pt);
\draw [fill=white] (-2.,-0.6) circle (8.0pt);
\draw [fill=white] (-1.9,-0.6) circle (8.0pt);
\draw [fill=white] (-1.8,-0.6) circle (8.0pt);
\draw [fill=white] (-1.7,-0.6) circle (8.0pt);
\draw [fill=white] (-1.6,-0.6) circle (8.0pt);
\draw [fill=white] (-1.5,-0.6) circle (8.0pt);
\draw [fill=white] (-1.4,-0.6) circle (8.0pt);
\draw [fill=white] (-1.3,-0.6) circle (8.0pt);
\draw [fill=white] (-1.2,-0.6) circle (8.0pt);
\draw [fill=white] (-1.1,-0.6) circle (8.0pt);
\draw [fill=white] (-1.,-0.6) circle (8.0pt);
\draw [fill=white] (-0.9,-0.6) circle (8.0pt);
\draw [fill=white] (-0.8,-0.6) circle (8.0pt);
\draw [fill=white] (-0.7,-0.6) circle (8.0pt);
\draw [fill=white] (-0.6,-0.6) circle (8.0pt);
\draw [fill=white] (-0.5,-0.6) circle (8.0pt);

\begin{tiny}
\draw (-2.6,0.8) node[anchor=center]{$-6$};
\draw (-2.5,0.8) node[anchor=center]{$-1$};
\draw (-2.4,0.8) node[anchor=center]{$-7$};
\draw (-2.3,0.8) node[anchor=center]{$0$};
\draw (-2.2,0.8) node[anchor=center]{$-4$};
\draw (-2.1,0.8) node[anchor=center]{$-3$};
\draw (-2.0,0.8) node[anchor=center]{$-5$};
\draw (-1.9,0.8) node[anchor=center]{$2$};
\draw (-1.8,0.8) node[anchor=center]{$4$};
\draw (-1.7,0.8) node[anchor=center]{$-9$};
\draw (-1.6,0.8) node[anchor=center]{$-2$};
\draw (-1.5,0.8) node[anchor=center]{$5$};
\draw (-1.4,0.8) node[anchor=center]{$3$};
\draw (-1.3,0.8) node[anchor=center]{$10$};
\draw (-1.2,0.8) node[anchor=center]{$7$};
\draw (-1.1,0.8) node[anchor=center]{$6$};
\draw (-1.0,0.8) node[anchor=center]{$1$};
\draw (-0.9,0.8) node[anchor=center]{$17$};
\draw (-0.8,0.8) node[anchor=center]{$9$};
\draw (-0.7,0.8) node[anchor=center]{$14$};
\draw (-0.6,0.8) node[anchor=center]{$12$};
\draw (-0.5,0.8) node[anchor=center]{$8$};

\draw (-2.5,-0.6) node[anchor=center]{$8$};
\draw (-2.4,-0.6) node[anchor=center]{$1$};
\draw (-2.2,-0.6) node[anchor=center]{$-5$};
\draw (-1.9,-0.6) node[anchor=center]{$\ell_k$};
\draw (-1.5,-0.6) node[anchor=center]{$\ell_{k-1}$};
\draw (-0.7,-0.6) node[anchor=center]{$\ell_{k-2}$};

\draw (-2.3,-0.64) node[anchor=north]{$A_*+B_*$};
\draw (-1.9,-0.64) node[anchor=north]{$m_k$};
\draw (-1.5,-0.64) node[anchor=north]{$m_{k-1}$};
\draw (-0.7,-0.64) node[anchor=north]{$m_{k-2}$};
\end{tiny}

\end{tikzpicture}
\caption{An illustration of the event $\cE$ on $\eta_{t_*}=\hmu^{A_*,C_*}_{t_*}$: for each $1\le i \le k$, there is a particle at site $m_i$ with label $m_i$; and there is no other particle to the right of $m_k$.}  
\label{fig:EEpI}
\end{figure}
\begin{proof}[Proof of Lemma \ref{lem:equal-p1}]
We note that $\cE$ is equivalent to the following event (denoted as $\cE'$):
\begin{enumerate}
    \item $T^A_{B,C} \le t_*$, for any $A\le A_*$, $A-C\ge A_*-C_*$, $A+B-C+1 \ge m_k$, such that $|\{1\le i \le k: A_*-C_*+\ell_i\le A, m_i\ge A+B-C+1\}|\ge C$;
    \item $T^A_{B,C} > t_*$, for any $A\le A_*$, $A-C\ge A_*-C_*$, $A+B-C+1 \ge m_k$, such that $|\{1\le i \le k: A_*-C_*+\ell_i\le A, m_i\ge A+B-C+1\}|< C$.
\end{enumerate}
It is straightforward to check that $\cE'$ is implied by $\cE$.
For the other direction: from $\cE'$ we can determine $|\{x\ge x_0: \eta_{t_*}(x)\le i\}|$ for each $x_0\ge m_k$ and each $1\le i \le C_*$ (by Lemma \ref{lem:proj-info});
thus by varying $x_0$ we can determine the set $\{x\ge m_k: \eta_{t_*}(x)\le i\}$ for each $1\le i \le C_*$; and by further varying $i$ we can determine $\eta_{t_*}(x)$ for each $x\ge m_k$, and this can determine $\cE$.

We also note that $\cE'$ can be written as the intersection of $I^{t_*}[A,B,C]$ for all $A,B,C$ in the first point above, minus the union of $I^{t_*}[A,B,C]$ for all $A,B,C$ in the second point above.
To show that $\PP[\cE'\cap I] = \PP[\cE' \cap I^+]$, we consider the joint distributions of
\[
\{\don[I^{t_*}[A,B,C]]\}_{A\le A_*, A-C\ge A_*-C_*, A+B-C+1 \ge m_k}, \don[I],
\]
and
\[
\{\don[I^{t_*}[A,B,C]]\}_{A\le A_*, A-C\ge A_*-C_*, A+B-C+1 \ge m_k}, \don[I^+].
\]
It then suffices to show that the above two sets of random variables are equal in distribution.
For this, we take an arbitrary subset of $\{(A, B, C): A\le A_*$, $A-C\ge A_*-C_*$, $A+B-C+1 \ge m_k\}$, denoted by $\{(A_{0,j}, B_{0,j}, C_{0,j})\}_{j=1}^{k_0} \in \Z^{k_0} \times \N^{k_0}\times \N^{k_0}$, for some $k_0 \in \N$.
We claim that
\begin{equation}  \label{eq:IABCeq}
\PP\left[\bigcap_{j=1}^{k_0}I^{t_*}[A_{0,j}, B_{0,j}, C_{0,j}] \cap I\right] = \PP\left[\bigcap_{j=1}^{k_0}I^{t_*}[A_{0,j}, B_{0,j}, C_{0,j}] \cap I^+\right].    
\end{equation}
This follows from the induction hypothesis of Theorem \ref{thm:main-des}, and below we explain how the parameters are taken, and check that all the required conditions are satisfied (in particular, $\hg$ is smaller).
We apply Theorem \ref{thm:main-des} to $g'\in\N$, $k_1',\ldots, k_{g'}'\in\N$, and $A_{i,j}'\in \Z$, $B_{i,j}', C_{i,j}' \in\N$ for $1\le i \le g'$ and $1\le j \le k_i'$, and $1\le \iota' < g'$, $t_1',\ldots, t_{g'}'$, which are taken as follows: if $\tau < \iota$, we have
\begin{enumerate}
    \item $g'=g-\tau$, $\iota'=\iota-\tau$, $t_i'=t_{\tau+i}$ for each $1\le i \le g'$,
    \item $k_1'=k_{\tau+1}+k_0$, $k_i' = k_{\tau+i}$ for each $1<i\le g'$,
    \item $\{A_{1,j}'\}_{j=1}^{k_1'} = \{A_{0,j}\}_{j=1}^{k_0}\cup \{A_{\tau+1,j}\}_{j=1}^{k_{\tau+1}}$, $\{B_{1,j}'\}_{j=1}^{k_1'} = \{B_{0,j}\}_{j=1}^{k_0}\cup \{B_{\tau+1,j}\}_{j=1}^{k_{\tau+1}}$, $\{C_{1,j}'\}_{j=1}^{k_1'} = \{C_{0,j}\}_{j=1}^{k_0}\cup \{C_{\tau+1,j}\}_{j=1}^{k_{\tau+1}}$; and $A_{i,j}'=A_{\tau+i,j}$, $B_{i,j}'=B_{\tau+i,j}$, $C_{i,j}'=C_{\tau+i,j}$ for each $1<i\le g'$ and $1\le j \le k_i'$,
\end{enumerate}
and if $\tau=\iota$, we have
\begin{enumerate}
    \item $g'=g-\tau+1$, $\iota'=1$, $t_1'=t_*=t_{\tau+1}$, and $t_i'=t_{\tau+i-1}$ for each $1< i \le g'$,
    \item $k_1'=k_0$, $k_i' = k_{\tau+i-1}$ for each $1<i\le g'$,
    \item $\{A_{1,j}'\}_{j=1}^{k_1'} = \{A_{0,j}\}_{j=1}^{k_0}$, $\{B_{1,j}'\}_{j=1}^{k_1'} = \{B_{0,j}\}_{j=1}^{k_0}$, $\{C_{1,j}'\}_{j=1}^{k_1'} = \{C_{0,j}\}_{j=1}^{k_0}$; and $A_{i,j}'=A_{\tau+i-1,j}$, $B_{i,j}'=B_{\tau+i-1,j}$, $C_{i,j}'=C_{\tau+i-1,j}$ for each $1<i\le g'$ and $1\le j \le k_i'$.
\end{enumerate}
In words, we take the groups of passage times with indexes from $\tau+1$ to $g$, and also the extra group $\{(A_{0,j}, B_{0,j}, C_{0,j})\}_{j=1}^{k_0}$. Whenever $\tau<\iota$ we combine this new group and the one with index $\tau+1$. (In the case where $\tau=\iota$, we cannot combine these two groups because a relative shift happens between them.)

Under this setting, if $t_\iota \neq t_{\iota+1}$, the corresponding $\hg'$ equals $g'=g-\tau<g=\hg$; otherwise, if $\tau < \iota$ we have $\hg'=g'-1=g-\tau-1<g-1=\hg$, and if $\tau = \iota$ we have $\tau\ge 2$, so $\hg'=g'-1=g-\tau < g-1=\hg$.
Thus by the induction hypothesis, we conclude that \eqref{eq:IABCeq} holds.
Then by the principle of inclusion-exclusion, and taking different $\{(A_{0,j}, B_{0,j}, C_{0,j})\}_{j=1}^{k_0}$, we get $\PP[\cE'\cap I] = \PP[\cE' \cap I^+]$. So with \eqref{eq:IIpeq} the conclusion follows.
\end{proof}

\subsection{Partial order and induction of cutting colors}
Let $\Lambda=\bigcup_{k=1}^{C_*}\cL\{k\}$.
We define a partial order $\prec$ on the $\Lambda$: for any $\bell$ and $\bell'$, we let $\bell\prec\bell'$, if one of the following conditions are satisfied:
\begin{enumerate}
    \item $\bell\in\cL\{k\}$ and $\bell'\in\cL\{k'\}$, for some $1\le k<k'\le C_*$, and the first $k'$ coordinates of $\bell$ and $\bell'$ are the same.
    \item Both $\bell, \bell'\in\cL\{k\}$ for some $1\le k \le C_*$, and one can apply a `swap operation' to $\bell$ to obtain $\bell'$; i.e., if we write $\bell=\{\ell_i\}_{i=1}^{k}$ and $\bell'=\{\ell_i'\}_{i=1}^{k}$, then there exists some $1\le i <k$ with $\ell_i>\ell_{i+1}$, such that $\ell_i'=\ell_{i+1}$ and $\ell_{i+1}=\ell_i'$, and $\ell_j=\ell_j'$ for any $1\le j \le k$, $j\not\in\{i,i+1\}$.
\end{enumerate}
Also, suppose $\bell\prec\bell'$ and $\bell'\prec\bell''$, we let $\bell\prec\bell''$.
In other words, for any $\bell\in \Lambda$, the sequences that are greater than it under $\prec$ are those that can be obtained in the following way: first, apply a sequence of swap operations; then take the first a few coordinates.
Under this ordering, $\{C_*, \cdots, 1\}$ is the minimum element in $\Lambda$, and $\emptyset$ is the maximum element in $\Lambda$.

\begin{ex}   \label{ex:order}
For $C_*=5$, we have $\{3,5,2,1\}\prec\{2,3\}$, because we can take a sequence of swap operations $\{3,5,2,1\}\to \{3,2,5,1\}\to\{2,3,5,1\}$, and take the first two numbers.
On the other hand, we must have  $\{3,5,2,1\}\not\prec\{5,2\}$, since any sequence of swap operations cannot move $5$ to the beginning.
\end{ex}

For simplicity of notation, we let $\bell\preceq\bell'$ if $\bell\prec\bell'$ or $\bell=\bell'$.

We prove Proposition \ref{prop:equal-p} using Lemma \ref{lem:equal-p1} and induction in $\bell \in \Lambda$, under the ordering $\prec$. Below we shall fix $1\le k \le C_*$ and $\bell =\{\ell_i\}_{i=1}^{k}\in \cL\{k\}$, and assume that Proposition \ref{prop:equal-p} is true for any $\bell'\prec\bell$.
We next set up the following transition probabilities.
Recall the vectors $\bR^{k'}=\{R_i\}_{i=1}^{k'}$, $\bL^k=\{L_i\}_{i=1}^{k'}$ for each $1\le k'\le C_*$, and that the cutting information consists of $\{\don[R_i \le t_\tau]R_i\}_{i=1}^{C_*}, \{\don[R_i \le t_\tau]L_i\}_{i=1}^{C_*}$.
\\

\noindent\textbf{Transition probabilities.} For each $1\le k'\le C_*$, $\bell'\in\cL\{k'\}$, $\br\in\R_<\{k',t_\tau\}$, and $\hat{I}=I$ or $I^+$, we denote
\[
G(\bell', \br, \hat{I}) =
\PP[\bR^{k'}=\br, R_{k'+1}>t_\tau, \bL^{k'}=\bell' \mid \hat{I}].
\]
In words, $G$ is the distribution density function of the cutting information (up to time $t_\tau$), conditioned on $I$ or $I^+$.
The identity \eqref{eq:equal-p} in the statement of Proposition \ref{prop:equal-p} is then equivalent to 
\begin{equation}  \label{eq:equal-pe}
G(\bell, \br, I) = G(\bell, \br, I^+),
\end{equation}
for any $\br\in \R_<\{k,t_\tau\}$, and our goal is to prove it.
The induction hypothesis of Proposition \ref{prop:equal-p} is translated into the fact that 
\begin{equation}  \label{eq:equal-pei}
G(\bell', \br, I) = G(\bell', \br, I^+),
\end{equation}
for any $k\le k'\le C_*$, $\bell'\prec\bell$ with $\bell'\in\cL\{k'\}$, and any $\br\in \R_<\{k',t_\tau\}$.

Define the event $\cE$ to be the `high-speed event' as in the previous subsection, for the fixed $\bell$.
For $\hat{I}=I$ or $I^+$, and each $1\le k'\le C_*$, $\bell'\in\cL\{k'\}$, $\br\in\R_<\{k',t_\tau\}$, we denote
\[
F(\bell', \br, \hat{I}) = \PP[\cE \mid \bR^{k'}=\br, R_{k'+1}>t_\tau, \bL^{k'}=\bell', \hat{I}].
\]
In words, $F$ is the probability of $\cE$ conditioned on the cutting information and $\hat{I}$.
We note that the definition of $F$ depends on $\bell$, since the event $\cE$ depends on $\bell$.\\

Now we can expand $\PP[\cE\mid \hat{I}]$ as follows:
\begin{equation}  \label{eq:wr-ce}
\begin{split}
\PP[\cE\mid \hat{I}] &= \sum_{k',\bell'\in\Lambda\{k'\}} \int_{\br\in \R_<\{k',t_\tau\} }
\PP[\cE, \bR^{k'}=\br, R_{k'+1}>t_\tau, \bL^{k'}=\bell'\mid \hat{I}] d\br \\
&=\sum_{k',\bell'\in\Lambda\{k'\}} \int_{\br\in \R_<\{k',t_\tau\}}
F(\bell', \br, \hat{I}) G(\bell', \br, \hat{I})d\br.
\end{split}
\end{equation}
By Lemma \ref{lem:equal-p1}, the second line of \eqref{eq:wr-ce} is the same for $\hat{I}=I$ or $I^+$; namely, we have
\[
\sum_{k',\bell'\in\Lambda\{k'\}} \int_{\br\in \R_<\{k',t_\tau\}}
F(\bell', \br, {I}) G(\bell', \br, {I})d\br
=
\sum_{k',\bell'\in\Lambda\{k'\}} \int_{\br\in \R_<\{k',t_\tau\}}
F(\bell', \br, {I^+}) G(\bell', \br, {I^+})d\br.
\]
We will analyze the summand for each $\bell'\in\Lambda$:
\begin{itemize}
    \item For $\bell'\not\preceq\bell$, we will show that $F(\bell', \br, \hat{I})$ is very small (compared to $F(\bell, \br, \hat{I})$), when $\lambda \to \infty$ (Lemma \ref{lem:npr} below). Therefore each summand in both sides with such $\bell'$ can be ignored in the $\lambda \to \infty$ limit.
    \item For $\bell'\preceq\bell$, we will show that $|F(\bell', \br, I)-F(\bell', \br, I^+)|$ is very small (compared to $F(\bell, \br, \hat{I})$), when $\lambda \to \infty$ (Lemma \ref{lem:pr} below). 
    As a consequence, 
    for the summands in the right-hand side such $\bell'$, we can replace $F(\bell', \br, I^+)$ by $F(\bell', \br, I)$.
    Using the induction hypothesis \eqref{eq:equal-pei}, we can ignore the summands with $\bell'\prec\bell$ (in both sides).
    Therefore we conclude that the integrals
    \[
    \int_{\br\in \R_<\{k,t_\tau\}}
F(\bell, \br, I) G(\bell, \br, I) d\br
    \]
    and 
        \[
    \int_{\br\in \R_<\{k,t_\tau\}}
F(\bell, \br, I) G(\bell, \br, I^+) d\br
    \]
    are roughly equal, as $\lambda\to\infty$.
    \item From the rough inequality between the above two integrals, we can extract the equality between $G(\bell, \cdot, I^+)$ and $G(\bell, \cdot, I)$ (i.e., prove \eqref{eq:equal-pe}), by using (1) the fact that they are both analytic, and (2) refined $\lambda\to\infty$ asymptotic behavior of $F(\bell, \cdot, I)$ (Lemma \ref{lem:asy-m-t} below). 
\end{itemize}

As already indicated, the main steps in the above strategy are given by various estimates on $F$, i.e., the probability of $\cE$ conditioned on the cutting information (up to time $t_\tau$) and $\hat{I}$.
Below we give these estimates.

Recall that $\cE$ is the event that in $\eta_{t_*}$, the particle colored $\ell_i$ is at site $m_i=2^{\lambda^{k+1-i}}$ (for each $1\le i \le k$).
When $\lambda$ is large, this requires that the particle colored $\ell_i$ move roughly $m_i$ steps within $t_*$ time.
Thus $F$ should be of order at most $\prod_{i=1}^k \frac{t_*^{m_i}}{m_i!}$.
Our first estimate states that conditioned on certain cutting information, $F$ is smaller.
\begin{lemma}  \label{lem:npr}
For any $1\le k'\le C_*$, $\bell'\in\cL\{k'\}$, $\bell'\not\preceq\bell$, and $\alpha_1, \cdots, \alpha_k \in \Z_{\ge 0}$, and $\hat{I} = I$ or $\hat{I}=I^+$, we have
\[
\prod_{i=1}^{k} \frac{m_i^{\alpha_i}\cdot m_i!}{t_*^{m_i}}
\int_{\br\in \R_<\{k', t_\tau\}}
F(\bell', \br, \hat{I}) d\br \to 0,
\]
as we send $\lambda\to \infty$.
\end{lemma}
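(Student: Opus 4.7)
My plan is to bound the conditional probability $F(\{l_i'\}_{i=1}^{k'}, \{r_i\}_{i=1}^{k'}, \hat{I})$ by a product of Poisson tail estimates for particle travel distances in $\eta$, and then use the combinatorial content of $\{l_i'\} \not\preceq \{l_i\}$ to extract a super-polynomial decay factor in $\lambda$ which beats the doubly-exponential prefactor $\prod_{i=1}^{k} m_i^{\alpha_i}\cdot m_i!/t_*^{m_i}$. Since $m_i = 2^{\lambda^{k+1-i}}$, the prefactor grows doubly exponentially in $\lambda$, so the argument requires a matching super-exponential decay of $F$, which only the mismatch condition can provide.

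First, using Lemma \ref{lem:I-deter-bdy} (adapted to the future interval $[r_{k'}, t_*]$), I would express $F$ in terms of the Poisson field $\Pi$ on the relevant space-time region and bound it by the probability that each target color $l_j$ travels from its constrained position at time $r_{k'}$ to $m_j$ by time $t_*$. Since each particle's number of right-jumps in time $[r_{k'}, t_*]$ is stochastically dominated by $\mathrm{Poisson}(t_*)$, the standard tail estimate $\PP[\mathrm{Poisson}(t_*) \ge D]\le C\,t_*^D/D!$ yields $F \le \prod_j C\, t_*^{D_j}/D_j!$ where $D_j$ is the required travel distance. Under the ``natural'' pairing arising when $\{l_i'\} \preceq \{l_i\}$ one has $D_j \asymp m_j$, so the product matches the inverse of $\prod_i m_i!/t_*^{m_i}$ up to polynomial factors and the lemma could fail at this level; the extra decay must come from the $\not\preceq$ hypothesis.

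The crucial step is to show that $\{l_i'\} \not\preceq \{l_i\}$ forces the required travel distance for at least one $D_j$ to exceed $m_j$ by an amount $e = e(\lambda) \to \infty$. The partial order $\preceq$ is designed to capture exactly the rearrangements of colors that the projected TASEP can realize between the cutting block and the far region: inversion-removing swaps (smaller colors overtaking larger ones as they move right in the empty region) together with truncations (corresponding to particles entering or not being tracked). When $\{l_i'\} \not\preceq \{l_i\}$, no such sequence of operations transforms $\{l_i'\}$ into $\{l_i\}$, so the event $\cE$ must be realized either by (a) a target color $l_j \notin \{l_i'\}$ whose particle lies strictly to the left of $A_*+B_*-k'$ at time $t_\tau$ (using $R_{k'+1}>t_\tau$) and so must make extra jumps to reach $m_j$, or (b) an order-reversing pairing that can only be approximated by specific low-probability interactions in the empty region. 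A case analysis on the first $\preceq$-violation localizes the extra cost to some index $i$, giving an extra factor $(t_*/m_i)^{e}$; since the gap $m_i/m_{i+1}=2^{\lambda^{k-i}(\lambda-1)}$ is itself super-polynomial in $\lambda$, this factor decays like $2^{-e\lambda^{k+1-i}}$ and dominates $m_i^{\alpha_i} \sim 2^{\alpha_i\lambda^{k+1-i}}$ for $\lambda$ large.

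Integrating the resulting pointwise bound over the simplex $0<r_1<\cdots<r_{k'}<t_\tau$ of volume $t_\tau^{k'}/k'!$ then yields the claimed convergence to $0$. The main obstacle will be the combinatorial identification in step three: for each violation $\{l_i'\} \not\preceq \{l_i\}$, one must precisely locate an index $i$ and a forced number of extra jumps $e(\lambda)\to\infty$ matching the structure of the mismatch, which requires carefully analyzing how an algebraic obstruction in the partial order translates into a dynamical cost through the rapidly growing gaps between the $m_i$. A secondary technical point is the conditioning on $\hat{I}$: since $\hat{I}$ is independent of the choice of $\lambda$ and has positive probability bounded below by a constant, one can pass freely between conditional and unconditional estimates at the cost of an overall multiplicative constant.
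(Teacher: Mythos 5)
Your high-level plan (bound $F$ by independent Poisson-type tail estimates for travel distances, and extract decay from the structure of the violation $\{l_i'\}\not\preceq\{l_i\}$) is in the right direction, but the ``crucial step'' you flag as the main obstacle contains a genuine gap, and the mechanism you identify is not the one that makes the argument work.

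The paper's proof dichotomizes more cleanly than you do. For each $j$ it sets $\kappa[j]$ to be the index with $l'_{\kappa[j]}=l_j$ if $l_j$ appears in $\{l'_i\}$, and $\kappa[j]=0$ otherwise. If some $\kappa[j]=0$, the relevant effect is not that the particle colored $l_j$ must make \emph{extra jumps}: the extra distance coming from starting at $A_*+B_*-k'$ rather than at $m_j$'s corresponding boundary is only an $O(C_*)$ constant and does nothing. What matters is that the event $R_{k'+1}>t_\tau$ forces that particle's cutting time to be $>t_\tau$, so it has only time $<t_*-t_\tau$ (rather than up to $t_*$) to reach $m_j$. Plugging this into Lemma \ref{lem:bd-tfc} (together with Lemma \ref{lem:cut-e-i} to remove the conditioning on $\hat I$) replaces the base $t_*^{m_j}$ by $(t_*-t_\tau)^{m_j}$, and after multiplying by $m_j^{\alpha_j}m_j!/t_*^{m_j}$ the surviving factor is $(1-t_\tau/t_*)^{m_j}$, geometric in $m_j\to\infty$. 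Your claimed extra factor ``$(t_*/m_i)^e$ with $e=e(\lambda)\to\infty$'', and your appeal to the gaps $m_i/m_{i+1}$, do not describe what happens here and there is no justification offered for where such an $e(\lambda)$ would come from; the gaps between the $m_i$ play their role in Lemma \ref{lem:asy-m-t}, not in this lemma.

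Your case (b), ``an order-reversing pairing approximated by low-probability interactions'', also misses the actual structure. If all $\kappa[j]>0$ (every $l_j$ appears among $\{l'_i\}$) and $\{l'_i\}\not\preceq\{l_i\}$, the paper's claim is that $\cE$ is \emph{impossible} given the cutting data, i.e.\ the conditional probability is exactly $0$, not merely small. The reason is combinatorial: realizing $\cE$ would mean one could apply a sequence of allowed swap operations to $\{l'_i\}_{i=1}^{k'}$ to bring $\{l_i\}_{i=1}^k$ to the first $k$ positions, which would give $\{l'_i\}\preceq\{l_i\}$ by definition of the order, a contradiction. Your proposal treats this as a small-probability estimate that still needs a quantitative bound, which is more work than needed and for which no argument is given. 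Finally, your treatment of the conditioning on $\hat I$ via a lower bound on $\PP[\hat I]$ is weaker than what is needed: the paper uses Lemma \ref{lem:cut-e-i}, a conditional-independence statement, to show the conditioning on $\hat I$ can be dropped entirely, which is what makes Lemma \ref{lem:bd-tfc} directly applicable.
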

We now explain this lemma heuristically.
Suppose that $\bL^{k'}=\bell'$ and $\cE$ holds. If each number in $\bell$ is also contained in $\bell'$,
we must have that $\bell'\preceq\bell$, by the definition of the ordering $\prec$.
Therefore, for any $\bell'\not\preceq\bell$, the events $\bL^{k'}=\bell'$ and $\cE$ imply that there is a number $\ell_j$ in $\bell$ that is not contained in $\bell'$.
If in addition $R_{k'+1}>t_\tau$ is assumed, in $\eta_{t_\tau}$ the particle colored $\ell_j$ is still on or to the left of site $A_*+B_*-k'$.
Thus this particle needs to move roughly $m_j$ steps between time $t_\tau$ and time $t_*$, and that happens with probability of order at most $\frac{(t_*-t_\tau)^{m_j}}{m_j!}$.
In summary, for $\bell'\not\preceq\bell$ the function $F(\bell', \br, \hat{I})$ is of order at most $((t_*-t_\tau)/t_*)^{m_j}\prod_{i=1}^k \frac{t_*^{m_i}}{m_i!}$, which is much smaller than $\prod_{i=1}^{k} \frac{t_*^{m_i}}{m_i^{\alpha_i}\cdot m_i!}$ for large $\lambda$.

Our second estimate compares the probabilities of $\cE$ conditioned on $I$ or $I^+$ (i.e., $F(\cdot,\cdot,I)$ versus $F(\cdot,\cdot,I^+)$).
\begin{lemma}  \label{lem:pr}
For any $k\le k'\le C_*$, $\bell'\in\cL\{k'\}$, $\bell'\preceq\bell$, and $\alpha_1, \cdots, \alpha_k \in \Z_{\ge 0}$, we have
\[
\prod_{i=1}^{k} \frac{m_i^{\alpha_i}\cdot m_i!}{t_*^{m_i}}
\int_{\br\in \R_<\{k', t_\tau\}}
\big|F(\bell', \br, I) 
-F(\bell', \br, I^+) \big|
d\br \to 0,
\]
as we send $\lambda\to \infty$.
\end{lemma}
The heuristics behind this lemma is as follows.
In $F(\bell', \br, I)$ and $F(\bell', \br, I^+)$, the cutting information up to time $t_\tau$ are conditioned to be the same.
The difference is on $I$ versus $I^+$, and that can only affect the cutting information after time $t_\tau$.
For the cutting information after time $t_\tau$ to affect the event $\cE$, there must exist a particle that is on or to the left of $A_*+B_*-k'$ in $\eta_{t_\tau}$, and interacts with the particles with colors in $\bell$ before time $t_*$.
Then this particular particle should move at least order $m_k$ steps before time $t_*$, because under $\cE$, at time $t_\tau$ each particle with color in  $\bell$ is likely to have already moved order $m_k$ steps.
Note that the probability for a particle to move $cm_k$ steps in $t_*-t_\tau$ time is roughly $\frac{(t_*-t_\tau)^{cm_k}}{(cm_k)!}$ (here $c>0$ is a constant).
Then the difference $|F(\bell', \br, I) 
-F(\bell', \br, I^+)|$ is expected to be of order at most $\frac{(t_*-t_\tau)^{cm_k}}{(cm_k)!}\prod_{i=1}^k \frac{t_*^{m_i}}{m_i!}$, which is much smaller than $\prod_{i=1}^{k} \frac{t_*^{m_i}}{m_i^{\alpha_i}\cdot m_i!}$ for large $\lambda$.

The next result is a more refined asymptotic result of $F$, with a scaling of times.
\begin{lemma}  \label{lem:asy-m-t}
Take any $s_1,\ldots,s_k>0$.
As we send $\lambda\to \infty$, there is
\begin{equation}\label{eq:asy-m-t}
\prod_{i=1}^k \frac{(m_i-(A_*+B_*+1-i))!}{t_*^{m_i-(A_*+B_*+1-i)}}
F(\bell, \{\frac{s_i}{m_i}\}_{i=1}^{k}, I) \to e^{-kt_*}\prod_{i=1}^k e^{-s_i/t_*}.     
\end{equation}
In addition, when $\lambda$ is large enough (depending on $A_*, B_*, C_*, t_*, t_\tau$), for any $0<\frac{s_1}{m_1}<\cdots<\frac{s_k}{m_k}<t_\tau$, the left-hand side is bounded by $\prod_{i=1}^k 2e^{-s_i/t_*}$.
\end{lemma}
This lemma can be explained by the following more careful analysis of the event $\cE$.
For each $1\le i\le k$, the events $R_i=\frac{s_i}{m_i}$ and $L_i=\ell_i$ imply that at time $\frac{s_i}{m_i}$, the particle colored $\ell_i$ is at site $A_*+B_*+1-i$.
For $\cE$ to happen, this particle needs to move $m_i-(A_*+B_*+1-i)$ steps between time $\frac{s_i}{m_i}$ and time $t_*$.
As $m_i=2^{\lambda^{k+1-i}}$, when $\lambda$ is large, these $k$ particles travel at very different and high speeds, and one can regard their moves as being independent of each other.
Thus $F(\bell, \{\frac{s_i}{m_i}\}_{i=1}^{k}, I)$ should be roughly $\prod_{i=1}^k e^{-t_*}\frac{(t_*-s_i/m_i)^{m_i-(A_*+B_*+1-i)}}{(m_i-(A_*+B_*+1-i))!}$, and \eqref{eq:asy-m-t} follows.

We leave the proofs of the above lemmas to the next subsections, and now finish the proof of Proposition \ref{prop:equal-p} assuming them.
\begin{proof}[Proof of Proposition \ref{prop:equal-p}]
For $\PP[\cE \mid I] = \PP[\cE \mid I^+]$ from Lemma \ref{lem:equal-p1}, we apply \eqref{eq:wr-ce} to both hand sides.
Take any $\alpha_1, \cdots, \alpha_k \in \Z_{\ge 0}$, we multiply both sides by $\prod_{i=1}^{k} \frac{m_i^{\alpha_i}\cdot m_i!}{t_*^{m_i}}$,  and get
\begin{align*}
&\sum_{k',\bell'\in\Lambda\{k'\}}
\prod_{i=1}^{k} \frac{m_i^{\alpha_i}\cdot m_i!}{t_*^{m_i}} \int_{\br\in \R_<\{k',t_\tau\} }
F(\bell', \br, I) G(\bell', \br, I)d\br
\\
=
&\sum_{k',\bell'\in\Lambda\{k'\}}
\prod_{i=1}^{k} \frac{m_i^{\alpha_i}\cdot m_i!}{t_*^{m_i}} \int_{\br\in \R_<\{k',t_\tau\} }
F(\bell', \br, I^+) G(\bell', \br, I^+)d\br.
\end{align*}
We will then send $\lambda\to\infty$, and analyze the behavior of each term in the summations on both sides.

We claim that $G(\bell', \br, \hat{I})$ is bounded, uniformly in $\hat{I} = I$ or $\hat{I}=I^+$, and $\bell'\in\Lambda$, $\br\in \R_<\{k',t_\tau\}$.
Indeed, we have
\begin{align*}
G(\bell', \br, \hat{I}) = & \PP[\bR^{k'}=\br, R_{k'+1}>t_\tau, \bL^{k'}=\bell' \mid \hat{I}] \\
\le &\PP[\hat{I}]^{-1}\PP[\bR^{k'}=\br, \bL^{k'}=\bell', \hat{I}].
\end{align*}
By Lemma \ref{lem:analytic}, for each $\bell'$ and $\hat{I}$ this is analytic as a function of $\br\in \R_<\{k',t_\tau\}$ with an extension to $\C^{k'}$, thus must be bounded uniformly in $\br$.
As there are only finitely many choices of $\bell'$ and $\hat{I}$, the above claim then follows.

With this claim, by Lemma \ref{lem:npr}, 
for any $\bell'\not\preceq\bell$ and $\hat{I}=I$ or $I^+$, we have
\[
\prod_{i=1}^{k} \frac{m_i^{\alpha_i}\cdot m_i!}{t_*^{m_i}}
\int_{\br\in \R_<\{k',t_\tau\}}
F(\bell', \br, \hat{I})
G(\bell', \br, \hat{I})
d\br \to 0,
\]
as $\lambda\to\infty$.
For any $\bell'\prec\bell$, by the induction hypothesis (of this proposition), we have that $G(\bell', \br, I)
=G(\bell', \br, I^+)$.
Thus the difference
\[
\prod_{i=1}^{k} \frac{m_i^{\alpha_i}\cdot m_i!}{t_*^{m_i}}
\int_{\br\in \R_<\{k',t_\tau\}}
F(\bell', \br, I)
G(\bell', \br, I)
d\br
-
\prod_{i=1}^{k} \frac{m_i^{\alpha_i}\cdot m_i!}{t_*^{m_i}}
\int_{\br\in \R_<\{k',t_\tau\}}
F(\bell', \br, I^+)
G(\bell', \br, I^+)
d\br
\]
equals
\[
\prod_{i=1}^{k} \frac{m_i^{\alpha_i}\cdot m_i!}{t_*^{m_i}}
\int_{\br\in \R_<\{k',t_\tau\}}
\big(
F(\bell', \br, I)
- F(\bell', \br, I^+)
\big)
G(\bell', \br, I) d\br.
\]
As $G(\bell', \br, I)$ is uniformly bounded, by Lemma \ref{lem:pr} we have that this expression $\to 0$ as $\lambda\to\infty$.

Now only the term where $\bell'=\bell$ remains, i.e., as $\lambda\to\infty$ we have
\[
\prod_{i=1}^{k} \frac{m_i^{\alpha_i}\cdot m_i!}{t_*^{m_i}}
\int_{\br\in \R_<\{k,t_\tau\}}
F(\bell, \br, I)
G(\bell, \br, I)
d\br
-
\prod_{i=1}^{k} \frac{m_i^{\alpha_i}\cdot m_i!}{t_*^{m_i}}
\int_{\br\in \R_<\{k,t_\tau\}}
F(\bell, \br, I^+)
G(\bell, \br, I^+)
d\br \to 0.
\]
By Lemma \ref{lem:pr} we have that as $\lambda\to\infty$,
\[
\prod_{i=1}^{k} \frac{m_i^{\alpha_i}\cdot m_i!}{t_*^{m_i}}
\int_{\br\in \R_<\{k,t_\tau\}}
(F(\bell, \br, I)
-
F(\bell, \br, I^+))
G(\bell, \br, I^+)
d\br \to 0,
\]
since $G(\bell, \br, I^+)$ is uniformly bounded.
Thus we have that
\begin{equation}  \label{eq:diff-cov}
\prod_{i=1}^{k} \frac{m_i^{\alpha_i}\cdot m_i!}{t_*^{m_i}}
\int_{\br\in \R_<\{k,t_\tau\}}
F(\bell, \br, I) (G(\bell, \br, I)
- G(\bell, \br, I^+) )
d\br \to 0,
\end{equation}
as we send $\lambda\to \infty$.

By Lemma \ref{lem:analytic}, the function
\begin{equation}  \label{eq:rmap}
\br \mapsto G(\bell, \br, I)
- G(\bell, \br, I^+)
\end{equation}
is analytic for $\br\in \R_<\{k,t_\tau\}$, and can be analytically extended to $\C^k$.
Thus if it were not identically $0$, by taking the first (in the dictionary order of the exponents) nonzero term in its Taylor expansion, there would exist $\beta_1,\cdots, \beta_k \in \Z_{\ge 0}$, such that for any $s_1,\cdots, s_k>0$, we have
\[
\prod_{i=1}^{k} m_i^{\beta_i}
(G(\bell, \{\frac{s_i}{m_i}\}_{i=1}^{k}, I)
- G(\bell, \{\frac{s_i}{m_i}\}_{i=1}^{k}, I^+)) \to D \prod_{i=1}^{k} s_i^{\beta_i},
\]
where $D\neq 0$ is a constant; and that the absolute value of the left-hand side is bounded by some constant, uniformly in all $m_i$ and $s_i$.
Using this and Lemma \ref{lem:asy-m-t}, by dominated convergence theorem we get a contradiction with \eqref{eq:diff-cov} (since $\alpha_1, \cdots, \alpha_k \in \Z_{\ge 0}$ are arbitrarily taken).
Thus we have that \eqref{eq:rmap} must be $0$, i.e., 
\eqref{eq:equal-pe} holds for any $\br\in \R_<\{k,t_\tau\}$, which is precisely the conclusion we want.
\end{proof}
In the next two subsections, we prove Lemmas \ref{lem:npr}, \ref{lem:pr}, \ref{lem:asy-m-t}, via making rigorous the heuristic explanations following each lemma.

\subsection{Bounds on transition probabilities from cutting information}
In this subsection, we prove Lemma \ref{lem:npr} and \ref{lem:pr}. We start with the pre-limit bounds.
The first one is a straightforward approximation by independent jumps to the right.

Recall that $\cE$ is the `high-speed event' defined in Section \ref{ssec:hywfc}, using some fixed $\bell =\{\ell_i\}_{i=1}^{k}\in \cL\{k\}$.
Also recall the vectors $\bR^{k'}=\{R_i\}_{i=1}^{k'}$, $\bL^k=\{L_i\}_{i=1}^{k'}$ for each $1\le k'\le C_*$, and that the cutting information is given by $\{\don[R_i \le t_\tau]R_i\}_{i=1}^{C_*}, \{\don[R_i \le t_\tau]L_i\}_{i=1}^{C_*}$.
\begin{lemma}  \label{lem:bd-tfc}
Take any $\bell'=\{\ell_i'\}_{i=1}^{C_*}\in\cL\{C_*\}$ (which is a permutation of $1, \cdots, C_*$), and let $\kappa[j]$ be the number such that $\ell_{\kappa[j]}'=\ell_j$ for any $1\le j \le k$.
Take any $\br=\{r_i\}_{i=1}^{C_*}\in\R_<\{C_*,\infty\}$.
If each $r_{\kappa[j]}<t_*$, and $\lambda$ is large enough (depending on $t_*$ and $A_*+B_*$), we have
\[
\PP[\cE \mid \bR^{C_*}=\br, \bL^{C_*}=\bell']
< \prod_{j=1}^k 2e^{-(t_*-r_{\kappa[j]})} \frac{(t_*-r_{\kappa[j]})^{m_j-(A_*+B_*+1-\kappa[j])}}{(m_j-(A_*+B_*+1-\kappa[j]))!}.
\]
If $r_{\kappa[j]}\ge t_*$ for some $1\le j\le k$ then the left-hand side equals zero.
\end{lemma}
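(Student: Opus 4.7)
The plan is to pass to the Harris (particle-clock) construction of the colored TASEP $\eta$: each of the $C_*$ particles carries its own independent rate-1 Poisson clock, and at each ring of a particle's clock it attempts a right jump, succeeding if and only if the site to its right is empty. This construction is equivalent in distribution to the edge-clock formulation used throughout the paper, but has the technical advantage that the driving clocks for distinct particles are independent. First I dispose of the degenerate case: if $r_{\kappa[j]}\ge t_*$ for some $j$, then particle colored $l_j$ is at site $A_*+B_*+1-\kappa[j]$ only from time $r_{\kappa[j]}$ onward, so at time $t_*$ it sits at some site $\le A_*+B_*+1-\kappa[j]<m_j$ (assuming $\lambda$ large enough that $m_k>A_*+B_*$); hence $\cE$ is impossible and the right-hand side of the inequality is vacuous.

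In the remaining case, $\cE$ forces the particle colored $l_j$ to traverse from site $A_*+B_*+1-\kappa[j]$ (where it sits at time $r_{\kappa[j]}$) all the way to $m_j$ in the interval $[r_{\kappa[j]},t_*]$, and hence its private clock must ring at least $M_j:=m_j-(A_*+B_*+1-\kappa[j])$ times in this interval. Letting $\xi_j$ denote this ring count,
\[
\PP\bigl[\cE\bigm|\{R_i\}=\{r_i\},\{L_i\}=\{l_i'\}\bigr] \le \PP\bigl[\xi_j\ge M_j\ \forall j \bigm| \{R_i\}=\{r_i\},\{L_i\}=\{l_i'\}\bigr].
\]
The main step is to show that, conditional on the cutting information, $(\xi_1,\dots,\xi_k)$ is stochastically dominated by a vector of independent $\mathrm{Poisson}(t_*-r_{\kappa[j]})$ variables. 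I would split the interval $[r_{\kappa[j]},t_*]$ at the largest cutting time $r_{C_*}$: by the strong Markov property applied at $r_{C_*}$, combined with independence of the clocks across particles, the ring counts on $(r_{C_*},t_*]$ are independent $\mathrm{Poisson}(t_*-r_{C_*})$ random variables that are independent of the cutting information; the contribution on $[r_{\kappa[j]},r_{C_*}]$ is then handled by iterating the strong Markov/Palm argument at each preceding cutting time, again exploiting independence of particles' clocks.

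Finally, for $\lambda$ large enough that $M_j\ge 2t_*\ge 2(t_*-r_{\kappa[j]})$ for every $j$ (this is where the hypothesis ``$\lambda$ large, depending on $t_*$ and $A_*+B_*$'' enters), the Poisson tail admits the bound
\[
\PP[\mathrm{Poisson}(s)\ge M]=\sum_{i\ge M}e^{-s}\frac{s^i}{i!}\le e^{-s}\frac{s^M}{M!}\sum_{i\ge 0}\left(\frac{s}{M+1}\right)^i\le 2e^{-s}\frac{s^M}{M!},
\]
since the ratio of consecutive terms is at most $s/(M+1)\le 1/2$. Multiplying this bound over $j=1,\dots,k$ with $s=t_*-r_{\kappa[j]}$ and $M=M_j$ yields the claimed estimate. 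The main obstacle I anticipate is the independence reduction in the middle step: the cutting information is a fairly complicated joint function of all particles' clocks, so pinning down that conditioning on it preserves the product structure of Poisson ring counts requires a careful Palm/disintegration argument rather than a one-line strong Markov application. The Poisson tail step and the trivial case are routine.
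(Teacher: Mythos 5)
Your proof shares its skeleton with the paper's: both reduce $\PP[\cE \mid \text{cutting data}]$ to a product over $j$ of the same Poisson/Gamma tail, and the closing elementary estimate (the $2e^{-s}s^M/M!$ bound for $M$ large relative to $s$, invoked via the ``$\lambda$ large'' hypothesis) is identical. The route through the middle is genuinely different, though, and that difference is not merely cosmetic. The paper does not pass to a per-particle-clock Harris construction or count raw clock rings. Instead it attaches to the $i'$-th right jump of particle $l_j$ after time $r_{\kappa[j]}$ its \emph{enabled waiting time} $E_{j,i'}$ --- the amount of time, between the $(i'-1)$-th and $i'$-th right jumps, during which the site immediately to the right of $l_j$ carries a larger color. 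It then asserts that these $E_{j,i'}$, jointly over $j$ and $i'$, are i.i.d.\ $\Exp(1)$ conditional on the cutting data; since $\cE$ forces $l_j$ to make at least $M_j := m_j-(A_*+B_*+1-\kappa[j])$ right jumps by $t_*$, one gets $\sum_{i'\le M_j}E_{j,i'}\le t_*-r_{\kappa[j]}$, whose Gamma-tail probability equals, by Poisson--Gamma duality, exactly the Poisson tail you compute. So the arithmetic is the same.

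The step you flag as ``requires a careful Palm/disintegration argument'' is where your plan has a real problem, not just a missing computation. For the raw ring count $\xi_j$ of $l_j$'s private clock, stochastic domination by an independent Poisson$(t_*-r_{\kappa[j]})$ given the cutting data is not clearly true: the conditioning pins down arrival times of \emph{other} particles at their cutting sites, several of which occur after $r_{\kappa[j]}$, and a trailing particle of larger color reaches its cutting site quickly only if $l_j$ has cleared the way --- which biases $l_j$'s clock toward \emph{more} rings, not fewer. Your sketch of iterating the strong Markov property at preceding cutting times also does not peel the conditioning off cleanly, since the event that $R_i=r_i, L_i=l_i'$ for all $i$ is not a product of independent constraints over the intervals between consecutive cutting times. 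The paper's enabled-waiting-time decomposition is designed precisely to make this step transparent: each $E_{j,i'}$ is $\Exp(1)$ by memorylessness of the edge clock driving each jump, and different particles' right-jump clocks live on disjoint edges at any instant, so the cross-particle independence is immediate; this is the structure you would need to import to rescue the ring-count formulation, and it is not obviously there. A small additional inaccuracy: for a \emph{colored} TASEP, a ring of a particle's clock results in a successful swap iff the neighbor to its right has a strictly larger color (holes carrying color $\infty$), not iff that site is empty. Your handling of the degenerate case and the final Poisson tail estimate are exactly as in the paper.
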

\begin{proof}
The second part (where $r_{\kappa[j]}\ge t_*$) is obvious from the definition of $\cE$, so we focus on the first part; i.e., we assume that $r_{\kappa[j]}<t_*$ for the rest of this proof.
In discussion below we also assume the event that $\bR^{C_*}=\br, \bL^{C_*}=\bell'$.

For each $1\le j \le k$, and $i'\in\N$, we let $w_{j,i'}$ be the time when the particle colored $\ell_{\kappa[j]}'=\ell_j$ makes the $i'$-th jump to the right since time $r_{\kappa[j]}$.
Note that here we simply ignore jumps to the left.
We also denote $w_{j,0}=r_{\kappa[j]}$.
Denote $E_{j,i'}$ as the total amount of time, between times $w_{j,i'-1}$ and $w_{j,i'}$, such that the particle right next to the particle colored $\ell_{\kappa[j]}'=\ell_j$ has a larger color.
In other words, $E_{j,i'}$ is the amount of time when the particle colored $\ell_j$ is able to jump to the right, between times $w_{j,i'-1}$ and $w_{j,i'}$.
It can also be understood as the `waiting time' for the particle colored $\ell_j$ to make the $i'$-th jump.
Then we have that $E_{j,i'}$ for all $1\le j \le k$ and $i'\in\N$ are i.i.d. $\Exp(1)$.

Under the event $\cE$ we must have that
\begin{equation}  \label{eq:Ejif}
\sum_{i'=1}^{m_j-(A_*+B_*+1-\kappa[j])} E_{j,i'} \le t_*-r_{\kappa[j]},    
\end{equation}
for each $1\le j \le k$. 
We note that the probability of \eqref{eq:Ejif} is bounded by \[2e^{-(t_*-r_{\kappa[j]})} \frac{(t_*-r_{\kappa[j]})^{m_j-(A_*+B_*+1-\kappa[j])}}{(m_j-(A_*+B_*+1-\kappa[j]))!},\] when $m_j-(A_*+B_*+1-\kappa[j])$ is large enough.
Therefore  the conclusion follows by taking the product over $j$.
\end{proof}
The next pre-limit bound is on the difference between the transition probabilities, for different sets of cutting information.
\begin{lemma}  \label{lem:bd-dtfc}
Take any $\bell'=\{\ell_i'\}_{i=1}^{C_*}, \obell'=\{\ol_i'\}_{i=1}^{C_*}\in\cL\{C_*\}$, and any $\br=\{r_i\}_{i=1}^{C_*}, \obr=\{\oor_i\}_{i=1}^{C_*}\in\R_<\{C_*,\infty\}$.
Let $k\le k'\le C_*$, and suppose that $\ell_i'=\ol_i'$, $r_i=\oor_i$ for each $1\le i \le k'$, and $r_{k'}<t_\tau$, and $r_{k'+1}, \oor_{k'+1}>t_\tau$ if $k'<C_*$.
Let $\kappa[j]$ be the number such that $\ell_{\kappa[j]}'=\ell_j$ for any $1\le j \le k$, and we also assume that each $\kappa[j]\le k'$ (i.e., the colors in $\bell$ are contained in $\{\ell_i'\}_{i=1}^{k'}$).
Then we have
\begin{multline*}
\left|\PP[\cE \mid \bR^{C_*}=\br, \bL^{C_*}=\bell']
-
\PP[\cE \mid \bR^{C_*}=\obr, \bL^{C_*}=\obell']\right|\\
< H \left(\sum_{j=1}^k m_j \left(\frac{t_*-t_\tau}{t_*-r_{\kappa[j]}}\right)^{m_j} \right) \prod_{j=1}^k  \frac{(t_*-r_{\kappa[j]})^{m_j}}{(m_j-(A_*+B_*+1-\kappa[j]))!},
\end{multline*}
if $\lambda>H$.
Here $H$ is a large constant depending on $A_*, B_*, C_*, t_*, t_\tau$.
\end{lemma}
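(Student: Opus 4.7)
The plan is to couple the two conditional laws via a common realization of the Poisson field $\Pi$, then isolate a high-probability \emph{good event} $G$ on which the two probabilities of $\cE$ coincide exactly by a locality argument, reducing the problem to bounding $\PP[\cE\cap\bar G\mid\cdot]$ under either configuration. Since the cutting data up to time $t_\tau$ (the first $k'$ cutting times and colors, together with the constraint $R_{k'+1}>t_\tau$) is identical in both configurations, the conditional law of the trajectory $\eta|_{[0,t_\tau]}$ is the same, and the coupling can be taken to make these agree pathwise.

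I would define $G=\bigcap_{j=1}^k G_j$, where $G_j$ asserts that by time $t_\tau$ the color-$l_j$ particle has already advanced to a site at least $A_*+B_*+(t_*-t_\tau)+1$. Under either conditioning, any particle whose cutting index exceeds $k'$ has not yet reached its cutting site $A_*+B_*-k'$ by time $t_\tau$ and so sits at a site no larger than $A_*+B_*-k'$. Using the deterministic rate-one upper bound on rightward motion in the colored TASEP, such a \emph{slow} particle remains at a site $\le A_*+B_*-k'+(t_*-t_\tau)$ throughout $[t_\tau,t_*]$, which on $G$ is strictly to the left of every fast particle $l_j$ at every time. Hence the joint trajectory of the fast particles on $[t_\tau,t_*]$ restricted to $G$ is measurable with respect to $\Pi$ on a space-time region $U_{\mathrm{fast}}\subset\Z\times[t_\tau,t_*]$ that is disjoint from the region $U_{\mathrm{slow}}$ on which the differing post-$t_\tau$ cutting data $\{R_i,L_i\}_{i=k'+1}^{C_*}$ impose their constraints. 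A $\Pi$-measurability argument modelled on Lemmas~\ref{lem:I-deter-gen}--\ref{lem:I-deter-bdy} then yields $\PP[\cE\cap G\mid\{R_i=r_i,L_i=l_i'\}]=\PP[\cE\cap G\mid\{R_i=\oor_i,L_i=\ol_i'\}]$, so these contributions cancel in the difference.

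For the residual on $\bar G$, on $\bar G_j$ the color-$l_j$ particle must compress essentially all $m_j-(A_*+B_*+1-\kappa[j])$ of its required rightward jumps into the shorter window $[t_\tau,t_*]$ instead of the full $[r_{\kappa[j]},t_*]$. A Poisson tail comparison to the estimate underlying Lemma~\ref{lem:bd-tfc} introduces an additional factor $\bigl(\tfrac{t_*-t_\tau}{t_*-r_{\kappa[j]}}\bigr)^{m_j}$, and a polynomial $m_j^2$ overhead absorbs the union bound over the choice of $j$, the combinatorics of where the first slow-fast interference could occur, and the replacement of the exponent $m_j-(A_*+B_*+1-\kappa[j])$ by $m_j$ in the final normalisation. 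Summing over $j\in\{1,\dots,k\}$ and combining with the exact cancellation on $G$ yields the claimed bound, with $H$ depending only on $A_*,B_*,C_*,t_*,t_\tau$. \textbf{The main obstacle} is the rigorous locality claim in the good-event analysis: one must extend the $\Pi$-measurability arguments of Lemmas~\ref{lem:I-deter-gen}--\ref{lem:I-deter-bdy} to certify that, on $G$, the $l_j$-trajectories during $[t_\tau,t_*]$ genuinely decouple from the post-$t_\tau$ slow-particle conditioning, despite the colored swaps which a priori could propagate dependence across large spatial distances.
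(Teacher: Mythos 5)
Your overall architecture (couple the two conditional laws, identify a high-probability event on which the two are pathwise identical, bound the contribution of the complement) is exactly the paper's strategy, but the key locality claim as you formulate it is false. You write that a slow particle, sitting at site $\le A_*+B_*-k'$ at time $t_\tau$, ``remains at a site $\le A_*+B_*-k'+(t_*-t_\tau)$ throughout $[t_\tau,t_*]$'' by a ``deterministic rate-one upper bound.'' There is no such deterministic bound: a rate-one Poisson particle can make arbitrarily many jumps on $[t_\tau,t_*]$ with positive probability, and in particular it could in principle catch up with the fast particle $l_j$ even when $l_j$ has advanced beyond $A_*+B_*+(t_*-t_\tau)$ at time $t_\tau$. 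Consequently the good event $G$ as you define it (a condition only on the positions of the fast particles at time $t_\tau$) does not actually imply the decoupling you claim, and the disjoint-regions $\Pi$-measurability argument does not close.

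The paper handles this by introducing an auxiliary tracer particle $p_*$ started at $A_*+B_*-k'$ at time $t_\tau$, jumping right with rate one and dominating every slow particle, together with the stopping time $T_*$ when $p_*$ first meets a fast particle. Conditional on the shared pre-$t_\tau$ cutting data, the two processes $\eta,\oeta$ are coupled to agree on $[p_*(t)+1,\infty)$ for $t\le T_*$, so the two conditional probabilities of $\cE$ coincide on $\{T_*\ge t_*\}$; the residual is then bounded by the probability that $\cE$ holds and $T_*<t_*$, which is estimated explicitly via sums of independent exponential waiting times, producing the factor $\bigl(\tfrac{t_*-t_\tau}{t_*-r_{\kappa[j]}}\bigr)^{m_j}$ and the $m_j^2$ union-bound overhead you anticipated. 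You correctly flagged the locality step as the main obstacle, but the obstacle is not merely that the $\Pi$-measurability argument needs extending; your stated deterministic displacement bound is wrong, and the right fix is precisely the stopping-time/dominating-tracer device $(p_*,T_*)$, which turns your a priori good event into an a posteriori one and is what the paper does. Your tail-probability heuristic for the bad event is essentially correct and matches the paper's computation; it is the good-event mechanism that needs to be replaced.
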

The general idea of this lemma is that, for two sets of cutting information that differ only after time $t_\tau$, the transition probabilities are close. This is because the particles jump fast, and it is unlikely for a particle joining after time $t_\tau$ to catch up with and affect the evolution of those particles to its right.
\begin{proof}[Proof of Lemma \ref{lem:bd-dtfc}]
We take $\oeta$ as a copy of $\eta$, and we consider the evolution of $\eta$ on \[\Omega=\bigcup_{1\le i\le C_*}\{(x,t): x\ge A_*+B_*+1-i, t \ge r_i\},\]
conditional on $\bR^{C_*}=\br, \bL^{C_*}=\bell'$;
and the evolution of $\oeta$ on \[\overline{\Omega}=\bigcup_{1\le i\le C_*}\{(x,t): x\ge A_*+B_*+1-i, t \ge \oor_i\},\]
conditional on $\overline{\bR}^{C_*}=\obr, \overline{\bL}^{C_*}=\obell'$.
Here $\overline{\bR}^{C_*}$ and $\overline{\bL}^{C_*}$ are the $\oeta$ version of $\bR^{C_*}$ and $\bL^{C_*}$.
We couple $\eta$ and $\oeta$ by using the same Poisson clocks on $\Omega \cap \overline{\Omega}$.

We consider an additional particle $p_*:[t_\tau,\infty) \to \Z$, such that $p_*(t_\tau)=A_*+B_*-k'$, and it jumps to the right according to the same Poisson clocks (note that $(A_*+B_*+1-k',t_\tau)\in \Omega \cap \overline{\Omega}$), until $T_*:=\inf\{t\ge t_\tau: \eta_t(p_*(t)) \in \bell\}$, the time when $p_*$ catches up with any particle with color in $\bell$.
After time $T_*$, we let $p_*$ continue to jump to the right with rate $1$, but use independent Poisson clocks.

The purpose of introducing $p_*$ is that, from the coupling using the same Poisson clocks, at any time $t\in [t_\tau, T_*]$ in either $\eta_t$ or $\oeta_t$ there is no particle to the right of $p_*(t)$ with color in $\{\ell_i'\}_{i=k'+1}^{C_*}$ or $\{\ol_i'\}_{i=k'+1}^{C_*}$.
Then we have that $\eta_t$ and $\oeta_t$ are the same on $[p_*(t)+1, \infty)\cap \Z$.
Thus if $T_*\ge t_*$, the event $\cE$ holds for $\eta$ if and only if it holds for $\oeta$.
With symmetry between $\eta$ and $\oeta$, it now suffices to upper bound
\[
\PP[\cE, T_* < t_* \mid \bR^{C_*}=\br, \bL^{C_*}=\bell'].
\]

For each $i'\in\N$, we let $E_{i'}$ be the waiting time between the $i'-1$-th jump and the $i'$-th jump of the particle $p_*$.
For each $1\le j \le k$,
we again take $E_{j,i'}$ as in the proof of Lemma \ref{lem:bd-tfc}; i.e., it is the waiting time for the $i'$-th jump to the right since time $r_{\kappa[j]}$, for the particle colored $\ell_{\kappa[j]}'=\ell_j$.
Then we have that they are all independent $\Exp(1)$ random variables, since before $T_*$ the particle $p_*$ is always to the left of the particles with colors in $\bell$.

The events $\cE$ and $T_* < t_*$ imply the following:
\begin{enumerate}
    \item 
\textbf{Particle with color $\ell_j$ reaches $m_j$:} for each $1\le j \le k$,
\[
\sum_{i'=1}^{m_j-(A_*+B_*+1-\kappa[j])} E_{j,i'} \le t_*-r_{\kappa[j]};
\]
\item 
\textbf{Some particle is caught up by $p_*$:} for some $1\le \oj \le k$ and some $1\le \oi \le m_{\oj}-(A_*+B_*+1-\kappa[\oj])$, $\gamma\in\Z$, $0\le \gamma < C_*$, there is
\[
\sum_{i'=1}^{\oi-\kappa[\oj]+k'} E_{i'} + \sum_{i'=\oi+\gamma}^{m_{\oj}-(A_*+B_*+1-\kappa[\oj])+\gamma} E_{\oj,i'} \le t_*-t_\tau.
\]
Here $\gamma$ corresponds to the number of left steps the particle colored $\ell_{\kappa[\oj]}'=\ell_{\oj}$ has taken before $T_*$, and $A_*+B_*+\oi-\kappa[\oj]$ corresponds to $p_*(T_*)$.
\end{enumerate}
When $\lambda$ is large enough, the probability of the first event (for each $1\le j \le k$) is bounded by
\begin{equation}  \label{eq:fevbd}
2e^{-(t_*-r_{\kappa[j]})} \frac{(t_*-r_{\kappa[j]})^{m_j-(A_*+B_*+1-\kappa[j])}}{(m_j-(A_*+B_*+1-\kappa[j]))!}.    
\end{equation}
For the second event, for fixed $\oj, \oi, \gamma$, when $\lambda$ is large enough the probability is bounded by
\begin{equation}  \label{eq:ttmbd}
2e^{-(t_*-t_\tau)} \frac{(t_*-t_\tau)^{ m_{\oj}-A_*-B_*+k' }}{(m_{\oj}-A_*-B_*+k')!}.    
\end{equation}
Below we use $H$ to denote a large constant depending on $A_*, B_*, C_*, t_*, t_\tau$, and the value can change from line to line.
For \eqref{eq:ttmbd}, we first sum over $1\le \gamma < C_*$, and get an upper bound of $H \frac{(t_*-t_\tau)^{ m_{\oj}-A_*-B_*+k' }}{(m_{\oj}-A_*-B_*+k')!}$; then by summing over $\oi$ we get an upper bound of
\[
Hm_{\oj} \frac{(t_*-t_\tau)^{ m_{\oj}-A_*-B_*+k' }}{(m_{\oj}-A_*-B_*+k')!}
<
Hm_{\oj} \frac{(t_*-t_\tau)^{ m_{\oj}-(A_*+B_*+1-\kappa[\oj]) }}{(m_{\oj}-(A_*+B_*+1-\kappa[\oj]))!}.
\]
Then with the bound \eqref{eq:fevbd} for the probabilities of the first event for each $j\neq \oj$, by summing over $\oj$ the conclusion follows.
\end{proof}
To deduce Lemma \ref{lem:npr} and \ref{lem:pr} from the above two lemmas, we need one additional result, on the relation between $\cE$, $\bR^{C_*}, \bL^{C_*}$, and $I$ or $I^+$.
\begin{lemma} \label{lem:cut-e-i}
Take any $\bell'\in\cL\{C_*\}$, $\hat{I}=I$ or $I^+$, and any $\br=\{r_i\}_{i=1}^{C_*}\in\R_<\{C_*,\infty\}$.
If $\lambda$ is large enough (depending on $t_*$ and $A_*+B_*$), we have
\[
\PP[\cE \mid \bR^{C_*}=\br, \bL^{C_*}=\bell', \hat{I}]
=\PP[\cE \mid \bR^{C_*}=\br, \bL^{C_*}=\bell'].
\]
\end{lemma}
\begin{proof}
This proof follows essentially the same ideas as the (feasible cutting information case of) the proof of Proposition \ref{prop:equal-condi-cut}.

We denote $\cE_-$ as the event where $\bR^{C_*}=\br, \bL^{C_*}=\bell'$.
We will show that conditional on $\cE_-$, the events $\hat{I}$ and $\cE$ are independent.

We recall that $\Pi$ is the Poisson field on $\Z\times [0,\infty)$, where for each $x\in\Z$ and any $0\le a < b$, $\Pi(\{x\}\times [a,b])$ is the number of times that the clock on the edge $(x-1, x)$ rings, in the time interval $[a, b]$.
Also recall that for any $A\in\Z$ and $B, C\in\N$, we denote $\cP[A, B, C]=(A+B+1-C, T^A_{B,C})$.

Denote
\[
U^-:=\{(x,t):x\le A_*+B_*-C_*\} \cup \bigcup_{i=1}^{C_*}\{ (x,t): 0\le t< r_i, x\le A_*+B_*+1-i \},
\]
and $U^+ = \Z\times [0,\infty) \setminus U^-$.
By Lemma \ref{lem:I-deter-gen}, we have that the event $\cE_-$ is measurable with respect to $\Pi$ on $U_-$, so independent of $\Pi$ on $U_+$.

We then claim that conditional on $\cE_-$, the event $\hat{I}$ is determined by $\Pi$ on $U^-$.
Indeed, given $\cE_-$, for any $\tau<i\le g$ and $1\le j \le k_i$, we have that $\cP[A_{i,j},B_{i,j},i'] \in U^-$ and $\cP[A_{i,j}^+,B_{i,j},i'] \in U^-$ for any $1\le i'\le C_{i,j}$.
Thus by Lemma \ref{lem:I-deter-gen}, conditional on $\cE_-$, $T^{A_{i,j}}_{B_{i,j},C_{i,j}}$ and $T^{A_{i,j}^+}_{B_{i,j},C_{i,j}}$ are determined by $\Pi$ on $U^-$.

We next show that conditional on $\cE_-$, the event $\cE$ is measurable with respect to $\Pi$ on $U^+$, when $\lambda$ is large enough.
Recall from the proof of Lemma \ref{lem:equal-p1}, that the event $\cE$ is determined by $I^{t_*}[A,B,C]$, for all $A\in \Z$ and $B, C \in \N$, such that
$A\le A_*$, $A-C\ge A_*-C_*$, and $A+B+1-C\ge m_k$.
By taking $\lambda$ large enough we have that $A+B+1-C \ge A_*+B_*$.
We next show that for such $A,B,C$, the event $I^{t_*}[A,B,C]$ is determined by $\Pi$ on $U^+$, conditional on $\cE_-$.

Indeed, let $\kappa[1] < \cdots < \kappa[C]$ be the first $C$ numbers in the set $\{i: 1\le i \le C_*, \ell'_i\le C_*+A-A_* \}$.
Such numbers exist because $|\{i: 1\le i \le C_*, \ell'_i\le C_*+A-A_* \}| = C_*+A-A_* \ge C$.
For each $1\le i \le C$, we take $B'_i \in \N$ such that $A+B'_i+1-i=A_*+B_*+1-\kappa[i]$.
Then $T^A_{B'_i,i}$ is precisely 
\[
\inf\{t>0: |\{x\in\Z: x\ge A_*+B_*+1-\kappa[i], \eta_t(x)\le  C_*+A-A_*\}| \ge i'\},
\]
according to Lemma \ref{lem:proj-info}.
Under $\cE_-$ this equals $r_{\kappa[i]}$, so $T^{A}_{B'_i,i} = r_{\kappa[i]}$.
Then by Lemma \ref{lem:I-deter-bdy}, conditional on $\cE_-$, the event $I^{t_*}[A,B,C]$ is determined by $\Pi$ on $U^+$.

By considering all such $A, B, C$, we have that $\cE$ is determined by $\Pi$ on $U^+$, conditional on $\cE_-$.
Thus we conclude that the events $\hat{I}$ and $\cE$ are independent conditional on $\cE_-$, and that the conclusion follows.
\end{proof}

We can now prove Lemma \ref{lem:npr} using Lemmas \ref{lem:bd-tfc} and \ref{lem:cut-e-i}.
\begin{proof} [Proof of Lemma \ref{lem:npr}]
We write $\bell'=\{\ell_i'\}_{i=1}^{k'}$.
For each $1\le j \le k$, let $\kappa[j]$ be the number such that $\ell_{\kappa[j]}'=\ell_j$, if $\ell_j\in \bell'$, and $\kappa[j]=0$ otherwise.
By Lemmas \ref{lem:bd-tfc} and \ref{lem:cut-e-i}, for any $\br\in \R_<\{k', t_\tau\}$ we have
\[
F(\bell', \br, \hat{I})
<
H\prod_{j:1\le j \le k, \kappa[j]>0}  \frac{t_*^{m_j}}{(m_j-A_*-B_*-1+\kappa[j])!}
\prod_{j:1\le j \le k, \kappa[j]=0}  \frac{(t_*-t_\tau)^{m_j}}{(m_j-A_*-B_*+k')!}.
\]
where $H$ is a constant depending on $t_*, t_\tau, A_*, B_*, C_*$.

We next integrate over all $\br\in \R_<\{k', t_\tau\}$, and multiply $\prod_{j=1}^{k} \frac{m_j^{\alpha_j}\cdot m_j!}{t_*^{m_j}}$; then we get that
\begin{align*}
&\prod_{j=1}^{k} \frac{m_j^{\alpha_j}\cdot m_j!}{t_*^{m_j}}
\int_{\br\in \R_<\{k', t_\tau\}}
F(\bell', \br, \hat{I})d\br \\
<&
Ht_\tau^{k'}\prod_{j:1\le j \le k, \kappa[j]>0}  \frac{m_j^{\alpha_j}\cdot m_j!}{(m_j-A_*-B_*-1+\kappa[j])!}
\prod_{j:1\le j \le k, \kappa[j]=0}  \frac{m_j^{\alpha_j}\cdot m_j!(1-t_\tau/t_*)^{m_j}}{(m_j-A_*-B_*+k')!}\\
\le &
Ht_\tau^{k'}\prod_{j=1}^k m_j^{\alpha_j+|A_*+B_*|} \prod_{j:1\le j \le k, \kappa[j]=0} (1-t_\tau/t_*)^{m_j}.
\end{align*}
Assuming that there exists some $j$ such that $\kappa[j]=0$, we must have the last line $\to 0$ as $\lambda\to\infty$.

Finally, we consider the case where $\kappa[j]>0$ for each $1\le j \le k$.
In this case, we must have that $k'\ge k$.
We claim that we always have $F(\bell', \br, \hat{I}) = 0$:
otherwise, we can take a sequence of swaps starting from $\bell'$, ending with $\bell$ being the first $k$ coordinates.
Thus we have  $\bell'\preceq\bell$ from the definition of the partial ordering $\prec$, which contradicts with a condition in the statement.
Thus the conclusion holds.
\end{proof}

For Lemma \ref{lem:pr}, it is deduced from Lemmas \ref{lem:bd-dtfc} and \ref{lem:cut-e-i}.
\begin{proof} [Proof of Lemma \ref{lem:pr}]
We write $\bell'=\{\ell_i'\}_{i=1}^{k'}$.
By Lemma \ref{lem:cut-e-i}, we have that for any $\br\in \R_<\{k', t_\tau\}$,
\[
|F(\bell', \br, I) 
-F(\bell', \br, I^+) |
\le  
\sup_{\obell', \obr} \PP[\cE \mid \bR^{C_*}=\obr, \bL^{C_*}=\obell']
-
\inf_{\obell', \obr} \PP[\cE \mid \bR^{C_*}=\obr, \bL^{C_*}=\obell'],
\]
where the $\sup$ and the $\inf$ are over all $\obell'=\{\ol_i'\}_{i=1}^{C_*}\in \cL\{C_*\}$ such that $\ol_i'=\ell_i'$ for each $1\le i\le k'$, and all $\obr=\{\oor_i\}_{i=1}^{C_*}$ such that $\oor_{C_*}>\cdots >\oor_{k'+1}>t_\tau$ and $\oor_i'=r_i'$ for each $1\le i\le k'$.

Since $\bell'\preceq\bell$, for each $1\le j \le k$ we can find $\kappa[j]$ such that $\ell_{\kappa[j]}'=\ell_j$.
Then by Lemma \ref{lem:bd-dtfc}, when $\lambda>H$ we can bound the above by
\begin{align*}
&H \left(\sum_{j=1}^k m_j \left(\frac{t_*-t_\tau}{t_*-r_{\kappa[j]}}\right)^{m_j} \right) \prod_{j=1}^k  \frac{(t_*-r_{\kappa[j]})^{m_j}}{(m_j-(A_*+B_*+1-\kappa[j]))!}\\
\le &
H \left(\sum_{j=1}^k m_j (1-t_\tau/t_*)^{m_j} \right) \prod_{j=1}^k  \frac{t_*^{m_j}}{(m_j-(A_*+B_*+1-\kappa[j]))!}
\end{align*}
for $H$ being a large constant depending on $A_*, B_*, C_*, t_*, t_\tau$.
By integrating over $\br\in \R_<\{k', t_\tau\}$ and multiplying $\prod_{j=1}^{k} \frac{m_j^{\alpha_j}\cdot m_j!}{t_*^{m_j}}$, we get an upper bound of
\[
H t_\tau^{k'} \left(\sum_{j=1}^k m_j (1-t_\tau/t_*)^{m_j} \right) \prod_{j=1}^k  
m_j^{\alpha_j+|A_*+B_*|}
<
kH t_\tau^{k'} m_1 (1-t_\tau/t_*)^{m_k} \prod_{j=1}^k  
m_j^{\alpha_j+|A_*+B_*|}
,
\]
which $\to 0$ as $\lambda\to \infty$.
\end{proof}

\subsection{Asymptote of transition probability from cutting information}

This subsection is devoted to the proof of Lemma \ref{lem:asy-m-t}.
\begin{proof}[Proof of Lemma \ref{lem:asy-m-t}]
For the second statement, by Lemmas \ref{lem:bd-tfc} and \ref{lem:cut-e-i}, when $\lambda$ is large, we can bound the left-hand side of \eqref{eq:asy-m-t} by
\[
\prod_{i=1}^k 2e^{-(t_*-s_i/m_i)} \left(1-\frac{s_i}{m_it_*}\right)^{m_i-(A_*+B_*+1-i)}
<
\prod_{i=1}^k 2e^{-s_i/t_*},
\]
where the inequality is by $\left(1-\frac{s_i}{m_it_*}\right)^{m_i} \le e^{-s_i/t_*}$ and $e^{-(t_*-s_i/m_i)}  \left(1-\frac{s_i}{m_it_*}\right)^{-|A_*+B_*|}<1$ (for $\lambda$ large enough).

Below we prove the first statement.
We denote $\cD$ as the intersection of the events $\bR^{k}=\{\frac{s_i}{m_i}\}_{i=1}^{k}$, $R_{k+1}>t_\tau$, $\bL^{k}=\bell$, and $I$, for simplicity of notation.

Conditional on $\cD$, 
we recall the following setup from proofs in the previous subsection.
For each $1\le j \le k$, and $i'\in\N$, we let $w_{j,i'}$ be the time when the particle colored $\ell_{j}$ makes the $i'$-th jump to the right since $\frac{s_j}{m_j}$ (note that we ignore jumps to the left).
We also denote $w_{j,0}=\frac{s_j}{m_j}$.
Define $E_{j,i'}$ as follows: it is the time between $w_{j,i'-1}$ and $w_{j,i'}$ such that the particle right next to the particle colored $\ell_j$ has a larger color.
Then $E_{j,i'}$ is also the `waiting time' for the particle colored $\ell_j$ to make the $i'$-th jump.

We consider an additional particle $p_*:[t_\tau,\infty) \to \Z$, such that $p_*(t_\tau)=A_*+B_*-k$, and it jumps to the right according to the same Poisson clocks, until time $T_*:=\inf\{t\ge t_\tau: \eta(p_*(t)) \in \bell\}$.
We let $p_*$ continue to jump to the right with rate $1$ after time $T_*$, but use some independent Poisson clocks.
Thus before $T_*$, in $\eta$ only the particles with colors in $\bell$ are to the right of $p_*$.

For each $i'\in\N$, we let $E_{k+1,i'}$ be the waiting time between the $i'-1$-th jump and the $i'$-th jump of $p_*$.
Then all these $E_{j,i'}$, for $1\le j \le k+1$ and $i'\in\N$, are independent $\Exp(1)$ random variables. \\

\noindent\textbf{Proxy events.}
We next consider the event $\cE_1$, defined as follows.
For each $1\le j \le k$ and $1\le \gamma\le m_j-A_*-B_*-1+j$, we have
\[
\frac{s_j}{m_j} + \sum_{i'=1}^\gamma E_{j,i'} < \frac{s_{j+1}}{m_{j+1}} + \sum_{i'=1}^\gamma E_{j+1,i'} .
\]
Here and below we take $m_{k+1}=1$ and $s_{k+1}=t_\tau$.
We can then think of $\cE_1$ as the event where the particle colored $\ell_{j+1}$ does not catch up with the particle colored $\ell_j$, for each $1\le j < k$; and the particle $p_*$  does not catch up with the particle colored $\ell_k$.

We let $\cE_2$ be the event where for each $1\le j \le k$, 
\[
\sum_{i'=1}^{m_j-A_*-B_*-1+j} E_{j,i'} \le t_*-\frac{s_j}{m_j} \le \sum_{i'=1}^{m_j-A_*-B_*+j} E_{j,i'}.
\]
This event is the proxy of $\cE$, assuming $\cD$ and $\cE_1$.
We also let $\cE_2'$ be the event where for each $1\le j \le k$, 
\begin{equation}  \label{eq:ce2j}
\sum_{i'=1}^{m_j-A_*-B_*-1+j} E_{j,i'} \le t_*-\frac{s_j}{m_j} .    
\end{equation}
Then conditional on $\cD$, we have that $\cE\cap \cE_1 = \cE_2\cap \cE_1$, and $\cE, \cE_2\subset \cE_2'$.
We also note that
\[
\PP[\cE_2\mid \cD] = \prod_{j=1}^k e^{-(t_*-\frac{s_j}{m_j})}\frac{(t_*-\frac{s_j}{m_j})^{m_j-A_*-B_*-1+j}}{(m_j-A_*-B_*-1+j)!},
\]
since $\cE_2$ precisely means that for each $1\le j \le k$, for a rate $1$ Poisson point process on $[0,t_*-\frac{s_j}{m_j}]$ there are $m_j-A_*-B_*-1+j$ particles.
This implies that
\begin{equation} \label{eq:e2p-cov}
\prod_{j=1}^k \frac{(m_j-A_*-B_*-1+j)!}{t_*^{m_j-A_*-B_*-1+j}}\PP[\cE_2 \mid \cD] \to e^{-kt_*}\prod_{j=1}^k e^{-s_j/t_*},    
\end{equation}
as we send $\lambda\to \infty$.
It now suffices to bound $\PP[\cE\setminus \cE_1\mid \cD]+ \PP[\cE_2\setminus \cE_1\mid \cD]$, thus suffices to bound $\PP[\cE_2'\setminus \cE_1\mid \cD]$.\\

\noindent\textbf{Bound the difference $\PP[\cE_2'\setminus \cE_1\mid \cD]$.} For each $1\le \oj \le k$, we let $\cE^{(\oj)}$ denote the event where there exists $1\le \gamma\le m_{\oj}-A_*-B_*-1+\oj$, such that
\begin{equation}  \label{eq:ojg}
\sum_{i'=1}^{\gamma} E_{\oj+1,i'} + \sum_{i'=\gamma+1}^{m_{\oj}-A_*-B_*-1+\oj} E_{\oj,i'} \le t_*-\frac{s_{\oj+1}}{m_{\oj+1}}.
\end{equation}
We then have that $\cE_2'\setminus \cE_1\subset \bigcup_{\oj=1}^{k} \cE^{(\oj)}$.

Below we let $H$ denote a large number depending on $t_*, t_\tau, A_*, B_*, C_*$, and its value can change from line to line.
When $\lambda>H$, for given $\gamma$ the probability of \eqref{eq:ojg} is bounded by
\[
2e^{-(t_*-\frac{s_{\oj+1}}{m_{\oj+1}})}\frac{(t_*-\frac{s_{\oj+1}}{m_{\oj+1}})^{m_{\oj}-A_*-B_*-1+\oj}}{(m_{\oj}-A_*-B_*-1+\oj)!}.
\]
By summing over $\gamma$ we get
\[
\PP[\cE^{(\oj)}\mid \cD] < \frac{Hm_{\oj}(t_*-\frac{s_{\oj+1}}{m_{\oj+1}})^{m_{\oj}}}{(m_{\oj}-A_*-B_*-1+\oj)!}.
\]
We also note that \eqref{eq:ce2j} for $j<\oj$ is independent of $\cE^{(\oj)}$.
These imply that
\[
\PP[\cE_2'\setminus \cE_1 \mid \cD]
<
H\sum_{\oj=1}^k
\frac{m_{\oj}(t_*-\frac{s_{\oj+1}}{m_{\oj+1}})^{m_{\oj}}}{(m_{\oj}-A_*-B_*-1+\oj)!}\prod_{j=1}^{\oj-1} \frac{(t_*-\frac{s_j}{m_j})^{m_j}}{(m_j-A_*-B_*-1+j)!},
\]
so
\begin{multline}  \label{eq:cepf1}
\prod_{j=1}^k \frac{(m_j-A_*-B_*-1+j)!}{t_*^{m_j-A_*-B_*-1+j}}\PP[\cE_2'\setminus \cE_1 \mid \cD] \\
<
H\sum_{\oj=1}^k
\frac{m_{\oj}(t_*-\frac{s_{\oj+1}}{m_{\oj+1}})^{m_{\oj}}}{t_*^{m_{\oj}}}\prod_{j=1}^{\oj-1} \frac{(t_*-\frac{s_j}{m_j})^{m_j}}{t_*^{m_j}}
\prod_{j=\oj+1}^k \frac{(m_j-A_*-B_*-1+j)!}{t_*^{m_j}}.
\end{multline}
For the $\oj$-th summand in the right-hand side, as we send $\lambda\to\infty$, we get
\begin{equation}   \label{eq:cepf2}
\frac{m_{\oj}(t_*-\frac{s_{\oj+1}}{m_{\oj+1}})^{m_{\oj}}}{t_*^{m_{\oj}}}  e^{\frac{s_{\oj+1}}{t_*}\frac{m_{\oj}}{m_{\oj+1}}} \to 0,
\end{equation}
since by taking the logarithm, we get
\[
\log(m_{\oj})+\frac{s_{\oj+1}}{t_*}\frac{m_{\oj}}{m_{\oj+1}}\left(1+\frac{t_*m_{\oj+1}}{s_{\oj+1}}\log\left(1-\frac{s_{\oj+1}}{t_*m_{\oj+1}}\right)\right)
< \log(m_{\oj})-\frac{1}{2}\frac{s_{\oj+1}}{t_*}\frac{m_{\oj}}{m_{\oj+1}}\frac{s_{\oj+1}}{t_*m_{\oj+1}}
,\]
where the inequality is by the elementary inequality of $1+x^{-1}\log(1-x) < -x/2$ (for any $0<x<1$). 
As $\lambda\to \infty$ this $\to-\infty$. 
For each $1\le j<\oj$, we have
\begin{equation}   \label{eq:cepf3}
\frac{(t_*-\frac{s_j}{m_j})^{m_j}}{t_*^{m_j}} \to e^{-s_j/t_*},
\end{equation}
and for each $\oj<j\le k$, we have
\begin{equation}   \label{eq:cepf4}
e^{-\frac{1}{k}\frac{s_{\oj+1}}{t_*}\frac{m_{\oj}}{m_{\oj+1}}}
\frac{(m_j-A_*-B_*-1+j)!}{t_*^{m_j}} \to 0
\end{equation}
Thus by \eqref{eq:cepf1}, and combining \eqref{eq:cepf2}, \eqref{eq:cepf3}, \eqref{eq:cepf4}, we conclude that
\[
\prod_{j=1}^k \frac{(m_j-A_*-B_*-1+j)!}{t_*^{m_j-A_*-B_*-1+j}}\PP[\cE_2'\setminus \cE_1 \mid \cD] \to 0,
\]
as $\lambda\to\infty$.
This with \eqref{eq:e2p-cov} implies the conclusion of Lemma \ref{lem:asy-m-t}.
\end{proof}

\section{Extensions of the shift-invariance}  \label{sec:exten}
In this section, we deduce Theorem \ref{thm:main-sa} from Theorem \ref{thm:main-de}, and finish the proofs of Corollary \ref{cor:main-geo} and Theorem \ref{thm:conv-colored}.
We then discuss some directions to extend the shift-invariance.

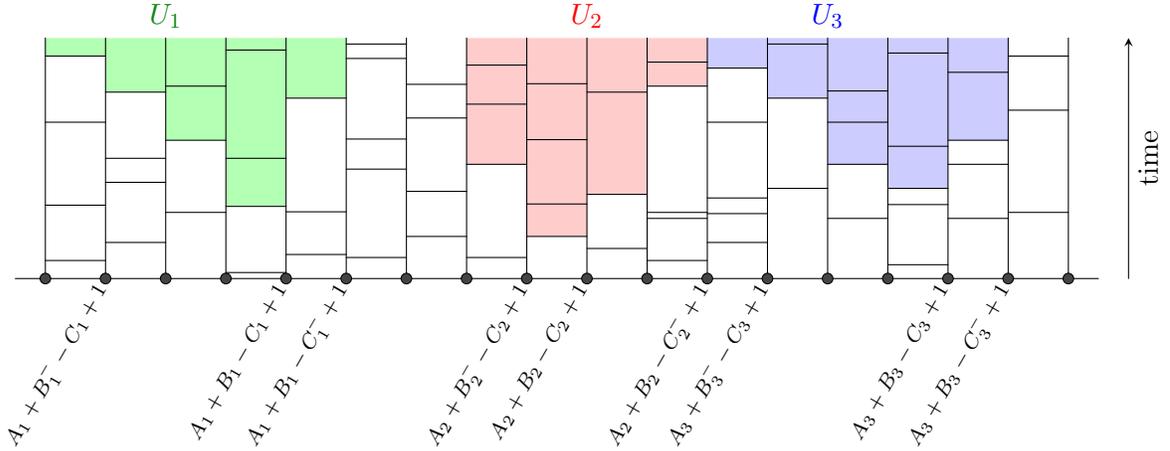
\begin{figure}[hbt!]
    \centering
\begin{tikzpicture}[line cap=round,line join=round,>=triangle 45,x=.8cm,y=.8cm]
\clip(-.5,-3.2) rectangle (19.8,4.7);

\fill[line width=0.pt,color=green,fill=green,fill opacity=0.3]
(1,4) -- (6,4) -- (6,3) -- (5,3) -- (5,1.2) -- (4,1.2) -- (4,2.3) -- (3,2.3) -- (3,3.1) -- (2,3.1) -- (2,3.7) -- (1,3.7) -- cycle;

\fill[line width=0.pt,color=red,fill=red,fill opacity=0.2]
(8,4) -- (12,4) -- (12,3.2) -- (11,3.2) -- (11,1.4) -- (10,1.4) -- (10,0.7) -- (9,0.7) -- (9,1.9) -- (8,1.9) -- cycle;

\fill[line width=0.pt,color=blue,fill=blue,fill opacity=0.2]
(12,4) -- (17,4) -- (17,2.3) -- (16,2.3) -- (16,1.5) -- (15,1.5) -- (15,1.9) -- (14,1.9) -- (14,3) -- (13,3) -- (13,3.5) -- (12,3.5) -- cycle;

\draw (0.5,0) -- (18.5,0);
\draw (1,0) -- (1,4);
\draw (2,0) -- (2,4);
\draw (3,0) -- (3,4);
\draw (4,0) -- (4,4);
\draw (5,0) -- (5,4);
\draw (6,0) -- (6,4);
\draw (7,0) -- (7,4);
\draw (8,0) -- (8,4);
\draw (9,0) -- (9,4);
\draw (10,0) -- (10,4);
\draw (11,0) -- (11,4);
\draw (12,0) -- (12,4);
\draw (13,0) -- (13,4);
\draw (14,0) -- (14,4);
\draw (15,0) -- (15,4);
\draw (16,0) -- (16,4);
\draw (17,0) -- (17,4);
\draw (18,0) -- (18,4);

\draw [-stealth] (19,0) -- (19,4);

\draw (1,3.7) -- (2,3.7);
\draw (1,2.6) -- (2,2.6);
\draw (1,1.22) -- (2,1.22);
\draw (1,0.3) -- (2,0.3);
\draw (2,3.1) -- (3,3.1);
\draw (2,2.0) -- (3,2.0);
\draw (2,1.6) -- (3,1.6);
\draw (2,0.6) -- (3,0.6);
\draw (3,3.2) -- (4,3.2);
\draw (3,2.3) -- (4,2.3);
\draw (3,1.1) -- (4,1.1);
\draw (4,3.8) -- (5,3.8);
\draw (4,2.0) -- (5,2.0);
\draw (4,1.2) -- (5,1.2);
\draw (4,0.1) -- (5,0.1);
\draw (5,3.0) -- (6,3.0);
\draw (5,1.11) -- (6,1.11);
\draw (5,0.4) -- (6,0.4);

\draw (6,3.9) -- (7,3.9);
\draw (6,3.66) -- (7,3.66);
\draw (6,2.32) -- (7,2.32);
\draw (6,1.82) -- (7,1.82);
\draw (6,0.35) -- (7,0.35);

\draw (7,3.23) -- (8,3.23);
\draw (7,2.67) -- (8,2.67);
\draw (7,1.45) -- (8,1.45);
\draw (7,0.7) -- (8,0.7);

\draw (8,3.55) -- (9,3.55);
\draw (8,2.9) -- (9,2.9);
\draw (8,1.9) -- (9,1.9);
\draw (8,0.35) -- (9,0.35);

\draw (9,3.24) -- (10,3.24);
\draw (9,2.31) -- (10,2.31);
\draw (9,1.24) -- (10,1.24);
\draw (9,0.7) -- (10,0.7);

\draw (10,3.1) -- (11,3.1);
\draw (10,1.4) -- (11,1.4);
\draw (10,0.5) -- (11,0.5);

\draw (11,3.2) -- (12,3.2);
\draw (11,3.6) -- (12,3.6);
\draw (11,1.1) -- (12,1.1);
\draw (11,1.0) -- (12,1.0);
\draw (11,0.3) -- (12,0.3);

\draw (12,3.5) -- (13,3.5);
\draw (12,2.6) -- (13,2.6);
\draw (12,1.34) -- (13,1.34);
\draw (12,1.08) -- (13,1.08);
\draw (12,0.6) -- (13,0.6);

\draw (13,3.9) -- (14,3.9);
\draw (13,3.) -- (14,3.);
\draw (13,1.5) -- (14,1.5);

\draw (14,3.12) -- (15,3.12);
\draw (14,2.6) -- (15,2.6);
\draw (14,1.9) -- (15,1.9);
\draw (14,1.0) -- (15,1.0);

\draw (15,3.75) -- (16,3.75);
\draw (15,2.2) -- (16,2.2);
\draw (15,1.5) -- (16,1.5);
\draw (15,1.23) -- (16,1.23);
\draw (15,0.23) -- (16,0.23);

\draw (16,3.43) -- (17,3.43);
\draw (16,2.3) -- (17,2.3);
\draw (16,1.9) -- (17,1.9);
\draw (16,1.0) -- (17,1.0);

\draw (17,3.7) -- (18,3.7);
\draw (17,2.8) -- (18,2.8);
\draw (17,1.1) -- (18,1.1);

\draw [fill=uuuuuu] (1,0) circle (2.0pt);
\draw [fill=uuuuuu] (2,0) circle (2.0pt);
\draw [fill=uuuuuu] (3,0) circle (2.0pt);
\draw [fill=uuuuuu] (4,0) circle (2.0pt);
\draw [fill=uuuuuu] (5,0) circle (2.0pt);
\draw [fill=uuuuuu] (6,0) circle (2.0pt);
\draw [fill=uuuuuu] (7,0) circle (2.0pt);
\draw [fill=uuuuuu] (8,0) circle (2.0pt);
\draw [fill=uuuuuu] (9,0) circle (2.0pt);
\draw [fill=uuuuuu] (10,0) circle (2.0pt);
\draw [fill=uuuuuu] (11,0) circle (2.0pt);
\draw [fill=uuuuuu] (12,0) circle (2.0pt);
\draw [fill=uuuuuu] (13,0) circle (2.0pt);
\draw [fill=uuuuuu] (14,0) circle (2.0pt);
\draw [fill=uuuuuu] (15,0) circle (2.0pt);
\draw [fill=uuuuuu] (16,0) circle (2.0pt);
\draw [fill=uuuuuu] (17,0) circle (2.0pt);
\draw [fill=uuuuuu] (18,0) circle (2.0pt);

\draw (19,2) node[anchor=north,rotate=90]{time};

\begin{scriptsize}
\draw (2,0) node[anchor=east,rotate=60]{$A_1+B_1^--C_1+1$};
\draw (5,0) node[anchor=east,rotate=60]{$A_1+B_1-C_1+1$};
\draw (6,0) node[anchor=east,rotate=60]{$A_1+B_1-C_1^-+1$};

\draw (9,0) node[anchor=east,rotate=60]{$A_2+B_2^--C_2+1$};
\draw (10,0) node[anchor=east,rotate=60]{$A_2+B_2-C_2+1$};
\draw (12,0) node[anchor=east,rotate=60]{$A_2+B_2-C_2^-+1$};

\draw (13,0) node[anchor=east,rotate=60]{$A_3+B_3^--C_3+1$};
\draw (16,0) node[anchor=east,rotate=60]{$A_3+B_3-C_3+1$};
\draw (17,0) node[anchor=east,rotate=60]{$A_3+B_3-C_3^-+1$};
\end{scriptsize}

\draw (3,4) node[anchor=south, color=darkgreen]{$U_1$};
\draw (10,4) node[anchor=south, color=red]{$U_2$};
\draw (14,4) node[anchor=south, color=blue]{$U_3$};

\end{tikzpicture}
\caption{
An illustration of the Poisson field $\Pi$ and the areas $U_i$.
Each horizontal segment indicates a particle in $\Pi$.}  
\label{fig:pce}
\end{figure}

\begin{proof}[Proof of Theorem \ref{thm:main-sa}]
To make the arguments clearer, we start with some assumptions of the parameters in the statement of this theorem.
\begin{itemize}
    \item Without loss of generality, we assume that each $V_i\neq\emptyset$, since otherwise we just ignore it.
    \item By symmetry we assume that $\cR^{A_i}_{B_i,C_i} \le \cR^{A_{i+1}}_{B_{i+1},C_{i+1}}$ and $\cR^{A_i'}_{B_i,C_i} \le \cR^{A_{i+1}'}_{B_{i+1},C_{i+1}}$ for any $1\le i < g$.
    Therefore, for any $1\le i < g$, we have $\cR^{A_i}_{B,C} \le \cR^{A_{i+1}}_{B_{i+1},C_{i+1}}$ and $\cR^{A_i'}_{B,C} \le \cR^{A_{i+1}'}_{B_{i+1},C_{i+1}}$, for all $(B,C)\in [1,B_i]\times[1,C_i]\cap\Z^2$; and  $\cR^{A_i}_{B_i,C_i} \le \cR^{A_{i+1}}_{B,C}$ and $\cR^{A_i'}_{B_i,C_i} \le \cR^{A_{i+1}'}_{B,C}$, for all $(B,C)\in [1,B_{i+1}]\times[1,C_{i+1}]\cap\Z^2$.
    \item For any $1\le i<g$, we also assume that either $A_i<A_{i+1}$ or $A_i'<A_{i+1}'$, i.e., we do not have both $A_i=A_{i+1}$ and $A_i'=A_{i+1}'$. This is because, otherwise we can combine $\{T_{B,C}^{A_i}\}_{(B,C)\in V_i}$ and $\{T_{B,C}^{A_{i+1}}\}_{(B,C)\in V_{i+1}}$,
and combine $\{T_{B,C}^{A_i'}\}_{(B,C)\in V_i}$ and $\{T_{B,C}^{A_{i+1}'}\}_{(B,C)\in V_{i+1}}$.
\end{itemize}
The first observation we make (as can be seen from Figure \ref{fig:thm2}) is that each set $V_i$ is actually a rectangle; namely, there is some $B_i^-, C_i^-\in\N$, such that $V_i=[B_i^-, B_i]\times [C_i^-, C_i] \cap\Z^2$.

Take $\oV_i=\{(B_i^-,C):C_i^-\le C \le C_i\} \cup \{(B,C_i^-):B_i^-\le B \le B_i\}$, the lower and left boundary of $V_i$.
To reduce notation, from now on we denote
$\cbT=\{\cbT_i\}_{i=1}^g=\{\{T^{A_i}_{B,C}\}_{(B,C)\in V_i\setminus\oV_i}\}_{i=1}^g$
and $\obT=\{\{T^{A_i}_{B,C}\}_{(B,C)\in \oV_i}\}_{i=1}^g$.
Likewise, we let $\cbT'=\{\cbT_i'\}_{i=1}^g$
and $\obT'$ denote the same random variables, with each $A_i$ replaced by $A_i'$.
By the orderings given by the second assumption above, we can apply Theorem \ref{thm:main-de} to conclude that
$\obT$ has the same distribution as  $\obT'$.
The goal of the rest of this proof is to upgrade this to the conclusion of this theorem, i.e., to show that $\cbT$ and $\obT$ have the same joint distribution as $\cbT'$ and $\obT'$.

Toward this goal, we next study the law of $\cbT$ conditional on $\obT$.

Recall the setup of the Poisson field $\Pi$ on $\Z\times [0,\infty)$, where for each $x\in\Z$ and any $0\le a < b$, $\Pi(\{x\}\times [a,b])$ is the number of times that the clock on the edge $(x-1, x)$ rings, in the time interval $[a, b]$.
Also recall that for any $A\in\Z$ and $B, C\in\N$, we denote $\cP[A, B, C]=(A+B+1-C, T^A_{B,C})$.

We consider $\Pi$ in different areas, as follows.
We take a family of positive real numbers \[\obt=\{\{t^i_{B,C}\}_{(B,C)\in\oV_i}\}_{i=1}^g,\] satisfying the following conditions:
for any $C^-_i\le C < C_i$ we have $t^i_{B^-_i,C}< t^i_{B^-_i,C+1}$; for any $B^-_i\le B < B_i$ we have $t^i_{B,C^-_i}< t^i_{B+1,C^-_i}$.
Let $U_i\subset \Z\times [0,\infty)$ be the following set
\[
\left(\bigcup_{C^-_i<C\le C_i}\{(A_i+B^-_i-C+1,t): t > t^i_{B,C^-_i}\}\right)\cup \left(\bigcup_{B^-_i<B\le B_i}\{(A_i+B-C^-_i+1,t): t > t^i_{B^-_i,C}\}\right),
\]
and let $U=\bigcup_{i=1}^g U_i$. See Figure \ref{fig:pce}.
Similarly, we take $U_i'=\{(x+A_i'-A_i,t):(x,t)\in U_i\}$ and $U'=\bigcup_{i=1}^g U_i'$.

We next show the following:
\begin{enumerate}
    \item[(i)] These $U_i$ are mutually disjoint.
    \item[(ii)] The event $\obT=\obt$ is determined by $\Pi$ on $\Z\times [0,\infty) \setminus U$.
    \item[(iii)] Given $\obT=\obt$, each $\cbT_i$ is determined by $\Pi$ on $U_i$.
\end{enumerate}
For (i), it is implied by the fact that $A_i+B_i^--C_i > A_{i+1}+B_{i+1}-C_{i+1}^-$ for any $1\le i < g$, and we now prove this fact.
Indeed, we have 
\[
A_i\le A_{i+1},\; A_i+B_i^- \ge A_{i+1}+B_{i+1},\; A_i-C_i\ge A_{i+1}-C_{i+1}^-,
\]
since $\cR^{A_i}_{B_i^-,C_i^-} \le \cR^{A_{i+1}}_{B_{i+1},C_{i+1}}$ and $\cR^{A_i}_{B_i,C_i} \le \cR^{A_{i+1}}_{B_{i+1}^-,C_{i+1}^-}$ (by the second assumption above). 
Similarly, we have
\[
A_i'+B_i^- \ge A_{i+1}'+B_{i+1},\; A_i'\le A_{i+1}'.
\]
If $A_i+B_i^--C_i > A_{i+1}+B_{i+1}-C_{i+1}^-$ does not hold, we must have that $A_i=A_{i+1}$ and $B_i^-=B_{i+1}$, then $A_i'=A_{i+1}'$. Therefore we get a contradiction with the third assumption above.

We then prove (ii).
The strategy is to show that the event $\obT=\obt$ implies that 
\begin{equation}  \label{eq:BCp}
\cP[A_i,B',C'] \in \Z\times [0,\infty) \setminus U,    
\end{equation}
for each $i$ and any $(B', C') \in [1, B_i]\times [1, C_i] \cap\Z^2 \setminus V_i$.
Then we can apply Lemma \ref{lem:I-deter-gen} and get (ii).

Take any $1\le B'<B_i^-$. 
From (i) and the fact proved there, we have that $\cP[A_i,B',C_i]$ is disjoint from $U_j$ for $j\le i$, since $A_i+B'-C_i+1<A_j+B_j^--C_j+1$.
We now consider $j>i$.
If $A_i+B'-C_i=A_j+B-C$ for some $(B, C)\in\oV_j$, we claim that there must be $T^{A_i}_{B',C_i} \le T^{A_j}_{B,C}$.
Indeed, in $\opi^{A_j}_{T^{A_j}_{B, C}}$, there are precisely $C$ particles on or to the right of $A_j+B-C+1$.
Since $\opi^{A_j}_{T^{A_j}_{B, C}}$ and $\opi^{A_i}_{T^{A_j}_{B, C}}$ differ by $A_j-A_i$ particles, in $\opi^{A_i}_{T^{A_j}_{B, C}}$ there are at least $C+A_i-A_j$ particles on or to the right of $A_j+B-C+1= A_i+B'-C_i+1$.
Since $C+A_i-A_j\ge C_j^- +A_i-A_j \ge C_i$ (where the second inequality is due to the second assumption above), we have that $T^{A_i}_{B',C_i} \le T^{A_j}_{B,C}$.
Therefore $\cP[A_i,B',C_i]$ is disjoint from $U_j$ for each $j$, thus disjoint from $U$.
Similarly, for any $1\le C'<C_i^-$, we have that $\cP[A_i,B_i,C']$ is disjoint from $U$.
From these we conclude that \eqref{eq:BCp} holds for any $(B', C') \in [1, B_i]\times [1, C_i] \cap\Z^2 \setminus V_i$, so (ii) holds by Lemma \ref{lem:I-deter-gen}.

For (iii), the passage time $T^{A_i}_{B,C}$ for each $(B,C)\in{V_i\setminus\oV_i}$ is the smallest $t > T^{A_i}_{B-1,C}\vee T^{A_i}_{B,C+1}$, such that there is a point at $(t, A_i+B-C+1)$ in $\Pi$.
Thus given $\obT=\obt$, we can determine $\cbT_i$ by $\Pi$ on $U_i$, recursively.

From (i), (ii), and (iii), we have that conditional on $\obT=\obt$, the distributions of $\cbT_i$ are independent for each $1\le i\le g$.
Similarly, conditional on $\obT'=\obt$, the distributions of $\cbT_i'$ are independent for each $1\le i\le g$.
From the arguments of (iii), we also see that for each $1\le i \le g$, the distribution of $\cbT_i$ conditional on $\obT=\obt$ is the same as the distribution of $\cbT_i'$ conditional on $\obT'=\obt$.
These together imply that $\obT$ and $\cbT$ have the same joint distribution as $\obT'$ and $\cbT'$.
\end{proof}
For completeness, we give the proofs of Corollary \ref{cor:main-geo} and Theorem \ref{thm:conv-colored}.
\begin{proof}[Proof of Corollary \ref{cor:main-geo}]
We assume that $T_{B,C}^A=0$ for any $A\in\Z$ and $B,C \in \Z_{\ge 0}$, $BC=0$.

For each $1\le i \le g$, and any $(B, C) \in \Gamma_{B_i,C_i}^{A_i}$, the following results are true:
we have $(B-1, C) \in \Gamma_{B_i,C_i}^{A_i}$, if $T_{B-1,C}^{A_i} > T_{B,C-1}^{A_i}$; 
and we have $(B, C-1) \in \Gamma_{B_i,C_i}^{A_i}$, if $T_{B-1,C}^{A_i} < T_{B,C-1}^{A_i}$.
The same results are true if we replace $A_i$ with $A_i'$.
Since $\{ \{T^{A_i}_{B,C}\}_{(B,C)\in V_i} \}_{i=1}^g$ has the same distribution as $\{ \{T^{A_i'}_{B,C}\}_{(B,C)\in V_i} \}_{i=1}^g$ by Theorem \ref{thm:main-sa}, we must have that $\don[T_{B-1,C}^{A_i} < T_{B,C-1}^{A_i}]$ has the same distribution as $\don[T_{B-1,C}^{A_i'} < T_{B,C-1}^{A_i'}]$, jointly for all $1\le i \le g$ and $B,C \in W_i$. Thus the conclusion follows.
\end{proof}

\begin{proof}[Proof of Theorem \ref{thm:conv-colored}]
Recall the i.i.d. $\Exp(1)$ random variables $\{\omega^A(v)\}_{v\in\N^2}$ coupled with $\mu^A$, for each $A\in \Z$, and $L^A_{u,v}$ the passage time from $u$ to $v$ under $\omega^A$, satisfying $L^A_{(1,1),(B,C)} = T^A_{B,C}$ for any $B,C\in\N$.
For each $x, y \in \R$ and $n\in\N$, we define
\begin{align*}
\cS_n'(x,y) =& 2^{-4/3}n^{-1/3} (L^0_{(1, 1), (n+\lfloor 2^{2/3}n^{2/3}y \rfloor-\lfloor 2^{2/3}n^{2/3}x \rfloor, n-\lfloor 2^{2/3}n^{2/3}y \rfloor+\lfloor 2^{2/3}n^{2/3}x \rfloor)}-4n)
\\
=& 2^{-4/3}n^{-1/3} (T^0_{(n+\lfloor 2^{2/3}n^{2/3}y \rfloor-\lfloor 2^{2/3}n^{2/3}x \rfloor, n-\lfloor 2^{2/3}n^{2/3}y \rfloor+\lfloor 2^{2/3}n^{2/3}x \rfloor)}-4n).
\end{align*}
Without loss of generality, we assume that $\Theta$ is bounded, and $n$ is large enough.
Note that then for $(x, y) \in \Theta$, the points $(\lfloor 2^{2/3}n^{2/3}x \rfloor, \lfloor 2^{2/3}n^{2/3}y\rfloor) \in \Z^2$ take only finitely many values, with all the rectangles
$\cR^0_{(n+\lfloor 2^{2/3}n^{2/3}y \rfloor-\lfloor 2^{2/3}n^{2/3}x \rfloor, n-\lfloor 2^{2/3}n^{2/3}y \rfloor+\lfloor 2^{2/3}n^{2/3}x \rfloor)}$ being mutually ordered, and all the rectangles $\cR^{\lfloor 2^{2/3}n^{2/3}x \rfloor}_{(n+\lfloor 2^{2/3}n^{2/3}y \rfloor-\lfloor 2^{2/3}n^{2/3}x \rfloor, n-\lfloor 2^{2/3}n^{2/3}y \rfloor+\lfloor 2^{2/3}n^{2/3}x \rfloor)}$ also mutually ordered.
So by Theorem \ref{thm:main-sa}, $\cS_n'$ and $\cS_n$ have the same distribution on $\Theta$.

On the other hand, we have that $\cS_n'$ and $\cS_n^*$ on $\Theta$ are equal in distribution, by \cite[Theorem 1.2]{dauvergne2020hidden}.
Thus we conclude that $\cS_n$ and $\cS_n^*$ have the same distribution on $\Theta$.
By the convergence of $\cS_n$ to $\cS$, the conclusion follows.
\end{proof}

\subsection{Some further questions}  \label{ssec:shift-cons}
We now discuss some other possible extensions to Theorem \ref{thm:main-de} and Theorem \ref{thm:main-sa}.

A major question is whether some shift constraints in Theorem \ref{thm:main-de} and Theorem \ref{thm:main-sa} can be lifted. 
Let's consider just two passage times.
Using arguments that slightly generalize those in Example \ref{ex:simple}, or some simplification of the proof of Theorem \ref{thm:main-de}, we can get the following conditional equality in distribution.
\begin{ex}   \label{ex:condi}
Take $A_1, A_2, A_2' \in \Z$, $B_1, B_2, C_1, C_2 \in \N$, satisfying the following conditions: $A_1 \le A_2, A_2'$, and $A_1+B_1-C_1 \ge A_2+B_2-C_2, A_2'+B_2-C_2$, and $A_1+B_1 \ge A_2+B_2, A_2'+B_2$. For any $0<t_1\le t_2$, we have
\[
\PP[T^{A_1}_{B_1,C_1}<t_1, T^{A_2}_{B_2,C_2}<t_2] = \PP[T^{A_1}_{B_1,C_1}<t_1, T^{A_2'}_{B_2,C_2}<t_2].
\]
\end{ex}
We note that this statement is non-trivial only if $C_1 < C_2$, since otherwise we have that $T^{A_1}_{B_1,C_1}\le T^{A_2}_{B_2,C_2}$ and $T^{A_1}_{B_1,C_1}\le T^{A_2'}_{B_2,C_2}$.
The main difference between this example and Theorem \ref{thm:main-sa} is that here the condition $A_1-C_1\ge A_2-C_2, A_2'-C_2$ is replaced by the weaker ones $A_1+B_1-C_1 \ge A_2+B_2-C_2, A_2'+B_2-C_2$; whereas the equality in distribution only holds in half of the space $[0,\infty)^2$.

To see that the condition $t_1 \le t_2$ is necessary, we can compute the following example.
\begin{ex}
For any $t_1, t_2 > 0$, we have
\[
\PP[T^0_{3,1}=t_1, T^0_{1,2}=t_2]
=
\PP[T^0_{3,1}=t_1, T^1_{1,2}=t_2]
=\begin{cases}
e^{-t_2} - e^{-t_1-t_2}(1+t_1) ,\quad & \text{ if } t_1\le t_2 \\
e^{-t_1}(t_1-t_2+1) - e^{-t_1-t_2}(1+t_1) ,\quad & \text{ if } t_1> t_2
\end{cases},    
\]
and
\[
\PP[T^0_{3,1}=t_1, T^2_{1,2}=t_2]
=\begin{cases}
e^{-t_2} - e^{-t_1-t_2}(1+t_1) ,\quad & \text{ if } t_1\le t_2 \\
2e^{-t_1} + e^{-t_1-t_2}(t_2(t_1-t_2)^2/2-2(t_2+1)) ,\quad & \text{ if } t_1> t_2
\end{cases}.
\]
\end{ex}
From this, it seems that the constraint of rectangle ordering is needed to get equalities in distribution.
However, we expect that some conditional equalities in distribution hold with weaker constraints, and it will be interesting to find such an identity generalizing Example \ref{ex:condi} to more than two passage times.

In another direction (of lifting some shift constraints), one can ask whether the ordering condition can be imposed only on pairs of passage times where relative shift happens.
To be more precise, one can consider the following question, which is in a similar spirit as \cite[Conjecture 1.5]{borodin2019shift}.
\begin{question}
Let $g\in\N$, and take $A_i\in \Z$, $B_i, C_i \in\N$, for all $1\le i \le g$.
Let $1\le \iota < g$, and for any $1\le i\le g$ we let $A_i^+ = A_i+\don[i> \iota]$. Suppose that for any $1\le i\le \iota < i' \le g$, we have $\cR^{A_i}_{B_i,C_i} \le \cR^{A_{i'}}_{B_{i'},C_{i'}}$and $\cR^{A_{i}^+}_{B_{i},C_{i}} \le \cR^{A_{i'}^+}_{B_{i'},C_{i'}}$.
Are the vectors
$\{ T_{B_{i},C_{i}}^{A_{i}} \}_{i=1}^g$ and $\{ T_{B_{i},C_{i}}^{A_{i}^+} \}_{i=1}^g$ equal in distribution?
\end{question}
This equality in distribution (if true) implies both Theorem \ref{thm:main-de} and Theorem \ref{thm:main-sa}.
The main difference of this with Theorem \ref{thm:main-de} and Theorem \ref{thm:main-sa} is that, the rectangles  $\cR^{A_i}_{B_i,C_i}$ for $1\le i \le \iota$ or for $\iota<i\le g$ are not ordered, and these $A_i$ can be different. We hope that an answer to this question will give a better understanding of the limitation of and the mechanism behind these distributional equalities in the colored TASEP.

It is also natural to ask if some equalities in distribution can be obtained for the height function of the colored stochastic six-vertex model.
For example, can we prove some shift-invariance of the height function for more general points (than those in Theorem \ref{thm:gal-6v}), thus extending the results of \cite{borodin2019shift} and \cite{galashin2020symmetries}?
We note that the colored stochastic six-vertex model can be viewed as a discrete-time version of the colored ASEP (Asymmetric Simple Exclusion Process), which is a generalization of the colored TASEP, such that when the Poisson clock on edge $(x,x+1)$ rings, and the color at site $x$ is larger than the color at site $x+1$, we swap the particles with some probability $p\in [0,1]$.
The colored TASEP then corresponds to the colored stochastic six-vertex model with the parameter $b_2=0$.
By adapting our arguments to the discrete-time setting, and using the invertibility of the transition matrices instead of the analyticity of the transition distribution functions, we can get extensions of the height function shift-invariance of the colored stochastic six-vertex model, when $b_2=0$.
However, for the general case where $b_2>0$, 
one might need to prove some shift-invariance for the multi-time distribution of the colored ASEP with $p>0$.
Our arguments for the colored TASEP rely on the following observation: for the trajectories of the $i$-th rightmost particle in some $\opi^A$, and of the $j$-th leftmost hole in some $\opi^{A'}$, they will be independent once they cross each other; but this is not true for the colored ASEP.
Some new ideas are needed to prove (or disprove) possible extensions to the ASEP setting.

\section{Joint equality in distribution between OSP and LPP}  \label{sec:osp}

In this section, we prove Theorem \ref{thm:udv} and its implications.

Recall that for the colored TASEP, we encoded it with a family of TASEPs $\opi^A$, for $A\in \Z$, with different initial conditions and coupled using the same Poisson clocks.
We similarly encode OSP on $\{1,\ldots, N\}$, with a family of of TASEPs on $\{1,\ldots, N\}$, denoted as $\nu^{N,A}$, for $A\in \{1,\ldots, N-1\}$, such that $\nu^{N,A}_0(x) = 0$ for $x\le A$, and $\nu^{N,A}_0(x) = \infty$ for $x>A$ (as before we use $0$ to denote particles and $\infty$ to denote holes).
Then we have
\[
U_N(A) = \inf\{t>0: \nu^{N,A}_t(x) = 0, \forall N-A+1 \le x \le N; \nu^{N,A}_t(x) = \infty, \forall 1 \le x \le N-A\},
\]
i.e., $U_N(A)$ is the time for the last jump of $\nu^{N,A}$.

We use the following input from \cite{angel2009oriented}.
To state it, we consider two kinds of operators on particle-hole configurations on $\Z$ introduced there.
For each $k\in \N$, let $R_k$ be the \emph{cut-off operator}, such that for any configuration, it keeps the $k$ rightmost particles, and changes all other particles to holes (unless there are less than $k$ particles, in which case the configuration keeps the same).
For each $n\in \Z$ we let $B_n$ be the \emph{push-back operator}, which pushes all particles onto $(-\infty, n]\cap \Z$: for the $j$-th rightmost particle, it is moved to site $x\wedge n+1-j$, if it is originally at site $x$.

We couple the colored TASEP with OSP on $\{1,\ldots, N\}$, such that the Poisson clocks on $(k,k+1)$ for each $1\le k \le N-1$ are the same.
\begin{lemma}[\protect{\cite[Lemma 3.3]{angel2009oriented}}]  \label{lem:cbopera}
Under the above coupling, we have that $B_NR_A\opi^A$ on $\{1,\cdots,N\}$ is the same as $\nu^{N,A}$, for each $1\le A \le N-1$.
\end{lemma}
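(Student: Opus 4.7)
The plan is to verify the equality at the level of individual particle positions via induction on Poisson clock rings in the shared graphical construction. Because TASEP preserves the order of its particles, the $j$-th rightmost particle of $\opi^A$ is permanently the particle that started at site $A+1-j$; denote its position at time $t$ by $P_j(t)$, so $P_j(0)=A+1-j$. Let $Q_j(t)$ denote the position of the $j$-th rightmost particle of $\nu^{N,A}$, with $Q_j(0)=A+1-j$ as well. Unwinding the definitions of $R_A$ and $B_N$, the lemma is equivalent to the particle-level identity
\[
Q_j(t)=P_j(t)\wedge (N+1-j),\qquad 1\le j\le A,\ t\ge 0,
\]
which I would verify inductively.

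At $t=0$ the identity is immediate since $A+1-j\le N+1-j$. For the inductive step, consider a clock ring at time $t$ on edge $(k,k+1)$. If $k\le 0$, no $P_j$ can sit at $k$ (the $P_j$ are non-decreasing in time and start in $\{1,\ldots,A\}$); any firing only moves a non-tracked particle originally started at a site $\le 0$, and by order preservation such a particle stays strictly below $P_A$, so every $P_j$ is unchanged, while $\nu^{N,A}$ is unaffected because the edge lies outside $\{1,\ldots,N-1\}$. If $k\ge N$, only $P_1$ could be at $k$, and the jump $P_1\to k+1$ preserves $P_1\wedge N=N$; again $\nu^{N,A}$ is inert.

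The core regime is $1\le k\le N-1$, which I split according to what sits at site $k$ in $\opi^A$. When the occupant is a tracked $P_{j^*}=k$, sub-split on the cap: if $k<N+1-j^*$ the caps on $Q_{j^*-1}$ and $Q_{j^*}$ are both inactive at the relevant sites, so $P_{j^*-1}>k+1\Leftrightarrow Q_{j^*-1}>k+1$ and both processes execute a matching jump; if $k=N+1-j^*$ (the wall) then $Q_{j^*-1}=N+2-j^*$ blocks $Q_{j^*}$, whereas an $\opi^A$-jump of $P_{j^*}$ leaves $P_{j^*}\wedge (N+1-j^*)=N+1-j^*$ unchanged; if $k>N+1-j^*$, any $P_{j^*}$-jump again leaves the cap formula untouched, and one checks that the candidate particle at site $k$ in $\nu^{N,A}$ (if present, $Q_{N+1-k}=k$, arising from $P_{N+1-k}>k$) is blocked by $Q_{N-k}=k+1$.

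The main obstacle is the remaining scenario where the particle at $k$ in $\opi^A$ is non-tracked; the jump leaves every $P_j$ fixed, and I must rule out a spurious jump in $\nu^{N,A}$. For the non-tracked firing to be possible, $k+1$ must be empty in $\opi^A$, which together with $P_A>k$ forces $P_A\ge k+2$. Then $P_{N-k}\ge P_A\ge k+2$ whenever $N-k\le A$, so $Q_{N-k}=k+1$ blocks the only candidate $Q_{N+1-k}=k$ at site $k$; if instead $N+1-k>A$, site $k$ is simply empty in $\nu^{N,A}$. Combining all cases closes the induction and delivers $B_N R_A \opi^A=\nu^{N,A}$.
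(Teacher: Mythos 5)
Your argument is correct, and it is the natural one: reducing the claim to the particle-level identity $Q_j(t) = P_j(t)\wedge(N+1-j)$ for $1\le j\le A$ and then checking that every Poisson clock ring preserves this identity. I verified the case analysis and each branch closes as you say. Two spots are stated a bit loosely but are easily repaired: in the tracked case with $k<N+1-j^*$ you implicitly use the convention $P_0=Q_0=+\infty$ when $j^*=1$; and in the non-tracked case the split is cleanest as ``$N+1-k\le A$ (so $Q_{N+1-k}=k$ is present and is blocked by $Q_{N-k}=k+1$)'' versus ``$N+1-k>A$ (site $k$ empty in $\nu^{N,A}$),'' which are exhaustive and disjoint; your ``$N-k\le A$'' overlaps the empty case when $N-k=A$, though this causes no error. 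One other worth-a-sentence point: the induction is over the a.s.\ locally finite set of rings on edges $\{(k,k+1):1\le k\le N-1\}$; rings with $k\le 0$ or $k\ge N$ never change either side of the identity (your cases 1 and 2), so the induction is well-founded despite $\opi^A$ living on all of $\Z$.

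There is nothing in the paper itself to compare against: the lemma is stated with a citation to Lemma~3.3 of Angel--Holroyd--Romik and no proof is reproduced. Your direct clock-by-clock verification is consistent with the operator-commutation argument one expects from that source.
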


We can now prove the joint equality in distribution between $\bU_N$ and passage times in LPP, from Theorem \ref{thm:main-de} and using this connection.
\begin{proof}[Proof of Theorem \ref{thm:udv}]
By Lemma \ref{lem:cbopera}, 
for each $1\le A \le N-1$, if and only if $t\le T^A_{N-A,A}$, there are at least $A$ particles on or to the right of site $N-A+1$ in $\opi^A_t$, thus in $\nu^{N,A}_t$; and this is equivalent to that
$\nu^{N,A}_t(x) = 0, \forall N-A+1 \le x \le N$ and $\nu^{N,A}_t(x) = \infty, \forall 1 \le x \le N-A$.
Thus we must have that $T^A_{N-A,A} = U_N(A)$.

Via the connection between LPP and TASEP, as stated in Section \ref{sec:intro}, and symmetry between the two coordinates in LPP, we have that $\{T^1_{N-A,A}\}_{A=1}^{N-1}$ has the same distribution as $\{L_{(1,1),(A,N-A)}\}_{A=1}^{N-1}$.
It remains to show that it also has the same distribution as $\{T^A_{N-A,A}\}_{A=1}^{N-1}$.
For this we apply Theorem \ref{thm:main-de}: for each $1\le i < N-1$, we have that $\{T^{A\wedge i}_{N-A,A}\}_{A=1}^{N-1}$ has the same distribution as $\{T^{A\wedge (i+1)}_{N-A,A}\}_{A=1}^{N-1}$.
Thus $\{T^{A\wedge i}_{N-A,A}\}_{A=1}^{N-1}$ has the same distribution for each $1\le i \le N-1$.
By taking $i=1$ and $i=N-1$ we get the conclusion.
\end{proof}

We now deduce the asymptotic results from Theorem \ref{thm:udv}, using existing results of LPP.

\begin{proof}[Proof of Theorem \ref{cor:j-airy} and \ref{cor:max-loc}]
By Theorem \ref{thm:udv}, it suffices to consider the function
\begin{multline*}
x\mapsto 
\frac{(y(1-y))^{1/6}}{(1+2\sqrt{y(1-y)})^{2/3}N^{1/3}}
\\
\times (L_{(1,1),(\lfloor yN+ xN^{2/3} \rfloor,N-\lfloor yN+ xN^{2/3} \rfloor)} - (1+2\sqrt{y(1-y)})N - \frac{1-2y}{\sqrt{y(1-y)}}xN^{2/3}).
\end{multline*}
We have that it converges to $\cA_2(x)-x^2$ weakly, in the topology of uniform convergence on compact sets.
The finite-dimensional convergence follows from e.g.\ \cite{BF08,BP08}, and it can be upgraded to uniform convergence on compact sets using a tightness result from \cite{Pim17}.
See \cite[Theorem 3.8]{BGZ} for a proof in the $y=1/2$ case, which also goes through essentially verbatim for any fixed $y\in(0,1)$.

For Theorem \ref{cor:max-loc}, we note that $k_*=\argmax_{1\le k \le N-1}L_{(1,1),(k,N-k)}$.
Then we just need the following two additional facts: the process $\cA_2(x)-x^2$ attains its maximum at a unique point, and that $\frac{k_*-N/2}{N^{2/3}}$ is tight as $N\to\infty$.
The first fact is proved in \cite{CH14}.
The tightness follows from a transversal estimate in LPP, e.g.\ \cite[Proposition 4.2]{BGZ}, which implies the following: there exist $c_1,c_2>0$, such that for any $\phi$ large enough and any $N\in\N$, 
\[
\PP\left[\max_{1\le k \le N-1, |k-N/2|>\phi N^{2/3}}L_{(1,1),(k,N-k)} > 2N-c_1\phi^2N^{1/3}\right] < e^{-c_2\phi^3}.
\]
We also have that $\PP[L_{(1,1),(\lfloor N/2\rfloor,\lceil N/2\rceil)}-2N<-\phi N^{1/3}] < c^{-1}e^{-c\phi^3}$ for some constant $c>0$.
This is due to that $L_{(1,1),(\lfloor N/2\rfloor,\lceil N/2\rceil)}$ has the same distribution as the largest eigenvalue of
$X^*X$, where $X$ is an $\lfloor N/2\rfloor\times\lceil N/2\rceil$ matrix with i.i.d. standard complex Gaussian entries (see \cite[Proposition 1.4]{Jo99}); and the estimate is from \cite[Theorem 2]{LR10}.
With these bounds on the passage times, we get tightness of $\frac{k_*-N/2}{N^{2/3}}$.
\end{proof}

\begin{proof}[Proof of Theorem \ref{cor:loc-srw}]
It is proved in \cite[Theorem 2.1]{balazs2021local} that for the function \[x\mapsto L_{(1,1),(\lfloor yN \rfloor + x, N-\lfloor yN \rfloor - x)} - L_{(1,1),(\lfloor yN \rfloor, N-\lfloor yN \rfloor)},\] its total variation distance to $g_N$ decays to zero (actually it is stated for the total variation distance between the difference of the passage times, and the so-called Busemann function, which has the distribution of a two-sided random walk).
Then the conclusion immediately follows from Theorem \ref{thm:udv}.
\end{proof}

\subsection*{Acknowledgement}
The author would like to thank Jiaoyang Huang for first bringing the conjectures in \cite{bisi2020oriented} into his attention; and thank the organizers of the BIRS workshop: Permutations and Probability, and a talk by Dan Romik, for introducing some related backgrounds.
The author thanks Vadim Gorin for some very valuable comments on earlier drafts of this paper, and Duncan Dauvergne for some discussions on the relation between colored TASEP and the Airy sheet.
The author would also like to thank an anonymous referee for carefully reading this paper, and providing many very helpful suggestions on improving the expository.

\bibliographystyle{halpha}
\bibliography{bibliography}

\begin{thebibliography}{AHRV07}

\bibitem[AAV11]{amir2011tasep}
Gideon Amir, Omer Angel, and Benedek Valk{\'o}.
\newblock The {TASEP} speed process.
\newblock {\em Ann. Probab.}, 39(4):1205--1242, 2011.

\bibitem[Agg17]{aggarwal2017convergence}
Amol Aggarwal.
\newblock Convergence of the stochastic six-vertex model to the {ASEP}.
\newblock {\em Math. Phys. Anal. Geom.}, 20(2):3, 2017.

\bibitem[AGH12]{angel2012pattern}
Omer Angel, Vadim Gorin, and Alexander Holroyd.
\newblock A pattern theorem for random sorting networks.
\newblock {\em Electron. J. Probab.}, 17, 2012.

\bibitem[AHR09]{angel2009oriented}
Omer Angel, Alexander Holroyd, and Dan Romik.
\newblock The oriented swap process.
\newblock {\em Ann. Probab.}, 37(5):1970--1998, 2009.

\bibitem[AHRV07]{angel2007random}
Omer Angel, Alexander~E Holroyd, Dan Romik, and B{\'a}lint Vir{\'a}g.
\newblock Random sorting networks.
\newblock {\em Adv. Math.}, 215(2):839--868, 2007.

\bibitem[BB21]{borodin2021color}
Alexei Borodin and Alexey Bufetov.
\newblock Color-position symmetry in interacting particle systems.
\newblock {\em Ann. Probab.}, 49(4):1607--1632, 2021.

\bibitem[BBS21]{balazs2021local}
M{\'a}rton Bal{\'a}zs, Ofer Busani, and Timo Sepp{\"a}l{\"a}inen.
\newblock Local stationarity in exponential last-passage percolation.
\newblock {\em Probab. Theory Relat. Fields}, 180(1):113--162, 2021.

\bibitem[BCG16]{borodin2016stochastic}
Alexei Borodin, Ivan Corwin, and Vadim Gorin.
\newblock Stochastic six-vertex model.
\newblock {\em Duke Math. J.}, 165(3):563--624, 2016.

\bibitem[BCGR20]{bisi2020sorting}
Elia Bisi, Fabio~Deelan Cunden, Shane Gibbons, and Dan Romik.
\newblock Sorting networks, staircase young tableaux and last passage
  percolation.
\newblock arXiv:2003.03331, 2020.

\bibitem[BCGR22]{bisi2020oriented}
Elia Bisi, Fabio~Deelan Cunden, Shane Gibbons, and Dan Romik.
\newblock The oriented swap process and last passage percolation.
\newblock {\em Random Struct. Algor.}, 60(4):690--715, 2022.

\bibitem[BCS06]{balazs2006cube}
M{\'a}rton Bal{\'a}zs, Eric Cator, and Timo Sepp{\"a}l{\"a}inen.
\newblock Cube root fluctuations for the corner growth model associated to the
  exclusion process.
\newblock {\em Electron. J. Probab.}, 11:1094--1132, 2006.

\bibitem[BF08]{BF08}
Alexei Borodin and Patrik~L. Ferrari.
\newblock Large time asymptotics of growth models on space-like paths {I}:
  Push{ASEP}.
\newblock {\em Electron. J. Probab.}, 13:1380--1418, 2008.

\bibitem[BF22]{bufetov2020shock}
Alexey Bufetov and Patrik~L. Ferrari.
\newblock Shock fluctuations in {TASEP} under a variety of time scalings.
\newblock {\em Ann. Appl. Probab.}, 32(5):3614--3644, 2022.

\bibitem[BFPS07]{BFPS07}
Alexei Borodin, Patrik~L. Ferrari, Michael Pr{\"a}hofer, and Tomohiro Sasamoto.
\newblock {Fluctuation Properties of the {TASEP} with Periodic Initial
  Configuration}.
\newblock {\em J. Stat. Phys.}, 129(5):1055--1080, 2007.

\bibitem[BGR22]{bufetov2020absorbing}
Alexey Bufetov, Vadim Gorin, and Dan Romik.
\newblock Absorbing time asymptotics in the oriented swap process.
\newblock {\em Ann. Appl. Probab.}, 32(2):753--763, 2022.

\bibitem[BGW22]{borodin2019shift}
Alexei Borodin, Vadim Gorin, and Michael Wheeler.
\newblock Shift-invariance for vertex models and polymers.
\newblock {\em Proc. Lond. Math. Soc. (3)}, 124(2):182--299, 2022.

\bibitem[BGZ21]{BGZ}
Riddhipratim Basu, Shirshendu Ganguly, and Lingfu Zhang.
\newblock Temporal correlation in last passage percolation with flat initial
  condition via {B}rownian comparison.
\newblock {\em Comm. Math. Phys.}, 383(3):1805--1888, 2021.

\bibitem[BP08]{BP08}
Alexei Borodin and Sandrine P{\'e}ch{\'e}.
\newblock Airy kernel with two sets of parameters in directed percolation and
  random matrix theory.
\newblock {\em J. Stat. Phys.}, 132(2):275--290, 2008.

\bibitem[BS10]{balazs2010order}
M{\'a}rton Bal{\'a}zs and Timo Sepp{\"a}l{\"a}inen.
\newblock Order of current variance and diffusivity in the asymmetric simple
  exclusion process.
\newblock {\em Ann. of Math. (2)}, pages 1237--1265, 2010.

\bibitem[Buf20]{bufetov2020interacting}
Alexey Bufetov.
\newblock Interacting particle systems and random walks on hecke algebras.
\newblock arXiv:2003.02730, 2020.

\bibitem[CH14]{CH14}
Ivan Corwin and Alan Hammond.
\newblock {B}rownian {G}ibbs property for {A}iry line ensembles.
\newblock {\em Invent. Math.}, 195(2):441--508, 2014.

\bibitem[Dau22a]{dauvergne2020hidden}
Duncan Dauvergne.
\newblock Hidden invariance of last passage percolation and directed polymers.
\newblock {\em Ann. Probab.}, 50(1):18--60, 2022.

\bibitem[Dau22b]{dauvergne2018archimedean}
Duncan Dauvergne.
\newblock {The Archimedean limit of random sorting networks}.
\newblock {\em J. Amer. Math. Soc.}, 35(4):1215--1267, 2022.

\bibitem[DJLS93]{derrida1993exact}
Bernard Derrida, Steven~A. Janowsky, Joel~L. Lebowitz, and Eugene~R. Speer.
\newblock Exact solution of the totally asymmetric simple exclusion process:
  shock profiles.
\newblock {\em J. Stat. Phys.}, 73(5):813--842, 1993.

\bibitem[DOV18]{dauvergne2018directed}
Duncan Dauvergne, Janosch Ortmann, and B{\'a}lint Vir{\'a}g.
\newblock The directed landscape.
\newblock arXiv:1812.00309, 2018.

\bibitem[DV20]{dauvergne2020circular}
Duncan Dauvergne and B{\'a}lint Vir{\'a}g.
\newblock Circular support in random sorting networks.
\newblock {\em Trans. Amer. Math. Soc.}, 373(3):1529--1553, 2020.

\bibitem[DV21]{dauvergne2021scaling}
Duncan Dauvergne and B{\'a}lint Vir{\'a}g.
\newblock The scaling limit of the longest increasing subsequence.
\newblock arXiv:2104.08210, 2021.

\bibitem[EG87]{EG}
Paul Edelman and Curtis Greene.
\newblock Balanced tableaux.
\newblock {\em Adv. Math.}, 63(1):42--99, 1987.

\bibitem[Fer92]{ferrari1992shock}
Pablo~A. Ferrari.
\newblock Shock fluctuations in asymmetric simple exclusion.
\newblock {\em Probab. Theory Related Fields}, 91(1):81--101, 1992.

\bibitem[FKS91]{ferrari1991microscopic}
Pablo~A. Ferrari, Claude Kipnis, and Ellen Saada.
\newblock Microscopic structure of travelling waves in the asymmetric simple
  exclusion process.
\newblock {\em Ann. Probab.}, 19(1):226--244, 1991.

\bibitem[FQR13]{flores2013endpoint}
Gregorio~Moreno Flores, Jeremy Quastel, and Daniel Remenik.
\newblock Endpoint distribution of directed polymers in 1+ 1 dimensions.
\newblock {\em Comm. Math. Phys.}, 317(2):363--380, 2013.

\bibitem[Gal21]{galashin2020symmetries}
Pavel Galashin.
\newblock Symmetries of stochastic colored vertex models.
\newblock {\em Ann. Probab.}, 49(5):2175--2219, 2021.

\bibitem[GR19]{gorin2019random}
Vadim Gorin and Mustazee Rahman.
\newblock Random sorting networks: local statistics via random matrix laws.
\newblock {\em Probab. Theory Relat. Fields}, 175(1):45--96, 2019.

\bibitem[Joh00]{Jo99}
Kurt Johansson.
\newblock Shape fluctuations and random matrices.
\newblock {\em Comm. Math. Phys.}, 209(2):437--476, 2000.

\bibitem[Lig12]{liggett2012interacting}
Thomas~M. Liggett.
\newblock {\em Interacting particle systems}, volume 276.
\newblock Springer Science \& Business Media, 2012.

\bibitem[LR10]{LR10}
Michel Ledoux and Brain Rider.
\newblock Small deviations for beta ensembles.
\newblock {\em Electron. J. Probab.}, 15:1319--1343, 2010.

\bibitem[MSZ21]{MSZ}
James~B. Martin, Allan Sly, and Lingfu Zhang.
\newblock Convergence of the environment seen from geodesics in exponential
  last passage percolation.
\newblock arXiv:2106.05242, 2021.

\bibitem[Pim18]{Pim17}
Leandro P.~R. Pimentel.
\newblock Local behaviour of {Airy} processes.
\newblock {\em J. Stat. Phys.}, 173(6):1614--1638, 2018.

\bibitem[PS02]{prahofer2002scale}
Michael Pr{\"a}hofer and Herbert Spohn.
\newblock {Scale invariance of the PNG droplet and the Airy process}.
\newblock {\em J. Stat. Phys.}, 108(5):1071--1106, 2002.

\bibitem[Ros81]{rost1981non}
Hermann Rost.
\newblock Non-equilibrium behaviour of a many particle process: Density profile
  and local equilibria.
\newblock {\em Zeitschrift f{\"u}r Wahrscheinlichkeitstheorie und Verwandte
  Gebiete}, 58(1):41--53, 1981.

\bibitem[Roz16]{rozinov2016statistics}
Alex Rozinov.
\newblock {\em Statistics of random sorting networks}.
\newblock PhD thesis, Ph. D. Thesis, Courant Institute, New York, 2016.

\bibitem[Sas05]{Sas05}
Tomohiro Sasamoto.
\newblock Spatial correlations of the 1d {KPZ} surface on a flat substrate.
\newblock {\em J. Phys. A}, 38(33):L549--L556, 2005.

\bibitem[Sch12]{schehr2012extremes}
Gr{\'e}gory Schehr.
\newblock Extremes of n vicious walkers for large n: application to the
  directed polymer and kpz interfaces.
\newblock {\em J. Stat. Phys.}, 149(3):385--410, 2012.

\bibitem[Spe94]{speer1994two}
Eugene~R. Speer.
\newblock The two species totally asymmetric simple exclusion process.
\newblock In {\em On Three Levels: Micro, Meso and Macroscopic Approaches in
  Physics}, pages 91--102. Springer, 1994.

\bibitem[Sta84]{stanley1984number}
Richard~P. Stanley.
\newblock On the number of reduced decompositions of elements of coxeter
  groups.
\newblock {\em European J. Combin.}, 5(4):359--372, 1984.

\end{thebibliography}

\end{document}